\theoremstyle{plain}
\newtheorem{theorem}{Theorem}[section]
\newtheorem{lemma}[theorem]{Lemma}
\newtheorem{cor}[theorem]{Corollary}
\newtheorem{prop}[theorem]{Proposition}
\newtheorem{conj}[theorem]{Conjecture}
\theoremstyle{remark}
\theoremstyle{definition}
\newtheorem{defn}[theorem]{Definition}
\newtheorem{Remark}[theorem]{Remark}
\numberwithin{equation}{subsection}
\numberwithin{theorem}{subsection}
\newcommand\hfld[2]{\smash{\mathop{\hbox to 10mm{\rightarrowfill}}
     \limits^{\scriptstyle#1}_{\scriptstyle#2}}}
\newcommand\hflg[2]{\smash{\mathop{\hbox to 10mm{\leftarrowfill}}
     \limits^{\scriptstyle#1}_{\scriptstyle#2}}}
\title{The stable Bernstein center and \\ test functions for Shimura varieties}
\author{Thomas J. Haines}
\begin{document}

\thanks{Research partially supported by NSF DMS-0901723.}

\date{}

\maketitle

\begin{abstract} \begin{small}
We elaborate the theory of the stable Bernstein center of a $p$-adic group $G$, and apply it to state a general conjecture on test functions for Shimura varieties due to the author and R.~Kottwitz.  This conjecture provides a framework by which one might pursue the Langlands-Kottwitz method in a very general situation: not necessarily PEL Shimura varieties with arbitrary level structure at $p$. We give a concrete reinterpretation of the test function conjecture in the context of parahoric level structure. We also use the stable Bernstein center to formulate some of the transfer conjectures (the ``fundamental lemmas'')  that would be needed if one attempts to use the test function conjecture to express the local Hasse-Weil zeta function of a Shimura variety in terms of automorphic $L$-functions.
\end{small}
\end{abstract}

%%% Section 1
\markboth{T. Haines}
{Stable Bernstein center and test functions}

\tableofcontents

\section{Introduction}

The main purpose of this chapter is to give precise statements of some conjectures on test functions for Shimura varieties with bad reduction.  

In the Langlands-Kottwitz approach to studying the cohomology of a Shimura variety, one of the main steps is to identify a suitable test function that is ``plugged into'' the counting points formula that resembles the geometric side of the Arthur-Selberg trace formula.  To be more precise, let $({\bf G}, h^{-1}, K^pK_p)$ denote Shimura data where $p$ is a fixed rational prime such that the level-structure group factorizes as $K^p K_p \subset {\bf G}(\mathbb A^p_f)  {\bf G}(\mathbb Q_p)$. This data gives rise to a quasi-projective variety $Sh_{K_p} := Sh({\bf G}, h^{-1}, K^pK_p)$ over a number field ${\bf E} \subset \mathbb C$. Let $\Phi_p \in {\rm Gal}(\overline{\mathbb Q}_p/\mathbb Q_p)$ denote a geometric Frobenius element.  Then one seeks to prove a formula for the semi-simple Lefschetz number ${\rm Lef}^{\rm ss}(\Phi^r_p, Sh_{K_p})$ 
\begin{equation} \label{Lef^ss_intro}
{\rm tr}^{\rm ss}(\Phi^r_p, {\rm H}^\bullet_c(Sh_{K_p} \otimes_{\bf E} \overline{\mathbb Q}_p \, , \overline{\mathbb Q}_\ell)) = \sum_{(\gamma_0; \gamma, \delta)} c(\gamma_0; \gamma, \delta) ~ {\rm O}_\gamma(1_{K^p}) ~ {\rm TO}_{\delta \theta}(\phi_r),
\end{equation}
(see $\S\ref{test_fcn_subsec}$ for more details).

The {\em test function} $\phi_r$ appearing here is the most interesting part of the formula. Experience has shown that we may often find a test function belonging to the center $\mathcal Z({\bf G}(\mathbb Q_{p^r}), K_{p^r})$ of the Hecke algebra $\mathcal H({\bf G}(\mathbb Q_{p^r}), K_{p^r})$, in a way that is explicitly determined by the ${\bf E}$-rational conjugacy class $\{ \mu \}$ of 1-parameter subgroups of ${\bf G}$ associated to the Shimura data. In most PEL cases with good reduction, where $K_p \subset {\bf G}(\mathbb Q_p)$ is a hyperspecial maximal compact subgroup, this was done by Kottwitz (cf.~e.g.~\cite{Ko92a}). When $K_p$ is a parahoric subgroup of ${\bf G}(\mathbb Q_p)$ and when ${\bf G}_{\mathbb Q_p}$ is unramified, the {\em Kottwitz conjecture} predicts that we can take $\phi_r$ to be a power of $p$ times  the {\em Bernstein function} $z^{K_p}_{-\mu, j}$ arising from the Bernstein isomorphism for the center $\mathcal Z({\bf G}(\mathbb Q_{p^r}), K_{p^r})$ of the parahoric Hecke algebra $\mathcal H({\bf G}(\mathbb Q_{p^r}), K_{p^r})$ (see Conjecture \ref{Kottwitz_conj} and $\S \ref{appendix_sec}$).

In fact Kottwitz formulated (again, for unramified groups coming from Shimura data) a closely related conjecture concerning nearby cycles on Rapoport-Zink local models of Shimura varieties, which subsequently played an important role in the study of local models (Conjecture \ref{Kottwitz_conj_RPsi}). It also inspired important developments in the geometric Langlands program, e.g.~\cite{Ga}.  Both versions of Kottwitz' conjectures were later proved in several parahoric cases attached to linear or symplectic groups (see \cite{HN02a,H05}). In a recent breakthrough, Pappas and Zhu \cite{PZ} defined group-theoretic versions of Rapoport-Zink local models for quite general groups, and proved in the unramified situations the analogue of Kottwitz' nearby cycles conjecture for them. These matters are discussed in more detail in $\S\ref{parahoric_TFC_sec}$ and $\S\ref{evidence_TFC_sec}$.

Until around 2009 it was still not clear how one could describe the test functions $\phi_r$ in all deeper level situations. In the spring of 2009 the author and Kottwitz formulated a conjecture predicting test functions $\phi_r$ for general level structure $K_p$. This is the {\bf test function conjecture}, Conjecture \ref{TFC}.  It postulates that we may express $\phi_r$ in terms of a distribution $Z_{V^{E_{j0}}_{-\mu,j}}$ in the Bernstein center $\mathfrak Z({\bf G}(\mathbb Q_{p^r}))$ associated to a certain representation $V^{E_{j0}}_{-\mu, j}$ (defined in (\ref{test_rep_defn})) of the Langlands $L$-group $^L(G_{\mathbb Q_{p^r}})$. Let $d = {\rm dim}(Sh_{K_p})$. Then Conjecture \ref{TFC} asserts that we may take
$$
\phi_r = p^{rd/2} \big(Z_{V^{E_{j0}}_{-\mu, j}} * 1_{K_{p^r}}\big) \in \mathcal Z({\bf G}(\mathbb Q_{p^r}), K_{p^r})
$$
the convolution of the distribution $Z_{V^{E_{j0}}_{-\mu, j}}$ with the characteristic function $1_{K_{p^r}}$ of the subgroup $K_{p^r}$. As shown in $\S\ref{parahoric_TFC_sec}$, this specializes to the Kottwitz conjecture for parahoric subgroups in unramified groups. Conjecture \ref{TFC} was subsequently proved for Drinfeld case Shimura varieties with $\Gamma_1(p)$-level structure by the author and Rapoport \cite{HRa}, and for modular curves and for Drinfeld case Shimura varieties with arbitrary level structure by Scholze \cite{Sch1, Sch2}.  

The distributions in Conjecture \ref{TFC} are best seen as examples of a construction $V \leadsto Z_V$ which attaches to any algebraic representation $V$ of the Langlands dual group $^LG$ (for $G$ any connected reductive group over any $p$-adic field $F$), an element $Z_V$ in  the {\bf stable Bernstein center} of $G/F$.  This chapter elaborates the theory of the stable Bernstein center, following the lead of Vogan \cite{Vo}.  The set of all {\em infinitesimal characters}, i.e. the set of all $\widehat{G}$-conjugacy classes of admissible homomorphisms $\lambda: W_F \rightarrow \, ^LG$ (where $W_F$ is the Weil group of the local field $F$), is given the structure of an affine algebraic variety over $\mathbb C$, and the stable Bernstein center $\mathfrak Z^{\rm st}(G/F)$ is defined to be the ring of regular functions on this variety.\footnote{The difference between our treatment and Vogan's is in the definition of the variety structure on the set of infinitesimal characters.}

In order to describe the precise conjectural relation between the Bernstein and stable Bernstein centers of a $p$-adic group, it was necessary to formulate an enhancement ${\rm LLC+}$ of the usual conjectural local Langlands correspondence ${\rm LLC}$ for that group. Having this relation in hand, the construction $V \leadsto Z_V$ provides a supply of elements in the usual Bernstein center of $G/F$, which we call the {\em geometric Bernstein center}. 

It is for such distributions that one can formulate natural candidates for (Frobenius-twisted) endoscopic transfer, which we illustrate for standard endoscopy in Conjecture \ref{end_trans_conj} and for stable base change in Conjecture \ref{BC_conj}. These form part of the cadre of ``fundamental lemmas'' that one would need to pursue the ``pseudostabilization'' of (\ref{Lef^ss_intro}) and thereby express the cohomology of $Sh_{K_p}$ in terms of automorphic representations along the lines envisioned by Kottwitz \cite{Ko90} but for places with arbitrary bad reduction.  In the compact and non-endoscopic situations, we prove in Theorem \ref{Z_fcn_thm} that the various Conjectures we have made yield an expression of the semi-simple local Hasse-Weil zeta function in terms of semi-simple automorphic $L$-functions. Earlier unconditional results in this direction, for nice PEL situations, were established in \cite{H05}, \cite{HRa}, \cite{Sch1, Sch2}. We stress that the framework here should not be limited to PEL Shimura varieties, but should work more generally.

In recent work of Scholze and Shin \cite{SS}, the connection of the stable Bernstein center with Shimura varieties helped them to give nearly complete descriptions of the cohomology of many compact unitary Shimura varieties with bad reduction at $p$; they consider the ``EL cases'' where ${\bf G}_{\mathbb Q_p}$ is a product of Weil restrictions of general linear groups .  It would be interesting to extend the connection to further examples.

Returning to the original Kottwitz conjecture for parahoric level structure,  Conjecture \ref{TFC} in some sense subsumes it, since it makes sense for arbitrary level structure and without the hypothesis that ${\bf G}_{\mathbb Q_{p^r}}$ be unramified. However, Conjecture \ref{TFC} has the drawback that it assumes ${\rm LLC+}$ for ${\bf G}_{\mathbb Q_{p^r}}$. Further, it is still of interest to formulate the Kottwitz conjecture in the parahoric cases for {\em arbitrary groups}  in a concrete way that can be checked (for example) by explicit comparison of test functions with nearby cycles. In  $\S\ref{parahoric_TFC_sec}$ we formulate the Kottwitz conjecture for general groups, making use of the {\rm transfer homomorphisms}  of the Appendix $\S \ref{appendix_sec}$ to determine test functions on arbitrary groups from test functions on their quasi-split inner forms. The definition of transfer homomorphisms requires a theory of Bernstein isomorphisms more general than what was heretofore available. Therefore, in the Appendix we establish these isomorphisms in complete generality in a nearly self-contained way, and also provide some related structure theory results that should be of independent interest.

Here is an outline of the contents of this chapter. In $\S\ref{bernstein_cntr_rev}$ we review the Bernstein center of a $p$-adic group, including the algebraic structure on the Bernstein variety of all supercuspidal supports. In $\S\ref{LLC_sec}$ we recall the conjectural local Langlands correspondence (LLC), and discuss additional desiderata we need in our elaboration of the stable Bernstein center in $\S \ref{st_bernstein_cntr_sec}$. In particular in $\S\ref{LLC+_subsec}$ we describe the enhancement ${\rm (LLC+)}$ which plays a significant role throughout the chapter, and explain why it holds for general linear groups in Remark \ref{comp_w_ind_rem} and Corollary \ref{invariant_Levi}.  The distributions $Z_V$ are defined in $\S\ref{const_Z_V_subsec}$, and are used to formulate the test function conjecture, Conjecture \ref{TFC}, in $\S \ref{test_fcn_subsec}$. In the rest of $\S\ref{LK_approach_sec}$, we describe the nearby cycles variant Conjecture \ref{RPsi_TFC} along with some of the endoscopic transfer conjectures needed for the ``pseudostabilization'', and assuming these conjectures we prove in Theorem \ref{Z_fcn_thm}  the expected form of the semi-simple local Hasse-Weil zeta functions, in the compact and non-endoscopic cases.  In $\S\ref{parahoric_TFC_sec}$ we give a concrete reformulation of the test function conjecture in parahoric cases, recovering the Kottwitz conjecture and generalizing it to all groups using the material from the Appendix. The purpose of $\S\ref{evidence_TFC_sec}$ and $\S\ref{evidence_transfer_sec}$ is to list some of the available evidence for Conjectures \ref{TFC} and \ref{BC_conj}. In $\S\ref{explicit_sec}$ certain test functions are described very explicitly. Finally, the Appendix gives the treatment of Bernstein isomorphisms and the transfer homomorphisms, alluded to above.

\medskip

{\bf Acknowledgments.}
I am very grateful to Guy Henniart for supplying the proof of Proposition \ref{GL_n_invariance} and for allowing me to include his proof in this chapter.  I warmly thank Timo Richarz for sending me his unpublished article \cite{Ri} and for letting me quote a few of his results in Lemma \ref{Timo}.  I am indebted to Brooks Roberts for proving Conjecture \ref{comp_w_induction} for ${\rm GSp}(4)$ (see Remark \ref{comp_w_ind_rem}).  I thank my colleagues Jeffrey Adams and Niranjan Ramachandran for useful conversations. I also thank Robert Kottwitz for his influence on the ideas in this chapter and for his comments on a preliminary version. I thank Michael Rapoport for many stimulating conversations about test functions over the years. I am grateful to the referee for helpful suggestions and remarks.

\section{Notation}

If $G$ is a connected reductive group over a $p$-adic field $F$, then $\mathfrak R(G)$ will denote the category of smooth representations of $G(F)$ on $\mathbb C$-vector spaces.  We will write $\pi \in \mathfrak R(G)_{\rm irred}$ or $\pi \in \Pi(G/F)$ if $\pi$ is an irreducible object in $\mathfrak R(G)$.

If $G$ as above contains an $F$-rational parabolic subgroup $P$ with $F$-Levi factor $M$ and unipotent radical $N$, define the modulus function $\delta_P : M(F) \rightarrow \mathbb R_{>0}$ by
$$
\delta_P(m) = |{\rm det}({\rm Ad}(m) \, ; \, {\rm Lie}(N(F)))|_F
$$
where $|\cdot|_F$ is the normalized absolute value on $F$. By $\delta^{1/2}_P(m)$ we mean the positive square-root of the positive real number $\delta_P(m)$.  For $\sigma \in \mathfrak R(M)$, we frequently consider the normalized induced representation
$$
i^G_P(\sigma) = {\rm Ind}_{P(F)}^{G(F)}(\delta_P^{1/2}\sigma).
$$

We let $1_S$ denote the characteristic function of a subset $S$ of some ambient space. If $S \subset G$, let $^g S = gSg^{-1}$. If $f$ is a function on $S$, define the function $^gf$ on $^gS$ by $^gf(\cdot) = f(g^{-1}\cdot g)$.

Throughout the chapter we use the Weil form of the local or global Langlands $L$-group $^LG$.

\section{Review of the Bernstein center} \label{bernstein_cntr_rev}

We shall give a brief synopsis of \cite{BD} that is suitable for our purposes.  Other useful references are \cite{Be92}, \cite{Ren}, and \cite{Roc}.

The Bernstein center $\mathfrak Z(G)$ of a $p$-adic group $G$ is defined as the ring of endomorphisms of the identity functor on the category of smooth representations $\mathfrak R(G)$.  It can also be realized as an algebra of certain distributions, as the projective limit of the centers of the finite-level Hecke algebras, and as the ring of regular functions on a certain algebraic variety.  We describe these in turn.

\subsection{Distributions} \label{distribution_sec}

We start by defining the convolution algebra of distributions.

We write $G$ for the rational points of a connected reductive group over a $p$-adic field.  
Thus $G$ is a totally disconnected locally compact Hausdorff topological group.  Further $G$ is unimodular; fix a Haar measure $dx$. Let $C^\infty_c(G)$ denote the set of $\mathbb C$-valued compactly supported and locally constant functions on $G$.   Let $\mathcal H(G, \, dx) = (C_c^\infty(G), *_{dx})$, the convolution product $*_{dx}$ being defined using the Haar measure $dx$.  

A {\em distribution} is a $\mathbb C$-linear map $D: C^\infty_c(G) \rightarrow \mathbb C$.  For each $f \in C^\infty(G)$ we define $\breve{f} \in C^\infty(G)$ by $\breve{f}(x) = f(x^{-1})$ for $x \in G$.  We set
$$
\breve{D}(f) := D(\breve{f}).
$$
We can convolve a distribution $D$ with a function $f \in C^\infty_c(G)$ and get a new function $D * f \in C^\infty(G)$, by setting 
$$
(D * f)(g) = \breve{D}(g \cdot f),
$$
where $(g \cdot f)(x) := f(xg)$.  The function $D *f $ does not automatically have compact support. We say $D$ is {\em essentially compact} provided that $D * f \in C^\infty_c(G)$ for every $f \in C^\infty_c(G)$.  

We define $^gf$ by $^g f(x) := f(g^{-1}xg)$ for $x,g \in G$.  We say that $D$ is $G$-{\em invariant} if $D(\,^gf) = D(f)$ for all $g, f$. The set $\mathcal D(G)^G_{\rm ec}$ of $G$-invariant essentially compact distributions on $C^\infty_c(G)$ turns out to have the structure of a commutative $\mathbb C$-algebra.  We describe next the convolution product and its properties.

Given distributions $D_1, D_2$ with $D_2$ essentially compact, we define another distribution $D_1 * D_2$ by
$$
(D_1 * D_2)(f) = \breve{D}_1( D_2 * \breve{f}).
$$

\begin{lemma} \label{D_properties}
The convolution products $D *f$ and $D_1 * D_2$ have the following properties:
\begin{enumerate}
\item[(a)] For $\phi \in C^\infty_c(G)$ let $D_{\phi\, dx}$ (sometimes abbreviated $\phi \, dx$) denote the essentially compact distribution given by $f \mapsto \int_G f(x)\phi(x) \,dx$.  Then $D_{\phi \, dx} * f = \phi *_{dx} f$.
\item[(b)] If $f \in C^\infty_c(G)$, then $D* (f\,dx) = (D * f)\, dx$.  In particular, $D_{\phi_1 \, dx} * D_{\phi_2 \, dx} = D_{\phi_1 *_{dx} \phi_2 \, dx}$.
\item[(c)] If $D_2$ is essentially compact, then $(D_1 * D_2) * f = D_1 *(D_2 * f)$.  If $D_1$ and $D_2$ are each essentially compact, so is $D_1 * D_2$.
\item[(d)] If $D_2$ and $D_3$ are essentially compact, then $(D_1 * D_2) * D_3 = D_1 * (D_2 * D_3)$.
\item[(e)] An essentially compact distribution $D$ is $G$-invariant if and only if $D * (1_{Ug}\, dx) = (1_{Ug}\, dx) *D$ for all compact open subgroups $U \subset G$ and $g \in G$.  Here $1_{Ug}$ is the characteristic function of the set $Ug$.  
\item[(f)] If $D$ is essentially compact and $f_1, f_2 \in C^\infty_c(G)$, then 
$D * (f_1 *_{dx} f_2) = (D * f_1) *_{dx} f_2$.
\end{enumerate}
\end{lemma}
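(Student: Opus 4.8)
The plan is to establish the six statements more or less in the order given, since each is a routine unwinding of the definitions, with the later parts leaning on the earlier ones. First, for (a), I would simply compute $(D_{\phi\,dx} * f)(g) = \breve{D}_{\phi\,dx}(g\cdot f) = D_{\phi\,dx}(\widecheck{g\cdot f})$. Unwinding $\widecheck{g\cdot f}(x) = f(x^{-1}g)$ and pairing against $\phi\,dx$ gives $\int_G f(x^{-1}g)\phi(x)\,dx$, which after the substitution $x \mapsto gx^{-1}$ (using unimodularity of the fixed Haar measure) is exactly $(\phi *_{dx} f)(g)$. For (b), I would check the two sides agree when paired against an arbitrary $\psi \in C^\infty_c(G)$: by definition $\big(D * (f\,dx)\big)(\psi)$ is computed via the convolution $D_{f\,dx} * \breve{\psi}$ evaluated appropriately, and one reduces via Fubini (all integrals have compact support once $D$ is applied to a fixed test function) to $\int_G (D*f)(g)\psi(g)\,dg = \big((D*f)\,dx\big)(\psi)$. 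The special case $D_{\phi_1\,dx} * D_{\phi_2\,dx} = D_{\phi_1 *_{dx} \phi_2\,dx}$ then follows by combining with (a).

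For (c), the associativity $(D_1 * D_2) * f = D_1 * (D_2 * f)$ again comes from writing both sides as $\breve{D}_1$ applied to something built from $D_2$, $g$, and $f$; the point is that $D_2$ being essentially compact guarantees $D_2 * \breve{f} \in C^\infty_c(G)$ so that $D_1 * (D_2 * f)$ is well-defined, and the identity is a manipulation of the definition of convolving a distribution with a function. That $D_1 * D_2$ is again essentially compact when both factors are follows immediately: $(D_1*D_2)*f = D_1*(D_2*f)$ lands in $C^\infty_c(G)$ because $D_2 * f$ does and then $D_1 *(\cdot)$ preserves compact support. Part (d) is the analogous associativity at the level of three distributions, proved by pairing against a test function $f$ and repeatedly applying (c) to reduce the distribution-distribution convolutions to distribution-function convolutions, where associativity is already known. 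Part (f), the compatibility $D*(f_1 *_{dx} f_2) = (D*f_1)*_{dx} f_2$, follows similarly: using (a) to write $f_1 *_{dx} f_2 = D_{f_1\,dx} * f_2$ and then (c) gives $D * (D_{f_1\,dx} * f_2) = \ldots$; alternatively one checks it directly from the integral formula for $D*f$ and the translation-equivariance $(g\cdot(f_1 *_{dx} f_2)) = f_1 *_{dx}(g\cdot f_2)$, then pulls the $D$ inside.

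The part that deserves the most care, and the main obstacle, is (e): the characterization of $G$-invariance of an essentially compact $D$ by commutation with all $D_{1_{Ug}\,dx}$. One direction is clean: if $D$ is $G$-invariant, then for any compact open $U$ and $g$, using (c) and the behaviour of convolution under conjugation one shows $D * (1_{Ug}\,dx)$ and $(1_{Ug}\,dx) * D$ agree when convolved with any $f \in C^\infty_c(G)$, hence are equal as distributions (here one uses that an essentially compact distribution is determined by $D * f$ for all $f$, since $D(f) = \breve{D}(\breve f)$ and $\breve{D}(\breve f) = (D*\breve f)(e)$ up to the standard identification — this small lemma should be isolated first). For the converse, the key input is that the functions $1_{Ug}$, as $U$ ranges over compact open subgroups and $g$ over $G$, span a dense subspace of $C^\infty_c(G)$ in the relevant sense, and that conjugation-invariance of $D$ can be detected on such functions; concretely, ${}^g f$ can be approximated/decomposed using left translates of characteristic functions of small subgroups, and commutation of $D$ with the corresponding $D_{1_{Ug}\,dx}$ translates exactly into $D({}^g f) = D(f)$. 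I would spell this reduction out carefully, since it is the one place where a genuine (if elementary) approximation argument is needed rather than a formal manipulation of definitions; everything else is bookkeeping with the integral formulas and unimodularity.
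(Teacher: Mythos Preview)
The paper does not give a proof of this lemma at all; it is stated and immediately followed by the corollary, so the six properties are treated as routine verifications left to the reader. Your proposal is therefore more detailed than anything in the paper, and your overall strategy---unwind the definitions for (a) and (b), then deduce (c), (d), (f) formally, and handle (e) separately---is correct.

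One small correction: in (a) the substitution $x \mapsto gx^{-1}$ you mention is unnecessary, since $\int_G f(x^{-1}g)\phi(x)\,dx$ is already the standard formula for $(\phi *_{dx} f)(g)$.

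For (e), your sketch of the converse is a bit loose. You speak of the $1_{Ug}$ spanning a ``dense subspace'' and of an ``approximation argument'', but in fact every $f \in C^\infty_c(G)$ is an \emph{exact} finite linear combination of such characteristic functions, and the key identities hold on the nose for $U$ small enough. Concretely: commutation of $D$ with all $1_{Ug}\,dx$ gives, after applying (c) to both sides against a test function $f$, the identity $D * (\phi *_{dx} f) = \phi *_{dx} (D * f)$ for all $\phi, f$. Taking $\phi = {\rm vol}(U)^{-1} 1_{Uh}$ with $U$ small enough that both $f$ and $D*f$ are left $h^{-1}Uh$-invariant (possible since both lie in $C^\infty_c(G)$), this becomes $D * (L_h f) = L_h(D*f)$. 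Evaluating at $g = h$ and unwinding via the formula $(D*f)(g) = \breve{D}(g\cdot f)$ yields $\breve{D}({}^h f) = \breve{D}(f)$, hence $D({}^h\psi) = D(\psi)$ for all $\psi$ and $h$. So no limiting process is needed---just choose $U$ adapted to the given $f$ and $h$.
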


\begin{cor} The pair $(\mathcal D(G)^G_{\rm ec}, *)$ is a commutative and associative $\mathbb C$-algebra.
\end{cor}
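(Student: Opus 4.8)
The plan is to deduce the corollary directly from Lemma~\ref{D_properties}, since that lemma already packages the essential facts; what remains is to check that $(\mathcal D(G)^G_{\rm ec}, *)$ is closed under $*$, that $*$ is associative and commutative on this set, and that it has a unit. First I would check closure: if $D_1, D_2 \in \mathcal D(G)^G_{\rm ec}$, then $D_1 * D_2$ is essentially compact by part~(c), so it remains to see it is $G$-invariant. For this I would use the characterization in part~(e): one must show $(D_1 * D_2) * (1_{Ug}\,dx) = (1_{Ug}\,dx) * (D_1 * D_2)$ for all compact open $U$ and all $g$. Using associativity (parts (c) and (d)) one rewrites both sides by moving the factor $1_{Ug}\,dx$ past $D_2$ and then past $D_1$, invoking the invariance of each $D_i$ via (e) at each step; the two sides then visibly agree. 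This is the part that requires the most care, because one has to be attentive to which distributions are essentially compact at each stage so that the associativity statements (c), (d), (f) actually apply — that bookkeeping is the main obstacle, though it is routine once set up.

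Next I would establish associativity of $*$ on $\mathcal D(G)^G_{\rm ec}$: this is exactly part~(d) of the lemma, valid whenever the two rightmost factors are essentially compact, which holds here since all three are in $\mathcal D(G)^G_{\rm ec}$. For commutativity I would again use the criterion (e). Given $D_1, D_2$ essentially compact and $G$-invariant, the idea is to test $D_1 * D_2$ and $D_2 * D_1$ against functions of the form $1_{Ug}\,dx$, or more robustly to reduce to the dense subalgebra of distributions $D_{\phi\,dx}$: one approximates, in the sense that $D * (1_U\,dx)$ for small $U$ recovers $D$ in the appropriate topology, and on the $D_{\phi\,dx}$ one has $D_{\phi_1\,dx} * D_{\phi_2\,dx} = D_{\phi_1 *_{dx}\phi_2\,dx}$ by part~(b); combined with the $G$-invariance this forces $\phi_1 *_{dx} \phi_2$ and $\phi_2 *_{dx} \phi_1$ to define the same invariant distribution, giving commutativity of $*$ on the $G$-invariant essentially compact distributions. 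Alternatively, and more cleanly, one observes that for $G$-invariant $D_1$ the operator $f \mapsto D_1 * f$ commutes with all left and right translations (by (e) and (f)), hence commutes with $f \mapsto D_2 * f$ for $G$-invariant $D_2$; applying both operators to an approximate identity and passing to the limit yields $D_1 * D_2 = D_2 * D_1$.

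Finally I would note the unit: the distribution $1_U\,dx$ for $U$ compact open acts as an idempotent, and while there is no single two-sided unit in $C^\infty_c(G)$, the algebra $(\mathcal D(G)^G_{\rm ec}, *)$ does contain the identity distribution $\delta_e$ (evaluation at the identity), which is $G$-invariant and satisfies $\delta_e * f = f$ for all $f \in C^\infty_c(G)$, hence $\delta_e * D = D * \delta_e = D$; this makes the algebra unital, though the corollary as stated only claims commutativity and associativity. Thus the whole proof is a matter of extracting the right combination of items (c), (d), (e), (f) from Lemma~\ref{D_properties}, with the closure-under-invariance step being where the real (if modest) work lies.
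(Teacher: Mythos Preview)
Your approach --- deduce everything from Lemma~\ref{D_properties} --- is exactly what the paper intends; the corollary is stated there without proof, as an immediate consequence. Your arguments for closure and for associativity are correct.

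The commutativity argument, however, has a gap. Your first sketch is not coherent: the functions $\phi_1,\phi_2\in C_c^\infty(G)$ are not $G$-invariant, so nothing forces $\phi_1*_{dx}\phi_2$ and $\phi_2*_{dx}\phi_1$ to agree. In your second sketch, the step ``commutes with all left and right translations, hence commutes with $f\mapsto D_2*f$'' is unjustified: a linear endomorphism of $C_c^\infty(G)$ commuting with translations need not commute with convolution by an arbitrary distribution, and the appeal to an ``approximate identity and passing to the limit'' is not how one recovers a distribution here. The clean fix uses only (b), (d), (e). Since every $\phi\in C_c^\infty(G)$ is a finite $\mathbb C$-linear combination of functions $1_{Ug}$, part~(e) extends by linearity to $D*(\phi\,dx)=(\phi\,dx)*D$ for any $G$-invariant essentially compact $D$ and any $\phi\in C_c^\infty(G)$. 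Then for each compact open $U$,
\begin{align*}
(D_1*D_2)*(1_U\,dx)
&= D_1*\big((D_2*1_U)\,dx\big)
= \big((D_2*1_U)\,dx\big)*D_1\\
&= \big(D_2*(1_U\,dx)\big)*D_1
= D_2*\big((1_U\,dx)*D_1\big)\\
&= D_2*\big(D_1*(1_U\,dx)\big)
= (D_2*D_1)*(1_U\,dx),
\end{align*}
using (b), (d), and the extended (e). By (b) this gives $(D_1*D_2)*1_U=(D_2*D_1)*1_U$ as functions; evaluating at $g$ yields $(D_1*D_2)(1_{gU})=(D_2*D_1)(1_{gU})$, and since the $1_{gU}$ span $C_c^\infty(G)$ we conclude $D_1*D_2=D_2*D_1$. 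No limiting argument is needed.
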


\subsection{The projective limit}

Let $J \subset G$ range over the set of all compact open subgroups of $G$.   Let $\mathcal H(G)$ denote the convolution algebra of compactly-supported measures on $G$, and let $\mathcal H_J(G) \subset \mathcal H(G)$ denote the ring of $J$-bi-invariant compactly-supported measures, with center $\mathcal Z_J(G)$.  The ring $\mathcal H_J(G)$ has as unit $e_J = 1_J \, dx_J$, where $1_J$ is the characteristic function of $J$ and $dx_J$ is the Haar measure with ${\rm vol}_{dx_J}(J) = 1$.  Note that if $J' \subset J$, then $dx_{J'} = [J : J'] \, dx_J$.

Let $\mathcal Z(G, J)$ denote the center of the algebra $\mathcal H(G, J)$ consisting of compactly-supported $J$-bi-invariant functions on $G$ with product $*_{dx_J}$.  There is an isomorphism $\mathcal Z(G,J) ~\widetilde{\rightarrow} ~ \mathcal Z_J(G)$ by $z_J \mapsto z_J dx_J$.  

For $J' \subset J$ there is an algebra map $\mathcal Z(G,J') \rightarrow \mathcal Z(G,J)$, given by $z_{J'} \mapsto z_{J'} *_{dx_{J'}} 1_J$.  Equivalently, we have $\mathcal Z_{J'}(G) \rightarrow \mathcal Z_J(G)$ given by $z_{J'} dx_{J'} \mapsto z_{J'} dx_{J'} * (1_J dx_J)$.  

We can view $\mathfrak R(G)$ as the category of non-degenerate smooth $\mathcal H(G)$-modules, and 
any element of $\varprojlim \mathcal Z(G,J)$ acts on objects in $\mathfrak R(G)$ in a way that commutes with the action of $\mathcal H(G)$.  Hence there is a canonical homomorphism $\varprojlim \mathcal Z(G, J) \rightarrow \mathfrak Z(G)$. 

There is also a canonical homomorphism 
\begin{equation*}
\varprojlim Z(G,J) \rightarrow \mathcal D(G)^G_{\rm ec}
\end{equation*} 
since $Z = (z_J)_J \in \varprojlim \mathcal Z(G, J)$ gives a distribution on $f \in C^\infty_c(G)$ as follows: choose $J \subset G$ sufficiently small that $f$ is right-$J$-invariant, and set
\begin{equation} \label{Z_dist_defn}
Z(f) = \int_G z_J(x) \, f(x) \, dx_J.
\end{equation}
This is independent of the choice of $J$. Note that for $f \in \mathcal H(G, J)$ we have $Z * f = z_J *_{dx_J} f$, and in particular $Z * 1_J = z_J$, for all $J$. To see $Z = (z_J)_J$ as a distribution is really $G$-invariant, note that for $f \in \mathcal H(G, J)$, the identities $Z * f = z_J *_{dx_J} f = f *_{dx_J} z_J$ imply that $Z *(fdx) = (fdx) *Z$.  This in turn shows that $Z$ is $G$-invariant by Lemma \ref{D_properties}(e). 
 
Now $\S1.4-1.7$ of \cite{BD} show that the above maps yield isomorphisms 
\begin{equation} \label{Bern_cntr_as_distr}
\mathfrak Z(G) ~ \widetilde{\leftarrow} ~ \varprojlim \mathcal Z(G, J) ~ \widetilde{\rightarrow} ~ \mathcal D(G)^G_{\rm ec}.
\end{equation}

\begin{cor} \label{Hecke_action_cor}
Let $Z \in \mathfrak Z(G)$, and suppose a finite-length representation $\pi \in \mathfrak R(G)$ has the property that $Z$ acts on $\pi$ by a scalar $Z(\pi)$. 
\begin{enumerate}
\item[(a)] For every compact open subgroup $J \subset G$, $Z * 1_J$ acts on the left on $\pi^J$ by the scalar $Z(\pi)$.
\item[(b)] For every $f \in \mathcal H(G)$, ${\rm tr}(Z*f \, |\, \pi) = Z(\pi)\, {\rm tr}(f \, | \, \pi)$.
\end{enumerate}
\end{cor}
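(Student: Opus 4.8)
The plan is to deduce both statements directly from the defining property of the Bernstein center and the isomorphisms in (\ref{Bern_cntr_as_distr}), together with the relation $Z * 1_J = z_J$ recorded just above. For part (a): since $Z \in \mathfrak Z(G)$ is identified, via the isomorphism $\varprojlim \mathcal Z(G,J) \,\widetilde{\rightarrow}\, \mathfrak Z(G)$, with a compatible system $(z_J)_J$ where each $z_J \in \mathcal Z(G,J)$, and since the action of $Z$ on any $\pi \in \mathfrak R(G)$ is by construction the one induced by the $\mathcal H(G)$-module structure, I would first recall that $\mathcal H(G,J)$ acts on $\pi^J$ and that $z_J dx_J \in \mathcal Z_J(G)$ is central in $\mathcal H_J(G)$, hence acts on $\pi^J$ by an $\mathcal H(G,J)$-module endomorphism. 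Because $\pi$ has finite length, $\pi^J$ is a finite-length (indeed finite-dimensional) $\mathcal H(G,J)$-module; if $\pi$ is moreover irreducible then $\pi^J$ is either $0$ or simple, and in general one argues on a composition series. The hypothesis that $Z$ acts on $\pi$ by the scalar $Z(\pi)$ forces $z_J dx_J = Z * 1_J$ to act on each irreducible $\mathcal H(G,J)$-subquotient of $\pi^J$ by $Z(\pi)$; I would then note that the scalar by which $Z*1_J$ acts is detected already on $\pi^J$ for $J$ small — concretely, for $v \in \pi^J$ one has $(Z * 1_J) \cdot v = Z \cdot v = Z(\pi) v$ using $1_J \cdot v = v$ and the compatibility of the two descriptions of the $Z$-action. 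This yields (a) on the nose.

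For part (b), I would fix $f \in \mathcal H(G)$ and choose a compact open subgroup $J \subset G$ small enough that $f \in \mathcal H(G,J)$ (possible since $f$ is locally constant with compact support). Then $Z * f = z_J *_{dx_J} f$ by the identity recorded after (\ref{Z_dist_defn}). Both $f$ and $Z*f$ act on $\pi$ through their action on $\pi^J$, so that ${\rm tr}(Z*f \mid \pi) = {\rm tr}(Z*f \mid \pi^J)$ and ${\rm tr}(f \mid \pi) = {\rm tr}(f \mid \pi^J)$, the latter being legitimate since $\pi$ has finite length and hence $\pi^J$ is finite-dimensional. On $\pi^J$, the operator attached to $Z*f = z_J *_{dx_J} f$ is the composite of the operator attached to $Z*1_J = z_J$ — which by part (a) is the scalar $Z(\pi)$ — with the operator attached to $f$; more precisely $Z*f$ acts as $(Z*1_J) \circ f$ on $\pi^J$ because $e_J$ acts as the identity on $\pi^J$ and $z_J *_{dx_J} f = z_J *_{dx_J} e_J *_{dx_J} f$. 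Taking traces over the finite-dimensional space $\pi^J$ and pulling out the scalar $Z(\pi)$ gives ${\rm tr}(Z*f \mid \pi) = Z(\pi) \, {\rm tr}(f \mid \pi)$.

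The only genuinely delicate point is the bookkeeping in the first paragraph: one must be careful that the scalar ``$Z(\pi)$'' by which $Z$ acts on $\pi$ is literally the same scalar that governs the action of each $z_J = Z * 1_J$ on $\pi^J$, rather than merely an eigenvalue among others. This is where finite length is used — it guarantees $\pi^J$ is finite-dimensional and that one may reduce to irreducible subquotients, on each of which Schur's lemma for the algebra $\mathcal H(G,J)$ pins down the scalar — and it is also implicitly where the equivalence $\mathfrak R(G) \simeq \{\text{non-degenerate } \mathcal H(G)\text{-modules}\}$ and the compatibility of the three realizations in (\ref{Bern_cntr_as_distr}) do their work. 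Everything else is formal manipulation with the properties in Lemma \ref{D_properties}, especially the associativity (c) and (f), which license rewriting $Z * f$ as $(Z*1_J) *_{dx_J} f$ and moving the scalar past the trace.
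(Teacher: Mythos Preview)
The paper states this corollary without proof, treating it as an immediate consequence of the preceding discussion around (\ref{Bern_cntr_as_distr}) and the identity $Z * 1_J = z_J$. Your proposal is correct and supplies exactly the details the paper omits: the direct computation $(Z*1_J)\cdot v = z_J \cdot v = Z \cdot v = Z(\pi)v$ for $v \in \pi^J$, and then for (b) the reduction to $\pi^J$ via a choice of small $J$ together with $Z*f = (Z*1_J) *_{dx_J} f$.

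One remark: your first paragraph's digression through composition series and Schur's lemma for $\mathcal H(G,J)$ is unnecessary. The hypothesis is that $Z$ acts on \emph{all} of $\pi$ by the scalar $Z(\pi)$, not merely on each irreducible subquotient; so once you have identified the action of $Z*1_J$ on $\pi^J$ with the restriction of the action of $Z$, the scalar is pinned down immediately with no appeal to simplicity or composition series. The finite-length hypothesis is genuinely needed only to guarantee $\dim \pi^J < \infty$ so that the traces in (b) make sense (finite-length smooth representations of $G$ being admissible). Your ``concretely'' sentence at the end of the first paragraph is the entire argument for (a); the material before it can be dropped.
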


\subsection{Regular functions on the variety of supercuspidal supports}

\subsubsection{Variety structure on set of supercuspidal supports}
We describe the variety of supercuspidal supports in some detail. Also we will describe it in a slightly unconventional way, in that we use the Kottwitz homomorphism to parametrize the (weakly) unramified characters on $G(F)$. This will be useful later on, when we compare the Bernstein center with the stable Bernstein center.

Let us recall the basic facts on the Kottwitz homomorphism \cite{Ko97}.  Let $L$ be the completion $\widehat{F}^{\rm un}$ of the maximal unramified extension $F^{\rm un}$ in some algebraic closure of $F$, and let $\bar{L} \supset \bar{F}$ denote an algebraic closure of $L$. Let $I = {\rm Gal}(\bar{L}/L) \cong {\rm Gal}(\bar{F}/F^{\rm un})$ denote the inertia group. Let $\Phi \in {\rm Aut}(L/F)$ be the inverse of the Frobenius automorphism $\sigma$. In \cite{Ko97}~is defined a functorial surjective homomorphism for any connected reductive $F$-group $H$
\begin{equation} \label{K-hom}
\kappa_H: H(L) \twoheadrightarrow X^*(Z(\widehat{H}))_I,
\end{equation}
where $\widehat{H} = \widehat{H}(\mathbb C)$ denotes the Langlands dual group of $H$.  
By \cite[$\S7$]{Ko97}, it remains surjective on taking $\Phi$-fixed points:
$$
\kappa_H: H(F) \twoheadrightarrow X^*(Z(\widehat{H}))_I^\Phi.
$$
We define
\begin{align*}
H(L)_1 &:= {\rm ker}(\kappa_H) \\
H(F)_1 &:= {\rm ker}(\kappa_H) \cap H(F).
\end{align*}
We also define $H(F)^1 \supseteq H(F)_1$ to be the kernel of the map $H(F) \rightarrow X^*(Z(\widehat{H}))_I^\Phi/tors$ derived from $\kappa_H$.  

If $H$ is anisotropic modulo center, then $H(F)^1$ is the unique maximal compact subgroup of $H(F)$ and $H(F)_1$ is the unique parahoric subgroup of $H(F)$ (see e.g. \cite{HRo}).  Sometimes the two subgroups coincide: for example if $H$ is any {\em unramified} $F$-torus, then $H(F)_1 = H(F)^1$.

We define 
$$X(H) := {\rm Hom}_{\rm grp}(H(F)/H(F)^1, \mathbb C^\times),$$ 
the group of {\em unramified characters} on $H(F)$.   This definition of $X(H)$ agrees with the usual one as in \cite{BD}. We define 
$$
X^{\rm w}(H) := {\rm Hom}_{\rm grp}(H(F)/H(F)_1, \mathbb C^\times)$$ 
and call it the group of {\em weakly unramified characters} on $H(F)$.

We follow the notation of \cite{BK} in discussing supercuspidal supports and inertial equivalence classes.  As indicated earlier in $\S3.1$, for convenience we will sometimes write $G$ when we mean the group $G(F)$ of $F$-points of an $F$-group $G$. 

A {\em cuspidal pair} $(M,\sigma)$ consists of an $F$-Levi subgroup $M \subseteq G$ and a supercuspidal representation $\sigma$ on $M$.  The $G$-conjugacy class of the cuspidal pair $(M,\sigma)$ will be denoted $(M,\sigma)_G$.  We define the inertial equivalence classes: we write $(M,\sigma) \sim (L, \tau)$ if there exists $g \in G$ such that $gMg^{-1} = L$ and $^g\sigma = \tau \otimes \chi$ for some $\chi \in X(L)$.  Let $[M,\sigma]_G$ denote the equivalence class of $(M,\sigma)_G$.

If $\pi \in \mathfrak R(G)_{\rm irred}$, then the supercuspidal support of $\pi$ is the unique element $(M, \sigma)_G$ such that $\pi$ is a subquotient of the induced representation $i^G_P(\sigma)$, where $P$ is any $F$-parabolic subgroup having $M$ as a Levi subgroup.  Let $\mathfrak X_G$ denote the set of all supercuspidal supports $(M,\sigma)_G$.  Denote by the symbol $\mathfrak s = [M,\sigma]_G$ a typical inertial class.  

For an inertial class $\mathfrak s = [M,\sigma]_G$, define the set $\mathfrak X_\mathfrak s = \{ (L,\tau)_G ~ | ~ (L,\tau) \sim (M,\sigma) \}$. We have
$$
\mathfrak X_G = \coprod_{\mathfrak s} \mathfrak X_{\mathfrak s}.
$$
We shall see below that $\mathfrak X_G$ has a natural structure of an algebraic variety, and the Bernstein components $\mathfrak X_\mathfrak s$ form the connected components of that variety.

First we need to recall the variety structure on $X(M)$. As is well-known, $X(M)$ has the structure of a complex torus.  To describe this, we first consider the weakly unramified character group $X^{\rm w}(M)$.  This is a diagonalizable group over $\mathbb C$.  In fact, by Kottwitz we have an isomorphism
$$
M(F)/M(F)_1 \cong X^*(Z(\widehat{M})^I)^{\Phi} = X^*((Z(\widehat{M})^I)_\Phi).
$$
This means that 
$$X^{\rm w}(M)(\mathbb C) = {\rm Hom}_{\rm grp}(M(F)/M(F)_1, \mathbb C^\times) = {\rm Hom}_{\rm alg}(\mathbb C[X^*((Z(\widehat{M})^I)_\Phi)], \mathbb C),$$
in other words,
\begin{equation} \label{weakly_unram_chars}
X^{\rm w}(M) = (Z(\widehat{M})^I)_\Phi.
\end{equation}
Another way to see (\ref{weakly_unram_chars}) is to use Langlands' duality for quasicharacters, which is an isomorphism
$$
{\rm Hom}_{\rm cont}(M(F), \mathbb C^\times) \cong H^1(W_F, Z(\widehat{M})).
$$
(Here $W_F$ is the Weil group of $F$; see $\S\ref{LLC_sec}$.) Under this isomorphism, $X^{\rm w}(M)$ is identified with the image of the inflation map $H^1(W_F/I, Z(\widehat{M})^I) \rightarrow H^1(W_F, Z(\widehat{M}))$, that is, with $H^1(\langle \Phi \rangle, Z(\widehat{M})^I) = (Z(\widehat{M})^I)_\Phi$. This last identification is given by the map sending a cocycle $\varphi^{cocyc} \in Z^1(\langle \Phi \rangle, Z(\widehat{M})^I)$ to $\varphi^{cocyc}(\Phi) \in (Z(\widehat{M})^I)_\Phi$. The two ways of identifying $X^{\rm w}(M)$ with $(Z(\widehat{M})^I)_\Phi$, that via the Kottwitz isomorphism and that via Langlands duality, agree.\footnote{We normalize the Kottwitz homomorphism as in \cite{Ko97}, so that $\kappa_{\mathbb G_m}: L^\times \rightarrow \mathbb Z$ is the valuation map sending a uniformizer $\varpi$ to 1.  Then the claimed agreement holds provided we normalize the Langlands duality for tori as in (\ref{Ldual_norm}).} For a more general result which implies this agreement, see \cite[Prop.~4.5.2]{Kal}.

Now we can apply the same argument to identify the torus $X(M)$.  We first get
$$
X(M) (\mathbb C)= {\rm Hom}_{\rm grp}(M(F)/M(F)^1, \mathbb C^\times).$$
 Since $M(F)/M(F)^1$ is the quotient of $M(F)/M(F)_1$ by its torsion, it follows that 
$$X(M) =  ((Z(\widehat{M})^I)_\Phi)^\circ =: (Z(\widehat{M})^I)^\circ_\Phi,$$
the neutral component of $X^{\rm w}(M)$.   

%By the previous paragraph, we may consider lifts of $\chi \in X^t(M)$ to elements of %$Z(\widehat{M})^I$.  It does not seem obvious how to characterize the subset $X(M) \subseteq %X^t(M)$ purely in terms of these lifts.  More precisely, it {\em seems not to be} simply those %possessing a lift to the neutral component of $Z(\widehat{M})^I$, because %$(Z(\widehat{M})^I)_\Phi)^\circ$ is not necessarily the same as $((Z(\widehat{M})^I)^\circ)_\Phi$.

Now we turn to the variety structure on $\mathfrak X_{\mathfrak s}$.  We fix a cuspidal pair $(M,\sigma)$ representing $\mathfrak s_M = [M,\sigma]_M$ and $\mathfrak s = [M,\sigma]_G$.  
Let the corresponding Bernstein components be denoted $\mathfrak X_\mathfrak s$ and $\mathfrak X_{\mathfrak s_M}$. As sets, we have $\mathfrak X_\mathfrak s = \{ (M,\sigma \chi)_G \}$ and $\mathfrak X_{\mathfrak s_M} = \{ (M, \sigma \chi)_M \}$, where $\chi \in X(M)$. 
The torus $X(M)$ acts on $\mathfrak X_{\mathfrak s_M}$ by $\chi \mapsto (M, \sigma \chi)_M$.  The isotropy group is ${\rm stab}_\sigma := \{ \chi ~ | ~ \sigma \cong \sigma \chi \}$.  Let $Z(M)^\circ$ denote the neutral component of the center of $M$. Then ${\rm stab}_\sigma$ belongs to the kernel of the map $X(M) \rightarrow X(Z(M)^\circ)$, $\chi \mapsto \chi|_{Z(M)^\circ(F)}$, hence ${\rm stab}_\sigma$ is a {\em finite} subgroup of $X(M)$.  Thus $\mathfrak X_{\mathfrak s_M}$ is a torsor under the torus $X(M)/{\rm stab}_\sigma$, and thus has the structure of an affine variety over $\mathbb C$.

There is a surjective map
\begin{align*}
\mathfrak X_{\mathfrak s_M} &\rightarrow \mathfrak X_\mathfrak s \\
(M,\sigma \chi)_M & \mapsto (M, \sigma \chi)_G, \,\,\, \chi \in X(M)/{\rm stab}_\sigma.
\end{align*}
Let $N_G([M,\sigma]_M) := \{ n \in N_G(M) ~ | ~ \, ^n\sigma \cong \sigma \chi \,\, \mbox{for some $\chi \in X(M)$} \}$. Then the fibers of $\mathfrak X_{\mathfrak s_M} \rightarrow \mathfrak X_{\mathfrak s}$ are precisely the orbits the finite group $W^G_{[M,\sigma]_M}:= N_G([M,\sigma]_G)/M$ on $\mathfrak X_{\mathfrak s_M}$. Via
$$
\mathfrak X_\mathfrak s  = W^G_{[M,\sigma]_M} \backslash \mathfrak X_{\mathfrak s_M}
$$
the set $\mathfrak X_\mathfrak s$ acquires the structure of an irreducible affine variety over $\mathbb C$.  Up to isomorphism, this structure does not depend on the choice of the cuspidal pair $(M,\sigma)$.

\subsubsection{The center as regular functions on $\mathfrak X_G$}

An element $z \in \mathfrak Z(G)$ determines a regular function $\mathfrak X$: for a point $(M, \sigma)_G \in \mathfrak X_\mathfrak s$, $z$ acts on $i^G_P(\sigma)$ by a scalar $z(\sigma)$ and the function $(M,\sigma)_G \mapsto z(\sigma)$ is a regular function on $\mathfrak X_G$.  This is the content of \cite[Prop.~2.11]{BD}.  In fact we have by \cite[Thm.~2.13]{BD} an isomorphism
\begin{equation} \label{Bern_cntr_reg_fcns}
\mathfrak Z(G) ~ \widetilde{\rightarrow} ~ \mathbb C[\mathfrak X_G].
\end{equation}

Together with (\ref{Bern_cntr_as_distr}) this gives all the equivalent ways of realizing the Bernstein center of $G$.

\section{The local Langlands correspondence} \label{LLC_sec}

We need to recall the general form of the conjectural local Langlands correspondence $({\rm LLC})$ for a connected reductive group $G$ over a $p$-adic field $F$. Let $\bar{F}$ denote an algebraic closure of $F$. Let $W_F \subset {\rm Gal}(\bar{F}/F) =: \Gamma_F$ be the {\em Weil group of $F$}.  It fits into an exact sequence of topological groups 
$$
\xymatrix{
1 \ar[r] &  I_F \ar[r] & W_F \ar[r]^{\rm val} & \mathbb Z \ar[r] & 1,}
$$
where $I_F$ is the inertia subgroup of $\Gamma_F$ and where, if $\Phi \in W_F$ is a geometric Frobenius element (the inverse of an arithmetic Frobenius element), then ${\rm val}(\Phi) = -1$. Here $I_F$ has its profinite topology and $\mathbb Z$ has the discrete topology. Sometimes we write $I$ for $I_F$ in what follows.  

Recall the {\em Weil-Deligne} group is $W'_F := W_F \ltimes \mathbb C$, where 
$wzw^{-1} = |w|z$ for $w \in W_F$ and $z \in \mathbb C$, with $|w| := q_F^{{\rm val}(w)}$ for $q_F = \#(\mathcal O_F/(\varpi_F))$, the cardinality of the residue field of $F$.

A Langlands parameter is an {\em admissible} homomorphism $\varphi: W'_F \rightarrow \, ^LG$, where $^LG := \widehat{G} \rtimes W_F$.  This means:
\begin{itemize}
\item $\varphi$ is compatible with the projections $W'_F \rightarrow W_F$ and $\nu: \, ^LG \rightarrow W_F$;
\item $\varphi$ is continuous and respects Jordan decompositions of elements in $W'_F$ and $^LG$ (cf. \cite[$\S8$]{Bo79} for the definition of Jordan decomposition in the group $W_F \ltimes \mathbb C$ and what it means to respect Jordan decompositions here);
%\item $\varphi(W_F)$ consists of semi-simple elements of $^LG$ and $\varphi(\mathbb C)$ %consists of unipotent elements of $^LG$;
\item if $\varphi(W'_F)$ is contained in a Levi subgroup of a parabolic subgroup of $^LG$, then that parabolic subgroup is {\em relevant} in the sense of \cite[$\S3.3$]{Bo79}.  (This condition is automatic if $G/F$ is quasi-split.) 
\end{itemize}

Let $\Phi(G/F)$ denote the set of $\widehat{G}$-conjugacy classes of admissible homomorphisms $\varphi: W'_F \rightarrow \, ^LG$ and let $\Pi(G/F) = \mathfrak R(G(F))_{\rm irred}$ the set of irreducible smooth (or admissible) representations of $G(F)$ up to isomorphism.

\begin{conj} [{\rm LLC}] There is a finite-to-one surjective map $\Pi(G/F) \rightarrow \Phi(G/F)$, which satisfies the desiderata of \cite[$\S 10$]{Bo79}. 
\end{conj}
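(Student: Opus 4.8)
The plan must begin by acknowledging that ({\rm LLC}) is a conjecture, open for a general connected reductive group $G/F$; so what I would actually do is organize the argument by the type of $G$, establishing the statement in the cases where suitable global or endoscopic input is available, and then isolate precisely where the general argument breaks down. The strategy is thus a case analysis, working outward from $\GL_n$.

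First I would treat $G=\GL_n$, where the correspondence is a theorem (Harris--Taylor, Henniart, and later Scholze) and the proof is global in nature: one realizes the sought-after parameters inside the $\ell$-adic cohomology of Lubin--Tate space (equivalently, of suitable simple Shimura varieties), compares local and global $L$-functions using the classification of cuspidal automorphic representations of $\GL_n$ and strong multiplicity one, and pins down the local map uniquely by matching $L$- and $\varepsilon$-factors of pairs. Once the bijection between $\Pi(\GL_n/F)$ and the set of $n$-dimensional Frobenius-semisimple Weil--Deligne representations is in hand, the finite-to-one surjectivity onto $\Phi(\GL_n/F)$ --- here in fact a bijection --- together with the desiderata of \cite[$\S10$]{Bo79} follows from the characterizing properties.

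Next I would invoke, for the quasi-split classical groups (symplectic, (special) orthogonal, unitary), the endoscopic classification of Arthur, Mok, and Kaletha--Minguez--Shin--White: using twisted endoscopy and the stabilized (twisted) trace formula, a discrete parameter $\varphi$ is transferred to a self-dual parameter for an auxiliary $\GL_N$, and the packet $\Pi_\varphi$ is constructed together with a pairing with the component group $S_\varphi$. The map $\Pi(G/F)\to\Phi(G/F)$ is then assembled packet by packet; finiteness of the fibers reflects the finiteness of (a quotient of) the dual of $S_\varphi$, and surjectivity onto the \emph{relevant} parameters is part of the classification, as are the desiderata concerning parabolic induction and $L$- and $\varepsilon$-factors.

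The hardest part --- and the reason this remains a conjecture --- is that for an \emph{arbitrary} reductive group none of these inputs is available in the required generality: there is no known family of Shimura varieties or analogous moduli whose cohomology realizes all parameters, and the theory of twisted endoscopy, the stabilization of the trace formula, the fundamental lemma beyond the Lie-algebra or unit case, and a workable description of the internal structure of $L$-packets (the role of $S_\varphi$, of inner forms, of the relevance condition for non-quasi-split $G$) are not yet in final form; relevance even complicates the surjectivity statement. Recent purely local constructions --- the geometrization on $\mathrm{Bun}_G$ of Fargues--Scholze, the spectral action on the stack of $L$-parameters --- do yield a canonical map to \emph{semisimplified} parameters with many good properties, but reconciling it with the full list of Borel's desiderata, in particular at the non-semisimple Weil--Deligne level and with the expected internal parametrization, is exactly what would still need to be done.
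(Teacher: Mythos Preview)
The statement is labeled a \emph{Conjecture} in the paper and is not proved there; the paper simply records it as the conjectural local Langlands correspondence and then uses it (and its enhancement LLC+) as a standing hypothesis. So there is no ``paper's own proof'' to compare against.

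Your write-up correctly recognizes this and is not a proof but a status report: you outline the known cases ($\GL_n$ via Harris--Taylor/Henniart/Scholze; quasi-split classical groups via Arthur and successors; the semisimplified parameter map of Fargues--Scholze) and identify precisely why the general case remains open. That is an appropriate response to being asked to ``prove'' a conjecture, and it goes well beyond what the paper itself attempts. One minor caution: the surjectivity assertion in the conjecture is onto $\Phi(G/F)$, the set of \emph{relevant} admissible parameters, so for non-quasi-split $G$ the relevance condition is already built into the target and is not an obstruction to surjectivity per se; the genuine difficulty lies rather in constructing the packets and verifying Borel's desiderata.
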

The fiber $\Pi_\varphi$ over $\varphi \in \Phi(G/F)$ is called the $L$-packet for $\varphi$. 

\medskip

 We mention a few desiderata of the LLC that will come up in what follows. First, LLC for ${\mathbb G}_m$ is nothing other than Langlands duality for ${\mathbb G}_m$, which we normalize as follows: for $T$ any split torus torus over $F$, with dual torus $\widehat{T}$, 
\begin{align*}
{\rm Hom}_{\rm conts}(T(F), \mathbb C^\times) &= {\rm Hom}_{\rm conts}(W_F, \widehat{T}) \\
\xi &\leftrightarrow \varphi_\xi
\end{align*}
satisfies, for every $\nu \in X_*(T) = X^*(\widehat{T})$ and $w \in W_F$,
\begin{equation} \label{Ldual_norm}
\nu(\varphi_\xi(w)) = \xi(\nu({\rm Art}_F^{-1}(w))).
\end{equation}
Here ${\rm Art}^{-1}_F: W^{\rm ab}_F \rightarrow F^\times$ is the reciprocity map of local class field theory which sends any geometric Frobenius element $\Phi \in W_F$ to a uniformizer in $F$.

Next, we think of Langlands parameters in two ways, either as continuous $L$-{\em homomorphisms}
$$
\varphi: W'_F \rightarrow \, ^LG 
$$
modulo $\widehat{G}$-conjugation, or as continuous {\em 1-cocycles}
$$
\varphi^{cocyc}: W'_F \rightarrow \widehat{G}
$$
modulo 1-coboundaries (where $W'_F$ acts on $\widehat{G}$ via the projection $W'_F \rightarrow W_F$).  The dictionary between these is
$$
\varphi(w) = (\varphi^{cocyc}(w), \bar{w}) \in \widehat{G} \rtimes W_F
$$
for $w \in W'_F$ and $\bar{w}$ the image of $w$ under $W'_F \rightarrow W_F$.

The desideratum we will use explicitly is the following (a special case of \cite[10.3(2)]{Bo79}: given any Levi pair $(M, \sigma)$ (where $\sigma \in \Pi(M/F)$) with representing 1-cocycle $\varphi^{cocyc}_\sigma: W'_F \rightarrow \widehat{M}$, and any unramified 1-cocycle $z^{cocyc}_\chi: W_F \rightarrow Z(\widehat{M})^I$ representing $\chi \in X(M)$ via the Langlands correspondence for quasi-characters on $M(F)$, we have
$$
\varphi^{cocyc}_{\sigma \chi} = \varphi^{cocyc}_\sigma \cdot z^{cocyc}_\chi
$$
modulo 1-coboundaries with values in $\widehat{M}$. We may view $z^{cocyc}_\chi$ as a 1-cocycle on $W'_F$ which is trivial on $\mathbb C$; since it takes values in the center of $\widehat{M}$, the right hand side is a 1-cocycle whose cohomology class is independent of the choices of 1-cocycles $\varphi^{cocyc}_\sigma$ and $z^{cocyc}_\chi$ in their respective cohomology classes.  Hence the condition just stated makes sense.  Concretely, if $\chi \in X(M)$ lifts to an element $z \in Z(\widehat{M})^I$, then up to $\widehat{M}$-conjugacy we have
\begin{equation} \label{action}
\varphi_{\sigma \chi}(\Phi) = (z,1) \varphi_\sigma(\Phi) \in \widehat{M} \rtimes W_F.
\end{equation}

\begin{Remark} \label{diff_param_rem}
There is a well-known dictionary between equivalence classes of admissible homomorphisms $\varphi: W_F \ltimes \mathbb C \rightarrow \, ^LG$ and equivalence classes of admissible homomorphisms $W_F \times {\rm SL}_2(\mathbb C) \rightarrow \, ^LG$. For a complete explanation, see  \cite[Prop.~2.2]{GR}.  
Because of this equivalence, it is common in the literature for the Weil-Deligne group $W'_F$ to sometimes be defined as $W_F \ltimes \mathbb C$, and sometimes as $W_F \times {\rm SL}_2(\mathbb C)$.
\end{Remark}

\section{The stable Bernstein center} \label{st_bernstein_cntr_sec}

\subsection{Infinitesimal characters}

Following Vogan \cite{Vo}, we term a $\widehat{G}$-conjugacy class of an admissible\footnote{``Admissible'' is defined as for the parameters $\varphi: W'_F \rightarrow \, ^LG$ (e.~g.~$\lambda(W_F)$ consists of semisimple elements of $^LG$) {\bf except} that we omit the ``relevance condition''. This is because the restriction $\varphi|_{W_F}$ of a Langlands parameter could conceivably factor through a non-relevant Levi subgroup of $^LG$ (even though $\varphi$ does not) and we want to include such restrictions in what we call infinitesimal characters.} homomorphism
$$
\lambda: W_F \rightarrow \, ^LG
$$
an {\em infinitesimal character}.  Denote the $\widehat{G}$-conjugacy class of $\lambda$ by $(\lambda)_{\widehat{G}}$. In this section we give a geometric structure to the set of all infinitesimal characters for a group $G$.  It should be noted that the variety structure we define here differs from that put forth by Vogan in \cite[$\S7$]{Vo}.
%\footnote{It appears that Vogan's description of the variety structure cannot have the properties %he requires of it in his Conjecture 7.18: it is ``too disconnected'' to be the target of a surjective %algebraic morphism from the Bernstein variety.}. 

If $\varphi : W'_F \rightarrow \, ^LG$ is an admissible homomorphism, then its restriction $\varphi|_{W_F}$ represents an infinitesimal character.  Here it is essential to consider restriction along the proper embedding $W_F \hookrightarrow W'_F$: if $W'_F$ is thought of as $W_F \ltimes \mathbb C$, then this inclusion is $w \mapsto (w,0)$; if $W'_F$ is thought of as $W_F \times {\rm SL}_2(\mathbb C)$, then the inclusion is $w \mapsto (w, {\rm diag}(|w|^{1/2}, |w|^{-1/2}))$. If $\varphi_\pi \in \Phi(G/F)$ is attached by ${\rm LLC}$ to $\pi \in \Pi(G/F)$, then following Vogan \cite{Vo} we shall call the $\widehat{G}$-conjugacy class $(\varphi_\pi|_{W_F})_{\widehat{G}}$ the {\em infinitesimal character of $\pi$}. 

If $G$ is quasi-split over $F$, then conjecturally {\em every} infinitesimal character $\lambda$ is represented by a restriction $\varphi_\pi|_{W_F}: W_F \rightarrow \, ^LG$ for {\em some} $\pi \in \Pi(G/F)$.  

Assume ${\rm LLC}$ holds for $G/F$. Let $\lambda$ be an infinitesimal character for $G$. Define the {\bf infinitesimal class} to be the following finite union of $L$-packets
$$
\displaystyle
\Pi_\lambda := \coprod_{\varphi \leadsto \lambda} \Pi_\varphi.
$$
Here $\varphi$ ranges over $\widehat{G}$-conjugacy classes of admissible homomorphisms $W'_F \rightarrow \, ^LG$ such that $(\varphi|_{W_F})_{\widehat{G}} = (\lambda)_{\widehat{G}}$, and $\Pi_\varphi$ is the corresponding $L$-packet of smooth irreducible representations of $G(F)$.

\subsection{${\bf LLC+}$} \label{LLC+_subsec}

In order to relate the Bernstein variety $\mathfrak X$ with the variety $\mathfrak Y$ of infinitesimal characters, we will assume the Local Langlands Correspondence (LLC) for $G$ and all of its $F$-Levi subgroups.  We assume all the desiderata listed by Borel in \cite{Bo79}.

There are two additional desiderata of LLC we need. 

\begin{defn} We will declare that $G$ satisfies ${\rm LLC+}$ if the LLC holds for $G$ and its $F$-Levi subgroups, and these correspondences are compatible with normalized parabolic induction in the sense of the Conjecture \ref{comp_w_induction} below, and invariant under certain isomorphisms in the sense of Conjecture \ref{weak_invariant} below.
\end{defn}

Let $M \subset G$ denote an $F$-Levi subgroup.  Then the inclusion $M \hookrightarrow G$ induces an embedding $^LM \hookrightarrow \, ^LG$ which is well-defined up to $\widehat{G}$-conjugacy (cf. \cite[$\S3$]{Bo79}). 

\begin{conj} \label{comp_w_induction} {\rm (Compatibility of {\rm LLC} with parabolic induction)} Let $\sigma \in \Pi(M/F)$ and $\pi \in \Pi(G/F)$ and assume $\pi$ is an irreducible subquotient of $i^G_P(\sigma)$, where $P = MN$ is any $F$-parabolic subgroup of $G$ with $F$-Levi factor $M$.  Then the infinitesimal characters 
$$
\varphi_{\pi}|_{W_F} : W_F \rightarrow \, ^LG
$$
and
$$
\varphi_{\sigma}|_{W_F}: W_F \rightarrow \, ^LM \hookrightarrow \, ^LG
$$
are $\widehat{G}$-conjugate.
\end{conj}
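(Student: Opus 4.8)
The plan is to reduce the statement to the case of a supercuspidal $\sigma$ together with the behaviour of infinitesimal characters under unramified twists, which is governed by \eqref{action}. First I would observe that the statement is really a statement about inertial classes: if $\pi$ is a subquotient of $i^G_P(\sigma)$ and $(M_0, \sigma_0)_G$ is the supercuspidal support of $\pi$, then $(M_0, \sigma_0)_G = (M, \sigma')_G$ where $\sigma'$ is the supercuspidal support of $\sigma$ on $M$ (transitivity of induction and of supercuspidal support). So, replacing $(M, \sigma)$ by a cuspidal pair in the same inertial class and using the desideratum \cite[10.3(2)]{Bo79} recalled before Remark \ref{diff_param_rem}, we may assume from the outset that $\sigma$ is supercuspidal. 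The content then becomes: if $\pi$ is an irreducible subquotient of $i^G_P(\sigma)$ with $\sigma \in \Pi(M/F)$ supercuspidal, then $(\varphi_\pi|_{W_F})_{\widehat G} = (\varphi_\sigma|_{W_F})_{\widehat G}$ as $\widehat G$-conjugacy classes in $\mathrm{Hom}(W_F, {}^LG)$.

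Next I would attack this via the structure of $L$-packets and standard modules. By LLC for $M$ we have $\varphi_\sigma: W'_F \to {}^LM$, and composing with the canonical (up to $\widehat G$-conjugacy) embedding $^LM \hookrightarrow \, ^LG$ produces a parameter $\varphi := \varphi_\sigma$ viewed into $^LG$. Because $\sigma$ is supercuspidal (hence tempered after an unramified twist, or in general essentially square-integrable modulo center), the parameter $\varphi_\sigma$ is discrete modulo the center of $^LM$, and the induced parameter $\varphi$ into $^LG$ has the property that its associated $L$-packet $\Pi_\varphi$ consists exactly of the irreducible constituents of the (possibly reducible) induced representation $i^G_P(\sigma)$ — this is precisely the compatibility of LLC with parabolic induction for tempered/standard data, which is part of the Borel desiderata we are assuming (the "packets of $i^G_P(\sigma)$" desideratum, \cite[$\S$10]{Bo79}). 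In particular every irreducible subquotient $\pi$ of $i^G_P(\sigma)$ lies in $\Pi_\varphi$, so $\varphi_\pi = \varphi$ up to $\widehat G$-conjugacy, and a fortiori $\varphi_\pi|_{W_F}$ and $\varphi_\sigma|_{W_F}$ (viewed into $^LG$) are $\widehat G$-conjugate. The non-tempered case is handled by first twisting $\sigma$ by an unramified character to make it tempered, invoking \eqref{action} to see that this changes the infinitesimal character on both sides by the same central cocycle $z^{cocyc}_\chi$, and then applying the tempered case.

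The main obstacle is that this argument leans on the full strength of the Borel desiderata relating $L$-packets to parabolic induction — specifically that the $L$-packet of the parameter induced from $(M, \varphi_\sigma)$ is the set of constituents of $i^G_P(\sigma)$ — and one must be careful that a subquotient, not merely a sub- or quotient-, still lies in that packet; this is fine because the Jordan–Hölder constituents of $i^G_P(\sigma)$ are exactly the members of $\Pi_\varphi$. A secondary subtlety is the well-definedness of restriction to $W_F$: one must use the correct embedding $W_F \hookrightarrow W'_F$, namely $w \mapsto (w, \mathrm{diag}(|w|^{1/2}, |w|^{-1/2}))$ in the $\mathrm{SL}_2$-picture, so that $\varphi|_{W_F}$ genuinely depends only on the $W_F$-part and the twist by the principal $\mathrm{SL}_2$ (the ``Arthur $\mathrm{SL}_2$'') is absorbed correctly; since this embedding is fixed once and for all, compatibility of the restriction with the embedding $^LM \hookrightarrow {}^LG$ is automatic. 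Granting these desiderata, the proof is essentially a bookkeeping exercise tracing $\sigma \mapsto \varphi_\sigma \mapsto \varphi_\sigma|_{W_F}$ through induction; I do not expect any genuinely hard estimate or construction, only care with the conjectural inputs.
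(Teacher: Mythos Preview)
The statement you are attempting to prove is a \emph{conjecture} in the paper, not a theorem; the paper offers no proof and explicitly lists it as an \emph{additional} desideratum beyond those in \cite{Bo79}. In Remark~\ref{comp_w_ind_rem} the author records only that it holds for $\mathrm{GL}_n$ and $\mathrm{GSp}(4)$, and that one may reduce to the tempered (and further to the discrete series) case.

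Your argument is circular. The step where you assert that ``the $L$-packet $\Pi_\varphi$ consists exactly of the irreducible constituents of $i^G_P(\sigma)$'' and attribute this to \cite[$\S10$]{Bo79} is precisely the content of the conjecture, not a consequence of the desiderata already assumed. The paper is careful to say, just before stating Conjecture~\ref{comp_w_induction}: ``We assume all the desiderata listed by Borel in \cite{Bo79}. There are two additional desiderata of LLC we need'' --- and this conjecture is the first of those two. So invoking it as a Borel desideratum assumes what is to be shown. The reduction steps you outline (passing to supercuspidal support via transitivity of induction, and twisting by an unramified character to reach the tempered range using \eqref{action}) are correct and match Remark~\ref{comp_w_ind_rem}(4); but after those reductions you are left with the genuine open content of the conjecture, for which no general argument is known.
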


\begin{Remark} \label{comp_w_ind_rem}

\noindent (1) The conjecture implies that the restriction $\varphi_\pi|_{W_F}$ depends only on the supercuspidal support of $\pi$. This latter statement is a formal consequence of Vogan's Conjecture 7.18 in \cite{Vo}, but the Conjecture \ref{comp_w_induction} is slightly more precise. In Proposition \ref{map_structure} we will give a construction of the map $f$ in Vogan's Conjecture 7.18, by sending a supercuspidal support $(M, \sigma)_G$ (a ``classical infinitesimal character'' in \cite{Vo}) to the infinitesimal character $(\varphi_\sigma|_{W_F})_{\widehat{G}}$. With this formulation, the condition on $f$ imposed in Vogan's Conjecture 7.18 is exactly the compatibility in the conjecture above.

\noindent (2) The conjecture holds for ${\rm GL}_n$, and is implicit in the way the local Langlands correspondence for ${\rm GL}_n$ is extended from supercuspidals to all representations (see Remark  13.1.1 of \cite{HRa}).  It was a point of departure in Scholze's new characterization of LLC for ${\rm GL}_n$ \cite{Sch3}, and that paper also provides another proof of the conjecture in that case.

\noindent (3) I was informed by Brooks Roberts (private communication), that the conjecture holds for ${\rm GSp}(4)$.

\noindent (4)  Given a parameter $\varphi: W'_F \rightarrow \, ^LG$, there exists a certain $P = MN$ and a certain tempered parameter $\varphi_M: W'_F \rightarrow \, ^LM$ and a certain real-valued unramified character $\chi_M$ on $M(F)$ whose parameter is in the interior of the Weyl chamber determined by $P$, such that the $L$-packet $\Pi_\varphi$ consists of  Langlands quotients $J (\pi_M \otimes \chi_M)$, for $\pi_M$ ranging over the packet $\Pi_{\varphi_M}$.  The parameter $\varphi$ is the twist of $\varphi_M$ by the parameter associated to the character $\chi_M$.  This reduces the conjecture to the case of tempered representations. One can further reduce to the case of discrete series representations.  
\end{Remark}

The following is a very natural kind of functoriality which should be satisfied for all groups.

\begin{conj} {\rm (Invariance of LLC under isomorphisms)} \label{invariant}
Suppose $\phi: (G, \pi) ~ \widetilde{\rightarrow} ~ (G', \pi')$ is an isomorphism of connected reductive $F$-groups together with irreducible smooth representations on them.  Then the induced isomorphism $^L\phi: \, ^LG' ~ \widetilde{\rightarrow} ~ \, ^LG$ (well-defined up to an inner automorphism of $\widehat{G}$), takes the $\widehat{G}'$-conjugacy class of $\varphi_{\pi'} : W'_F \rightarrow \, ^LG'$ to the $\widehat{G}$-conjugacy class of $\varphi_{\pi}: W'_F \rightarrow \, ^LG$.
\end{conj}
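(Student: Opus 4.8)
Conjecture~\ref{invariant} is a \emph{desideratum} we impose on the local Langlands correspondence, not a theorem we can prove outright. So what I can offer is a plan for (a) reducing it to a statement purely about how LLC is constructed/characterized, and (b) verifying it in the cases where LLC is known—most importantly $\mathrm{GL}_n$—so that the conjecture is at least consistent with and forced by the existing normalizations. This is also exactly the role it plays later (Remark~\ref{comp_w_ind_rem}, Corollary~\ref{invariant_Levi}): a compatibility we check rather than derive.

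\textbf{Step 1: Make the functoriality of ${}^L\phi$ precise.} First I would spell out how an $F$-isomorphism $\phi\colon G \xrightarrow{\sim} G'$ induces $\widehat{\phi}\colon \widehat{G'}\xrightarrow{\sim}\widehat{G}$ and then ${}^L\phi\colon {}^LG' \xrightarrow{\sim} {}^LG$, well-defined up to inner automorphism by $\widehat{G}$. The key point is that $\phi$ carries a Borel-and-maximal-torus $(B_{G'},T_{G'})$ to one in $G$ and hence induces a $\Gamma_F$-equivariant isomorphism of based root data, which dualizes; the Weil-group action matches because $\phi$ is defined over $F$. The "up to inner automorphism" ambiguity is harmless since $\Phi(G/F)$ and $\Phi(G'/F)$ are sets of $\widehat{G}$- (resp.\ $\widehat{G'}$-)conjugacy classes. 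I would record that $\phi$ also induces a bijection $\Pi(G/F)\xrightarrow{\sim}\Pi(G'/F)$, $\pi\mapsto\pi'=\pi\circ\phi^{-1}$, so the statement "${}^L\phi(\varphi_{\pi'})=\varphi_\pi$ up to $\widehat{G}$-conjugacy" is well-posed.

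\textbf{Step 2: Reduce to compatibility with the characterizing properties of LLC.} The honest content is that the recipe $\pi\mapsto\varphi_\pi$ is "natural in the pair $(G,\pi)$." For any axiomatic characterization of LLC—e.g.\ matching of local $L$- and $\varepsilon$-factors, central characters, behavior under twisting, and compatibility with parabolic induction (Conjecture~\ref{comp_w_induction})—one checks that an isomorphism $\phi$ intertwines all the input data on both sides: $L(s,\pi,r\circ{}^L\phi)=L(s,\pi',r)$ for representations $r$ of ${}^LG'$, and similarly for $\varepsilon$-factors; $\phi$ matches parabolic subgroups, Levi subgroups, and supercuspidal supports. Hence the parameter attached to $\pi'$ by the characterizing properties must be ${}^L\phi$ applied to the one attached to $\pi$—this is a formal consequence once LLC is known to be \emph{uniquely} characterized, which is why the conjecture is automatic in such cases.

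\textbf{Step 3: Verify it for $\mathrm{GL}_n$ (and note other known cases).} For $G=\mathrm{GL}_n$ an $F$-automorphism is inner modulo the transpose-inverse, so one only needs invariance under $g\mapsto {}^t g^{-1}$, which corresponds on the dual side to the automorphism of $\mathrm{GL}_n(\mathbb{C})$ of the same type; Harris--Taylor/Henniart LLC for $\mathrm{GL}_n$ satisfies $\varphi_{\pi^\vee}=\varphi_\pi^\vee$, giving the claim. For a general $F$-isomorphism between (possibly different) forms one invokes the compatibility of LLC for $\mathrm{GL}_n$ with restriction of scalars and with central-character twists, all of which are part of the standard list of properties. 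I would likewise remark that the conjecture follows in all cases where LLC has been established by a characterization theorem (e.g.\ Scholze's characterization for $\mathrm{GL}_n$, the $\mathrm{GSp}(4)$ case), since naturality under isomorphisms is then automatic.

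\textbf{Main obstacle.} The only real difficulty is conceptual rather than technical: since LLC itself is conjectural in general, "invariance under isomorphisms" cannot be \emph{proved}, only shown to be a necessary feature of any LLC satisfying the standard desiderata, and verified unconditionally exactly where LLC is unconditional. Among those cases, the mild subtlety is keeping track of the inner-automorphism ambiguity in ${}^L\phi$ so that the reduction in Step~2 is genuinely forced and not merely consistent; but since everything is stated up to $\widehat{G}$-conjugacy, this bookkeeping is routine.
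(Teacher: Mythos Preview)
You correctly recognize that Conjecture~\ref{invariant} is a desideratum, not something provable in general, and that the substantive content is the verification for $\mathrm{GL}_n$. Your Step~3 is on the right track and close to what the paper does (the proof there is due to Henniart, Proposition~\ref{GL_n_invariance}), but the comparison is worth spelling out.

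The paper's proof is more concrete and does not pass through your Step~2 ``characterization'' philosophy. Henniart lists four functorial properties of LLC for $\mathrm{GL}_n$ (class field theory for $n=1$; determinant $\leftrightarrow$ central character; twists by characters; contragredient $\leftrightarrow$ transpose-inverse), then explicitly classifies the $F$-automorphisms of $\mathrm{GL}_n$ via the induced Dynkin diagram automorphism: up to inner automorphisms and transpose-inverse, one is reduced to maps of the form $g\mapsto g\cdot c(\det g)$, which are the identity except when $n=2$, where $g\mapsto g\cdot\det(g)^{-1}$ survives. That residual $n=2$ case is then handled using the central-character and twist compatibilities (his (ii) and (iii)), not the contragredient property. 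Your assertion ``an $F$-automorphism is inner modulo the transpose-inverse'' is in fact true (the $n=2$ exception above equals transpose-inverse composed with conjugation by the permutation matrix), so your route via $\varphi_{\pi^\vee}=\varphi_\pi^\vee$ would also work; but you should supply the Dynkin-diagram argument rather than assert the classification. Your remark about ``different forms'' and restriction of scalars is unnecessary here, since $\mathrm{GL}_n$ has no nontrivial $F$-forms; the paper simply notes at the outset that one may take $G'=\mathrm{GL}_n$ and $\phi$ an $F$-automorphism.

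In short: your proposal is correct, and for $\mathrm{GL}_n$ it is essentially the same strategy as the paper's, differing only in that Henniart isolates the $n=2$ case and dispatches it with the central-character/twist properties rather than folding it into the contragredient argument. Your Step~2 is a reasonable heuristic but is neither used in the paper nor needed for the $\mathrm{GL}_n$ verification.
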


\begin{prop}  \label{GL_n_invariance}
Conjecture \ref{invariant} holds when $G = {\rm GL}_n$.
\end{prop}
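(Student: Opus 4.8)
The plan is to reduce the statement to a short, explicit list of automorphisms of $\GL_n$ and then to quote two standard facts about the (now proven) local Langlands correspondence for $\GL_n$: its compatibility with contragredients and, if one wishes, with twists by characters.

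First I would normalize. Since $G\cong\GL_n$ forces $G'\cong\GL_n$, fix an $F$-isomorphism $G'\cong\GL_n$ and use it to transport $\pi'$ and to identify ${}^LG'$ with ${}^L\GL_n$; this changes neither the hypothesis nor the conclusion, so we may assume $G=G'=\GL_n$, that $\phi\in\Aut_F(\GL_n)$, and that $\pi'=\pi\circ\phi^{-1}$. Next recall the structure of $\Aut_F(\GL_n)$: it is generated by the inner automorphisms $\Int(h)$, $h\in\GL_n(F)$, together with the transpose-inverse involution $\theta\colon g\mapsto{}^tg^{-1}$. Indeed, after composing $\phi$ with a suitable power of $\theta$ one may arrange that $\phi$ is inner on $\SL_n$, and after a further inner automorphism of $\GL_n$ that $\phi|_{\SL_n}=\mathrm{id}$; such a $\phi$ must then be of the form $g\mapsto(\det g)^{k}g$, which is an automorphism only if it is trivial (for $n\ge 3$) or, for $n\le 2$, itself an inner multiple of $\theta$. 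Because the assertion of Conjecture \ref{invariant} is stable under composition of isomorphisms --- here one uses that ${}^L\phi$ restricts to a group isomorphism $\widehat{G'}\xrightarrow{\sim}\widehat{G}$ and so carries $\widehat{G'}$-conjugacy classes to $\widehat{G}$-conjugacy classes --- it suffices to treat the cases $\phi$ inner and $\phi=\theta$. I expect this reduction, in particular the low-rank bookkeeping where $\theta$ collapses into a determinant twist modulo an inner automorphism, to be the fussiest point; the remainder is formal.

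If $\phi=\Int(h)$, then $\pi'=\pi\circ\Int(h)^{-1}\cong\pi$ via the intertwiner $\pi(h)$, while ${}^L\phi$ is an inner automorphism of ${}^L\GL_n$ and hence fixes every $\widehat{G}$-conjugacy class; the claim is immediate. If $\phi=\theta$, then $\pi'=\pi\circ\theta=:\pi^\theta$, and the Gelfand--Kazhdan theorem gives $\pi^\theta\cong\pi^\vee$, the contragredient of $\pi$. On the dual side, $\theta$ is, up to an inner automorphism, the standard pinned involution of $\GL_n$, whose effect on the based root datum is $-w_0$ (with $w_0$ the longest Weyl element) and which is self-dual; hence ${}^L\phi$ acts on $\widehat{G}=\GL_n(\mathbb{C})$, up to an inner automorphism, again by $\widehat\theta\colon g\mapsto{}^tg^{-1}$, and by the identity on $W_F$ since $\GL_n$ is split. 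Invoking the compatibility of the local Langlands correspondence for $\GL_n$ (Harris--Taylor, Henniart) with contragredients, $\varphi_{\pi^\vee}=\varphi_\pi^\vee$, where for $\GL_n$ the dual parameter is given by $\varphi_\pi^\vee=\widehat\theta\circ\varphi_\pi$, we obtain
\[
{}^L\phi\circ\varphi_{\pi'}=\widehat\theta\circ\varphi_{\pi^\theta}=\widehat\theta\circ\varphi_{\pi^\vee}=\widehat\theta\circ\widehat\theta\circ\varphi_\pi=\varphi_\pi,
\]
all read up to $\widehat{G}$-conjugacy (which absorbs the inner ambiguity in ${}^L\phi$ and uses that $\widehat\theta$ is an involution). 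This is exactly the assertion of Conjecture \ref{invariant}. The determinant-twist case in small rank follows because that automorphism differs from $\theta$ by an inner one, or alternatively from the compatibility of the correspondence for $\GL_n$ with twists by characters.
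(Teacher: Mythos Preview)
Your proof is correct and follows essentially the same route as the paper's (which is attributed to Henniart): reduce to $F$-automorphisms of $\GL_n$, classify them via inner automorphisms and transpose-inverse using the Dynkin diagram, and then invoke the compatibility of the local Langlands correspondence for $\GL_n$ with contragredients. The one cosmetic difference is how the exceptional $n=2$ automorphism $g\mapsto g\cdot\det(g)^{-1}$ is dispatched: you observe that it equals $\Int(w^{-1})\circ\theta$ for $w=\left(\begin{smallmatrix}0&1\\-1&0\end{smallmatrix}\right)$ and so fold it into the $\theta$ case, whereas the paper treats it directly via compatibility of the correspondence with central characters and character twists.
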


\begin{proof} ({\rm Guy Henniart}).   It is enough to consider the case where $G' = {\rm GL}_n$ and $\phi$ is an $F$-automorphism of ${\rm GL}_n$.

The functorial properties in the Langlands correspondence for ${\rm GL}_n$ are:
\begin{enumerate}
\item[(i)] Compatibility with class field theory, that is, with the case where $n=1$.
\item[(ii)] The determinant of the Weil-Deligne group representation corresponds to the central character: this is Langlands functoriality for the homomorphism ${\rm det}: {\rm GL}_n(\mathbb C) \rightarrow {\rm GL}_1(\mathbb C)$.
\item[(iii)] Compatibility with twists by characters, i.e., Langlands functoriality for the obvious homomorphism of dual groups ${\rm GL}_1(\mathbb C) \times {\rm GL}_n(\mathbb C) \rightarrow {\rm GL}_n(\mathbb C)$.
\item[(iv)] Compatibility with taking contragredients: this is Langlands functoriality with respect to the automorphism $g \mapsto \, ^tg^{ -1}$ (transpose inverse), since it is known that for ${\rm GL}_n(F)$ this sends an irreducible representation to a representation isomorphic to its contragredient.
\end{enumerate}

These properties are enough to imply the desired functoriality for $F$-automorphisms of ${\rm GL}_n$.  When $n = 1$, the functoriality is obvious for any $F$-endomorphism of ${\rm GL}_1$.  When $n$ is at least 2, an $F$-automorphism of ${\rm GL}_n$ induces an automorphism of ${\rm SL}_n$ hence an automorphism of the Dynkin diagram which must be the identity or, (when $n \geq 3$) the opposition automorphism. Hence up to conjugation by ${\rm GL}_n(F)$, the $F$-automorphism is the identity on ${\rm SL}_n$, or possibly (when $n \geq 3$) transpose inverse. Consequently the $F$-automorphism can be reduced (by composing with an inner automorphism or possibly with transpose inverse) to one which is the identity on ${\rm SL}_n$, hence is of the form  $g \mapsto g \cdot c({\rm det}(g))$ where $c \in X_*(Z({\rm GL}_n))$. But this implies that it is the identity unless $n=2$, in which case it could also be $g \mapsto g \cdot {\rm det}(g)^{-1}$. In that exceptional case, the map induced on the dual group ${\rm GL}_2(\mathbb C)$ is also $g \mapsto g \cdot {\rm det}(g)^{-1}$, and the desired result holds by invoking (ii) and (iii) above.
\end{proof}

\begin{cor} \label{invariant_Levi}
Let $M = {\rm GL}_{n_1} \times \cdots \times {\rm GL}_{n_r} \subset {\rm GL}_n$ be a standard Levi subgroup. Let $g \in {\rm GL}_n(F)$. Then Conjecture \ref{invariant} holds for the isomorphism $c_g: M ~ \widetilde{\rightarrow} ~ \, ^gM$ given by conjugation by $g$.
\end{cor}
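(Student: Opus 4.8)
The plan is to reduce the statement to Proposition \ref{GL_n_invariance}, argued one factor at a time. Write $M = {\rm GL}_{n_1} \times \cdots \times {\rm GL}_{n_r}$, with the $i$-th factor sitting inside ${\rm GL}_n$ as the block-diagonal subgroup supported on the $i$-th block, and (unwinding Conjecture \ref{invariant}) let $\pi' := \pi \circ c_g^{-1} \in \Pi({}^gM/F)$ be the transport of $\pi$ along $c_g$. Conjugation by $g$ carries the $i$-th factor of $M$ isomorphically onto the $F$-subgroup $H_i := g\,{\rm GL}_{n_i}\,g^{-1} \subset {\rm GL}_n$ (which is ${\rm GL}_n(F)$-conjugate to a standard ${\rm GL}_{n_i}$-block), and the $H_i$ pairwise commute, being conjugates of the pairwise commuting factors of $M$; hence ${}^gM = H_1 \times \cdots \times H_r$ as an internal direct product of $F$-subgroups, and $c_g = (c_g)_1 \times \cdots \times (c_g)_r$ decomposes as a product of $F$-isomorphisms $(c_g)_i \colon {\rm GL}_{n_i} \to H_i$. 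Correspondingly $\pi = \bigotimes_i \pi_i$ and $\pi' = \bigotimes_i \pi'_i$ with $\pi'_i = \pi_i \circ (c_g)_i^{-1}$.

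Now both the local Langlands correspondence and the assertion of Conjecture \ref{invariant} are compatible with direct products of reductive $F$-groups: the correspondence for a product is the external product of the correspondences for the factors, the dual group and $L$-group split accordingly (so that a parameter into ${}^LM$ is the ``diagonal'' of parameters into the ${}^L{\rm GL}_{n_i}$, and $\widehat{M}$-conjugacy of diagonal parameters is the product of the $\widehat{{\rm GL}_{n_i}}$-conjugacies), and ${}^Lc_g = \prod_i {}^L(c_g)_i$. It therefore suffices to prove Conjecture \ref{invariant} for each factor isomorphism $(c_g)_i \colon {\rm GL}_{n_i} \to H_i$ separately. But $(c_g)_i$ is an $F$-isomorphism whose source is a general linear group, so this is exactly an instance of Proposition \ref{GL_n_invariance} (Henniart's result), whose proof reduces an arbitrary $F$-isomorphism out of ${\rm GL}_m$ to the case of an $F$-automorphism of ${\rm GL}_m$, the correspondence for the target being the one obtained from ${\rm GL}_m$ by transport of structure along the isomorphism. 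Taking the product over $i$ then gives Conjecture \ref{invariant} for $c_g$.

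The one point that needs care --- modest as it is --- is that one should resist the temptation to prove the corollary by inducing $\pi$ and $\pi'$ up to ${\rm GL}_n$ along the parabolics $P = MN$ and ${}^gP$. One does have $i^{{\rm GL}_n}_P(\pi) \cong i^{{\rm GL}_n}_{{}^gP}(\pi')$ --- the two differ by pullback along the inner automorphism of ${\rm GL}_n$ given by conjugation by $g$, which does not change the isomorphism class of a representation of ${\rm GL}_n(F)$ --- and, combined with compatibility of LLC for ${\rm GL}_n$ with parabolic induction (Conjecture \ref{comp_w_induction} for ${\rm GL}_n$, known), this identifies $\iota_M \circ (\varphi_\pi|_{W_F})$ with $\iota_{{}^gM} \circ (\varphi_{\pi'}|_{W_F})$ after composing with the Levi embedding $\iota_M \colon {}^LM \hookrightarrow {}^L{\rm GL}_n$ and its analogue $\iota_{{}^gM}$. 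However $\iota_M$ is not injective on $\widehat{M}$-conjugacy classes of parameters modulo $\widehat{{\rm GL}_n}$-conjugacy, so this route loses precisely the information distinguishing the factors ${\rm GL}_{n_i}$, and it would not by itself give the $\widehat{M}$-conjugacy (nor the statement at the level of full parameters on $W'_F$) asserted in Conjecture \ref{invariant}. Working factor-by-factor and appealing directly to Proposition \ref{GL_n_invariance}, as above, circumvents this.
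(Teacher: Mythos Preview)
Your proof is correct and follows essentially the same strategy as the paper's: reduce to the individual factors and invoke Proposition~\ref{GL_n_invariance} on each. The paper's argument differs only in presentation. It first reduces to the case $g \in N_{{\rm GL}_n}(M)$, so that source and target are the \emph{same} standard Levi $M$ (with its unambiguous product LLC), then observes $g \in N_{{\rm GL}_n}(T)\,M$ and composes with a permutation matrix normalizing $M$ to reduce further to the case where $c_g$ preserves each diagonal block ${\rm GL}_{n_i}$; at that point Proposition~\ref{GL_n_invariance} applies directly to an $F$-automorphism of ${\rm GL}_{n_i}$. Your route skips this reduction and applies Proposition~\ref{GL_n_invariance} to $(c_g)_i: {\rm GL}_{n_i} \to H_i$ with $H_i$ possibly a non-standard copy of ${\rm GL}_{n_i}$; this is fine, but it tacitly uses that the LLC for $H_i$ (hence for ${}^gM$) is well-defined independently of the chosen identification with ${\rm GL}_{n_i}$ --- which is itself the automorphism case of Proposition~\ref{GL_n_invariance}, as your parenthetical correctly indicates. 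The paper's preliminary reduction simply sidesteps having to say this. Your third paragraph's caution about the induction-to-${\rm GL}_n$ shortcut is a correct observation (it would only yield agreement of infinitesimal characters up to $\widehat{{\rm GL}_n}$-conjugacy, not the full $\widehat{M}$-conjugacy of parameters), though it is not needed for the argument.
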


\begin{proof}
It is enough to consider the case where $g$ belongs to the normalizer of $M$ in ${\rm GL}_n$. Let $T \subset M$ be the standard diagonal torus in ${\rm GL}_n$. Then $g \in N_G(T) M$. Thus composing $g$ with a permutation matrix which normalizes $M$ we may assume that $c_g$ preserves each diagonal factor ${\rm GL}_{n_i}$.  The desired functoriality follows by applying Proposition \ref{GL_n_invariance} to each ${\rm GL}_{n_i}$.
\end{proof}

For the purposes of comparing the Bernstein center and the stable Bernstein center as in Proposition \ref{map_structure}, we need only this weaker variant of Conjecture \ref{invariant}.

\begin{conj} {\rm (Weak invariance of LLC)} \label{weak_invariant} Let $M \subseteq G$ be any $F$-Levi subgroup and let $g \in G(F)$. Then Conjecture \ref{invariant} holds for the isomorphism 
$c_g : M ~ \widetilde{\rightarrow} ~ \, ^gM$.
\end{conj}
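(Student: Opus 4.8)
The plan is to prove Conjecture \ref{invariant} for $\phi = c_g$ by reducing, in the spirit of Remark \ref{comp_w_ind_rem}(4) and Corollary \ref{invariant_Levi}, to a situation in which the correspondence is rigid enough that the equivariance becomes a transport of structure; the statement presupposes {\rm LLC} for $M$ and for ${}^gM$, which we assume. Write $c_g = {\rm Int}(g)|_M$ and, for $\pi \in \Pi(M/F)$, put ${}^g\pi := \pi \circ c_g^{-1} \in \Pi({}^gM/F)$. We must show that the induced $L$-isomorphism ${}^Lc_g \colon {}^L({}^gM) ~\widetilde{\rightarrow}~ {}^LM$ (well-defined up to ${\rm Int}(\widehat{M})$) carries the $\widehat{{}^gM}$-conjugacy class of $\varphi_{{}^g\pi}$ to the $\widehat{M}$-conjugacy class of $\varphi_\pi$.

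When $g \in M(F)$ the statement is trivial: then $c_g$ is an inner automorphism of $M$, ${}^g\pi \cong \pi$, and ${}^Lc_g$ is inner in $\widehat{M}$, so there is nothing to prove since {\rm LLC} is defined on isomorphism classes. For general $g$, since $M$ and ${}^gM$ are $G(F)$-conjugate Levi subgroups we may, after post-composing $c_g$ with conjugation by a suitable element of ${}^gM(F)$, assume that the choices (an inner twist to the quasi-split form together with a Whittaker datum there) used to normalize {\rm LLC} for $M$ and for ${}^gM$ correspond to one another under $c_g$; this is the analogue of the reduction ``$g \in N_G(T)M$'' in the proof of Corollary \ref{invariant_Levi}, and it makes the two correspondences compatibly normalized.

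Now $c_g$ is the restriction of the $F$-rational inner automorphism ${\rm Int}(g)$ of $G$, so one has a square
$$
\xymatrix{
{}^L({}^gM) \ar[r]^{{}^Lc_g} \ar[d] & {}^LM \ar[d] \\
{}^LG \ar[r]^{{}^L{\rm Int}(g)} & {}^LG
}
$$
(vertical arrows the canonical Levi embeddings) that commutes up to ${\rm Int}(\widehat{G})$, the bottom arrow being inner and hence trivial on $\widehat{G}$-conjugacy classes; thus the two Levi embeddings of ${}^L({}^gM)$ and ${}^LM$ into ${}^LG$ agree, up to $\widehat{G}$-conjugacy, after pre-composition with ${}^Lc_g$. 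On representations ${\rm Int}(g)$ intertwines parabolic induction, $i^G_P(\pi) \cong i^G_{{}^gP}({}^g\pi)$ for $P = MN$, so these two $G$-representations share their irreducible constituents. Applying Conjecture \ref{comp_w_induction} to a common constituent, once for $(M,\pi)$ and once for $({}^gM,{}^g\pi)$, and combining with the commutativity of the square, yields the assertion at the level of infinitesimal characters regarded as $\widehat{G}$-conjugacy classes -- which is exactly the input needed for Proposition \ref{map_structure} and Remark \ref{comp_w_ind_rem}(1).

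The full statement, about parameters on $W'_F$ up to $\widehat{{}^gM}$-conjugacy, does not follow from parabolic induction alone, since the constituents of $i^G_P(\pi)$ need not share a parameter. Here I would follow Remark \ref{comp_w_ind_rem}(4): twisting by the appropriate real unramified character and passing to Langlands quotients reduces to $\pi$ tempered, and then to $\pi$ a discrete series representation. Both reductions are respected by $c_g$, because it is the restriction of an $F$-automorphism of $G$ and so commutes with $G(F)$-conjugation, preserves $P$ and the modulus characters, and sends essentially tempered twists to essentially tempered twists. For discrete series one reads $({}^Lc_g)_*\varphi_{{}^g\pi} = \varphi_\pi$ off the (conjectural) endoscopic character identities characterizing the packets: being the restriction of an inner automorphism of $G$ defined over $F$, $c_g$ transports the elliptic endoscopic data of $M$, the transfer factors normalized via the Whittaker datum fixed above, and the stable orbital integrals, to the corresponding objects for ${}^gM$, so it must intertwine the two correspondences. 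When $G = {\rm GL}_n$ this is unconditional and is precisely Corollary \ref{invariant_Levi}. The main obstacle is this last step in general: it is not formal from Borel's desiderata but presupposes the canonical characterization of $L$-packets, which is why the assertion is only conjectural; granting that characterization, what remains is the routine transport of structure sketched here.
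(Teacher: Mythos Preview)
The statement you are attempting to prove is labeled a \emph{conjecture} in the paper; there is no proof of it there beyond the ${\rm GL}_n$ case in Corollary~\ref{invariant_Levi}. You recognize this yourself in your final paragraph, where you correctly identify that the endoscopic characterization of discrete-series $L$-packets on which your last step rests is itself conjectural. So what you have written is not a proof but a plausibility sketch, and as such it is reasonable.

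Your middle paragraph, however, contains an observation worth isolating. From $i^G_P(\pi) \cong i^G_{{}^gP}({}^g\pi)$ and Conjecture~\ref{comp_w_induction} applied to a common irreducible subquotient, you deduce that the images of $\varphi_\pi|_{W_F}$ and $\varphi_{{}^g\pi}|_{W_F}$ in ${}^LG$ are $\widehat{G}$-conjugate. This is weaker than Conjecture~\ref{weak_invariant}, which asks for compatibility at the level of $\widehat{M}$-conjugacy classes of full parameters on $W'_F$. But the only place the paper invokes Conjecture~\ref{weak_invariant} is in the proof of Proposition~\ref{map_structure}, to show that $f\colon (M,\sigma)_G \mapsto (\varphi_\sigma|_{W_F})_{\widehat{G}}$ is well-defined, and for that your weaker statement already suffices. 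So you have in effect argued that Conjecture~\ref{comp_w_induction} alone yields the well-definedness of $f$, without Conjecture~\ref{weak_invariant} as a separate hypothesis. The paper hints at a related economy in the Remark following Proposition~\ref{Z_V_construction}, but does not make this particular reduction explicit.
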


\subsection{Variety structure on the set of infinitesimal characters} \label{variety_str_subsec}

It is helpful to rigidify things on the dual side by choosing the data $\widehat{G} \supset \widehat{B} \supset  \widehat{T}$ of a Borel subgroup and maximal torus which are stable under the action of $\Gamma_F$ on $\widehat{G}$ and which form part of the data of a $\Gamma_F$-invariant splitting for $\widehat{G}$ (cf. \cite[$\S1$]{Ko84a}). The variety structure we will define will be independent of this choice, up to isomorphism, since different choices such that $\widehat{B} \supset \widehat{T}$ are conjugate under $\widehat{G}^{\Gamma_F}$ (\cite[Cor.~1.7]{Ko84a}). Let $^LB := \widehat{B} \rtimes W_F$ and $^LT = \widehat{T} \rtimes W_F$.

Following \cite[$\S3.3$]{Bo79}, we say a parabolic subgroup $\mathcal P \subseteq \, ^LG$ is {\em standard} if $\mathcal P \supseteq \, ^LB$.  Then its neutral component $\mathcal P^\circ := \mathcal P \cap \widehat{G}$ is a $W_F$-stable standard parabolic subgroup of $\widehat{G}$ (containing $\widehat{B}$), and $\mathcal P = \mathcal P^\circ \rtimes W_F$.  Every parabolic subgroup in $^LG$ is $\widehat{G}$-conjugate to a unique standard parabolic subgroup.

Assume $\mathcal P$ is standard and let $\mathcal M^\circ \subset \mathcal P^\circ$ be the unique Levi factor with $\mathcal M^\circ \supseteq \widehat{T}$; it is $W_F$-stable. Then $\mathcal M := N_{\mathcal P}(\mathcal M^\circ)$ is a Levi subgroup of $\mathcal P$ in the sense of \cite[$\S3.3$]{Bo79}, and $\mathcal M = \mathcal M^\circ \rtimes W_F$. The Levi subgroups $\mathcal M \subset \, ^LG$ which arise this way are called {\em standard}. Every Levi subgroup in $^LG$ is $\widehat{G}$-conjugate to at least one standard Levi subgroup; two different standard Levi subgroups may be conjugate under $\widehat{G}$.  Denote by $\{\mathcal M \}$ the set of standard Levi subgroups in $^LG$ which are $\widehat{G}$-conjugate to a fixed standard Levi subgroup $\mathcal M$.

Now suppose $\lambda: W_F \rightarrow \, ^LG$ is an admissible homomorphism.  Then there exists a minimal Levi subgroup of $^LG$ containing $\lambda(W_F)$.  Any two such are conjugate by an element of $C_\lambda^\circ$, where $C_\lambda$ is the subgroup of $\widehat{G}$ commuting with $\lambda(W_F)$, by (the proof of) \cite[Prop.~3.6]{Bo79}.  

Suppose $\lambda_1, \lambda_2 : W_F \rightarrow \, ^LG$ are $\widehat{G}$-conjugate.  Then there exists a $\widehat{G}$-conjugate $\lambda_1^+$ (resp. $\lambda_2^+$) of $\lambda_1$ (resp. $\lambda_2$) and a standard Levi subgroup $\mathcal M_1$ (resp. $\mathcal M_2$) containing $\lambda^+_1(W_F)$ (resp. $\lambda^+_2(W_F)$) minimally.  Write $g\lambda_1^+ g^{-1} = \lambda^+_2 $ for some $g \in \widehat{G}$.  Then the Levi subgroups $ g\mathcal M_1 g^{-1}$ and $\mathcal M_2$ contain $\lambda_2^+(W_F)$ minimally, hence by \cite[Prop.~3.6]{Bo79} are conjugate by an element $s \in C_{\lambda^+_2}^\circ$.  Then $sg(\mathcal M_1) (sg)^{-1} = \mathcal M_2$, and thus $\{\mathcal M_1 \} = \{\mathcal M_2 \}$.

Hence any $\widehat{G}$-conjugacy class $(\lambda)_{\widehat{G}}$ gives rise to a {\em unique} class of standard Levi subgroups $\{\mathcal M_\lambda\}$, with the property that the image of some element $\lambda^+ \in (\lambda)_{\widehat{G}}$ is contained minimally by $\mathcal M_\lambda$ for some $\mathcal M_\lambda$ in this class.

 A similar argument shows the following lemma. 

\begin{lemma} \label{norm_conjugation}
Let $\lambda^+_1$ and $\lambda^+_2$ be admissible homomorphisms with $(\lambda^+_1)_{\widehat{G}} = (\lambda^+_2)_{\widehat{G}}$, and suppose $\lambda^+_1(W_F)$ and $\lambda^+_2(W_F)$ are contained minimally by a standard Levi subgroup $\mathcal M$.  Then there exists $n \in N_{\widehat{G}}(\mathcal M)$ such that $^n\lambda^+_1 = \lambda^+_2$.
\end{lemma}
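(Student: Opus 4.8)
The plan is to mimic the argument just given for the uniqueness of the class $\{\mathcal{M}_\lambda\}$, but now track the conjugating element more carefully so that it normalizes the given standard Levi $\mathcal{M}$. First I would pick $g \in \widehat{G}$ with ${}^g\lambda^+_1 = \lambda^+_2$, which exists by hypothesis. Then ${}^g\mathcal{M}$ is a Levi subgroup of ${}^LG$ containing $\lambda^+_2(W_F) = {}^g\lambda^+_1(W_F)$ minimally, since conjugation carries a minimal containing Levi to a minimal containing Levi. On the other hand $\mathcal{M}$ itself contains $\lambda^+_2(W_F)$ minimally by assumption. So both ${}^g\mathcal{M}$ and $\mathcal{M}$ are minimal Levi subgroups of ${}^LG$ containing $\lambda^+_2(W_F)$, and by (the proof of) \cite[Prop.~3.6]{Bo79} they are conjugate by some $s \in C^\circ_{\lambda^+_2}$, i.e.\ ${}^{sg}\mathcal{M} = \mathcal{M}$. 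Setting $n := sg$, we have $n \in N_{\widehat{G}}(\mathcal{M})$, and since $s$ centralizes $\lambda^+_2(W_F)$ we also get ${}^n\lambda^+_1 = {}^s\lambda^+_2 = \lambda^+_2$, as desired.

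The one point that needs care is the appeal to \cite[Prop.~3.6]{Bo79}: that statement is phrased for minimal Levi subgroups of ${}^LG$ \emph{through which $\lambda$ factors}, and asserts that any two are conjugate under $C_\lambda$, indeed under $C^\circ_\lambda$ by the proof. I would make sure that "$\lambda^+_i(W_F)$ is contained minimally by the standard Levi $\mathcal{M}$'' is exactly the condition "$\mathcal{M}$ is a minimal Levi subgroup of ${}^LG$ containing $\lambda^+_i(W_F)$'' in Borel's sense, so that the cited proposition applies verbatim; this is just unwinding definitions. The transport of a minimal containing Levi under the conjugation by $g$ is immediate from the fact that $h \mapsto {}^gh$ is an automorphism of ${}^LG$ preserving the property of being a Levi subgroup and the inclusion relation.

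I expect the main (very mild) obstacle to be purely bookkeeping: namely verifying that $s$ can be taken in $C^\circ_{\lambda^+_2}$ rather than merely in $C_{\lambda^+_2}$ — but this is precisely what "(the proof of) \cite[Prop.~3.6]{Bo79}'' delivers, and it was already used in the paragraph immediately preceding the lemma, so I would simply cite that. There is no deeper difficulty; the lemma is a routine refinement of the preceding discussion, and the proof is three lines once the identification of "minimal standard Levi containing the image'' with Borel's notion is made explicit.
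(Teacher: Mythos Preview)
Your proposal is correct and is exactly the argument the paper has in mind: the paper's own proof is simply the sentence ``A similar argument shows the following lemma,'' referring to the paragraph immediately before, and your write-up spells out that similar argument verbatim (choose $g$ with ${}^g\lambda^+_1 = \lambda^+_2$, note ${}^g\mathcal M$ and $\mathcal M$ both contain $\lambda^+_2(W_F)$ minimally, correct by $s \in C^\circ_{\lambda^+_2}$ via \cite[Prop.~3.6]{Bo79}, and set $n = sg$).
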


The following lemma is left to the reader.

\begin{lemma} \label{normalizer_lem} If $\mathcal M \subseteq \, ^LG$ is a standard Levi subgroup, then 
$$
N_{\widehat{G}}(\mathcal M) = \{ n \in N_{\widehat{G}}(\mathcal M^\circ) ~ | ~ \, \mbox{$n\mathcal M^\circ$ is $W_F$-stable} \}.
$$
Consequently, conjugation by $n \in N_{\widehat{G}}(\mathcal M)$ preserves the set $(Z(\mathcal M^\circ)^I)_\Phi^\circ$. More generally, if $\mathcal M_1$ and $\mathcal M_2$ are standard Levi subgroups of $^LG$ and if we define the transporter subset by
$$
{\rm Trans}_{\widehat{G}}(\mathcal M_1, \mathcal M_2) := \{ g \in \widehat{G} ~ | ~ g\mathcal M_1 g^{-1} = \mathcal M_2 \},
$$
then 
$$
{\rm Trans}_{\widehat{G}}(\mathcal M_1, \mathcal M_2) = \{ g \in {\rm Trans}_{\widehat{G}}(\mathcal M^\circ_1, \mathcal M^\circ_2) ~ | ~ \, \mbox{$g\mathcal M^\circ_1 = \mathcal M^\circ_2 g$ is $W_F$-stable} \}.
$$
Consequently, conjugation by $g \in {\rm Trans}_{\widehat{G}}(\mathcal M_1, \mathcal M_2)$ sends $(Z(\mathcal M^\circ_1)^I)_\Phi ^\circ$ into $(Z(\mathcal M^\circ_2)^I)_\Phi ^\circ$.
\end{lemma}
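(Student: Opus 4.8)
The plan is to reduce everything to two elementary facts about the semidirect product structure $^LG = \widehat G \rtimes W_F$: first, that a subgroup of the form $\mathcal N \rtimes W_F$ (with $\mathcal N \subseteq \widehat G$ a $W_F$-stable subgroup) is normalized by $n \in \widehat G$ precisely when $n$ normalizes $\mathcal N$ \emph{and} keeps it $W_F$-stable; second, that the $\Gamma_F$-action (equivalently, the $\langle\Phi\rangle$-action, since the inertia $I$ is already used to form $Z(\mathcal M^\circ)^I$) on $Z(\mathcal M^\circ)^I$ is the restriction of conjugation by $W_F$ inside $^LG$, so anything commuting with the $W_F$-action preserves the fixed/coinvariant construction $(Z(\mathcal M^\circ)^I)_\Phi$ and its neutral component. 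I will state the normalizer identity first, then the transporter generalization, then read off the two ``consequently'' assertions.

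First I would prove the equality $N_{\widehat G}(\mathcal M) = \{ n \in N_{\widehat G}(\mathcal M^\circ) : n\mathcal M^\circ n^{-1} \text{ is } W_F\text{-stable}\}$. For ``$\subseteq$'': if $n \in \widehat G$ normalizes $\mathcal M = \mathcal M^\circ \rtimes W_F$, then since $\mathcal M^\circ = \mathcal M \cap \widehat G$ is characteristic in $\mathcal M$ (it is the unique maximal subgroup mapping to $1$ under $\nu: {}^LG \to W_F$, i.e.\ the intersection with $\widehat G$), $n$ normalizes $\mathcal M^\circ$; and $W_F$-stability of $n\mathcal M^\circ n^{-1} = \mathcal M^\circ$ is automatic. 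For ``$\supseteq$'': if $n$ normalizes $\mathcal M^\circ$ and $n\mathcal M^\circ n^{-1}$ is $W_F$-stable — but it equals $\mathcal M^\circ$, so $\mathcal M^\circ$ is $W_F$-stable (it already is), and then for any $w \in W_F$ we have $n(\widehat m, w)n^{-1} = (n\widehat m\, (nw n^{-1})\cdots)$; working this out in $\widehat G \rtimes W_F$ with $n \in \widehat G$ one gets $n(\widehat m, w)n^{-1} = (n \widehat m\, w(n^{-1}), w) \in \mathcal M^\circ \rtimes W_F$ using $w(n^{-1}) \in \widehat G$, so $n$ normalizes $\mathcal M$. (Here I write $w(\cdot)$ for the action of $w \in W_F$ on $\widehat G$; the parenthetical about $W_F$-stability in the statement is what makes the computation land back in $\mathcal M^\circ$.) The transporter version is identical bookkeeping: $g\mathcal M_1 g^{-1} = \mathcal M_2$ forces $g\mathcal M_1^\circ g^{-1} = \mathcal M_2^\circ$ by intersecting with $\widehat G$, and conversely if $g\mathcal M_1^\circ g^{-1} = \mathcal M_2^\circ$ is $W_F$-stable the same semidirect-product computation shows $g\mathcal M_1 g^{-1} = \mathcal M_2$.

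For the two ``consequently'' clauses: conjugation by $n \in N_{\widehat G}(\mathcal M)$ is an isomorphism $\mathcal M^\circ \to \mathcal M^\circ$, hence carries $Z(\mathcal M^\circ)$ to itself; since $n \mathcal M^\circ$ is $W_F$-stable, conjugation by $n$ commutes with the $W_F$-action up to inner automorphisms of $\mathcal M^\circ$ that are trivial on the center, so it commutes with the $I$-action on $Z(\mathcal M^\circ)$ and with the residual $\langle \Phi\rangle = \langle\sigma^{-1}\rangle$-action, hence preserves $Z(\mathcal M^\circ)^I$, its coinvariants $(Z(\mathcal M^\circ)^I)_\Phi$, and the neutral component $(Z(\mathcal M^\circ)^I)_\Phi^\circ$ (neutral components are preserved by any algebraic group automorphism). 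The transporter case is the same argument applied to the isomorphism $\mathcal M_1^\circ \to \mathcal M_2^\circ$ given by $g$, which is $W_F$-equivariant modulo inner automorphisms trivial on centers, hence induces an isomorphism $(Z(\mathcal M_1^\circ)^I)_\Phi^\circ \xrightarrow{\sim} (Z(\mathcal M_2^\circ)^I)_\Phi^\circ$.

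The only mildly delicate point — the ``main obstacle,'' such as it is — is making precise why conjugation by $n$ (or $g$) commutes with the $W_F$-action \emph{on the center} when, as an automorphism of $\mathcal M^\circ$ itself, $n$ and the $W_F$-action need not literally commute: the resolution is that $N_{\widehat G}(\mathcal M)/\mathcal M^\circ$ maps to $\mathrm{Out}(\mathcal M^\circ)$-like data respecting the pinning up to the Weyl group of $\mathcal M^\circ$, and Weyl-group elements act trivially on $Z(\mathcal M^\circ)$; concretely, for $n \in N_{\widehat G}(\mathcal M)$ and $w \in W_F$, the two automorphisms $\mathrm{Int}(n)\circ w$ and $w \circ \mathrm{Int}(n)$ of $\mathcal M^\circ$ differ by $\mathrm{Int}(n^{-1}\, w(n))$ with $n^{-1} w(n) \in \mathcal M^\circ$ (this is exactly the $W_F$-stability condition), and inner automorphisms act trivially on the center. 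That is why the statement is, as the author says, ``left to the reader.''
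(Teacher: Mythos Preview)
The paper gives no proof of this lemma (it is explicitly ``left to the reader''), so there is nothing to compare against; your overall strategy --- unwind the semidirect product and then use that inner automorphisms of $\mathcal M^\circ$ act trivially on $Z(\mathcal M^\circ)$ --- is the natural one and is correct in outline.

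There is, however, a genuine confusion in your ``$\subseteq$'' argument. You read ``$n\mathcal M^\circ$ is $W_F$-stable'' as the assertion that the conjugate $n\mathcal M^\circ n^{-1}$ is $W_F$-stable, and then declare this automatic since $n\mathcal M^\circ n^{-1} = \mathcal M^\circ$. On that reading the condition is vacuous and the lemma would assert $N_{\widehat G}(\mathcal M) = N_{\widehat G}(\mathcal M^\circ)$, which is false in general. The intended meaning is that the \emph{coset} $n\mathcal M^\circ$ in $N_{\widehat G}(\mathcal M^\circ)/\mathcal M^\circ$ is $W_F$-fixed, i.e.\ $w(n) \in n\mathcal M^\circ$ for every $w \in W_F$, equivalently $n^{-1}w(n) \in \mathcal M^\circ$. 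You do arrive at exactly this condition in your ``$\supseteq$'' computation and again in your final paragraph, so you clearly understand what is required; but the ``$\subseteq$'' direction as written does not establish it. The fix is immediate: if $n \in N_{\widehat G}(\mathcal M)$, conjugate the element $(1,w) \in \mathcal M$ to get $n(1,w)n^{-1} = (n\,w(n^{-1}),\,w) \in \mathcal M$, which forces $n\,w(n^{-1}) \in \mathcal M^\circ$. The same correction applies to the transporter statement, where ``$g\mathcal M_1^\circ = \mathcal M_2^\circ g$ is $W_F$-stable'' again means the coset is $W_F$-fixed, i.e.\ $w(g) \in \mathcal M_2^\circ g$ for all $w$. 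With that clarified, your argument for the ``consequently'' clauses (that ${\rm Int}(n)$ and the $W_F$-action commute on $Z(\mathcal M^\circ)$ because their commutator is inner) goes through as you wrote it.
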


We can now define the notion of {\em inertial equivalence} $(\lambda_1)_{\widehat{G}} \sim (\lambda_2)_{\widehat{G}}$ of infinitesimal characters.

\begin{defn} We say $(\lambda_1)_{\widehat{G}}$ and $(\lambda_2)_{\widehat{G}}$ are {\em inertially equivalent} if 
\begin{itemize}
\item $\{\mathcal M_{\lambda_1} \} = \{\mathcal M_{\lambda_2}\}$; 
\item  there exists $\mathcal M \in \{\mathcal M_{\lambda_1} \}$, and $\lambda_1^+ \in (\lambda_1)_{\widehat{G}}$ and $\lambda_2^+ \in (\lambda_2)_{\widehat{G}}$ whose images are minimally contained by $\mathcal M$, and an element $z \in (Z(\mathcal M^\circ)^I)_\Phi^\circ$, such that
$$
(z\lambda_1^+)_{\mathcal M^\circ} = (\lambda_2^+)_{\mathcal M^\circ}.
$$
\end{itemize}
We write $[\lambda]_{\widehat{G}}$ for the inertial equivalence class of $(\lambda)_{\widehat{G}}$.
\end{defn}
Note that $\mathcal M$ automatically contains $(z\lambda_1^+)(W_F)$ minimally if it contains $\lambda^+_1(W_F)$ minimally. 

\begin{lemma} \label{equiv_rel}
The relation $\sim$ is an equivalence relation on the set of infinitesimal characters.
\end{lemma}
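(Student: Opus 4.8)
The plan is to check that $\sim$ satisfies the three axioms of an equivalence relation. Reflexivity and symmetry are essentially formal; the real work is in transitivity, where the point is to transport two witnessing relations onto a single common standard Levi subgroup of ${}^LG$ by means of Lemmas~\ref{norm_conjugation} and~\ref{normalizer_lem}, after which everything reduces to bookkeeping with the fact that $(Z(\mathcal{M}^\circ)^I)_\Phi^\circ$ is a group whose elements commute with $\mathcal{M}^\circ$.

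For reflexivity, given an infinitesimal character $(\lambda)_{\widehat{G}}$, the discussion preceding Lemma~\ref{norm_conjugation} supplies a standard Levi $\mathcal{M} \in \{\mathcal{M}_\lambda\}$ and a representative $\lambda^+ \in (\lambda)_{\widehat{G}}$ whose image is minimally contained in $\mathcal{M}$; taking $z = 1 \in (Z(\mathcal{M}^\circ)^I)_\Phi^\circ$ gives $(z\lambda^+)_{\mathcal{M}^\circ} = (\lambda^+)_{\mathcal{M}^\circ}$, so $(\lambda)_{\widehat{G}} \sim (\lambda)_{\widehat{G}}$. For symmetry, suppose $(\lambda_1)_{\widehat{G}} \sim (\lambda_2)_{\widehat{G}}$ with witnessing data $\mathcal{M}$, $\lambda_1^+$, $\lambda_2^+$, $z$. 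I would choose $m \in \mathcal{M}^\circ$ realizing ${}^m(z\lambda_1^+) = \lambda_2^+$; since $z$ commutes with $\mathcal{M}^\circ$ this rewrites as ${}^m\lambda_1^+ = z^{-1}\lambda_2^+$, where $z^{-1} \in (Z(\mathcal{M}^\circ)^I)_\Phi^\circ$ and ${}^m\lambda_1^+$ still has image minimally contained in $\mathcal{M}$ (as $\mathcal{M}^\circ$ is normal in $\mathcal{M}$). Thus $\mathcal{M}$, $\lambda_2^+$, ${}^m\lambda_1^+$, $z^{-1}$ witness $(\lambda_2)_{\widehat{G}} \sim (\lambda_1)_{\widehat{G}}$.

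For transitivity, suppose $(\lambda_1)_{\widehat{G}} \sim (\lambda_2)_{\widehat{G}}$ is witnessed by $\mathcal{M}_a$, $\lambda_1^+$, $\lambda_2^+$, $z_a$ and $(\lambda_2)_{\widehat{G}} \sim (\lambda_3)_{\widehat{G}}$ is witnessed by $\mathcal{M}_b$, $\lambda_2^{++}$, $\lambda_3^{++}$, $z_b$; then $\{\mathcal{M}_{\lambda_1}\} = \{\mathcal{M}_{\lambda_2}\} = \{\mathcal{M}_{\lambda_3}\}$, and in particular $\mathcal{M}_b \in \{\mathcal{M}_{\lambda_1}\}$. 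The crucial step is to move the first relation onto $\mathcal{M}_b$. I would pick $g \in {\rm Trans}_{\widehat{G}}(\mathcal{M}_a, \mathcal{M}_b)$; then ${}^g\lambda_2^+$ and $\lambda_2^{++}$ are $\widehat{G}$-conjugate admissible homomorphisms with images minimally contained in $\mathcal{M}_b$, so Lemma~\ref{norm_conjugation} produces $n \in N_{\widehat{G}}(\mathcal{M}_b)$ with ${}^{ng}\lambda_2^+ = \lambda_2^{++}$. Setting $h = ng$, we have $h \in {\rm Trans}_{\widehat{G}}(\mathcal{M}_a, \mathcal{M}_b)$ and ${}^h\lambda_2^+ = \lambda_2^{++}$. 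Conjugating the first relation by $h$, and invoking the transporter part of Lemma~\ref{normalizer_lem} to see that ${}^hz_a \in (Z(\mathcal{M}_b^\circ)^I)_\Phi^\circ$, I obtain $({}^hz_a \cdot {}^h\lambda_1^+)_{\mathcal{M}_b^\circ} = (\lambda_2^{++})_{\mathcal{M}_b^\circ}$ with ${}^h\lambda_1^+$ minimally contained in $\mathcal{M}_b$. Both relations now live over $\mathcal{M}_b$, and I would compose them as in the symmetry step: choosing $m_1, m_2 \in \mathcal{M}_b^\circ$ realizing the two conjugacy-class identities and using that the central elements ${}^hz_a$ and $z_b$ are invariant under $\mathcal{M}_b^\circ$-conjugation yields $z_b \cdot {}^hz_a \cdot {}^{m_2 m_1 h}\lambda_1^+ = \lambda_3^{++}$, where $z_b \cdot {}^hz_a \in (Z(\mathcal{M}_b^\circ)^I)_\Phi^\circ$ and ${}^{m_2 m_1 h}\lambda_1^+ \in (\lambda_1)_{\widehat{G}}$ has image minimally contained in $\mathcal{M}_b$. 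This exhibits $(\lambda_1)_{\widehat{G}} \sim (\lambda_3)_{\widehat{G}}$.

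The one genuinely substantive point is the transport step: the two standard Levi subgroups $\mathcal{M}_a$ and $\mathcal{M}_b$ produced by the two given relations need only lie in the common class $\{\mathcal{M}_{\lambda_2}\}$, and the chosen representatives $\lambda_2^+$, $\lambda_2^{++}$ of $(\lambda_2)_{\widehat{G}}$ minimally contained in them may differ, so one must first conjugate to make these coincide. Lemmas~\ref{norm_conjugation} and~\ref{normalizer_lem} are exactly what make this possible — the latter because one needs the conjugating element to carry $(Z(\mathcal{M}_a^\circ)^I)_\Phi^\circ$ into $(Z(\mathcal{M}_b^\circ)^I)_\Phi^\circ$ — and once the data sit over a single Levi the argument is purely formal.
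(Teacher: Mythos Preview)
Your proof is correct and follows precisely the approach the paper indicates: the paper's own proof is the single line ``Use Lemmas~\ref{norm_conjugation} and~\ref{normalizer_lem},'' and you have carried out in full the verification that those two lemmas are exactly what is needed to transport the witnessing data for transitivity onto a common standard Levi subgroup. Your treatment of reflexivity, symmetry, and the composition step once everything lives over $\mathcal{M}_b$ is routine and accurate.
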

\begin{proof} 
Use Lemmas \ref{norm_conjugation} and \ref{normalizer_lem}. 
\end{proof}

\begin{Remark} 
To define $(\lambda_1)_{\widehat{G}} \sim (\lambda_2)_{\widehat{G}}$ we used the choice of $\widehat{G} \supset \widehat{B}\supset \widehat{T}$ (which was assumed to form part of a $\Gamma_F$-invariant splitting for $\widehat{G}$) in order to define the notion of standard Levi subgroup of $^LG$.  However, the equivalence relation $\sim$ is independent of this choice, since as remarked above, any two $\Gamma_F$-invariant splittings for $\widehat{G}$ are conjugate under $\widehat{G}^{\Gamma_F}$, by \cite[Cor.~1.7]{Ko84a}.
\end{Remark} 

\begin{Remark}
The property we need of standard Levi subgroups $\mathcal M \subseteq \, ^LG$ is that they are {\em decomposable}, that is, $\mathcal M^\circ := \mathcal M \cap \widehat{G}$ is $W_F$-stable, and $\mathcal M = \mathcal M^\circ \rtimes W_F$. Any standard Levi subgroup is decomposable.  In our discussion, we could have avoided choosing a notion of standard Levi, by associating to each $(\lambda)_{\widehat{G}}$ a unique class of decomposable Levi subgroups $\{ \mathcal M \}$, all of which are $\widehat{G}$-conjugate, such that $\lambda$ factors minimally through some $\mathcal M \in \{ \mathcal M \}$.
\end{Remark}

Now fix a standard Levi subgroup $\mathcal M \subseteq \, ^LG$. We write $\mathfrak t_{\mathcal M^\circ}$ for an inertial equivalence class of admissible homomorphisms $W_F \rightarrow \mathcal M$.  We write $\mathfrak Y_{\mathfrak t_{\mathcal M^\circ}}$ for the set of $\mathcal M^\circ$-conjugacy classes contained in this inertial class. We want to give this set the structure of an affine algebraic variety over $\mathbb C$.  Define the torus
\begin{equation} \label{Y_torus}
Y(\mathcal M^\circ) := (Z(\mathcal M^\circ)^I)_\Phi ^\circ.
\end{equation}
Then $Y(\mathcal M^\circ)$ acts transitively on $\mathfrak Y_{\mathfrak t_{\mathcal M^\circ}}$.  Fix a representative
$$
\lambda: W_F \rightarrow \mathcal M
$$
for this inertial class, so that $\mathfrak t_{\mathcal M^\circ} = [\lambda]_{\mathcal M^\circ}$. 

\begin{lemma} \label{ker_lambda_finite}
The $Y(\mathcal M^\circ)$-stabilizer 
$$
{\rm stab}_\lambda := \{ z \in Y(\mathcal M^\circ) ~ | ~ (z\lambda)_{\mathcal M^\circ} = (\lambda)_{\mathcal M^\circ} \}
$$
is finite.
\end{lemma}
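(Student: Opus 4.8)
The plan is to show that ${\rm stab}_\lambda$ lands in the kernel of a homomorphism from $Y(\mathcal{M}^\circ)$ onto a torus, with finite kernel, by pushing $\lambda$ forward along the central isogeny encoding the "central character" of the parameter. More precisely, the strategy mirrors the proof in the excerpt that ${\rm stab}_\sigma \subset X(M)$ is finite: there one restricts $\chi$ to $Z(M)^\circ(F)$; here the dual statement is to project $\lambda: W_F \to \mathcal{M} = \mathcal{M}^\circ \rtimes W_F$ to the cocharacter-lattice level. First I would recall that a $z \in Y(\mathcal{M}^\circ) = (Z(\mathcal{M}^\circ)^I)^\circ_\Phi$ acts on the $\mathcal{M}^\circ$-conjugacy class $(\lambda)_{\mathcal{M}^\circ}$ by multiplying the cocycle $\lambda^{cocyc}: W_F \to \mathcal{M}^\circ$ by the central, unramified cocycle $z^{cocyc}_z$ representing the image of $z$ under $Y(\mathcal{M}^\circ) = X^{\rm w}(M_{\mathcal M})^\circ \cong$ (a subgroup of weakly unramified characters of the corresponding Levi), exactly as in equation (\ref{action}); so $(z\lambda)_{\mathcal M^\circ} = (\lambda)_{\mathcal M^\circ}$ forces $\lambda^{cocyc}$ and $z^{cocyc}_z \cdot \lambda^{cocyc}$ to be $\mathcal M^\circ$-conjugate cocycles.

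The key step is then to compose with the quotient map $\mathcal{M}^\circ \twoheadrightarrow \mathcal{M}^\circ/[\mathcal{M}^\circ,\mathcal{M}^\circ] =: D$, an (abelian) torus-with-finite-stuff carrying a $W_F$-action. Since $z^{cocyc}_z$ is central, its image $\bar z^{cocyc}_z$ in $D$ is a cocycle in $Z^1(W_F/I, D^I)$ whose class is unramified, and the condition $(z\lambda)_{\mathcal M^\circ} = (\lambda)_{\mathcal M^\circ}$ projects to the equation $\bar z^{cocyc}_z \cdot \bar\lambda^{cocyc} = \bar\lambda^{cocyc}$ \emph{on the nose} in $H^1(W_F, D)$ (conjugation in $D$ is trivial), i.e.\ $\bar z^{cocyc}_z$ is a coboundary in $H^1(W_F,D)$, hence its class in $H^1(\langle\Phi\rangle, D^I) = (D^I)_\Phi$ is trivial. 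Thus ${\rm stab}_\lambda$ is contained in the kernel of the composite homomorphism $Y(\mathcal{M}^\circ) = (Z(\mathcal{M}^\circ)^I)^\circ_\Phi \to (D^I)_\Phi$, or better its neutral component $(D^I)^\circ_\Phi$. One checks this composite has finite kernel: the map $Z(\mathcal{M}^\circ) \to D = \mathcal{M}^\circ/[\mathcal{M}^\circ,\mathcal{M}^\circ]$ is an isogeny onto the image when restricted to $Z(\mathcal{M}^\circ)^\circ$ (the standard fact that the center maps with finite kernel to the cocenter of a connected reductive group), and taking $I$-invariants, neutral components, and $\Phi$-coinvariants of an isogeny of diagonalizable groups again yields a map with finite kernel (functoriality of $(-)^I$, $(-)^\circ$, $(-)_\Phi$ on tori up to finite kernels/cokernels). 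Therefore ${\rm stab}_\lambda$ is finite.

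The main obstacle I anticipate is bookkeeping at the level of cocycles versus homomorphisms: one must be careful that translating "$z$ acts on $(\lambda)_{\mathcal M^\circ}$" into "multiply the cocycle by $z^{cocyc}_z$" is precisely the content of the desideratum recorded in (\ref{action}) and the surrounding discussion (with $W_F$ in place of $W'_F$, and with $\mathcal M^\circ$ in place of $\widehat M$), and that the relevant $z^{cocyc}_z$ is genuinely central and unramified so that its image in the cocenter lands in $Z^1(W_F/I, D^I)$ and the "coboundary" conclusion is valid. A secondary, purely technical point is verifying that passing from $(Z(\mathcal{M}^\circ)^I)^\circ_\Phi$ to $(D^I)^\circ_\Phi$ along the isogeny $Z(\mathcal{M}^\circ)^\circ \to D$ has finite kernel; this follows from the exact-sequence formalism for diagonalizable groups but should be stated cleanly, perhaps by passing to cocharacter lattices tensored with $\mathbb{Q}$, where $I$-invariants, $\Phi$-coinvariants, and the isogeny all become isomorphisms. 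Everything else is formal, so the proof will be short.
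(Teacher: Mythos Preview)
Your argument is correct, and the approach is genuinely different from the paper's, though both rest on the same underlying observation that the stabilizer condition becomes trivial once one kills the derived subgroup of $\mathcal M^\circ$.

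The paper argues as follows: choose $r \geq 1$ so that $\Phi^r$ acts trivially on $\mathcal M^\circ$, and consider the norm map $N_r : (Z(\mathcal M^\circ)^I)_\Phi \to Z(\mathcal M^\circ)^{\Gamma_F}$, $z \mapsto z\,\Phi(z)\cdots\Phi^{r-1}(z)$. If $z \in {\rm stab}_\lambda$, one evaluates the conjugacy $m(z\lambda)m^{-1}=\lambda$ at $\Phi^r$; since $\Phi^r$ acts trivially, the cocycle identity collapses to $N_r(z) = [\lambda^{cocyc}(\Phi^r), m]$, a commutator in $\mathcal M^\circ$. Hence $N_r(z) \in Z(\mathcal M^\circ)^{\Gamma_F} \cap (\mathcal M^\circ)_{\rm der}$, a finite group, and since $N_r$ itself has finite kernel, ${\rm stab}_\lambda$ is finite.

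Your route instead projects the whole cocycle to the cocenter $D = \mathcal M^\circ/(\mathcal M^\circ)_{\rm der}$ and uses that conjugation there is trivial, so $\bar z^{cocyc}$ becomes a coboundary in $H^1(W_F,D)$; then injectivity of inflation $H^1(\langle\Phi\rangle, D^I) \hookrightarrow H^1(W_F,D)$ forces the image of $z$ in $(D^I)_\Phi$ to vanish, and you conclude by the finite-kernel property of $(Z(\mathcal M^\circ)^I)^\circ_\Phi \to (D^I)_\Phi$. This is the dual-side mirror of the classical ${\rm stab}_\sigma$ argument you invoked, and is arguably more conceptual. The paper's version is more hands-on: by passing to $\Phi^r$ it avoids any appeal to inflation--restriction and makes both the ``lands in a finite group'' and ``finite kernel'' steps visibly elementary. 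Your secondary technical point (finite kernel after $(-)^I$ and $(-)_\Phi$) is real but routine; the paper sidesteps it entirely since $N_r$ is a map between honest groups and its kernel is transparently finite.
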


\begin{proof}
There exists an integer $r \geq 1$ such that $\Phi^r$ acts trivially on $\mathcal M^\circ$.  The group ${\rm stab}_\lambda$ is contained in the preimage of the finite group $Z(\mathcal M^\circ)^{\Gamma_F} \cap (\mathcal M^\circ)_{\rm der}$ under the norm homomorphism
\begin{align*}
N_r: (Z(\mathcal M^\circ)^I)_\Phi &\rightarrow Z(\mathcal M^\circ)^{\Gamma_F}, \\
z &\mapsto z\Phi(z) \cdots \Phi^{r-1}(z) 
\end{align*}
and the kernel of this homomorphism is finite.
\end{proof}
Then $\mathfrak Y_{\mathfrak t_{\mathcal M^\circ}}$ is a torsor under the quotient torus
$$
 \mathfrak Y_{\mathfrak t_{\mathcal M^\circ}} \cong Y(\mathcal M^\circ)/{\rm stab}_\lambda.
$$
In this way the left hand side acquires the structure of an affine algebraic variety.  Up to isomorphism, this structure is independent of the choice of $\lambda$ representing $\mathfrak t_{\mathcal M^\circ}$.

Now let $\mathfrak t$ denote an inertial class of infinitesimal characters for $G$, and let $\mathfrak Y_{\mathfrak t}$ denote the set of infinitesimal characters in $\mathfrak t$.  Recall $\mathfrak t$ gives rise to a unique class of standard Levi subgroups $\{ \mathcal M \}$, having the property that some representative $\lambda$ for $\mathfrak t$ factors minimally through some $\mathcal M \in \{ \mathcal M \}$. Fix such a representative $\lambda: W_F \rightarrow \mathcal M \hookrightarrow \, ^LG $ for $\mathfrak t$, so that $\mathfrak t = [\lambda]_{\widehat{G}}$ and $\mathfrak t_{\mathcal M^\circ} = [\lambda]_{\mathcal M^\circ}$. By our previous work, there is a {\em surjective} map
\begin{align*}
\mathfrak Y_{\mathfrak t_{\mathcal M^\circ}} & \rightarrow \mathfrak Y_{\mathfrak t} \\
(z\lambda)_{\mathcal M^\circ} & \mapsto (z\lambda)_{\widehat{G}}.
\end{align*}
where $z \in Y(\mathcal M^\circ)/{\rm stab}_\lambda$.   Let 
$$N_{\widehat{G}}(\mathcal M,[\lambda]_{\mathcal M^\circ}) = \{ n \in N_{\widehat{G}}(\mathcal M) ~ | ~  (\, ^n\lambda)_{\mathcal M^\circ} = 
(z\lambda)_{\mathcal M^\circ}, \, \mbox{for some $z \in Y(\mathcal M^\circ)$} \}.$$
 From the above discussion we see the following.
\begin{lemma} 
The fibers of $\mathfrak Y_{\mathfrak t_{\mathcal M^\circ}} \rightarrow \mathfrak Y_{\mathfrak t}$ are precisely the orbits of the finite group $W^{\widehat{G}}_{[\lambda]_{\mathcal M^\circ}} := N_{\widehat{G}}(\mathcal M,[\lambda]_{\mathcal M^\circ}) / \mathcal M^\circ$ on $\mathfrak Y_{\mathfrak t_{\mathcal M^\circ}}$.
\end{lemma}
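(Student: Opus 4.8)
The claim is the exact analogue, on the dual side, of the statement (already established in Section \ref{bernstein_cntr_rev}) that the fibers of $\mathfrak X_{\mathfrak s_M} \to \mathfrak X_{\mathfrak s}$ are the orbits of $W^G_{[M,\sigma]_M} = N_G([M,\sigma]_G)/M$. So the plan is to mimic that argument verbatim, replacing $\mathfrak X$ by $\mathfrak Y$, the torus $X(M)$ by $Y(\mathcal M^\circ)$, conjugation in $G$ by conjugation in $\widehat G$, and the role of $N_G(M)$ by $N_{\widehat G}(\mathcal M)$. First I would observe that the map $\mathfrak Y_{\mathfrak t_{\mathcal M^\circ}} \to \mathfrak Y_{\mathfrak t}$ sending $(z\lambda)_{\mathcal M^\circ} \mapsto (z\lambda)_{\widehat G}$ is $W^{\widehat G}_{[\lambda]_{\mathcal M^\circ}}$-invariant: if $n \in N_{\widehat G}(\mathcal M, [\lambda]_{\mathcal M^\circ})$, then $(\,^n\lambda)_{\mathcal M^\circ} = (z_0\lambda)_{\mathcal M^\circ}$ for some $z_0 \in Y(\mathcal M^\circ)$, and since conjugation by $n$ commutes with scaling by $Y(\mathcal M^\circ)$ (because $n$ normalizes $\mathcal M^\circ$, hence by Lemma \ref{normalizer_lem} preserves $(Z(\mathcal M^\circ)^I)^\circ_\Phi$), one gets $(\,^n(z\lambda))_{\mathcal M^\circ} = (\,^nz \cdot z_0 \cdot \lambda)_{\mathcal M^\circ}$, which has the same image $(z\lambda)_{\widehat G}$ in $\mathfrak Y_{\mathfrak t}$. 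Since $\mathcal M^\circ$ acts trivially on $\mathcal M^\circ$-conjugacy classes, the $N_{\widehat G}(\mathcal M)$-action on $\mathfrak Y_{\mathfrak t_{\mathcal M^\circ}}$ descends to the finite group $W^{\widehat G}_{[\lambda]_{\mathcal M^\circ}}$ (finiteness of this group follows from finiteness of $W^G$-type normalizer quotients together with Lemma \ref{ker_lambda_finite}), so the fibers are unions of $W^{\widehat G}_{[\lambda]_{\mathcal M^\circ}}$-orbits.

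The substantive direction is the converse: two points $(z_1\lambda)_{\mathcal M^\circ}$ and $(z_2\lambda)_{\mathcal M^\circ}$ with the same image, i.e. $(z_1\lambda)_{\widehat G} = (z_2\lambda)_{\widehat G}$, lie in one $W^{\widehat G}_{[\lambda]_{\mathcal M^\circ}}$-orbit. Here I would invoke Lemma \ref{norm_conjugation} directly: both $z_1\lambda$ and $z_2\lambda$ are admissible homomorphisms, they are $\widehat G$-conjugate by hypothesis, and (by the remark following the definition of inertial equivalence) their images are each contained minimally by the standard Levi $\mathcal M$, since $\mathcal M$ contains $\lambda(W_F)$ minimally and scaling by an element of $Y(\mathcal M^\circ) = (Z(\mathcal M^\circ)^I)^\circ_\Phi$ does not change this. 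Lemma \ref{norm_conjugation} then yields $n \in N_{\widehat G}(\mathcal M)$ with $\,^n(z_1\lambda) = z_2\lambda$. It remains to check $n \in N_{\widehat G}(\mathcal M, [\lambda]_{\mathcal M^\circ})$: from $\,^n(z_1\lambda) = z_2\lambda$ and the fact that $n$ preserves $Y(\mathcal M^\circ)$, we get $(\,^n\lambda)_{\mathcal M^\circ} = (\,^n z_1^{-1} \cdot z_2 \cdot \lambda)_{\mathcal M^\circ}$ with $\,^nz_1^{-1} z_2 \in Y(\mathcal M^\circ)$, which is precisely the membership condition. Hence $n$ descends to an element of $W^{\widehat G}_{[\lambda]_{\mathcal M^\circ}}$ carrying $(z_1\lambda)_{\mathcal M^\circ}$ to $(z_2\lambda)_{\mathcal M^\circ}$.

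The only point requiring a little care—and the step I expect to be the main obstacle, though it is minor—is the bookkeeping around "minimal containment" being preserved under the $Y(\mathcal M^\circ)$-action and under the conjugation produced by Lemma \ref{norm_conjugation}; this is exactly the content of the Note after the definition of inertial equivalence, plus Lemma \ref{normalizer_lem}, so no new input is needed. One should also double-check that the two characterizations of the descended action agree, i.e. that $W^{\widehat G}_{[\lambda]_{\mathcal M^\circ}}$ indeed acts on $\mathfrak Y_{\mathfrak t_{\mathcal M^\circ}} \cong Y(\mathcal M^\circ)/\mathrm{stab}_\lambda$ and that this is the action whose orbits we have described—but this is formal given that $N_{\widehat G}(\mathcal M)$ acts on $Y(\mathcal M^\circ)$ compatibly with the torsor structure. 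Altogether the proof is a transcription of the Bernstein-side argument together with Lemmas \ref{norm_conjugation}, \ref{normalizer_lem}, and \ref{ker_lambda_finite}.
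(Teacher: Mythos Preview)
Your proposal is correct and is precisely the argument the paper has in mind: the paper gives no explicit proof, stating only ``From the above discussion we see the following,'' where the ``above discussion'' consists of Lemmas \ref{norm_conjugation}, \ref{normalizer_lem}, and \ref{ker_lambda_finite} together with the note that $\mathcal M$ contains $(z\lambda)(W_F)$ minimally whenever it contains $\lambda(W_F)$ minimally. Your write-up simply unpacks these references in the expected way.
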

Hence $\mathfrak Y_{\mathfrak t} = W^{\widehat{G}}_{\mathfrak t_{\mathcal M^\circ}} \backslash \mathfrak Y_{\mathfrak t_{\mathcal M^\circ}}$ acquires the structure of an affine variety over $\mathbb C$.  Thus $\mathfrak Y = \coprod_{\mathfrak t} \mathfrak Y_{\mathfrak t}$ is an affine variety over $\mathbb C$ and each $\mathfrak Y_{\mathfrak t}$ is a connected component.

Let $\mathfrak Z^{\rm st}(G)$ denote the ring of regular functions on the affine variety $\mathfrak Y$.  We call this ring the {\bf stable Bernstein center} of $G/F$.

\subsection{Base change homomorphism of the stable Bernstein center}

Let $E/F$ be a finite extension in $\overline{F}/F$ with ramification index $e$ and residue field extension $k_E/k_F$ of degree $f$.  Then $W_E \subset W_F$ and $I_E \subseteq I_F$.  Further, we can take $\Phi_E := \Phi_F^f$ as a geometric Frobenius element in $W_E$.  Let $\mathfrak Y^{G/F}$ resp. $\mathfrak Y^{G/E}$ denote the variety of infinitesimal characters associated to $G$ resp. $G_E$.

\begin{prop} \label{change_of_field}
The map $(\lambda)_{\widehat{G}} \mapsto (\lambda|_{W_E})_{\widehat{G}}$ determines a morphism of algebraic varieties $\mathfrak Y^{G/F} \rightarrow \mathfrak Y^{G/E}$.  
\end{prop}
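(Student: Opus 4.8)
The plan is to check that $(\lambda)_{\widehat G}\mapsto(\lambda|_{W_E})_{\widehat G}$ pulls regular functions back to regular functions, i.e.\ induces a ring homomorphism $\mathfrak Z^{\rm st}(G/E)=\mathbb C[\mathfrak Y^{G/E}]\to\mathfrak Z^{\rm st}(G/F)=\mathbb C[\mathfrak Y^{G/F}]$. The map is clearly well defined on points (conjugate admissible homomorphisms have conjugate restrictions, and no relevance condition intervenes), so once the pullback statement is known it is automatically the desired morphism. Since $\mathfrak Y^{G/F}=\coprod_{\mathfrak t}\mathfrak Y^{G/F}_{\mathfrak t}$, I would work one component at a time. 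Fix $\mathfrak t=[\lambda]_{\widehat G}$ with $\lambda\colon W_F\to\mathcal M\hookrightarrow{}^LG$ factoring minimally through a standard Levi $\mathcal M$, and recall the finite surjection $\pi\colon\mathfrak Y^{G/F}_{\mathfrak t_{\mathcal M^\circ}}\twoheadrightarrow\mathfrak Y^{G/F}_{\mathfrak t}$ exhibiting $\mathfrak Y^{G/F}_{\mathfrak t}$ as the quotient of the torus-torsor $\mathfrak Y^{G/F}_{\mathfrak t_{\mathcal M^\circ}}$ by the finite group $W^{\widehat G}_{\mathfrak t_{\mathcal M^\circ}}$, so $\mathbb C[\mathfrak Y^{G/F}_{\mathfrak t}]=\mathbb C[\mathfrak Y^{G/F}_{\mathfrak t_{\mathcal M^\circ}}]^{W^{\widehat G}_{\mathfrak t_{\mathcal M^\circ}}}$. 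Any function pulled back along $\pi$ is $W^{\widehat G}_{\mathfrak t_{\mathcal M^\circ}}$-invariant for free, so it is enough to prove that the set map $\mathfrak Y^{G/F}_{\mathfrak t_{\mathcal M^\circ}}\to\mathfrak Y^{G/E}$, $(z\lambda)_{\mathcal M^\circ}\mapsto((z\lambda)|_{W_E})_{\widehat G}$, has regular pullback and image in a single component of $\mathfrak Y^{G/E}$.

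Next I would compute this map in coordinates. Since $\widehat B\supset\widehat T$ are $\Gamma_F$-stable they are $W_E$-stable, so $\mathcal M^\circ$ is $W_E$-stable; let $\mathcal N^\circ\subseteq\mathcal M^\circ$ be a minimal $W_E$-stable Levi of $\widehat G$ through which $\lambda|_{W_E}$ factors (these exist and are $\widehat G$-conjugate by the argument of \cite[Prop.~3.6]{Bo79}, and $\mathcal N:=\mathcal N^\circ\rtimes W_E$, being decomposable, is $\widehat G$-conjugate to a standard Levi, hence may be used to describe the variety structure on $\mathfrak Y^{G/E}$ near $(\lambda|_{W_E})_{\widehat G}$). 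The torsor $\mathfrak Y^{G/F}_{\mathfrak t_{\mathcal M^\circ}}$ is parametrized by $z\in Y(\mathcal M^\circ)=(Z(\mathcal M^\circ)^{I_F})^\circ_{\Phi_F}$ via $z\mapsto(z\lambda)_{\mathcal M^\circ}$, where $z\lambda$ is $\lambda$ multiplied by the central cocycle inflated from $W_F/I_F=\langle\Phi_F\rangle$ whose value at $\Phi_F$ is a chosen lift of $z$. Restricting that cocycle along $W_E/I_E\hookrightarrow W_F/I_F$ and using $\Phi_E=\Phi_F^f$, the cocycle relation gives it value $N_f(z):=z\,\Phi_F(z)\cdots\Phi_F^{f-1}(z)$ at $\Phi_E$. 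Since $\mathcal N^\circ\subseteq\mathcal M^\circ$ forces $Z(\mathcal M^\circ)\subseteq Z(\mathcal N^\circ)$ and $I_E\subseteq I_F$, the rule $z\mapsto N_f(z)$ descends to a morphism of algebraic groups $(Z(\mathcal M^\circ)^{I_F})_{\Phi_F}\to(Z(\mathcal N^\circ)^{I_E})_{\Phi_E}$, hence carries the torus $Y(\mathcal M^\circ)$ into the neutral component $Y(\mathcal N^\circ)=(Z(\mathcal N^\circ)^{I_E})^\circ_{\Phi_E}$. Choosing the cocycle representative of $\lambda$ so that its restriction to $W_E$ takes values in $\mathcal N^\circ$, one sees $(z\lambda)|_{W_E}$ is just $\lambda|_{W_E}$ twisted by the restricted cocycle, i.e.\ as a point of $\mathfrak Y^{G/E}_{[\lambda|_{W_E}]_{\mathcal N^\circ}}$ it is the image of $N_f(z)\in Y(\mathcal N^\circ)$. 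Therefore our map factors as
$$
\mathfrak Y^{G/F}_{\mathfrak t_{\mathcal M^\circ}}\ \xrightarrow{\ z\,\mapsto\, N_f(z)\ }\ \mathfrak Y^{G/E}_{[\lambda|_{W_E}]_{\mathcal N^\circ}}\ \longrightarrow\ \mathfrak Y^{G/E}_{\mathfrak t'}\ \hookrightarrow\ \mathfrak Y^{G/E},
$$
the first arrow a morphism of torus-torsors (a torus homomorphism composed with a translation) and the second the canonical finite quotient onto the component $\mathfrak Y^{G/E}_{\mathfrak t'}$ containing $(\lambda|_{W_E})_{\widehat G}$; both send regular functions to regular functions and have image in a single component, so the composite does too.

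The step I expect to be the real obstacle is the change of minimal Levi: restriction to $W_E$ can genuinely shrink it — already when $\lambda(\Phi_F)$ permutes simple factors of $\widehat G$ that $\lambda(\Phi_F^f)$ leaves stable — so one must pass from $\mathcal M^\circ$ to the smaller $\mathcal N^\circ$, and then the lifted map $\mathfrak Y^{G/F}_{\mathfrak t_{\mathcal M^\circ}}\to\mathfrak Y^{G/E}_{[\lambda|_{W_E}]_{\mathcal N^\circ}}$ does \emph{not} in general descend to a morphism of the quotients $\mathfrak Y^{G/F}_{\mathfrak t}\to\mathfrak Y^{G/E}_{\mathfrak t'}$ (there is no reason for $W^{\widehat G}_{\mathfrak t_{\mathcal M^\circ}}$ to map into the finite group producing $\mathfrak Y^{G/E}_{\mathfrak t'}$). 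Working on coordinate rings circumvents this: the pullback of a regular function is $W^{\widehat G}_{\mathfrak t_{\mathcal M^\circ}}$-invariant merely because it is a $\pi$-pullback, so no compatibility between the two finite groups is needed. The remaining points — that $N_f$ is well defined on the indicated (co)invariants, that $\mathcal N^\circ$ may be taken inside $\mathcal M^\circ$, and that the twist identity for $(z\lambda)|_{W_E}$ holds — are routine from the constructions of \S\ref{variety_str_subsec} and the cocycle description of (weakly) unramified twists recalled in \S\ref{bernstein_cntr_rev}.
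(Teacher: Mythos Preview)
Your proposal is correct and follows essentially the same approach as the paper: both arguments lift to the torus-torsor $\mathfrak Y^{G/F}_{\mathfrak t_{\mathcal M^\circ}}$, identify the restriction-to-$W_E$ map with the norm homomorphism $N_f\colon Y(\mathcal M^\circ)\to Y(\mathcal N^\circ)$ (the paper writes $\mathcal M_E^\circ$ for your $\mathcal N^\circ$), and conclude algebraicity from this torus description. Your discussion of why the lift need not descend to a morphism of the finite quotients---only yield regular pullbacks---is a useful clarification that the paper leaves implicit.
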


\begin{defn} \label{b_E/F_defn}
We call the corresponding map $b_{E/F}: \mathfrak Z^{\rm st}(G_E) \rightarrow \mathfrak Z^{\rm st}(G)$ the {\em base change homomorphism} for the stable Bernstein center.
\end{defn}

\begin{proof}
Suppose $\lambda: W_F \rightarrow  \widehat{G} \rtimes W_F$ factors minimally through the standard Levi subgroup $\mathcal M \subset \widehat{G} \rtimes W_F$ and that its restriction $\lambda|_{W_E} : W_E \rightarrow \widehat{G} \rtimes W_E$ factors minimally through the standard Levi subgroup $\mathcal M_E \subset \widehat{G} \rtimes W_E$.  We may assume $\mathcal M_E^\circ \subseteq \mathcal M^\circ$ and thus $Z(\mathcal M^\circ) \subset Z(\mathcal M_E^\circ)$.  

There is a homomorphism of tori
\begin{align} \label{Y_bc_hom}
Y(\mathcal M^\circ) = (Z(\mathcal M^\circ)^{I_F})_{\Phi_F}^\circ &\longrightarrow (Z(\mathcal M_E^\circ)^{I_E})_{\Phi_F^f}^\circ = Y(\mathcal M_E^\circ) \\
z &\longmapsto z_f := N_f(z) := z\cdot \Phi_F(z) \cdots \Phi^{f-1}_F(z).\notag
\end{align}

Recall that $z \in (Z(\mathcal M^\circ)^{I_F})_{\Phi_F}$ is identified with the image of the element $z(\Phi_F) \in Z(\mathcal M^\circ)^{I_F}$, where $z$ is viewed as a cohomology class $z \in H^1(\langle \Phi_F \rangle, Z(\mathcal M^\circ)^{I_F})$.  Using the same fact for $E$ in place of $F$, it follows that $(z\lambda)|_{W_E} = z_f \lambda|_{W_E}$, where $z_f$ is defined as above.  Thus the map $(z\lambda)_{\widehat{G}} \mapsto ((z\lambda)|_{W_E})_{\widehat{G}}$ lifts to the map
$$
(z \lambda)_{\mathcal M^\circ} \mapsto (z_f \lambda|_{W_E})_{\mathcal M^\circ} \mapsto (z_f \lambda|_{W_E})_{\widehat{G}},
$$
and being induced by (\ref{Y_bc_hom}), the latter is an algebraic morphism.
\end{proof}

\subsection{Relation between the Bernstein center and the stable Bernstein center}

The varieties $\mathfrak X$ and $\mathfrak Y$ are defined unconditionally.  In order to relate them, we need to assume ${\rm LLC+}$ holds.  

\begin{prop} \label{map_structure} Assume ${\rm LLC+}$ holds for the group $G$.  Then the map $(M,\sigma)_G \mapsto (\varphi_\sigma|_{W_F})_{\widehat{G}}$ defines a quasi-finite morphism of affine algebraic varieties 
$$
f: \mathfrak X \rightarrow \mathfrak Y.  
$$
It is surjective if $G/F$ is quasi-split.
\end{prop}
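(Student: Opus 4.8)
The plan is to analyze $f$ one Bernstein component at a time, transporting the torsor descriptions of $\mathfrak X_{\mathfrak s}$ and $\mathfrak Y_{\mathfrak t}$ back to the governing tori $X(M)$ and $Y(\mathcal M^\circ)$, where the map becomes a plain homomorphism.

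\textbf{Well-definedness.} Given a cuspidal pair $(M,\sigma)$, LLC for $M$ produces $\varphi_\sigma : W'_F \to \, ^LM$, and composing with an embedding $^LM \hookrightarrow \, ^LG$ (unique up to $\widehat G$-conjugacy) and restricting to $W_F$ gives the infinitesimal character $\lambda := \varphi_\sigma|_{W_F}$. Replacing $(M,\sigma)$ by a $G$-conjugate $(\,^gM, \,^g\sigma)$ changes $\lambda$ only by an inner automorphism of $\widehat G$, by the weak invariance Conjecture~\ref{weak_invariant}, so $(\lambda)_{\widehat G}$ depends only on $(M,\sigma)_G$. Replacing $\sigma$ by $\sigma\chi$ with $\chi \in X(M)$ multiplies $\varphi_\sigma|_{W_F}$ by an unramified $Z(\widehat M)^I$-valued cocycle $z^{cocyc}_\chi$, by the desideratum~(\ref{action}); since $X(M) = (Z(\widehat M)^I)_\Phi^\circ$ is connected, its image in $(Z(\mathcal M^\circ)^I)_\Phi$ lies in the neutral component $Y(\mathcal M^\circ)$, where $\mathcal M^\circ$ is the neutral component of the minimal Levi subgroup of $^LG$ through which $\lambda$ factors (which is then a Levi of $\widehat M$). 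Hence $f$ is a well-defined map of sets carrying each component $\mathfrak X_{\mathfrak s}$ into a single $\mathfrak Y_{\mathfrak t}$, with $\mathfrak t = [\lambda]_{\widehat G}$.

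\textbf{Algebraicity.} Fix $\mathfrak s = [M,\sigma]_G$ and let $\lambda, \mathcal M^\circ, \mathfrak t$ be as above. The inclusion $Z(\widehat M)^I \hookrightarrow Z(\mathcal M^\circ)^I$ induces a homomorphism of tori
$$
\rho : X(M) = (Z(\widehat M)^I)_\Phi^\circ \longrightarrow (Z(\mathcal M^\circ)^I)_\Phi^\circ = Y(\mathcal M^\circ).
$$
After choosing base points $(M,\sigma)_M$ and $(\lambda)_{\mathcal M^\circ}$, I would set up the diagram with rows the quotient maps $X(M) \to \mathfrak X_{\mathfrak s_M} \to \mathfrak X_{\mathfrak s}$ and $Y(\mathcal M^\circ) \to \mathfrak Y_{\mathfrak t_{\mathcal M^\circ}} \to \mathfrak Y_{\mathfrak t}$ (each step a quotient by a finite group: ${\rm stab}_\sigma$ then $W^G_{[M,\sigma]_M}$; ${\rm stab}_\lambda$ then $W^{\widehat G}_{\mathfrak t_{\mathcal M^\circ}}$), with left edge $\rho$. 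Commutativity is precisely~(\ref{action}) together with the matching of the conjugation action of $N_G(M)$ on parameters with the conjugation action of $N_{\widehat G}(\mathcal M)$, which follows from weak invariance and Lemmas~\ref{norm_conjugation} and~\ref{normalizer_lem}. Since $X(M) \to \mathfrak X_{\mathfrak s}$ is a finite-group quotient and the composite $X(M) \to \mathfrak Y_{\mathfrak t}$ --- namely $\rho$ followed by the two finite-group quotients on the right --- is a morphism invariant under that group, it descends to a morphism $\mathfrak X_{\mathfrak s} \to \mathfrak Y_{\mathfrak t}$. Assembling over all $\mathfrak s$ gives the morphism $f : \mathfrak X \to \mathfrak Y$.

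\textbf{Quasi-finiteness and surjectivity.} Finite-group quotients and the torus-to-torsor map $Y(\mathcal M^\circ) \to \mathfrak Y_{\mathfrak t_{\mathcal M^\circ}}$ have finite fibers, so $f$ restricted to each component has finite fibers --- hence $f$ is quasi-finite --- once $\ker\rho$ is finite. Because $\mathcal M^\circ$ is a Levi of $\widehat M$, the inclusion $Z(\widehat M) \hookrightarrow Z(\mathcal M^\circ)$ is a closed immersion of groups of multiplicative type, so $X^*(Z(\mathcal M^\circ)) \twoheadrightarrow X^*(Z(\widehat M))$; since $\Gamma_F$ acts through a finite quotient, the functor of $\Gamma_F$-invariants is exact over $\mathbb Q$, giving $(X^*(Z(\mathcal M^\circ)) \otimes \mathbb Q)^{\Gamma_F} \twoheadrightarrow (X^*(Z(\widehat M)) \otimes \mathbb Q)^{\Gamma_F}$, which is exactly the statement that $\rho$ has finite kernel. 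For surjectivity when $G/F$ is quasi-split, given $(\lambda)_{\widehat G} \in \mathfrak Y$ one invokes the expectation recorded in $\S\ref{st_bernstein_cntr_sec}$ that $\lambda$ is represented by $\varphi_\pi|_{W_F}$ for some $\pi \in \Pi(G/F)$; taking $(M,\sigma)_G$ to be the supercuspidal support of $\pi$, Conjecture~\ref{comp_w_induction} yields $f((M,\sigma)_G) = (\varphi_\pi|_{W_F})_{\widehat G} = (\lambda)_{\widehat G}$.

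\textbf{Main obstacle.} The crux is the commutativity of the square in the algebraicity step: one must check carefully that conjugation by $N_G(M)$ on the cuspidal-pair side translates, through the embedding $^LM \hookrightarrow \, ^LG$ and the weak invariance of LLC, into conjugation by elements of $N_{\widehat G}(\mathcal M)$ respecting the $W_F$-structure, so that the finite groups $W^G_{[M,\sigma]_M}$ and $W^{\widehat G}_{\mathfrak t_{\mathcal M^\circ}}$ and the finite stabilizers ${\rm stab}_\sigma$ and ${\rm stab}_\lambda$ are carried to one another compatibly with $\rho$. Granting that, the lattice computation for quasi-finiteness and (modulo the input already flagged as conjectural) the surjectivity are routine.
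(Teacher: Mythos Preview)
Your approach to well-definedness, algebraicity, and surjectivity matches the paper's proof essentially step for step: invoke weak invariance to see the map is well-defined on $G$-conjugacy classes, lift to the torus homomorphism $\rho: X(M) \to Y(\mathcal M^\circ_\lambda)$ induced by $Z(\widehat M) \hookrightarrow Z(\mathcal M^\circ_\lambda)$ to verify algebraicity, and invoke LLC in the quasi-split case for surjectivity.

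The one place you diverge is quasi-finiteness. The paper disposes of this in a single line --- ``the fibers of $f$ are finite by a property of LLC'' --- meaning that the fiber over $(\lambda)_{\widehat G}$ is exactly the set of supercuspidal supports of representations in the infinitesimal class $\Pi_\lambda = \coprod_{\varphi \leadsto \lambda} \Pi_\varphi$, which is finite because it is a finite union of finite $L$-packets (both finitenesses being desiderata of LLC). Your argument via finiteness of $\ker\rho$ is correct as far as it goes, and it gives a more structural reason for finiteness of fibers \emph{within a single component} $\mathfrak X_{\mathfrak s}$; but it leaves unaddressed why only finitely many components $\mathfrak X_{\mathfrak s}$ map to a given $\mathfrak Y_{\mathfrak t}$. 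To close that gap you would most naturally invoke the very LLC finiteness the paper uses directly, so the paper's route is both shorter and complete. Relatedly, the ``main obstacle'' you flag --- the careful matching of $N_G(M)$-conjugation with $N_{\widehat G}(\mathcal M)$-conjugation to make the diagram commute --- is not something the paper treats as delicate: it simply observes that the map on supercuspidal supports lifts to the map (\ref{alg_map}) of torus quotients and moves on.
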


The reader should compare this with Conjecture 7.18 in \cite{Vo}.  Our variety structure on the set $\mathfrak Y$ is different from that put forth by Vogan, and our $f$ is given by a simple and explicit rule. In view of ${\rm LLC+}$ our $f$ automatically satisfies the condition which Vogan imposed on the map in his Conjecture 7.18: if $\pi$ has supercuspidal support $(M,\sigma)_G$, then the infinitesimal character of $\pi$ is $f((M,\sigma)_G)$.

\begin{proof}
It is easy to see that the map $(M,\sigma)_G \mapsto (\varphi_\sigma|_{W_F})_{\widehat{G}}$ is well-defined. We need to show that an isomorphism $c_g: (M,\sigma) ~ \widetilde{\rightarrow} ~ (\,^gM, \, ^g\sigma)$ given by conjugation by $g \in G(F)$ gives rise to parameters $\varphi_{\sigma}: W'_F \rightarrow \, ^LM \hookrightarrow \, ^LG$ and $\varphi_{\,^g\sigma}: W'_F \rightarrow \,^L(\,^gM) \hookrightarrow \, ^LG$ which differ by an inner automorphism of $\widehat{G}$.  In view of Conjecture \ref{weak_invariant} applied to $M$, the isomorphism $^L(\,^gM)  ~\widetilde{\rightarrow} ~ \, ^LM$ takes $\varphi_{\, ^g\sigma}$ to an $\widehat{M}$-conjugate of $\varphi_\sigma$.  On the other hand the embeddings $^LM \hookrightarrow \, ^LG$ and $^L(\, ^gM) \hookrightarrow \, ^LG$ are defined using based root systems in such a way that it is obvious that they are $\widehat{G}$-conjugate. 

%But the isomorphism $c_g: M ~\widetilde{\rightarrow} ~ \, ^gM$ gives rise to a diagram of %based root systems
%$$
%\xymatrix{
%\psi_0(M) \ar[r]^{\sim} \ar[d]_{c_g} & \psi_0(\widehat{M})^\vee \ar[d]_{\widehat{c_g}^\vee} \\
%\psi_0(\,^gM) \ar[r]^{\sim} & \psi_0(\widehat{\,^gM})^\vee,
%}
%$$
%where $\psi_0(M) ~ \widetilde{\rightarrow} ~ \psi_0(\widehat{M})^\vee$ (resp. $\psi_0(\, ^gM) %~ \widetilde{\rightarrow} ~ \psi_0(\widehat{\, ^gM})^\vee$) is induced by the given %isomorphism $\psi_0(G) ~\widetilde{\rightarrow} ~ \psi_0(\widehat{G})^\vee$ (between the %based root system for $G$ and the dual of that for $\widehat{G}$) and the natural inclusions %$\psi_0(M) \hookrightarrow \psi_0(G)$ and $\psi_0(\widehat{M})^\vee \hookrightarrow %\psi_0(\widehat{G})^\vee$ (resp. $\psi_0(\, ^gM) \hookrightarrow \psi_0(G)$ and %$\psi_0(\widehat{\, ^gM})^\vee \hookrightarrow \psi_0(\widehat{G})^\vee$).  Via these %inclusions, the diagram is induced by the corresponding diagram on the level of $G$
%$$
%\xymatrix{
%\psi_0(G) \ar[r]^{\sim} \ar[d]_{c_g = {\rm id}} & \psi_0(\widehat{G})^\vee \ar[d]_{{\rm id}} \\
%\psi_0(G) \ar[r]^{\sim}  & \psi_0(\widehat{G})^\vee.
%}
%$$
%Note the right vertical arrows here is the identity, i.e. is induced by an inner automorphism of %$\widehat{G}$.  It follows that $\widehat{c_g}^\vee: \psi_0(\widehat{M})^\vee ~ %\widetilde{\rightarrow} ~ \psi_0(\widehat{\,^gM})^\vee$ is likewise induced by an inner %automorphism of $\widehat{G}$. [QUESTION: WHY GOOD ENOUGH TO GET MAP ON L-GrOUPS %INSTEAD OF JUST DUAL GROUPS?]

To examine the local structure of this map, we first fix a $\lambda$ and a standard $\mathcal M_\lambda$ through which $\lambda$ factors minimally.  Let $\mathfrak t = [\lambda]_{\widehat{G}}$.  Then over $\mathfrak Y_\mathfrak t$ the map $f$ takes the form
\begin{equation} \label{local_map}
\coprod_{\mathfrak s_M \leadsto \mathfrak t} \mathfrak X_{\mathfrak s_M} \rightarrow \mathfrak Y_\mathfrak t.
\end{equation}
Here $\mathfrak s_M$ ranges over the inertial classes $[M,\sigma]_G$ such that $(\varphi_\sigma|_{W_F})_{\widehat{G}}$ is inertially equivalent to $(\lambda)_{\widehat{G}}$.  We now fix a representative $(M,\sigma)$ for $\mathfrak s_M$.  Given such a $\varphi_\sigma$, its restriction $\varphi_\sigma|_{W_F}$ factors through a $\widehat{G}$-conjugate of $^LM$.  But $(\varphi_\sigma|_{W_F})_{\widehat{G}} \sim (\lambda)_{\widehat{G}}$ implies that (up to conjugation by $\widehat{G}$) $\varphi_\sigma|_{W_F}$ factors minimally through $\mathcal M_\lambda$. Thus we may assume that $\mathcal M_\lambda \subseteq \, ^LM$.  The corresponding inclusion $Z(\widehat{M}) \hookrightarrow Z(\mathcal M_\lambda^\circ)$ induces a morphism of algebraic tori
$$
Y(\widehat{M}) = (Z(\widehat{M})^I)_\Phi^\circ \rightarrow (Z(\mathcal M_\lambda^\circ)^I)_\Phi^\circ = Y(\mathcal M_\lambda^\circ).
$$
Further, recall $X(M) \cong Y(\widehat{M})$ by the Kottwitz isomorphism (or the Langlands duality for quasi-characters), by the rule $\chi \mapsto z_\chi^{cocyc}(\Phi)$.  

Taking (\ref{action}) into account, we see that (\ref{local_map}) on $\mathfrak X_{\mathfrak s_M}$, given by
$$
(M,\sigma \chi)_G \mapsto (\varphi_{\sigma\chi}|_{W_F})_{\widehat{G}}
$$
for $\chi \in X(M)/{\rm stab}_\sigma$, lifts to the map
\begin{equation} \label{alg_map}
X(M)/{\rm stab}_\sigma \rightarrow Y(\mathcal M^\circ_\lambda)/{\rm stab}_\lambda,
\end{equation}
which is the obvious map induced by $X(M) \rightarrow Y(\widehat{M}) \rightarrow Y(\mathcal M_\lambda^\circ)$, up to translation by an element in $Y(\mathcal M^\circ_\lambda)$ measuring the difference between $(\varphi_{\sigma}|_{W_F})_{\mathcal M^\circ_\lambda}$ and $(\lambda)_{\mathcal M^\circ_\lambda}$.  The map (\ref{alg_map}) is clearly a morphism of algebraic varieties. Hence the map $f$ is a morphism of algebraic varieties.

The fibers of $f$ are finite by a property of LLC. Finally, if $G/F$ is quasi-split, the morphism $f$ is surjective by another property of LLC.
\end{proof}

\begin{cor} Assume $G/F$ satisfies ${\rm LLC+}$, so that the map $f$ in Proposition \ref{map_structure} exists. Then $f$ induces a $\mathbb C$-algebra homomorphism $\mathfrak Z^{\rm st}(G) \rightarrow \mathfrak Z(G)$.  It is injective if $G/F$ is quasi-split.
\end{cor}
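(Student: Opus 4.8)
The proof is essentially a matter of transporting Proposition \ref{map_structure} through the two identifications of centers with rings of regular functions, so I expect it to be short. \textbf{Setup.} First I would recall that $\mathfrak Z^{\rm st}(G) = \mathbb C[\mathfrak Y]$ by definition, where $\mathfrak Y = \coprod_{\mathfrak t} \mathfrak Y_{\mathfrak t}$, and that $\mathfrak Z(G) \cong \mathbb C[\mathfrak X_G]$ via the isomorphism (\ref{Bern_cntr_reg_fcns}), where $\mathfrak X_G = \coprod_{\mathfrak s} \mathfrak X_{\mathfrak s}$. In both cases "ring of regular functions on the (infinite) disjoint union of affine varieties" means the product $\prod_{\mathfrak t} \mathbb C[\mathfrak Y_{\mathfrak t}]$, resp. $\prod_{\mathfrak s} \mathbb C[\mathfrak X_{\mathfrak s}]$, of the coordinate rings of the connected components. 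By Proposition \ref{map_structure} we have a morphism of affine algebraic varieties $f : \mathfrak X_G \to \mathfrak Y$; since each $\mathfrak X_{\mathfrak s}$ is irreducible, hence connected, $f$ carries it into a single component $\mathfrak Y_{\mathfrak t(\mathfrak s)}$. Indeed the local description (\ref{local_map}) exhibits $f$ as a coproduct of morphisms $f_{\mathfrak s}: \mathfrak X_{\mathfrak s_M} \to \mathfrak Y_{\mathfrak t}$ taken over the inertial classes $\mathfrak s_M \leadsto \mathfrak t$.

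\textbf{Construction of the homomorphism.} Pullback of regular functions along $f$ then gives a unital $\mathbb C$-algebra homomorphism: a tuple $(g_{\mathfrak t})_{\mathfrak t} \in \prod_{\mathfrak t} \mathbb C[\mathfrak Y_{\mathfrak t}]$ is sent to $(f_{\mathfrak s}^{\,*}\, g_{\mathfrak t(\mathfrak s)})_{\mathfrak s} \in \prod_{\mathfrak s} \mathbb C[\mathfrak X_{\mathfrak s}]$. This is well-defined precisely because each component of $\mathfrak X_G$ has a well-defined target component under $f$. Composing with the isomorphism (\ref{Bern_cntr_reg_fcns}) yields the asserted homomorphism $\mathfrak Z^{\rm st}(G) \to \mathfrak Z(G)$; concretely it sends $Z \in \mathfrak Z^{\rm st}(G)$, viewed as a regular function on infinitesimal characters, to the element of $\mathfrak Z(G)$ whose value at the point $(M,\sigma)_G \in \mathfrak X_G$ is $Z\big((\varphi_\sigma|_{W_F})_{\widehat G}\big)$, i.e. to the element acting on each $\pi \in \Pi(G/F)$ by the scalar given by $Z$ evaluated at the infinitesimal character of $\pi$ (consistently with Corollary \ref{Hecke_action_cor}).

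\textbf{Injectivity in the quasi-split case.} Suppose $G/F$ is quasi-split, so that $f$ is surjective by Proposition \ref{map_structure}. If $Z \in \mathfrak Z^{\rm st}(G)$ lies in the kernel of $f^{\,*}$, then the regular function $Z$ on $\mathfrak Y$ vanishes on the image $f(\mathfrak X_G) = \mathfrak Y$, that is, at every point of $\mathfrak Y$. Since $\mathfrak Y$ is reduced of finite type over $\mathbb C$ on each component — each $\mathfrak Y_{\mathfrak t}$ being a finite quotient of a torsor under a torus, in particular an affine variety — this forces $Z = 0$ by the Nullstellensatz. Hence $f^{\,*}$ is injective.

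I do not expect a genuine obstacle: all the substance is already contained in Proposition \ref{map_structure} (the fact that $f$ is a morphism of affine varieties, and surjective when $G$ is quasi-split). The only point deserving a word of care is the bookkeeping with the infinitely many connected components of $\mathfrak X_G$ and $\mathfrak Y$ together with the observation that $f$ respects the decomposition into components, which is what makes the pullback on the corresponding products of coordinate rings well-defined.
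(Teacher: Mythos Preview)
Your proposal is correct and is exactly the approach the paper intends: the corollary is stated without proof in the paper, as an immediate consequence of Proposition \ref{map_structure} via pullback of regular functions along $f$. You have simply spelled out the details (component-wise pullback and the Nullstellensatz argument for injectivity) that the paper leaves implicit.
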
 

\begin{Remark} For the group ${\rm GL}_n$ the constructions above are unconditional because the local Langlands correspondence and its enhancement ${\rm LLC+}$ are known (cf.~Remark \ref{comp_w_ind_rem}(2) and Corollary \ref{invariant_Levi}).  One can see that $\mathfrak X_{{\rm GL}_n} \rightarrow \mathfrak Y_{{\rm GL}_n}$ is an isomorphism and hence $\mathfrak Z^{\rm st}({\rm GL}_n) = \mathfrak Z({\rm GL}_n)$.
\end{Remark}

\begin{Remark}
As remarked by Scholze and Shin \cite[$\S6$]{SS}, one may conjecturally characterize the image of $\mathfrak Z^{\rm st}(G) \rightarrow \mathfrak Z(G)$ in a way that avoids direct mention of $L$-parameters.  According to them it should consist of the distributions $D \in \mathcal D(G)^G_{\rm ec}$ such that, for any function $f \in C^\infty_c(G(F)$ whose stable orbital integrals vanish at semi-simple elements, the function $D * f$ also has this property.  See \cite[$\S6$]{SS} for further discussion of this.  From conjectured relations between stable characters and stable orbital integrals, one can conjecturally rephrase the condition on $D$ in terms of stable characters, as
\begin{equation} \label{st_char_cond}
{\rm SO}_{\varphi}(D*f) = 0, \, \forall \varphi ,\,\, \mbox{if} \,\, {\rm SO}_\varphi(f) =0, \, \forall \varphi.
\end{equation}
An element of $\mathfrak Z^{\rm st}(G)$ acts by the same scalar on all $\pi \in \Pi_\varphi$, and so the above condition holds if $D \in f(\mathfrak Z^{\rm st}(G))$.  The converse direction is much less clear, and implies non-trivial statements about the relation between supercuspidal supports, $L$-packets, and infinitesimal classes.  Indeed, suppose we are given $D \in \mathfrak Z(G)$ that satisfies (\ref{st_char_cond}).  This should mean that it acts by the same scalar on all $\pi \in \Pi_\varphi$. On the other hand, saying $D$ comes from $\mathfrak Z^{\rm st}(G)$ would mean that $D$ acts by the same scalar on all $\pi \in \Pi_\lambda$ and those scalars vary algebraically as $\lambda$ ranges over $\mathfrak Y$.  So if for some $\lambda$ the infinitesimal class
$$
\Pi_\lambda = \coprod_{\varphi \leadsto \lambda} \Pi_\varphi
$$
contains an $L$-packet $\Pi_{\varphi_0} \subsetneq \Pi_\lambda$ such that the set of supercuspidal supports coming from $\Pi_{\varphi_0}$ does not meet the set of those coming from any $\Pi_{\varphi'}$ with $\varphi'$ not conjugate to $\varphi_0$, then one could construct a regular function $D \in \mathfrak Z(G)$ which is constant on the $L$-packets $\Pi_{\varphi}$ but not constant on $\Pi_\lambda$, and thus not in $f(\mathfrak Z^{\rm st}(G))$.  In that case (\ref{st_char_cond}) would not be sufficient to force $D \in f(\mathfrak Z^{\rm st}(G))$, and the conjecture of Scholze-Shin would be false.  In that case, one could define the subring $\mathfrak Z^{\rm st*}(G) \subseteq \mathfrak Z(G)$ of regular functions on the Bernstein variety which take the same value on all supercuspidal supports of representations in the same $L$-packet.  This would then perhaps better deserve the title ``stable Bernstein center'' and it would be strictly larger than $f(\mathfrak Z^{\rm st}(G))$ at least in some cases.

 To illustrate this in a more specific setting, suppose $G/F$ is quasi-split and $\lambda$ does not factor through any proper Levi subgroup of $^LG$. Then by Proposition \ref{all_sc_condition} below, we expect $\Pi_\lambda$  to consist entirely of supercuspidal representations.  {\bf If} $\Pi_\lambda$ contains at least two $L$-packets $\Pi_\varphi$, {\bf then} there would exist a $D \in \mathfrak Z(G)$ which is constant on the $\Pi_\varphi$'s yet not constant on $\Pi_\lambda$, and the Scholze-Shin conjecture should be false.  Put another way, if the Scholze-Shin conjecture is true, we expect that whenever $\lambda$ does not factor through a proper Levi in $^LG$, the infinitesimal class $\Pi_\lambda$ consists of at most one $L$-packet.\footnote{Note added in proof (Feb.~2014): In fact this statement holds: if $\lambda$ does not factor through a proper Levi subgroup of $^LG$, then there is at most one way to extend it to an admissible homomorphism $\varphi: W'_F \rightarrow \, ^LG$.}
\end{Remark}

\subsection{Aside: when does an infinitesimal class consist only of supercuspidal representations?}

\begin{prop} \label{all_sc_condition}
Assume $G/F$ is quasi-split and ${\rm LLC+}$ holds for $G$. Then $\Pi_\lambda$ consists entirely of supercuspidal representations if and only if $\lambda$ does not factor through any proper Levi subgroup $^LM \subsetneq \, ^LG$.
\end{prop}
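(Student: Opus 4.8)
The statement is an "if and only if." I would prove the contrapositive in the easy direction and a direct argument in the hard direction, using the reduction to tempered (then discrete-series) parameters recorded in Remark~\ref{comp_w_ind_rem}(4), together with the compatibility of LLC with parabolic induction (Conjecture~\ref{comp_w_induction}, part of ${\rm LLC+}$) and the characterization of supercuspidal support via the map $f$ of Proposition~\ref{map_structure}.

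First, suppose $\lambda$ \emph{does} factor through a proper Levi subgroup $^LM \subsetneq {}^LG$, say $\lambda = \lambda_M$ with $\lambda_M\colon W_F \to {}^LM$. Since $G/F$ is quasi-split, $M$ is a proper $F$-Levi subgroup of $G$ (relevance is automatic), and I would argue that $\Pi_\lambda$ contains a non-supercuspidal representation as follows: pick any $\sigma \in \Pi(M/F)$ whose infinitesimal character $(\varphi_\sigma|_{W_F})_{\widehat M}$ is $\widehat M$-conjugate to $(\lambda_M)$—such a $\sigma$ exists because $G$, hence $M$, is quasi-split (the surjectivity statement of Proposition~\ref{map_structure} applied to $M$). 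Then any irreducible subquotient $\pi$ of $i^G_P(\sigma)$ lies in $\Pi_\lambda$ by Conjecture~\ref{comp_w_induction}, and such $\pi$ has supercuspidal support $(L,\tau)_G$ with $L \subseteq M \subsetneq G$, hence is not supercuspidal. (One should note here that $i^G_P(\sigma)$ always has at least one irreducible subquotient whose supercuspidal support has $L = M$, e.g. by taking $\sigma$ itself supercuspidal; this handles the degenerate possibility that every subquotient is "more cuspidal than expected.")

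Second, the substantive direction: assume $\lambda$ does \emph{not} factor through any proper Levi subgroup of $^LG$, and let $\pi \in \Pi_\lambda$, say with $\pi \in \Pi_\varphi$, $(\varphi|_{W_F})_{\widehat G} = (\lambda)_{\widehat G}$. I want to conclude $\pi$ is supercuspidal, i.e. its supercuspidal support is $(G,\pi)_G$. Suppose not; then $\pi$ is a subquotient of $i^G_P(\sigma)$ for a proper $F$-Levi $M \subsetneq G$ and a supercuspidal $\sigma$ on $M$. By Conjecture~\ref{comp_w_induction}, $(\varphi_\sigma|_{W_F})_{\widehat G} = (\varphi_\pi|_{W_F})_{\widehat G} = (\lambda)_{\widehat G}$. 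But $\varphi_\sigma|_{W_F}$ factors through $^LM \hookrightarrow {}^LG$, and the embedding $^LM \hookrightarrow {}^LG$ has image a (proper) Levi subgroup of $^LG$ because $M \subsetneq G$ is a proper $F$-Levi subgroup—this is exactly the Levi correspondence of \cite[\S3]{Bo79}. Hence $\lambda$ is $\widehat G$-conjugate to a homomorphism factoring through a proper Levi subgroup of $^LG$, contradicting the hypothesis. Therefore $\pi$ is supercuspidal.

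\textbf{Main obstacle.} The delicate point is matching up the two notions of "factoring through a proper Levi": on the $G$-side one has $F$-Levi subgroups of $G$, and on the dual side one has Levi subgroups of $^LG$ in Borel's sense, and the correspondence $M \mapsto {}^LM$ is only injective, not surjective—there can be Levi subgroups of $^LG$ (namely non-\emph{relevant} ones) that do not arise from any $F$-Levi subgroup of $G$. Here the quasi-split hypothesis is essential: when $G/F$ is quasi-split, \emph{every} Levi subgroup of $^LG$ is relevant and hence of the form $^LM$ for a genuine $F$-Levi $M \subseteq G$ (cf.\ \cite[\S3.3, \S3.4]{Bo79}), so "$\lambda$ factors through a proper Levi of $^LG$" is equivalent to "$\lambda$ factors through $^LM$ for some proper $F$-Levi $M$." I would make this equivalence explicit as the first lemma of the proof, since both directions of the proposition rest on it; the rest is a formal consequence of Conjecture~\ref{comp_w_induction} and the surjectivity part of Proposition~\ref{map_structure}. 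A secondary subtlety worth a sentence is that in the forward direction one must produce a subquotient of $i^G_P(\sigma)$ that is genuinely non-supercuspidal, which is automatic once $\sigma$ itself is taken supercuspidal on $M$.
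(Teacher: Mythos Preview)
Your proof is correct. One direction---that a non-supercuspidal $\pi\in\Pi_\lambda$ forces $\lambda$ to factor through a proper $^LM$---is essentially identical to the paper's argument (the paper phrases it as a direct implication rather than a contradiction, but the content is the same).

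For the other direction (if $\lambda$ factors through a proper $^LM$ then $\Pi_\lambda$ contains a non-supercuspidal), you take a genuinely different and more direct route than the paper. The paper fixes the \emph{minimal} standard Levi $\mathcal M_\lambda = {}^LM_\lambda$ through which $\lambda$ factors and then runs a dimension argument on the map $\coprod_{\mathfrak s}\mathfrak X_\mathfrak s \to \mathfrak Y_\mathfrak t$ of Proposition~\ref{map_structure}: components $\mathfrak X_{[M,\sigma]_G}$ with $M\supsetneq M_\lambda$ have strictly smaller dimension than $\mathfrak Y_\mathfrak t$, so some component with $M=M_\lambda$ must appear, and one twists by an unramified character to hit $\lambda$ exactly. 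Your argument bypasses the variety structure entirely: you simply extend $\lambda_M$ to a parameter $\varphi_M$ for $M$, take $\sigma\in\Pi_{\varphi_M}$ (nonempty since $M$ is quasi-split), and induce. This is more elementary and does not require the dimension count; the trade-off is that you invoke the surjectivity of LLC directly for the Levi $M$ rather than for $G$, though this is already included in ${\rm LLC+}$ for $G$.

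Two small clean-ups. First, your parenthetical about ``taking $\sigma$ itself supercuspidal'' is unnecessary and a little muddled: every irreducible subquotient of $i^G_P(\sigma)$ has the \emph{same} supercuspidal support $(L,\tau)_G$, where $(L,\tau)_M$ is the supercuspidal support of $\sigma$, and $L\subseteq M\subsetneq G$ regardless of whether $\sigma$ is supercuspidal. Second, rather than citing surjectivity of $f^M$ from Proposition~\ref{map_structure} (which would in turn need Conjecture~\ref{comp_w_induction} for $M$), it is cleaner to say: extend $\lambda_M$ trivially to $\varphi_M\colon W'_F\to{}^LM$ and take any $\sigma\in\Pi_{\varphi_M}$; then $(\varphi_\sigma|_{W_F})_{\widehat M}=(\lambda_M)_{\widehat M}$ tautologically, and Conjecture~\ref{comp_w_induction} for $G$ alone suffices for the rest.
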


\begin{proof}
If $\Pi_\lambda$ contains a nonsupercuspidal representation $\pi$ with supercuspidal support $(M,\sigma)_G$ for $M \subsetneq G$, then by ${\rm LLC+}$, we may assume $\varphi_\pi|_{W_F}$, and hence $\lambda$, factors through the proper Levi subgroup $^LM \subsetneq \, ^LG$.

Conversely, if $\lambda$ factors minimally through a standard Levi subgroup $\mathcal M_\lambda \subsetneq \, ^LG$, then we must show that $\Pi_\lambda$ contains a nonsupercuspidal representation of $G$. Since $G/F$ is quasi-split, we may identify $\mathcal M_\lambda = \, ^LM_\lambda$ for an $F$-Levi subgroup $M_\lambda \subsetneq G$.

Now for $\mathfrak t = [\lambda]_{\widehat{G}}$, the map (\ref{local_map}) is surjective.  For any $F$-Levi subgroup $M \supsetneq M_\lambda$, a component of the form $\mathfrak X_{[M,\sigma]_G}$ has dimension ${\rm dim}\,(Z(\widehat{M})^I)_\Phi^\circ < {\rm dim}\,(Z(\widehat{M_\lambda})^I)_\Phi^\circ = {\rm dim}\,\mathfrak Y_{\mathfrak t}$.  Thus the union of the components of the form $\mathfrak X_{[M,\sigma]_G}$ with $M \supsetneq M_\lambda$ cannot surject onto $\mathfrak Y_\mathfrak t$.  Thus there must be a component of the form $\mathfrak X_{[M_\lambda, \sigma_\lambda]}$ appearing in the left hand side of (\ref{local_map}).  We may assume $\varphi_{\sigma_\lambda}$ factors through $^LM_\lambda$ along with $\lambda$.   Writing $(\lambda)_{\widehat{M}_\lambda} = (z^{cocyc}_\chi \varphi_{\sigma_\lambda})_{\widehat{M}_\lambda}$ for some $\chi \in X(M_\lambda)$, it follows that $\Pi_\lambda$ contains the nonsupercuspidal representations with supercuspidal support $(M_\lambda, \sigma_\lambda \chi)_G$.
\end{proof}

\subsection{Construction of the distributions $Z_V$} \label{const_Z_V_subsec}

Let $(r,V)$ be a finite-dimensional algebraic representation of $^LG$ on a complex vector space.  
Given a geometric Frobenius element $\Phi \in W_F$ and an admissible homomorphism $\lambda: W_F \rightarrow \, ^LG$, we may define the {\em semi-simple trace} 
$$
{\rm tr}^{\rm ss}(\lambda(\Phi), V)) := {\rm tr}(r\lambda(\Phi), V^{r\lambda(I_F)}).
$$
Note this is independent of the choice of $\Phi$. 

This notion was introduced by Rapoport \cite{Ra90} in order to define semi-simple local $L$-functions $L(s, \pi_p, r)$, and is parallel to the notion for $\ell$-adic Galois representations used in \cite{Ra90}~to define semi-simple local zeta functions $\zeta^{\rm ss}_{\mathfrak p}(X,s)$; see also \cite{HN02a, H05}.

The following result is an easy consequence of the material in $\S\ref{variety_str_subsec}$.

\begin{prop} \label{Z_V_construction}
The map $\lambda \mapsto {\rm tr}^{\rm ss}(\lambda(\Phi),V)$ defines a regular function on the variety $\mathfrak Y$ hence defines an element $Z_V \in \mathfrak Z^{\rm st}(G)$ by
$$
Z_V((\lambda)_{\widehat{G}}) =   {\rm tr}^{\rm ss}(\lambda(\Phi),V).
$$
We use the same symbol $Z_V$ to denote the corresponding element in $\mathfrak Z(G)$ given via $\mathfrak Z^{\rm st}(G) \rightarrow \mathfrak Z(G)$. The latter has the property
\begin{equation} \label{Z_V_Z(G)}
Z_V(\pi) = {\rm tr}^{\rm ss}(\varphi_\pi(\Phi),V)
\end{equation}
for every $\pi \in \Pi(G/F)$, where $Z_V(\pi)$ stands for $Z_V((M,\sigma)_G)$ if $(M,\sigma)_G$ is the supercuspidal support of $\pi$.
\end{prop}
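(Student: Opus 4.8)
The plan is to separate the two assertions: first, unconditionally, that $Z_V$ is a well-defined regular function on the variety $\mathfrak Y$ of \S\ref{variety_str_subsec}; second, assuming ${\rm LLC+}$, that its image in $\mathfrak Z(G)$ satisfies (\ref{Z_V_Z(G)}). \textbf{Step 1 (well-definedness on $\mathfrak Y$).} First I would check that $\lambda \mapsto {\rm tr}^{\rm ss}(\lambda(\Phi),V)$ depends only on $(\lambda)_{\widehat G}$ and not on $\Phi$. Replacing $\lambda$ by $g\lambda g^{-1}$ with $g\in\widehat G$ replaces $r\lambda(I_F)$ by its conjugate, $V^{r\lambda(I_F)}$ by $r(g)\cdot V^{r\lambda(I_F)}$, and $r\lambda(\Phi)$ by $r(g)r\lambda(\Phi)r(g)^{-1}$, so the trace on the invariants is unchanged; independence of $\Phi$ holds because two geometric Frobenius elements differ by an element of $I_F$, which acts trivially on $V^{r\lambda(I_F)}$, and $V^{r\lambda(I_F)}$ is a genuine subspace since $I_F$ is compact and $\lambda$ continuous. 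This produces a well-defined $\mathbb C$-valued function $Z_V$ on the set of infinitesimal characters.

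\textbf{Step 2 (regularity).} Fix an inertial class $\mathfrak t=[\lambda]_{\widehat G}$ and a standard Levi $\mathcal M=\mathcal M_\lambda\subseteq{}^LG$ through which $\lambda$ factors minimally, so that by \S\ref{variety_str_subsec} we have $\mathfrak Y_\mathfrak t = W^{\widehat G}_{\mathfrak t_{\mathcal M^\circ}}\backslash\mathfrak Y_{\mathfrak t_{\mathcal M^\circ}}$ with $\mathfrak Y_{\mathfrak t_{\mathcal M^\circ}}$ a torsor under the torus $Y(\mathcal M^\circ)/{\rm stab}_\lambda$ via $z\mapsto(z\lambda)_{\mathcal M^\circ}$. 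It suffices to show the pullback of $Z_V$ to the torus $Y(\mathcal M^\circ)$, namely $z\mapsto{\rm tr}^{\rm ss}((z\lambda)(\Phi),V)$, is regular and $W^{\widehat G}_{\mathfrak t_{\mathcal M^\circ}}$-invariant; invariance is immediate from Step 1, since the relevant normalizer elements act on $\widehat G$-conjugacy classes by conjugation, and then $Z_V$ descends to the quotient affine variety. For regularity, write $z$ as a class in $H^1(\langle\Phi\rangle, Z(\mathcal M^\circ)^I)$ with representative $z(\Phi)\in Z(\mathcal M^\circ)^I$; the analogue of (\ref{action}) gives $(z\lambda)(\Phi)=(z(\Phi),1)\lambda(\Phi)$, and since $z\lambda$ and $\lambda$ agree on $I_F$ we have $V^{r(z\lambda)(I_F)}=V^{r\lambda(I_F)}=:W$. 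Because $z(\Phi)\in Z(\mathcal M^\circ)^I$ and $\lambda(I_F)\subseteq\mathcal M^\circ\rtimes I_F$, one checks $(z(\Phi),1)$ centralizes $\lambda(I_F)$, so $r(z(\Phi),1)$ preserves $W$; $r\lambda(\Phi)$ preserves $W$ too. Hence ${\rm tr}^{\rm ss}((z\lambda)(\Phi),V)={\rm tr}\big(r(z(\Phi),1)|_W\circ r\lambda(\Phi)|_W\big)$, which, as $z\mapsto z(\Phi)$ is an isomorphism of $Y(\mathcal M^\circ)$ onto a subtorus of $Z(\mathcal M^\circ)^I$, is the composition of the algebraic map $z\mapsto r(z(\Phi),1)|_W$ with the linear form $A\mapsto{\rm tr}(A\circ r\lambda(\Phi)|_W)$ on ${\rm End}(W)$; hence regular. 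Running this over every $\mathfrak t$ gives $Z_V\in\mathbb C[\mathfrak Y]=\mathfrak Z^{\rm st}(G)$.

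\textbf{Step 3 (the identity in $\mathfrak Z(G)$).} Now assume ${\rm LLC+}$, so the quasi-finite morphism $f:\mathfrak X\to\mathfrak Y$, $(M,\sigma)_G\mapsto(\varphi_\sigma|_{W_F})_{\widehat G}$ of Proposition \ref{map_structure} exists and the image of $Z_V$ in $\mathfrak Z(G)=\mathbb C[\mathfrak X]$ is $f^*Z_V$. For $\pi\in\Pi(G/F)$ with supercuspidal support $(M,\sigma)_G$ this gives $Z_V(\pi):=(f^*Z_V)((M,\sigma)_G)=Z_V((\varphi_\sigma|_{W_F})_{\widehat G})={\rm tr}^{\rm ss}(\varphi_\sigma(\Phi),V)$. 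Since $\pi$ is an irreducible subquotient of $i^G_P(\sigma)$, Conjecture \ref{comp_w_induction} (part of ${\rm LLC+}$) makes $\varphi_\pi|_{W_F}$ and $\varphi_\sigma|_{W_F}$ $\widehat G$-conjugate, and by Step 1 the semisimple trace depends only on that class; thus $Z_V(\pi)={\rm tr}^{\rm ss}(\varphi_\pi(\Phi),V)$, which is (\ref{Z_V_Z(G)}).

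As the author indicates, there is no serious obstacle here. The step demanding the most care is the regularity computation in Step 2: one must pass correctly between the cocycle and homomorphism descriptions of $z\lambda$, verify that $W=V^{r\lambda(I_F)}$ is simultaneously stable under $r(z(\Phi),1)$ and $r\lambda(\Phi)$, and recognize the $z$-dependence as algebraic. Once the variety structure of \S\ref{variety_str_subsec} and the morphism $f$ are in hand, everything else is formal.
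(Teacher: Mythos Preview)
Your proposal is correct and is precisely the natural unpacking of what the paper leaves implicit: the paper gives no detailed proof here, only the remark that the result ``is an easy consequence of the material in \S\ref{variety_str_subsec}'', and your Steps~1--3 are exactly that consequence made explicit via the torsor description $\mathfrak Y_{\mathfrak t_{\mathcal M^\circ}}\cong Y(\mathcal M^\circ)/{\rm stab}_\lambda$ and the morphism $f$ of Proposition~\ref{map_structure}.

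One minor imprecision worth tightening in Step~2: you write that ``$z\mapsto z(\Phi)$ is an isomorphism of $Y(\mathcal M^\circ)$ onto a subtorus of $Z(\mathcal M^\circ)^I$'', but $Y(\mathcal M^\circ)=(Z(\mathcal M^\circ)^I)_\Phi^\circ$ is a \emph{quotient} of $Z(\mathcal M^\circ)^I$, not a subtorus; the cleaner formulation is that $\tilde z\mapsto{\rm tr}\big(r(\tilde z,1)|_W\circ r\lambda(\Phi)|_W\big)$ is visibly regular on the torus $Z(\mathcal M^\circ)^I$ and, by the well-definedness already established (Step~1), factors through the coinvariant quotient, hence through $Y(\mathcal M^\circ)$. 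This does not affect the argument's validity.
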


\begin{Remark}
One does not really need the full geometric structure on the set $\mathfrak Y$ in order to construct $Z_V \in \mathfrak Z(G)$: one may show directly, assuming that LLC and Conjecture \ref{comp_w_induction} hold, that $\pi \mapsto {\rm tr}^{\rm ss}(\varphi_\pi(\Phi),V)$ descends to give a regular function on $\mathfrak X$ and hence (\ref{Z_V_Z(G)}) defines an element $Z_V \in \mathfrak Z(G)$. Using the map $f$ simply makes the construction more transparent (but has the drawback that we also need to assume Conjecture \ref{weak_invariant}).
\end{Remark}

\section{The Langlands-Kottwitz approach for arbitrary level structure} \label{LK_approach_sec}

\subsection{The test functions} \label{test_fcn_subsec}

Let $({\bf G},X)$ be a Shimura datum, where $X$ is the ${\bf G}(\mathbb R)$-conjugacy class of an $\mathbb R$-group homomorphism $h: {\rm R}_{\mathbb C/\mathbb R} {\mathbb G}_m \rightarrow {\bf G}_{\mathbb R}$.  This gives rise to the reflex field ${\mathbf E} \subset \mathbb C$ and a ${\bf G}(\mathbb C)$-conjugacy class $\{ \mu \} \subset X_*({\bf G}_\mathbb C)$ which is defined over ${\bf E}$.  

Choose a quasi-split group ${\bf G}^*$ over $\mathbb Q$ and an inner twisting ${
\psi}: {\bf G}^* \rightarrow {\bf G}$ of $\mathbb Q$-groups.  In particular we get an inner twisting ${\bf G}^*_{\bf E} \rightarrow {\bf G}_{\bf E}$ as well as an isomorphism of $L$-groups $^L({\bf G}_{\bf E}) ~ \widetilde{\rightarrow} ~ \, ^L({\bf G}^*_{\bf E})$. 

Let $\overline{\mathbb Q} \subset \mathbb C$ denote the algebraic numbers, so that we have an inclusion ${\bf E} \subset \overline{\mathbb Q}$ and we can 
regard $\{ \mu \}$ as a ${\bf G}(\overline{\mathbb Q})$-conjugacy class in $X_*({\bf G}_{\overline{\mathbb Q}})$ which is defined over ${\bf E}$ (cf. \cite[Lemma 1.1.3]{Ko84b}). 
Using ${\bf \psi}$ regard $\{ \mu \}$ as a ${\bf G}^*(\overline{\mathbb Q})$-conjugacy class in $X_*({\bf G}^*_{\overline{\mathbb Q}})$, defined over ${\bf E}$.  By Kottwitz' lemma  (\cite[1.1.3]{Ko84b}), $\{ \mu \}$ is represented by an ${\bf E}$-rational cocharacter $\mu: \mathbb G_m \rightarrow {\bf G}^*_{\bf E}$.  Following Kottwitz' argument in \cite[2.1.2]{Ko84b}, it is easy to show that there exists a unique representation $({\bf r}_{-\mu}, V_{-\mu})$ of $^L({\bf G}_{\bf E})$ such that as a representation of $\widehat{\bf G}^*$ it is an irreducible representation with extreme weight $-\mu$ and the Weil group $W_{\bf E}$ acts trivially on the highest-weight space 
corresponding to any $\Gamma_{\bf E}$-fixed splitting for $\widehat{\bf G}^*_{\bf E}$.

Using ${\bf \psi}$ we can regard $({\bf r}_{-\mu}, V_{-\mu})$ as a representation of $^L({\bf G}_{\bf E})$.  The isomorphism class of this representation depends only on the equivalence class of the inner twisting ${\bf \psi}$, thus only on ${\bf G}$ and $\{ \mu \}$.

Now we fix a rational prime $p$ and set $G := {\bf G}_{\mathbb Q_p}$. Choose a prime ideal $\mathfrak p \subset {\bf E}$ lying above $p$, and set $E := {\bf E}_\mathfrak p$.  Choose an algebraic closure $\overline{\mathbb Q}_p$ of $\mathbb Q_p$ and fix henceforth an isomorphism of fields $\mathbb C \cong \overline{\mathbb Q}_p$ such that the embedding ${\bf E} \hookrightarrow \mathbb C \cong \overline{\mathbb Q}_p$ corresponds to the prime ideal $\mathfrak p$. This gives rise to an embedding $\overline{\mathbb Q} \hookrightarrow \overline{\mathbb Q}_p$ extending ${\bf E} \hookrightarrow \overline{\mathbb Q}_p$, and thus to an embedding $W_E \hookrightarrow W_{\bf E}$. We get from this an embedding $^L(G_E) \hookrightarrow \, ^L({\bf G}_{\bf E})$.  Via this embedding we can regard $({\bf r}_{-\mu},V_{-\mu})$ as a representation $(r_{-\mu}, V_{-\mu})$ of $^L(G_E)$. 

Associated to $(r_{-\mu}, V_{-\mu}) \in {\rm Rep}(\,^L(G_E))$ we have an element $Z_{V_{-\mu}}$ in the Bernstein center $\mathfrak Z(G_E)$.  Of course here and in what follows, we are assuming ${\rm LLC+}$ holds for $G_E$.

Now we review briefly the Langlands-Kottwitz approach to studying the local Hasse-Weil zeta functions of Shimura varieties. Let $Sh_{K_p} = Sh({\bf G}, h^{-1}, K^p K_p)$ denote the canonical model\footnote{We use this term in the same sense as Kottwitz \cite{Ko92a}, comp.~Milne \cite[$\S1$, esp.~1.10]{Mil}.} over ${\bf E}$ for the Shimura variety attached to the data $({\bf G}, h^{-1}, K^p K_p)$ for some sufficiently small compact open subgroup $K^p \subset {\bf G}(\mathbb A^p_f)$ and some compact open subgroup $K_p \subset {\bf G}(\mathbb Q_p)$. We limit ourselves to constant coefficients 
$\overline{\mathbb Q}_\ell$ in the generic fiber of $Sh_{K_p}$ (here $\ell \neq p$ is a fixed rational prime).  Let $\Phi_p$ denote any geometric Frobenius element in ${\rm Gal}(\overline{\mathbb Q}_p/\mathbb Q_p)$. Then in the Langlands-Kottwitz approach to the semi-simple local zeta function $\zeta^{\rm ss}_\mathfrak p(s, Sh_{K_p})$, one needs to prove an identity of the form
\begin{equation} \label{Lef^ss}
{\rm tr}^{\rm ss}(\Phi^r_p, {\rm H}^\bullet_c(Sh_{K_p} \otimes_{\bf E} \overline{\mathbb Q}_p \, , \overline{\mathbb Q}_\ell)) = \sum_{(\gamma_0; \gamma, \delta)} c(\gamma_0; \gamma, \delta) ~ {\rm O}_\gamma(1_{K^p}) ~ {\rm TO}_{\delta \theta}(\phi_r).
\end{equation}
Here the semi-simple Lefschetz number ${\rm Lef}^{\rm ss}(\Phi^r_p, Sh_{K_p})$ 
on the left hand side is the alternating semi-simple trace of Frobenius on the compactly-supported $\ell$-adic cohomology groups\footnote{The Langlands-Kottwitz method really applies to the middle intersection cohomology groups of the Baily-Borel compactification and not just to the cohomology groups with compact supports; see \cite{Ko90} and \cite{Mor} for some general conjectures and results in this context, at primes of good reduction. The identity (\ref{Lef^ss}) corresponds to the contribution of the interior, at primes of arbitrary reduction, and is a first step toward understanding the intersection cohomology groups.} of $Sh_{K_p}$ (see \cite{Ra90} and \cite{HN02a} for the notion of semi-simple trace).  The expression on the right has precisely the same form as the counting points formula proved by Kottwitz in certain good reduction cases (PEL type A or C, $K_p$ hyperspecial; cf. \cite[(19.6)]{Ko92a}).  The integer $r \geq 1$ ranges over integers of the form $j \cdot [k_{E_0}:\mathbb F_p]$, $j \geq 1$, where $E_0/\mathbb Q_p$ is the maximal unramified subextension of $E/\mathbb Q_p$ and $k_{E_0}$ is its residue field.  Thus $\Phi^r_p = \Phi^j_{\mathfrak p}$ where $\Phi_{\mathfrak p}$ is a geometric Frobenius element in ${\rm Gal}(\overline{\mathbb Q}_p/E)$.  Finally, $\phi_r$ is an element in the Hecke algebra $\mathcal H(G(\mathbb Q_{p^r}), K_{p^r})$ with values in $\overline{\mathbb Q}_\ell$, where $\mathbb Q_{p^r}$ is the unique degree $r$ unramified extension in $\overline{\mathbb Q}_p/\mathbb Q_p$, and where $K_{p^r} \subset G(\mathbb Q_{p^r})$ is a suitable compact open subgroup which is assumed to be a natural analogue of $K_p \subset G(\mathbb Q_p)$. To be more precise about $K_{p^r}$, in practice there is a smooth connected $\mathbb Z_p$-model $\mathcal G$ for $G$, such that $K_p = \mathcal G(\mathbb Z_p)$. In that case, we always take $K_{p^r} = \mathcal G(\mathbb Z_{p^r})$, where $\mathbb Z_{p^r}$ is the ring of integers in $\mathbb Q_{p^r}$. In forming ${\rm TO}_{\delta \sigma}(\phi_r)$, the Haar measure on $G(\mathbb Q_{p^r})$ is normalized to give $K_{p^r}$ measure $1$. 

Let $E_j/E$ be the unique unramified extension of degree $j$ in $\overline{\mathbb Q}_p/E$.  Let $E_{j0}/\mathbb Q_p$ be the maximal unramified subextension of $E_j/\mathbb Q_p$.  So $E/E_0$ and $E_j/E_{j0}$ are totally ramified of the same degree, and $E_{j0} = \mathbb Q_{p^r}$.  

We make the choice of $\sqrt{p} \in \overline{\mathbb Q}_\ell$, and use it to define $\delta_P^{1/2}$ as a function with values in $\mathbb Q(\sqrt{p}) \subset \overline{\mathbb Q}_\ell$. We can now specify the test function $\phi_r \in \mathcal Z(G(E_{j0}),K_{j0})$, which will take values in $\overline{\mathbb Q}_\ell$.

In the construction of the elements $Z_V \in \mathfrak Z^{\rm st}(G)$, everything works the same way for $(r,V)$ a representation of $^LG := \widehat{G}(\overline{\mathbb Q}_\ell) \rtimes W_F$ on a $\overline{\mathbb Q}_\ell$-vector space.  We henceforth take this point of view. Let $(r_{-\mu,j}, V_{-\mu, j}) \in {\rm Rep}(\, ^L(G_{E_j}))$ denote the restriction of $(r_{-\mu}, V_{-\mu}) \in {\rm Rep}(\, ^L(G_E))$ via $\, ^L(G_{E_j}) \hookrightarrow \, ^L(G_E)$.  We can then induce to get a representation $(r^{E_{j0}}_{-\mu,j}, V^{E_{j0}}_{-\mu,j})$ of $^L(G_{E_{j0}})$. By Mackey theory, we get the same representation if we first induce to $^L(G_{E_0})$ and then restrict to $^L(G_{E_{j0}})$, that is, we have
\begin{equation} \label{test_rep_defn}
(r^{E_{j0}}_{-\mu,j}, V^{E_{j0}}_{-\mu,j}) := {\rm Ind}^{\widehat{G} \rtimes W_{E_{j0}}}_{\widehat{G} \rtimes W_{E_j}} \, {\rm Res}^{\widehat{G}\rtimes W_{E}}_{\widehat{G} \rtimes W_{E_j}} r_{-\mu} = {\rm Res}^{\widehat{G} \rtimes W_{E_0}}_{\widehat{G} \rtimes W_{E_{j0}}} \, {\rm Ind}^{\widehat{G} \rtimes W_{E_0}}_{\widehat{G} \rtimes W_E} r_{-\mu}.
\end{equation}

This gives rise to $Z_{V^{E_{j0}}_{-\mu,j}} \in \mathfrak Z^{\rm st}(G_{E_{j0}})$.   By abuse of notation, we use the same symbol to denote the image of this in the Bernstein center: $Z_{V^{E_{j0}}_{-\mu,j}} \in \mathfrak Z(G_{E_{j0}})$. Of course here we are viewing $\mathfrak Z(G_{E_{j0}})$ as $\overline{\mathbb Q}_\ell$-valued regular functions on the Bernstein variety, or equivalently as $\overline{\mathbb Q}_\ell$-valued invariant essentially compact distributions: the topology on $\mathbb C$ playing no role, it is harmless to identify it with $\overline{\mathbb Q}_\ell$.

%[ANOTHER THING: We really want to find a representation on $^L(G_{E_0})$, which we could then restrict to a represetnation of $^L(G_{E_{j0}})$.  A similar thing was discussed with Rapoport 8.7.02.  There we considered $G$ quasi-split over $\mathbb Q_p$, and chose $\mu$ $E$-rational representative of $\{ \mu \}$ by choosing a Borel defined over $E$.  Then we let $\lambda := \bar{\mu}$ denote the image of $\mu$ in $X_*(T)_{I}$, where $I= I_{E_0} \subset W_{E_0}$ is the inertia subgroup.  We claimed that $\lambda$ is fixed by $\Phi^r_p$ where $r = [k_{E_0}:\mathbb F_p]$, and that $\lambda$ is well-defined up to $W_{E_{0}} = W_0^{\Phi_p^r}$.  Then $\sum_{\lambda' \in \Lambda} \lambda' \in \mathbb C[X_*(T)_I^{\Phi_p^r}]^{W_{E_0}}$ (where $\Lambda$ is the $W_{E_0}$-orbit of $\lambda$), and thus (doing same for $r = j[k_{E_0}: \mathbb F_p]$) gives an element $z_{\lambda, r} \in \mathcal Z(G(\mathbb Q_{p^r}), K_{p^r})$, which is up to scalar our test function.  We want to produce a representation of $^L(G_{E_0})$ which gives rise to this same thing using the TFC.  QUESTION: is it ${\rm Ind}_{\widehat{G} \rtimes W_E}^{\widehat{G} \times W_{E_0}} (V_{-\mu})$????]

The following is the conjecture formulated jointly with R.~Kottwitz.

\begin{conj} {\rm (Test function conjecture)} \label{TFC}
Let $d = {\rm dim}(Sh_{K_p})$. The test function $\phi_r$ in \textup{(}\ref{Lef^ss}\textup{)} may be taken to be $p^{rd/2}\big(Z_{V^{E_{j0}}_{-\mu,j}} * 1_{K_{p^r}}\big)$.  In particular, $\phi_r$ may be taken in the center $\mathcal Z(G(\mathbb Q_{p^r}), K_{p^r})$ of $\mathcal H(G(\mathbb Q_{p^r}), K_{p^r})$ and these test functions vary compatibly with change in the level $K_p$ in an obvious sense.
\end{conj}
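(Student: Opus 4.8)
\medskip
\noindent\textbf{Proof strategy.} Conjecture \ref{TFC} is open for general $({\bf G},X)$ and general level; the following is the program along which it has been verified in the known cases (\cite{HRa}, \cite{Sch1}, \cite{Sch2}) and along which one would attack it in general. The plan is to reduce the global identity (\ref{Lef^ss}) to a local statement about nearby cycles on an integral model, to check that the resulting function is central, and then to pin it down inside $\mathcal Z(G(\mathbb Q_{p^r}), K_{p^r})$ using the function-sheaf dictionary together with the characterizing property (\ref{Z_V_Z(G)}) of the distributions $Z_V$.

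First I would establish \emph{some} counting-points formula of the shape (\ref{Lef^ss}) in which $\phi_r$ is a priori only defined geometrically, as the function $g \mapsto {\rm tr}^{\rm ss}\big(\Phi_p^r,\, (R\Psi\,\overline{\mathbb Q}_\ell)_{\bar x_g}\big)$ attached to the sheaf of nearby cycles on a suitable flat integral model $\mathcal S_{K_p}$ of $Sh_{K_p}$ over $\mathcal O_E$ --- or, where available, on the Rapoport--Zink/Pappas--Zhu local model governing its singularities --- with $\bar x_g$ running over $\overline{\mathbb F}_p$-points indexed by the cosets of the relevant affine flag variety. In PEL type this is Kottwitz's argument refined by the theory of local models (\cite{H05}, \cite{HN02a}); in general it is the nearby-cycles variant Conjecture \ref{RPsi_TFC}. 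This step isolates the two things left to prove: that $\phi_r$ is central, and that it equals $p^{rd/2}\big(Z_{V^{E_{j0}}_{-\mu,j}} * 1_{K_{p^r}}\big)$.

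Centrality I would deduce from the fact that $R\Psi$ is a \emph{central} sheaf in Gaitsgory's sense \cite{Ga}: nearby cycles commute with the proper pushforwards defining the Hecke correspondences, and the associated commutativity constraint forces $\phi_r * f = f * \phi_r$ for every $f \in \mathcal H(G(\mathbb Q_{p^r}), K_{p^r})$. Given centrality, both $\phi_r$ and $p^{rd/2}\big(Z_{V^{E_{j0}}_{-\mu,j}} * 1_{K_{p^r}}\big)$ lie in the center of the Hecke algebra, so by Corollary \ref{Hecke_action_cor} it suffices to compare their scalars on $\pi^{K_{p^r}}$ over all smooth irreducible $\pi$; by (\ref{Z_V_Z(G)}) this is the identity ${\rm tr}\big(\phi_r \mid \pi^{K_{p^r}}\big) = p^{rd/2}\,{\rm tr}^{\rm ss}\big(\varphi_\pi(\Phi),\, V^{E_{j0}}_{-\mu,j}\big)\,{\rm tr}\big(1_{K_{p^r}} \mid \pi\big)$. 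A character/test-vector argument converts this into a computation of the semi-simple trace of $\Phi$ on the cohomology of Rapoport--Zink towers: for parahoric $K_p$ in unramified $G$ it is exactly Pappas--Zhu's geometric Satake identification \cite{PZ} of $R\Psi$ with the Satake sheaf attached to $-\mu$ (and the Mackey decomposition (\ref{test_rep_defn}) accounts for the passage from $E$ to $E_{j0} = \mathbb Q_{p^r}$); for deeper level it is what one extracts from Scholze's description \cite{Sch1, Sch2} of the cohomology of the relevant moduli spaces in terms of ${\rm LLC}$ and $V_{-\mu}$.

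The hard part will be this last identification in the absence of a geometric Satake equivalence at deep level: for general $K_p$ and general $G$ there is neither a canonical integral model, nor a Pappas--Zhu local model, nor a dictionary relating $R\Psi$ directly to $V_{-\mu}$, so one is forced through the cohomology of Rapoport--Zink spaces and hence through ${\rm LLC+}$, which is itself conjectural; and even granting all of that, obtaining the trace identity uniformly rather than case by case would require enough functoriality of the local Langlands correspondence --- compatibility with parabolic induction, Conjecture \ref{comp_w_induction} --- to determine the Bernstein-center scalar. That is precisely where the current state of the art stops, which is why Conjecture \ref{TFC} is still a conjecture.
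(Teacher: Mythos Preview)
This statement is a \emph{conjecture} in the paper, not a theorem; there is no proof in the paper to compare against. You correctly recognize this and offer a program rather than a proof. Your outline --- reduce to nearby cycles on an integral or local model (cf.~Conjecture \ref{RPsi_TFC}), establish centrality, then pin down the resulting central function spectrally via (\ref{Z_V_Z(G)}) --- is exactly the template the paper surveys in \S\ref{evidence_TFC_sec} for the known parahoric cases \cite{HN02a, H05, PZ} and the pro-$p$ Iwahori case \cite{HRa}.

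One correction to your sketch: the centrality step via Gaitsgory's central-sheaf mechanism \cite{Ga} is available only when the special fiber embeds in an affine flag variety, i.e.\ essentially in the parahoric setting. For deeper level the paper explicitly notes (in the discussion of deeper level cases in \S\ref{evidence_TFC_sec}) that in higher rank Scholze's nearby cycles on his integral models in \cite{Sch2} do \emph{not} directly produce distributions in the Bernstein center, and Conjecture \ref{TFC} is verified there by indirect means, folding in the base-change transfer simultaneously. So the route ``nearby cycles $\Rightarrow$ central $\Rightarrow$ identify spectrally'' is not a uniform strategy across all levels; part of the difficulty you name at the end is precisely that no such uniform geometric mechanism for centrality is known beyond parahoric level. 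With that caveat, your summary of the program and of where it currently stops is accurate.
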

The same test functions should be used when one incorporates arbitrary Hecke operators away from $p$ into (\ref{Lef^ss}).

Following Rapoport's strategy (cf.~\cite{Ra90}, \cite{Ra05}, \cite{H05}), one seeks to find a natural integral model $\mathcal M_{K_p}$ over $\mathcal O_{E}$ for $Sh_{K_p}$, and then rephrase the above conjecture using the method of nearby cycles $R\Psi := R\Psi^{\mathcal M_{K_p}}(\overline{\mathbb Q}_\ell)$.

\begin{conj} \label{RPsi_TFC}
There exists a natural integral model $\mathcal M_{K_p}/\mathcal O_E$ for $Sh_{K_p}$, such that
\begin{equation} \label{RPsi_TFC_eq}
\sum_{x \in \mathcal M_{K_p}(k_{E_{j0}})} {\rm tr}^{\rm ss}(\Phi_p^r, R\Psi_x) = \sum_{(\gamma_0; \gamma, \delta)} c(\gamma_0; \gamma, \delta) ~ {\rm O}(1_{K^p}) ~ {\rm TO}_{\delta \theta}(\phi_r),
\end{equation}
where $\phi_r = p^{rd/2} \Big(Z_{V^{E_{j0}}_{-\mu,j}} * 1_{K_{p^r}}\Big)$ as in Conjecture \ref{TFC}.
\end{conj}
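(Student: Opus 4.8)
Conjecture \ref{RPsi_TFC} packages two assertions: the existence of a natural integral model $\mathcal M_{K_p}/\mathcal O_E$, and the point-counting identity (\ref{RPsi_TFC_eq}). Since the right-hand side of (\ref{RPsi_TFC_eq}) coincides with that of (\ref{Lef^ss}), and since proper (or suitably compactified) base change identifies $\sum_{x\in\mathcal M_{K_p}(k_{E_{j0}})}{\rm tr}^{\rm ss}(\Phi_p^r, R\Psi_x)$ with ${\rm tr}^{\rm ss}(\Phi_p^r,{\rm H}^\bullet_c(Sh_{K_p}\otimes_{\mathbf E}\overline{\mathbb Q}_p,\overline{\mathbb Q}_\ell))$ (the semisimple Lefschetz trace formula for nearby cycles, in the spirit of \cite{Ra90}), one may either \emph{assume} Conjecture \ref{TFC} and reduce to constructing $\mathcal M_{K_p}$ and checking this base-change comparison, or — the route I would actually take — prove (\ref{RPsi_TFC_eq}) directly and obtain (\ref{Lef^ss}), hence Conjecture \ref{TFC}, as corollaries. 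The direct approach reduces to three inputs: \textbf{(A)} a natural integral model $\mathcal M_{K_p}$ carrying a local model diagram; \textbf{(B)} a Langlands--Rapoport-type description of $\mathcal M_{K_p}(\overline{\mathbb F}_p)^{{\rm Frob}^r}$ yielding the Kottwitz triples $(\gamma_0;\gamma,\delta)$, the coefficients $c(\gamma_0;\gamma,\delta)$, and the prime-to-$p$ integrals ${\rm O}_\gamma(1_{K^p})$; and \textbf{(C)} the identification of the resulting ``nearby-cycles function'' on $G(\mathbb Q_{p^r})$ with $p^{rd/2}\big(Z_{V^{E_{j0}}_{-\mu,j}}*1_{K_{p^r}}\big)$.

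\textbf{Step (A).} In PEL cases one takes the Rapoport--Zink integral models; in Hodge type one uses the integral models of Kisin and Kisin--Pappas (and abelian type by descent), whose $p$-adic local structure is governed by a local model $\mathbb M^{\rm loc}_{\mathcal G,\{\mu\}}$ (Pappas--Zhu, with the more recent refinements of Scholze--Weinstein and Ansch\"utz--Gleason--Louren\c{c}o--Richarz). What is needed is a local model diagram $\mathcal M_{K_p}\longleftarrow\widetilde{\mathcal M}_{K_p}\longrightarrow\mathbb M^{\rm loc}_{\mathcal G,\{\mu\}}$ with smooth surjective legs, so that $R\Psi^{\mathcal M_{K_p}}$ is, up to shift and Tate twist by the relative dimensions of the legs, pulled back from $R\Psi^{\mathbb M^{\rm loc}}$; the half-Tate-twist normalization implicit in ${\rm tr}^{\rm ss}$ is what produces the factor $p^{rd/2}$, with $d={\rm dim}(Sh_{K_p})={\rm dim}(\mathbb M^{\rm loc}\otimes E)$. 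For the cohomological identity (\ref{Lef^ss}), noncompactness of $Sh_{K_p}$ forces passage to a toroidal or Baily--Borel compactification and the isolation of the interior contribution, as in Scholze's treatment (cf.\ the footnote following (\ref{Lef^ss}) and \cite{Ko90}); this is orthogonal to the local point.

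\textbf{Steps (B) and (C).} For \textbf{(B)} one follows Kottwitz \cite{Ko92a} in the good-reduction case and \cite{HRa}, \cite{Sch1, Sch2} at bad reduction: the $\overline{\mathbb F}_p$-points organize into isogeny classes indexed by Kottwitz triples; the prime-to-$p$ datum gives ${\rm O}_\gamma(1_{K^p})$ and the volume factor $c(\gamma_0;\gamma,\delta)$; and the datum at $p$ gives a twisted orbital integral ${\rm TO}_{\delta\theta}$, over the affine Deligne--Lusztig set $X_{\{\mu\}}(\delta)$ (or its parahoric / deep-level analogue), of the function recording ${\rm tr}^{\rm ss}(\Phi_p^r, R\Psi_x)$ as $x$ runs through the isogeny class. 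This rewrites $\sum_x{\rm tr}^{\rm ss}(\Phi_p^r,R\Psi_x)$ in the shape of the right-hand side of (\ref{RPsi_TFC_eq}) with $\phi_r$ equal to that nearby-cycles function, and reduces everything to \textbf{(C)}, which by Step (A) is the purely local statement that the trace-of-Frobenius function of $R\Psi^{\mathbb M^{\rm loc}_{\mathcal G,\{\mu\}}}$, suitably normalized, equals $p^{rd/2}(Z_{V^{E_{j0}}_{-\mu,j}}*1_{K_{p^r}})$. When ${\bf G}_{\mathbb Q_p}$ is unramified and $K_p$ is parahoric, this is the Kottwitz conjecture: $R\Psi^{\mathbb M^{\rm loc}}$ is Gaitsgory's central sheaf \cite{Ga, PZ}, its function is the Bernstein function $z_{-\mu}$, and the computation of \S\ref{parahoric_TFC_sec} identifies $z_{-\mu}$ (times $p^{rd/2}$) with $Z_{V_{-\mu}}*1_{K_p}$. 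At deeper level one bootstraps: $\Gamma_1(p)$-level and Drinfeld-level models fiber over parahoric ones, so $R\Psi$ is computed stratum by stratum from the parahoric answer together with the geometry of the covering, as in \cite{HRa}, \cite{Sch1, Sch2}.

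\textbf{Main obstacle.} Step (C) in full generality is the crux: outside the unramified parahoric case there is no general computation of nearby cycles on local models, and at wildly ramified or very deep level even the ``natural'' integral model $\mathcal M_{K_p}$ is not canonically pinned down — its existence and good properties are part of what the conjecture asserts. A general proof would therefore need (i) a robust theory of integral models at arbitrary level (e.g.\ via $v$-sheaves and local Shimura varieties / shtukas), (ii) a Langlands--Rapoport description for them — itself open in general — and (iii) a general identification of $R\Psi^{\mathbb M^{\rm loc}_{\mathcal G,\{\mu\}}}$ with the object of geometric Satake attached to $V_{-\mu}$, extending \cite{PZ} beyond the unramified parahoric situation. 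Absent these, the strategy above is complete only in the cases already present in the literature: modular curves and the Drinfeld-case Shimura varieties at arbitrary level (\cite{Sch1, Sch2}, with the $\Gamma_1(p)$ case in \cite{HRa}), the EL cases of Scholze--Shin \cite{SS}, and the unramified parahoric cases obtained by combining \cite{PZ} with Conjecture \ref{TFC}.
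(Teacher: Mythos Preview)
The statement you are addressing is labeled \emph{Conjecture} in the paper, and the paper does not supply a proof. Immediately after stating it, the author remarks only that the nearby-cycles-for-compact-support step is itself part of what is being conjectured (a suitable compactification of $\mathcal M_{K_p}$ is needed), and then points to $\S\ref{evidence_TFC_sec}$ for the cases in which unconditional versions are known. So there is no ``paper's own proof'' to compare against.

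That said, your outline is accurate and matches the paper's discussion of the evidence. Your Steps (A)--(C) correspond exactly to the ingredients the paper isolates: the Rapoport strategy of using a local model diagram (your Step (A)); the Langlands--Kottwitz counting-points argument to reorganize the left-hand side into Kottwitz triples (your Step (B), cf.\ \cite{Ko92a}, \cite{HRa}, \cite{Sch1, Sch2}); and the identification of the nearby-cycles function with $p^{rd/2}(Z_{V^{E_{j0}}_{-\mu,j}}*1_{K_{p^r}})$ via Gaitsgory/Pappas--Zhu centrality in the parahoric case and ad hoc computations at deeper level (your Step (C), cf.\ Conjecture \ref{Kottwitz_conj_RPsi} and $\S\ref{evidence_TFC_sec}$). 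Your ``Main obstacle'' paragraph correctly identifies why this remains a conjecture: beyond the unramified parahoric and the specific Drinfeld/modular-curve/EL cases you list, neither the integral models, nor the Langlands--Rapoport description, nor the nearby-cycles computation is available in general. The paper says essentially the same thing in $\S\ref{evidence_TFC_sec}$.

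One small correction of emphasis: you write that one may ``assume Conjecture \ref{TFC} and reduce to constructing $\mathcal M_{K_p}$ and checking this base-change comparison.'' The paper's Remark following Conjecture \ref{RPsi_TFC} stresses that this base-change comparison is itself conjectural in general (it requires a nice compactification of $\mathcal M_{K_p}/\mathcal O_E$), so this route does not genuinely reduce the difficulty; the two conjectures are logically close but neither is a formal consequence of the other without additional geometric input.
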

\begin{Remark} Implicit in this conjecture is that the method of nearby cycles can be used for compactly-supported cohomology. In fact we could conjecture there exists a suitably nice compactification of $\mathcal M_{K_p}/\mathcal O_E$ so that the natural map
$$
{\rm H}^i_c(\mathcal M_{K_p} \otimes_{\mathcal O_E} \overline{\mathbb F}_p \, , \, R\Psi(\overline{\mathbb Q}_\ell)) \rightarrow {\rm H}^i_c( Sh_{K_p} \otimes_{E} \overline{\mathbb Q}_p \, , \, \overline{\mathbb Q}_\ell)
$$
is a Galois-equivariant isomorphism. For $G = {\rm GSp}_{2g}$ and where $\mathcal M_{K_p}$ is the natural integral model for $Sh_{K_p}$ for $K_p$ an Iwahori subgroup, this was proved by Benoit Stroh. Of course, one is really interested in intersection cohomology groups of the Baily-Borel compactification (see footnote 5), and in fact Stroh \cite{Str} computed the nearby cycles and verified the analogue of the Kottwitz conjecture on nearby cycles (see Conjecture \ref{Kottwitz_conj_RPsi} below) for these compactifications.
\end{Remark}

\begin{Remark} Some unconditional versions of Conjectures \ref{TFC} and \ref{RPsi_TFC} have been proved.  See $\S\ref{evidence_TFC_sec}$.
\end{Remark}

\subsection{Endoscopic transfer of the stable Bernstein center}

Part of the Langlands-Kottwitz approach is to perform a ``pseudostabilization'' of (\ref{Lef^ss}), and in particular prove the ``fundamental lemmas'' that are required for this. The {\em stabilization} expresses ({\ref{Lef^ss}}) in the form $\sum_{\bf H} i({\bf G},{\bf H}) \, ST_e^*({\bf h})$, the sum over global $\mathbb Q$-elliptic endoscopic groups ${\bf H}$ for ${\bf G}$ of the $({\bf G},{\bf H})$-regular $\mathbb Q$-elliptic part of the geometric side of the stable trace formula for $({\bf H}, {\bf h})$ (cf. notation of \cite{Ko90}), for a certain {\em transfer} function ${\bf h} \in C^\infty_c({\bf H}(\mathbb A))$.  (By contrast in ``pseudostabilization'' which is used in certain situations, one instead writes ({\ref{Lef^ss}) in terms of the trace formula for ${\bf G}$ and not its quasi-split inner form, and this is sometimes enough, as in ~e.g.~Theorem \ref{Z_fcn_thm} below.) For stabilization one needs to produce elements ${\bf h}_p \in C^\infty_c({\bf H}(\mathbb Q_p))$ which are Frobenius-twisted endoscopic transfers of $\phi_r$. The existence of such transfers ${\bf h}_p$ is due mainly to the work of Ng\^{o} \cite{Ngo} and Waldspurger \cite{Wal97, Wal04, Wal08}. But we hope to have a priori spectral information about the transferred functions ${\bf h}_p$.  

A guiding principle is that the nearby cycles on an appropriate ``local model'' for ${Sh}_{K_p}$ should {\em naturally} produce a central element as a test function $\phi_r$, which should coincide with that given by the test function conjecture (cf. Conjecture \ref{RPsi_TFC}); then its spectral behavior is known by construction.  In that case one can formulate a conjectural endoscopic transfer $h_p$ of $\phi_r$ with known spectral behavior. 

General Frobenius-twisted endoscopic transfer homomorphisms $\mathfrak Z^{\rm st}(G_{\mathbb Q_{p^r}}) \rightarrow \mathfrak Z^{\rm st}(H_{\mathbb Q_p})$ will be described elsewhere. Here for simplicity we content ourselves to describe two special cases: standard (untwisted) endoscopic transfer of the geometric Bernstein center, and the base change transfer for the stable Bernstein center.

\subsubsection{Endoscopic transfer of the geometric Bernstein center}

Let us fix an endoscopic triple $(H,s,\eta_0)$ for $G$ over a $p$-adic field $F$ (cf. \cite[$\S7$]{Ko84a}), and suppose we have fixed an extension $\eta : \, ^LH \rightarrow \, ^LG$ of $\eta_0 : \widehat{H} \hookrightarrow \widehat{G}$ (we suppose we are in a situation, e.g. $G_{\rm der} = G_{\rm sc}$, where such extensions always exist). We could hope the natural map 
\begin{align*}
\mathfrak Y^{H/F} &\longrightarrow \mathfrak Y^{G/F} \\
(\lambda)_{\widehat{H}} &\longmapsto (\eta \circ \lambda)_{\widehat{G}}.
\end{align*}
would be algebraic and hence would induce an {\em endoscopic transfer homomorphism} $\mathfrak Z^{\rm st}(G) \rightarrow \mathfrak Z^{\rm st}(H)$. By invoking further expectations about endoscopic lifting, one would then formulate a map on the level of Bernstein centers, $\mathfrak Z(G) \rightarrow \mathfrak Z(H)$, which we could write as $Z \mapsto Z|_\eta$.  But these assertions are not obvious.  Fortunately, in practice we need this construction rather on the {\em geometric Bernstein center}.

\begin{defn}
Assume ${\rm LLC+}$ holds for $G/F$. We define the {\bf geometric Bernstein center} $\mathfrak Z^{\rm geom}(G)$ to be the subalgebra of $\mathfrak Z^{\rm st}(G)$ generated by the elements $Z_V$ as $V$ ranges over ${\rm Rep}(\,^LG)$.
\end{defn}
The terminology {\em geometric Bernstein center} is motivated by $\S\ref{geom_Lang_sec}$ below.

Let $V|_{\eta} \in {\rm Rep}(\,^LH)$ denote the restriction of $V \in {\rm Rep}(\,^LG)$ along $\eta$.  Further assume ${\rm LLC+}$ also holds for $H/F$. Then $Z_V \mapsto Z_{V|_{\eta}}$ determines  a map $\mathfrak Z^{\rm geom}(G) \rightarrow \mathfrak Z^{\rm geom}(H)$.  Write $Z^G_V$ (resp. $Z^H_{V|_{\eta}}$) for the image of $Z_V$ (resp. $Z_{V|_{\eta}}$) in $\mathfrak Z(G)$ (resp. $\mathfrak Z(H)$).

\begin{conj} \label{end_trans_conj} Assume ${\rm LLC+}$ holds for both $G$ and $H$. Then in the above situation the distribution $Z^H_{V|_{\eta}} \in \mathfrak Z(H)$ is the endoscopic transfer of $Z^G_V \in \mathfrak Z(G)$ in the following sense: whenever a function $\phi^H \in C^\infty_c(H(F))$ is a transfer of a function $\phi \in C^\infty_c(G(F))$, then $Z^H_{V|_{\eta}} * \phi^H$ is a transfer of $Z^G_V * \phi$.
\end{conj}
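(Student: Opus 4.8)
The plan is to deduce Conjecture~\ref{end_trans_conj}, conditionally on ${\rm LLC+}$ for $G$ and $H$ together with the expected endoscopic character identities (in both the geometric-to-spectral and the spectral-to-geometric directions), from the elementary observation that $Z^G_V$ and $Z^H_{V|_\eta}$ act by the \emph{same} scalar on $L$-packets matched by $\eta$. Concretely, let $\varphi_H : W'_F \to \, ^LH$ be a tempered parameter and put $\varphi_G := \eta \circ \varphi_H$. By Proposition~\ref{Z_V_construction} and \eqref{Z_V_Z(G)}, $Z^G_V$ acts on every $\pi_G \in \Pi_{\varphi_G}$ by $\mathrm{tr}^{\rm ss}(\varphi_G(\Phi), V)$ and $Z^H_{V|_\eta}$ acts on every $\pi_H \in \Pi_{\varphi_H}$ by $\mathrm{tr}^{\rm ss}(\varphi_H(\Phi), V|_\eta)$. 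Since $\eta$ is an $L$-morphism we have $\varphi_G|_{W_F} = \eta \circ (\varphi_H|_{W_F})$, and since $r_{V|_\eta} = r_V \circ \eta$ and $\eta(\varphi_H(I_F)) = \varphi_G(I_F)$, the defining formula ${\rm tr}^{\rm ss}(\lambda(\Phi),V) = {\rm tr}(r_V\lambda(\Phi),\, V^{r_V\lambda(I_F)})$ gives immediately
\[
Z^G_V(\pi_G) = \mathrm{tr}^{\rm ss}(\varphi_G(\Phi), V) = \mathrm{tr}^{\rm ss}(\varphi_H(\Phi), V|_\eta) = Z^H_{V|_\eta}(\pi_H) =: c(\varphi_H).
\]
In fact $c(\varphi_H)$ depends only on $\varphi_H|_{W_F}$, so $Z^G_V$ equals the scalar $c(\varphi_H)$ on the whole infinitesimal class containing $\Pi_{\varphi_G}$, and $Z^H_{V|_\eta}$ equals the scalar $c(\varphi_H)$ on all of $\Pi_{\varphi_H}$.

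Next I would pass to distribution characters. Note first that $Z^G_V * \phi \in C^\infty_c(G(F))$ and $Z^H_{V|_\eta} * \phi^H \in C^\infty_c(H(F))$, because under \eqref{Bern_cntr_as_distr} elements of the Bernstein center are \emph{essentially compact} distributions; thus the assertion of Conjecture~\ref{end_trans_conj} is meaningful. By Corollary~\ref{Hecke_action_cor}(b), for every $\pi_H \in \Pi_{\varphi_H}$ and $\pi_G \in \Pi_{\varphi_G}$,
\[
\Theta_{\pi_H}\big(Z^H_{V|_\eta} * \phi^H\big) = c(\varphi_H)\,\Theta_{\pi_H}(\phi^H), \qquad \Theta_{\pi_G}\big(Z^G_V * \phi\big) = c(\varphi_H)\,\Theta_{\pi_G}(\phi).
\]
Forming the stable character $S\Theta_{\varphi_H} = \sum_{\pi_H \in \Pi_{\varphi_H}} \Theta_{\pi_H}$ on $H$ and the $s$-twisted character $\Theta^{s}_{\varphi_G} = \sum_{\pi_G \in \Pi_{\varphi_G}} \langle s, \pi_G \rangle\, \Theta_{\pi_G}$ on $G$ (the sum weighted by the internal pairing of $\Pi_{\varphi_G}$ against the image of $s \in \widehat G$), and using that $c(\varphi_H)$ is constant along each packet, we obtain
\[
S\Theta_{\varphi_H}\big(Z^H_{V|_\eta} * \phi^H\big) = c(\varphi_H)\, S\Theta_{\varphi_H}(\phi^H), \qquad \Theta^{s}_{\varphi_G}\big(Z^G_V * \phi\big) = c(\varphi_H)\, \Theta^{s}_{\varphi_G}(\phi)
\]
for every tempered $\varphi_H$.

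Finally I would invoke the two halves of the spectral theory of endoscopic transfer. Because $\phi^H$ is a transfer of $\phi$, the endoscopic character identities give $S\Theta_{\varphi_H}(\phi^H) = \Theta^{s}_{\varphi_G}(\phi)$ with $\varphi_G = \eta \circ \varphi_H$, for all tempered $\varphi_H$ (up to the standard normalizing constants, which I suppress). Multiplying this identity by $c(\varphi_H)$ and combining with the two displays above yields $S\Theta_{\varphi_H}\big(Z^H_{V|_\eta} * \phi^H\big) = \Theta^{s}_{\varphi_G}\big(Z^G_V * \phi\big)$ for all tempered $\varphi_H$. By the converse direction — an equality of all stable tempered characters with the matching $s$-twisted characters forces an equality of stable orbital integrals, which rests on the weak-$*$ density of tempered characters among invariant distributions together with the stabilization of the resulting family of identities — it follows that $Z^H_{V|_\eta} * \phi^H$ is a transfer of $Z^G_V * \phi$, as desired. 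The genuinely new content here is only the bookkeeping of the previous two paragraphs; the main obstacle is the analytic and endoscopic input it rests on, namely the full local Langlands correspondence with its internal packet structure, the endoscopic character identities in \emph{both} directions (equivalently, the equivalence of geometric and spectral transfer), and the density statement used for the converse. In the generality of the conjecture these are themselves open, so the argument is necessarily conditional; however, when $G$ and its Levi subgroups are of the form ${\rm GL}_n$ — where ${\rm LLC+}$ is known (Remark~\ref{comp_w_ind_rem}), $L$-packets are singletons, and the relevant ``transfer'' is base change or the identity — all inputs become theorems and the argument is unconditional.
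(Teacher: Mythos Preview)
The paper does not prove this statement: it is posed as an open conjecture, with no proof offered. Your conditional reduction to the endoscopic character identities is the natural spectral approach, and it is exactly the method the paper uses to establish the analogous base-change statement for ${\rm GL}_n$ (Proposition~\ref{BC_GLn}): there the existence of cyclic base-change lifts supplies the character identity, Kazhdan's density theorem gives the converse (from matching of tempered traces to matching of regular semisimple orbital integrals), and Clozel's Shalika-germ argument extends to all semisimple elements. So your outline is correct as far as it goes and mirrors what the paper does in the one case it actually treats.

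The point to stress is that your inputs are, in the generality of the conjecture, at least as deep as the conjecture itself. The forward endoscopic character identity $S\Theta_{\varphi_H}(\phi^H) = \Theta^{s}_{\varphi_G}(\phi)$ presupposes the full internal structure of $L$-packets (the pairing $\langle s,\pi_G\rangle$), which goes well beyond ${\rm LLC+}$ as defined in \S\ref{LLC+_subsec}. More seriously, the converse you invoke --- that equality of all such spectral pairings forces the geometric transfer relation on $\kappa$-orbital integrals --- is not a formal consequence of weak-$*$ density of tempered characters among \emph{invariant} distributions: one needs density among \emph{stable} distributions on $H$ and a matching statement for the $s$-unstable distributions on $G$, i.e.\ essentially the full stable local trace formula machinery. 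You acknowledge this, but it is worth saying plainly that this is why the paper leaves the statement as a conjecture rather than deriving it from ${\rm LLC+}$.
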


This conjecture and its Frobenius-twisted analogue were announced by the author in April  2011 at Princeton \cite{H11}.  A very similar statement subsequently appeared as Conjecture 7.2 in \cite{SS}. Considering the untwisted case for simplicity, the difference is that in \cite{SS}, the authors take in place of $Z_V$ an element in the stable Bernstein center essentially of the form
$$
(\lambda)_{\widehat{G}} \mapsto {\rm tr}(\lambda(\Phi_F), V_{-\mu}),$$
where here the usual trace, not the semi-simple trace, is used. That conjecture is proved in \cite{SS}~in all EL or quasi-EL cases, by invoking special features of general linear groups such as the existence of base change representations.

Formally, Conjecture \ref{end_trans_conj} contains as a special case the ``fundamental lemma implies spherical transfer'' result of Hales \cite{Hal} (see also Waldspurger \cite{Wal97}). Indeed if $K, K_H$ are hyperspecial maximal compact subgroups in $G(F), H(F)$, then $1_{K_H}$ is a transfer of $1_K$ by the fundamental lemma, and hence $Z^H_{V|_{\eta}} * 1_{K_H}$ is a transfer of $Z^G_V * 1_K$.  But by the Satake isomorphism, every $K$-spherical function on $G(F)$ is of the form $Z^G_V * 1_K$ for some representation $V$ (comp.~$\S\ref{geom_Lang_sec}$).

Even in more general situations, Conjecture \ref{end_trans_conj} is most useful when applied to a pair $\phi, \phi^H$ of unit elements in appropriate Hecke algebras. At least when $G$ splits over $F^{un}$, Kazhdan-Varshavsky proved in \cite{KV} that for some explicit scalar $c$, the Iwahori unit $c 1_{I_H}$ is a transfer of the Iwahori unit $1_I$. As another example, if $K_n^G \subset G(F)$ is the $n$-th principal congruence subgroup in $G(F)$, then for some explicit scalar $c$ the function $c1_{K_n^H}$ is a transfer of $1_{K_n^G}$ (proved by Ferrari \cite{Fer} under some mild restrictions on the residue characteristic of $F$), and thus $c (Z^H_{V|_\eta} * 1_{K^H_n})$ should be an explicit transfer of $Z^G_V * 1_{K^G_n}$. A Frobenius-twisted analogue of Ferrari's theorem together with the Frobenius-twisted analogue of Conjecture \ref{end_trans_conj} would give an explicit Frobenius-twisted transfer of the test function $\phi_r$ from Conjecture \ref{TFC}, if $K_p$ is a principal congruence subgroup.

\subsubsection{Base change of the stable Bernstein center}

We return to the situation of Proposition \ref{change_of_field}, but we specialize it to cyclic Galois extensions of $F$ and furthermore we assume $G/F$ is quasi-split. Let $E/F$ be any finite cyclic Galois subextension of $\overline{F}/F$ with Galois group $\langle \theta \rangle$, and with corresponding inclusion of Weil groups $W_E \hookrightarrow W_F$.  

If $\phi \in \mathcal H(G(E))$ and $f \in \mathcal H(G(F))$ are functions in the corresponding Hecke algebras of locally constant compactly-supported functions, then we say $\phi, f$ are {\em associated} (or {\em $f$ is a base-change transfer of $\phi$}), if the following result holds for the stable (twisted) orbital integrals:  for every semisimple element $\gamma \in G(F)$, we have
\begin{equation} \label{stable_BC_defn}
{\rm SO}_\gamma(f) = \sum_{\rm \delta}\Delta(\gamma, \delta) \, {\rm SO}_{\delta \theta}(\phi)
\end{equation}
where the sum is over stable $\theta$-conjugacy classes $\delta \in G(E)$ with semisimple norm $\mathcal N\delta$, and where $\Delta(\gamma, \delta) = 1$ if $\mathcal N\delta = \gamma$ and $\Delta(\gamma,\delta) = 0$ otherwise.  See e.g.~\cite{Ko86}, \cite{Ko88}, \cite{Cl90}, or \cite{H09} for further discussion.  

\begin{conj} \label{BC_conj} In the above situation, consider $Z \in \mathfrak Z^{\rm st}(G_E)$, and consider its image, also denoted by $Z$, in $\mathfrak Z(G_E)$. Consider $b_{E/F}(Z) \in \mathfrak Z^{\rm st}(G)$ (cf.~Def.~\ref{b_E/F_defn}) and also denote by $b_{E/F}(Z)$ its image in $\mathfrak Z(G)$. Then $b_{E/F}(Z)$ is the base-change transfer of $Z \in \mathfrak Z(G_E)$, in the following sense: whenever a function $f \in C_c^\infty(G(F))$ is a base-change transfer of $\phi \in C_c^\infty(G(E))$, then $b_{E/F}(Z) * f$ is a base-change transfer of $Z * \phi$.
\end{conj}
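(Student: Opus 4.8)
Since $Z$ and $b_{E/F}(Z)$ are essentially compact, both $Z*\phi \in C^\infty_c(G(E))$ and $b_{E/F}(Z)*f \in C^\infty_c(G(F))$ are defined. The plan is to translate the geometric condition (\ref{stable_BC_defn}) into an equivalent condition on stable (twisted) tempered characters, where the claim becomes essentially formal: both $b_{E/F}(Z)$ and $Z$ multiply the relevant stable characters by \emph{one and the same} scalar, read off from $Z$ via the morphism of Proposition~\ref{change_of_field}.

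\textbf{Step 1 (spectral reformulation).} First I would invoke --- or, in the ranges where it is not yet available, \emph{assume} --- the spectral characterization of cyclic base-change transfer: a function $f\in C^\infty_c(G(F))$ is a base-change transfer of $\phi\in C^\infty_c(G(E))$ if and only if, for every tempered Langlands parameter $\varphi\colon W'_F\to {}^LG$,
$$
S\Theta_{\varphi}(f) \;=\; S\Theta^{\theta}_{\varphi|_{W'_E}}(\phi),
$$
where $S\Theta_\varphi$ is the stable character of the tempered $L$-packet $\Pi_\varphi$ on $G(F)$, and $S\Theta^{\theta}_{\varphi|_{W'_E}}$ is the $\theta$-twisted stable character of the ($\theta$-stable) tempered packet $\Pi_{\varphi|_{W'_E}}$ on $G(E)$ attached to the restricted parameter, the twist being computed with the normalized intertwining operators $A_\pi$ pinned down by a fixed Whittaker datum. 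This equivalence is the spectral incarnation of the stabilized twisted trace formula together with the Weyl integration formula; it is classical for $\mathrm{GL}_n$ (Arthur--Clozel) and follows from the work of Arthur and Moeglin--Waldspurger for quasi-split classical groups (see \cite{Ko86,Ko88,Cl90,H09} for the normalizations that make it match (\ref{stable_BC_defn})). I would treat this as the one genuinely external input.

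\textbf{Step 2 (one scalar).} By Definition~\ref{b_E/F_defn} and Proposition~\ref{change_of_field}, for any infinitesimal character $\lambda\colon W_F\to {}^LG$ the element $b_{E/F}(Z)\in\mathfrak Z^{\rm st}(G)$ acts on the whole infinitesimal class $\Pi_\lambda$ of $G(F)$ by the scalar $Z\big((\lambda|_{W_E})_{\widehat G}\big)$, while $Z\in\mathfrak Z^{\rm st}(G_E)$ acts on the infinitesimal class $\Pi_{\lambda|_{W_E}}$ of $G(E)$ by the \emph{same} scalar. Taking $\lambda=\varphi|_{W_F}$, and using the desideratum of the assumed LLC that every member of $\Pi_{\varphi|_{W'_E}}$ has infinitesimal character $(\varphi|_{W_E})_{\widehat G}$, I obtain a single scalar $c_\varphi := Z\big((\varphi|_{W_E})_{\widehat G}\big)$ governing both sides.

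\textbf{Step 3 (conclusion) and the main obstacle.} On the $F$-side, Corollary~\ref{Hecke_action_cor}(b) gives $\mathrm{tr}(b_{E/F}(Z)*f\mid\pi)=c_\varphi\,\mathrm{tr}(f\mid\pi)$ for all $\pi\in\Pi_\varphi$, hence, summing against the coefficients defining the stable character, $S\Theta_\varphi(b_{E/F}(Z)*f)=c_\varphi\,S\Theta_\varphi(f)$. On the $E$-side, each $\pi\in\Pi_{\varphi|_{W'_E}}$ is $\theta$-stable, so the scalar by which $Z$ acts on $\pi$ equals the one by which it acts on $\pi^\theta\cong\pi$, i.e.\ $Z$ commutes with $A_\pi$; therefore $\mathrm{tr}\big((Z*\phi)(\pi)\,A_\pi\big)=c_\varphi\,\mathrm{tr}\big(\phi(\pi)\,A_\pi\big)$ and $S\Theta^{\theta}_{\varphi|_{W'_E}}(Z*\phi)=c_\varphi\,S\Theta^{\theta}_{\varphi|_{W'_E}}(\phi)$. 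Combining this with the hypothesis $S\Theta_\varphi(f)=S\Theta^{\theta}_{\varphi|_{W'_E}}(\phi)$ from Step~1 and re-applying Step~1 in the reverse direction to the pair $b_{E/F}(Z)*f,\ Z*\phi$ finishes the proof. Everything past Step~1 is bookkeeping with Corollary~\ref{Hecke_action_cor} and Proposition~\ref{change_of_field}; the real content --- and the reason this remains a conjecture --- is Step~1, the equivalence of the orbital-integral condition (\ref{stable_BC_defn}) with the stable twisted character condition, which requires the twisted stabilization of the trace formula and the local twisted endoscopic character identities, available in full only for classical groups. A subsidiary technical point is to fix the normalization of the $A_\pi$ and the precise relation between $S\Theta^\theta$ and stable twisted orbital integrals so that the two formulations of ``base-change transfer'' agree, including the transfer factor $\Delta(\gamma,\delta)$ in (\ref{stable_BC_defn}).
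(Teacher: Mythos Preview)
Note first that the statement is a \emph{conjecture}: the paper does not prove it in general, and only establishes the case $G=\mathrm{GL}_n$ (Proposition~\ref{BC_GLn}). So there is no ``paper's proof'' of the full statement to compare against; what you have written is a conditional argument whose conditional nature you correctly identify in Step~1.

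That said, your outline is essentially the natural generalization of the paper's $\mathrm{GL}_n$ proof, with one structural difference worth noting. The paper does not invoke an ``if and only if'' spectral characterization of base-change transfer. Instead, for $\mathrm{GL}_n$ it argues as follows: given that $f$ is a transfer of $\phi$, the Weyl integration formula and its twisted analogue give $\mathrm{tr}((\phi,\theta)\mid\Pi)=\mathrm{tr}(f\mid\pi)$ for every tempered $\pi$ with base-change lift $\Pi$ (Arthur--Clozel); multiplying by the common scalar $Z(\Pi)=b_{E/F}(Z)(\pi)$ gives the analogous identity for $Z*\phi$ and $b_{E/F}(Z)*f$. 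Then, rather than reversing a spectral equivalence, the paper chooses an auxiliary $h\in C^\infty_c(\mathrm{GL}_n(F))$ which \emph{is} a base-change transfer of $Z*\phi$ (such $h$ exist by \cite[Prop.~3.1]{AC}), deduces $\mathrm{tr}(b_{E/F}(Z)*f-h\mid\pi)=0$ for all tempered $\pi$, and applies Kazhdan's density theorem plus Clozel's Shalika germ argument to match semisimple orbital integrals. For $\mathrm{GL}_n$ this avoids stable characters entirely, since $L$-packets are singletons and the base-change lift $\Pi$ is a single representation.

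Your route---posit the spectral equivalence in Step~1 and then multiply both sides by the common scalar $c_\varphi$---is cleaner once Step~1 is granted, and your Step~2 and Step~3 are correct bookkeeping with Corollary~\ref{Hecke_action_cor} and Proposition~\ref{change_of_field}. But you should be aware that even in the paper's $\mathrm{GL}_n$ argument the ``reverse direction'' is not taken for granted: it is replaced by the existence of \emph{some} transfer $h$ together with Kazhdan density. If you want to generalize, you might similarly avoid assuming the full iff by instead assuming (i) existence of base-change transfers for arbitrary $\phi$, and (ii) a stable analogue of Kazhdan density (vanishing of all tempered stable characters implies vanishing of stable semisimple orbital integrals). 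These are still serious inputs, but they are closer to what is actually proved in the literature than the clean biconditional you state in Step~1.
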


\begin{prop} \label{BC_GLn}
Conjecture \ref{BC_conj} holds for ${\rm GL}_n$.
\end{prop}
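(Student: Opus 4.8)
The plan is to deduce Proposition \ref{BC_GLn} from two inputs special to ${\rm GL}_n$: that base change of Langlands parameters is simply restriction of parameters, and the Arthur--Clozel characterization of local cyclic base-change transfer by an identity of twisted characters. Throughout, $G = {\rm GL}_n$, $E/F$ is cyclic with $\mathrm{Gal}(E/F) = \langle\theta\rangle$, and $\pi \mapsto \Pi := {\rm BC}_{E/F}(\pi)$ denotes the base change lifting.

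First I would assemble the parameter bookkeeping. For ${\rm GL}_n$ over any $p$-adic field, ${\rm LLC}$ and ${\rm LLC+}$ hold (Remark \ref{comp_w_ind_rem}(2), Corollary \ref{invariant_Levi}), the morphism of Proposition \ref{map_structure} is an isomorphism, and $\mathfrak Z^{\rm st}({\rm GL}_n) = \mathfrak Z({\rm GL}_n)$; in particular any element $Z$ of the Bernstein center acts on an irreducible $\pi$ by a scalar $Z(\pi)$ depending only on the infinitesimal character $(\varphi_\pi|_{W_F})_{\widehat{G}}$. Cyclic base change for ${\rm GL}_n$ gives a surjection $\pi \mapsto \Pi = {\rm BC}_{E/F}(\pi)$ from $\Pi({\rm GL}_n/F)$ onto the $\theta$-stable members of $\Pi({\rm GL}_n/E)$, which on parameters is $\varphi_\Pi = \varphi_\pi|_{W'_E}$; restricting further to $W_E$ shows that the infinitesimal character of $\Pi$ is $(\varphi_\pi|_{W_E})_{\widehat{G}}$, which is exactly the image of $(\varphi_\pi|_{W_F})_{\widehat{G}}$ under the map $(\lambda)_{\widehat{G}} \mapsto (\lambda|_{W_E})_{\widehat{G}}$ of Proposition \ref{change_of_field} defining $b_{E/F}$ (Definition \ref{b_E/F_defn}). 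This produces the key identity
\[
Z(\Pi) = b_{E/F}(Z)(\pi) \qquad \text{for every irreducible } \pi ,
\]
where $Z(\Pi)$ is the scalar by which $Z \in \mathfrak Z(G_E)$ acts on $\Pi$.

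Next I would bring in the Hecke action. Since $Z$ is essentially compact, $Z*\phi \in C_c^\infty(G(E))$, and since $Z$ acts on the finite-length representation $\Pi$ by the scalar $Z(\Pi)$, we have $\Pi(Z*\phi) = Z(\Pi)\,\Pi(\phi)$ as operators on the space of $\Pi$ (Corollary \ref{Hecke_action_cor}); hence for any normalized intertwining operator $I_\theta : \Pi \to \Pi^\theta$,
\[
{\rm tr}\big(\Pi(Z*\phi)\circ I_\theta\big) = Z(\Pi)\,{\rm tr}\big(\Pi(\phi)\circ I_\theta\big) ,
\]
and likewise ${\rm tr}\,\pi(b_{E/F}(Z)*f) = b_{E/F}(Z)(\pi)\,{\rm tr}\,\pi(f)$ by Corollary \ref{Hecke_action_cor}(b). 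By Arthur--Clozel, two functions $f \in C_c^\infty(G(F))$ and $\phi \in C_c^\infty(G(E))$ are associated in the sense of \eqref{stable_BC_defn} if and only if ${\rm tr}\,\pi(f) = {\rm tr}(\Pi(\phi)\circ I_\theta)$ for every irreducible $\pi$, with $\Pi = {\rm BC}_{E/F}(\pi)$ and $I_\theta$ the Whittaker-normalized intertwiner; here one uses that for ${\rm GL}_n$ stable (twisted) conjugacy coincides with ordinary (twisted) conjugacy, that the $\theta$-stable irreducibles of $G(E)$ are precisely the base changes, and that it suffices to test against tempered $\pi$, extending to all $\pi$ through the Langlands classification. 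Assuming $f$ is a base-change transfer of $\phi$, I multiply the character identity by $Z(\Pi) = b_{E/F}(Z)(\pi)$ and combine with the two formulas above to get ${\rm tr}\,\pi(b_{E/F}(Z)*f) = {\rm tr}(\Pi(Z*\phi)\circ I_\theta)$ for every such $\pi$; the converse direction of the same criterion then yields that $b_{E/F}(Z)*f$ is a base-change transfer of $Z*\phi$, which is the assertion of the proposition.

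The main obstacle is the spectral criterion invoked in the last step --- the equivalence between matching \eqref{stable_BC_defn} of (twisted) orbital integrals at semisimple elements and the twisted character identity, for the correct Whittaker normalization of $I_\theta$ and a large enough family of test representations $\pi$. This is precisely the local content of Arthur--Clozel base change for ${\rm GL}_n$; once it is granted, the remaining argument is formal, and in particular the scalar $Z(\Pi)$ enters identically on both sides, so any residual ambiguity in the normalization of $I_\theta$ is harmless. A minor point to dispose of along the way is that replacing $\phi$ by its $\theta$-average, or $f$ by any function with the same orbital integrals, changes neither side of \eqref{stable_BC_defn}, so no generality is lost in these implicit normalizations.
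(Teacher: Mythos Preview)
Your argument follows the same route as the paper's: derive the scalar identity $Z(\Pi)=b_{E/F}(Z)(\pi)$ from the compatibility of base change with restriction of parameters, and then multiply through the twisted character identity for tempered $\pi$. The only difference is in how the reverse implication (from the character identity back to matching of orbital integrals) is handled. You package it as a bidirectional ``spectral criterion'' from Arthur--Clozel and flag it as the main obstacle; the paper instead unpacks this step explicitly: it first invokes the \emph{existence} of some base-change transfer $h$ of $Z*\phi$ (\cite[Prop.~3.1]{AC}), compares traces to conclude ${\rm tr}(b_{E/F}(Z)*f - h \mid \pi)=0$ for all tempered $\pi$, applies Kazhdan's density theorem \cite{Kaz} to match regular semisimple orbital integrals, and then extends to all semisimple elements via Clozel's Shalika germ argument \cite[Prop.~7.2]{Cl90}. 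So your outline is correct, and the paper supplies precisely the ingredients needed to resolve the obstacle you identified.
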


\begin{proof}
The most efficient proof follows Scholze's proof of  Theorem C in \cite{Sch2} which makes essential use of the existence of cyclic base change lifts for ${\rm GL}_n$. Let $\pi \in \Pi({\rm GL}_n/F)$ be a tempered irreducible representation with base change lift $\Pi \in \Pi({\rm GL}_n/E)$, a tempered representation which is characterized by the character identity $\Theta_\Pi((g,\theta)) = \Theta_\pi(Ng)$ for all elements $g \in {\rm GL}_n(E)$ with regular semisimple norm $Ng$ (\cite[Thm.~6.2, p.~51]{AC}). Here $(g, \theta) \in {\rm GL}_n(E) \rtimes {\rm Gal}(E/F)$ and $\theta$ acts on $\Pi$ by the normalized intertwiner $I_\theta: \Pi \rightarrow \Pi$ of \cite[p.~11]{AC}.

Suppose $f$ is a base-change transfer of $\phi$. Using the Weyl integration formula and its twisted analogue (cf. \cite[p.~36]{AC}), we see that
$$
{\rm tr}((\phi, \theta)\,| \,\Pi) = {\rm tr}(f \, | \, \pi).
$$
Multiplying by the constant $Z(\Pi) = b_{E/F}(Z)(\pi)$, we get
$$
{\rm tr}((Z * \phi, \theta) \, | \, \Pi) = {\rm tr}(b_{E/F}(Z) * f \, | \, \pi).
$$
(Use Corollary \ref{Hecke_action_cor} and its twisted analogue.) There exists a base-change transfer $h \in C^\infty_c({\rm GL}_n(F))$ of $Z * \phi$ (\cite[Prop.~3.1]{AC}).  Using the same argument as above for the pair $Z * \phi$ and $h$, we conclude that  ${\rm tr}(b_{E/F}(Z) * f - h \, | \, \pi) = 0$ for every tempered irreducible $\pi \in \Pi({\rm GL}_n/F)$. By Kazhdan's density theorem (Theorem 1 in \cite{Kaz})  the regular semi-simple orbital integrals of $b_{E/F}(Z) * f$ and $h$ agree. Thus the (twisted) orbitals integrals of $b_{E/F}(Z) * f$ and $\phi$ match at all regular semi-simple elements, and hence at all semi-simple elements by Clozel's Shalika germ argument (\cite[Prop.~7.2]{Cl90}).
\end{proof}

\begin{Remark} Unconditional versions of Conjecture \ref{BC_conj} are available for parahoric and pro-p Iwahori-Hecke algebras, when $G/F$ is unramified.\footnote{The pro-p Sylow subgroup of an Iwahori subgroup $I \subset G(F)$ coincides with its pro-unipotent radical $I^+$, and it has become conventional to term the Hecke algebra $C^\infty_c(I^+ \backslash G(F)/I^+)$ the {\em pro-$p$ Iwahori-Hecke algebra}.} See $\S\ref{evidence_transfer_sec}$.
\end{Remark}

\subsection{Application: local Hasse-Weil zeta functions}

By Kottwitz' base change fundamental lemma for units \cite{Ko86}, we know $1_{K_p}$ is a base-change transfer of $1_{K_{p^r}}$ whenever $K_p = \mathcal G(\mathbb Z_p)$ and $K_{p^r} = \mathcal G(\mathbb Z_{p^r})$ for a smooth connected $\mathbb Z_p$-model $\mathcal G$ for $G$.  Then Conjectures \ref{TFC} and \ref{BC_conj} together say that 
\begin{equation} \label{f^j_defn}
f^{(j)}_p := p^{rd/2} \, b_{E_{j0}/\mathbb Q_p}(Z_{V^{E_{j0}}_{-\mu,j}}) * 1_{K_p}
\end{equation} 
is a base-change transfer of a test function $\phi_r$ that satisifies (\ref{Lef^ss}).  

Setting 
\begin{equation} \label{V^E_0}
(r^{E_0}_{-\mu}, V^{E_0}_{-\mu}) := {\rm Ind}^{\widehat{G} \rtimes W_{E_0}}_{\widehat{G} \rtimes W_{E}} r_{-\mu}
\end{equation}
we have for any admissible parameter $\varphi : W'_{\mathbb Q_p} \rightarrow \,^L(G_{\mathbb Q_p})$ and any $\pi_p \in \Pi_\varphi(G/\mathbb Q_p)$ the identity
\begin{equation} \label{tr_f^j_p}
{\rm tr}( f^{(j)}_p \, | \, \pi_p) = p^{rd/2} \,  {\rm dim}(\pi_p^{K_p}) \, {\rm tr}^{\rm ss}(\varphi(\Phi^r_p) \, , \, V^{E_0}_{-\mu}),
\end{equation}
where $r = j[E_0: \mathbb Q_p]$. In the compact and non-endoscopic cases, the above discussion allows us to express $\zeta^{\rm ss}_{\mathfrak p}(s, Sh_{K_p})$ in terms of semi-simple automorphic $L$-functions.  To explain this we need a detour on the point of view taken in \cite{L1, L2} (comp.~\cite[$\S2.2$]{Ko84b}).

Recall $({\bf r}_{-\mu}, V_{-\mu}) \in {\rm Rep}(\,^L(G_{\mathbf E}))$. Consider the Langlands representation 
$${\bf r} := {\rm Ind}^{\widehat{G} \rtimes W_\mathbb Q}_{\widehat{G} \rtimes W_{\mathbf E}} {\bf r}_{-\mu},$$
and for each prime $\mathfrak p$ of ${\mathbf E}$ dividing $p$, consider
$$
{\bf r}_\mathfrak p := {\rm Ind}^{\widehat{G} \rtimes W_{\mathbb Q_p}}_{\widehat{G} \rtimes W_{\mathbf E_\mathfrak p}} {\rm Res}^{\widehat{G} \rtimes W_{\mathbf E}}_{\widehat{G} \rtimes W_{\mathbf E_{\mathfrak p}}} {\bf r}_{-\mu} = {\rm Ind}^{\widehat{G} \rtimes W_{\mathbb Q_p}}_{\widehat{G} \rtimes W_{{\bf E}_\mathfrak p}} r_{-\mu}.
$$
Mackey theory gives
$$
{\rm Res}^{\widehat{G} \rtimes W_{\mathbb Q}}_{\widehat{G} \rtimes W_{\mathbb Q_p}} {\bf r} = \bigoplus_{\mathfrak p | p} {\bf r}_{\mathfrak p}.
$$
If $\mathfrak p$ is understood, let ${\mathbf E}_{\mathfrak p 0}/\mathbb Q_p$ denote the maximal unramified subextension of ${\mathbf E}_{\mathfrak p}/\mathbb Q_p$, and set $E = {\mathbf E}_{\mathfrak p}$ and $E_0 := {\mathbf E}_{\mathfrak p 0}$.  Then we have
\begin{equation} \label{r_mathfrakp}
{\bf r}_\mathfrak p = {\rm Ind}^{\widehat{G} \rtimes W_{\mathbb Q_p}}_{\widehat{G} \rtimes W_{E}} r_{-\mu} = {\rm Ind}^{\widehat{G} \rtimes W_{\mathbb Q_p}}_{\widehat{G} \rtimes W_{E_0}} r^{E_0}_{-\mu}.
\end{equation}

\begin{lemma}
Suppose $\pi_p \in \Pi_\varphi(G/\mathbb Q_p)$. Then 
\begin{equation}\label{r_vs_j}
[E_0: \mathbb Q_p]^{-1} {\rm tr}^{\rm ss}(\varphi(\Phi^r_p), {\bf r}_\mathfrak p) = \begin{cases}
{\rm tr}^{\rm ss}(\varphi(\Phi^j_\mathfrak p), r^{E_0}_{-\mu}) \,\,\,\, \mbox{if $r = j [E_0: \mathbb Q_p]$} \\ 0, \,\,\,\,\, \mbox{if $[E_0: \mathbb Q_p]\not| r$} \end{cases}
\end{equation}
\end{lemma}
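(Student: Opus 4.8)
\medskip\noindent\textbf{Proof sketch.}
The plan is to reduce the statement to an elementary trace computation for a representation of $W_{\mathbb Q_p}$ induced from its unramified subgroup $W_{E_0}$, where $a:=[E_0:\mathbb Q_p]=[k_{E_0}:\mathbb F_p]$. First I would record the facts coming from unramifiedness of $E_0/\mathbb Q_p$: $I_{E_0}=I_{\mathbb Q_p}$, the element $\Phi_p^a$ is a geometric Frobenius of $E_0$ (so $\Phi_p^r=\Phi_{\mathfrak p}^{r/a}$ whenever $a\mid r$), and $W_{E_0}\trianglelefteq W_{\mathbb Q_p}$ has index $a$ with $\{\Phi_p^k\}_{k=0}^{a-1}$ a set of coset representatives. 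Since $\varphi$ maps $W_{E_0}$ into $\widehat{G}\rtimes W_{E_0}$ and induces a bijection $W_{\mathbb Q_p}/W_{E_0}\xrightarrow{\sim}(\widehat{G}\rtimes W_{\mathbb Q_p})/(\widehat{G}\rtimes W_{E_0})$, the elementary base-change formula for induced representations gives an isomorphism of $W_{\mathbb Q_p}$-representations
$$
{\bf r}_{\mathfrak p}\circ\varphi|_{W_{\mathbb Q_p}}\;\cong\;{\rm Ind}_{W_{E_0}}^{W_{\mathbb Q_p}}\bigl(r^{E_0}_{-\mu}\circ\varphi|_{W_{E_0}}\bigr),
$$
which one can also write down explicitly using the above coset representatives. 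Setting $\rho:=r^{E_0}_{-\mu}\circ\varphi|_{W_{E_0}}$ and $\rho':={\rm Ind}_{W_{E_0}}^{W_{\mathbb Q_p}}\rho$ and unwinding the definition of ${\rm tr}^{\rm ss}$, the claim becomes: ${\rm tr}\bigl(\rho'(\Phi_p^r)\mid V_{\rho'}^{\rho'(I_{\mathbb Q_p})}\bigr)$ equals $0$ when $a\nmid r$, and equals $a\cdot{\rm tr}\bigl(\rho(\Phi_p^r)\mid V_{\rho}^{\rho(I_{E_0})}\bigr)$ when $a\mid r$.

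Next I would make $\rho'$ explicit on $V_{\rho'}=\bigoplus_{k=0}^{a-1}\Phi_p^k\otimes V_{\rho}$: the operator $\rho'(\Phi_p^r)$ carries the $k$-th summand to the $k'$-th with $k'\equiv k+r\pmod a$, acting there through $\rho$ of an element of $W_{E_0}$ which equals $\Phi_p^r$ itself when $k'=k$. Because $I_{\mathbb Q_p}=I_{E_0}$ is normal in $W_{\mathbb Q_p}$, each $i\in I_{\mathbb Q_p}$ preserves every summand, acting on $\Phi_p^k\otimes V_{\rho}$ as $\rho(\Phi_p^{-k}i\Phi_p^k)$, and $\Phi_p^{-k}I_{\mathbb Q_p}\Phi_p^k=I_{E_0}$; hence $V_{\rho'}^{\rho'(I_{\mathbb Q_p})}=\bigoplus_{k=0}^{a-1}\Phi_p^k\otimes V_{\rho}^{\rho(I_{E_0})}$, on which $\rho'(\Phi_p^r)$ still permutes the $a$ summands by $k\mapsto k+r\bmod a$. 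If $a\nmid r$ this permutation has no fixed point, so $\rho'(\Phi_p^r)$ has vanishing diagonal blocks and trace zero, giving ${\rm tr}^{\rm ss}(\varphi(\Phi_p^r),{\bf r}_{\mathfrak p})=0$. If $a\mid r$, write $r=ja$; the permutation is trivial, the action on each $\Phi_p^k\otimes V_{\rho}^{\rho(I_{E_0})}$ is just $\rho(\Phi_p^{ja})$ restricted to inertia invariants, and summing the $a$ equal contributions gives ${\rm tr}^{\rm ss}(\varphi(\Phi_p^r),{\bf r}_{\mathfrak p})=a\cdot{\rm tr}\bigl(\rho(\Phi_p^{ja})\mid V_{\rho}^{\rho(I_{E_0})}\bigr)=a\cdot{\rm tr}^{\rm ss}(\varphi(\Phi_{\mathfrak p}^{j}),r^{E_0}_{-\mu})$ using $\Phi_p^{ja}=\Phi_{\mathfrak p}^{j}$; dividing by $a=[E_0:\mathbb Q_p]$ yields the identity.

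The one genuinely non-formal point — and the step I expect to be the main obstacle — is the compatibility of the inertia-invariants functor $(-)^{I_{\mathbb Q_p}}$ with the decomposition of $V_{\rho'}$ over coset representatives; this rests precisely on the unramifiedness inputs $I_{E_0}=I_{\mathbb Q_p}$ and on the normality of inertia in $W_{\mathbb Q_p}$. Passing through the Weil-group reduction of the first step is what makes this transparent: it moves the computation entirely into $W_{\mathbb Q_p}$ and thereby avoids having to compare $r^{E_0}_{-\mu}$ with its conjugates inside $\widehat{G}\rtimes W_{\mathbb Q_p}$. I would also note in passing that ${\rm tr}^{\rm ss}(w,-)$ depends only on ${\rm val}(w)$, so the sign normalization ${\rm val}(\Phi_p)=-1$ plays no role.
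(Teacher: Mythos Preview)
Your argument is correct and is essentially the same as the paper's proof: the paper also writes ${\bf r}_\mathfrak p$ in its tensor-product/induced form, exhibits the basis $\{\varphi(\Phi_p^i)\otimes w_k\}_{0\le i<[E_0:\mathbb Q_p]}$ for the $\varphi(I_{\mathbb Q_p})$-invariants (with $\{w_k\}$ a basis of $(r^{E_0}_{-\mu})^{\varphi(I_{\mathbb Q_p})}$), and reads off the trace from the resulting permutation of summands. Your extra step of first restricting along $\varphi$ to reduce to an induction purely inside $W_{\mathbb Q_p}$ is a harmless repackaging of the same computation, and your identification of the ``main obstacle'' (compatibility of inertia invariants with the coset decomposition, via $I_{E_0}=I_{\mathbb Q_p}$) is exactly the point the paper is using implicitly when it writes down that basis.
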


\begin{proof}
There is an isomorphism of $\widehat{G} \rtimes W_{\mathbb Q_p}$-modules 
$${\bf r}_\mathfrak p \cong \mathbb C[\widehat{G} \rtimes W_{\mathbb Q_p}] \otimes_{\mathbb C[\widehat{G} \rtimes W_{E_0}]} r^{E_0}_{-\mu},$$ 
and ${\bf r}_\mathfrak p^{\varphi(I_{\mathbb Q_p})}$ has a $\mathbb C$-basis of the form $\{\varphi(\Phi_p^i) \otimes w_k\}$ where $0 \leq i \leq [E_0:\mathbb Q_p]-1$ and $\{ w_k\}$ comprises a $\mathbb C$-basis for $(r^{E_0}_{-\mu})^{\varphi(I_{\mathbb Q_p})}$.  The lemma follows.
\end{proof}

The following result shows the potential utility of Conjectures \ref{TFC} and \ref{BC_conj}. It applies not just to PEL Shimura varieties, but to any Shimura variety where these conjectures are known. Similar results will hold when incorporating Hecke operators away from $p$.

\begin{theorem}\label{Z_fcn_thm}
Suppose ${\bf G}_{\rm der}$ is anisotropic over $\mathbb Q$, so that the associated Shimura variety $Sh_{K_p} = Sh({\bf G}, h^{-1}, K^pK_p)$ is proper over ${\bf E}$.  Suppose ${\bf G}$ has ``no endoscopy'', in the sense that the group $\mathfrak K({\bf G}_{\gamma_0}/\mathbb Q)$ is trivial for every semisimple element $\gamma_0 \in {\bf G}(\mathbb Q)$, as in e.g.~\cite{Ko92b}. Let $\mathfrak p$ be a prime ideal of ${\bf E}$ dividing $p$. Assume $({\rm LLC}+)$ \textup{(}cf. $\S\ref{LLC+_subsec}$\textup{)}, and Conjectures \ref{TFC} and \ref{BC_conj} hold for all groups ${\bf G}_{\mathbb Q_{p^r}}$. 

Then in the notation above, we have
\begin{equation} \label{Z_fcn_eq}
\displaystyle
\zeta^{\rm ss}_{\mathfrak p}(s, Sh_{K_p}) = \prod_{\pi_f} L^{\rm ss}(s-\dfrac{d}{2}, \pi_p, {\bf r}_\mathfrak p)^{a(\pi_f)\, {\rm dim}(\pi^K_f)},
\end{equation}
where $\pi_f = \pi^p \otimes \pi_p$ runs over irreducible admissible representations of ${\bf G}(\mathbb A_f)$ and the integer $a(\pi_f)$ is given by
$$
a(\pi_f) = \sum_{\pi_\infty \in \Pi_\infty} m(\pi_f \otimes \pi_\infty) \, {\rm tr}(f_\infty | \pi_\infty),
$$
where $m(\pi_f \otimes \pi_\infty)$ is the multiplicity of $\pi_f \otimes \pi_\infty$ in 
${\rm L}^2({\bf G}(\mathbb Q) A_{\bf G}(\mathbb R)^\circ \backslash {\bf G}(\mathbb A))$. Here $A_{\bf G}$ is the $\mathbb Q$-split component of the center of ${\bf G}$ \textup{(}which we assume is also its $\mathbb R$-split component\textup{)}. Further $\Pi_\infty$ is the set of irreducible admissible representations of ${\bf G}(\mathbb R)$ which have trivial infinitesimal and central characters, and $f_\infty$ is defined as in \cite{Ko92b} to be $(-1)^{{\rm dim}(Sh_K)}$ times a pseudo-coefficient of an essentially discrete series member $\pi^0_\infty \in \Pi_\infty$.
\end{theorem}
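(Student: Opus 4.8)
The plan is to follow the template of Kottwitz \cite{Ko92b}, the only genuinely new inputs being the identification of the test function in (\ref{Lef^ss}) via Conjecture \ref{TFC} and the base-change descent governed by Conjecture \ref{BC_conj}; the remainder is formal. First I would invoke the Langlands--Kottwitz identity (\ref{Lef^ss}), which is available because ${\bf G}_{\rm der}$ anisotropic over $\mathbb Q$ forces $Sh_{K_p}$ to be proper over ${\bf E}$, and substitute $\phi_r = p^{rd/2}\big(Z_{V^{E_{j0}}_{-\mu,j}} * 1_{K_{p^r}}\big)$ from Conjecture \ref{TFC}. Then, using Kottwitz's base-change fundamental lemma for unit elements \cite{Ko86} (so that $1_{K_p}$ is a base-change transfer of $1_{K_{p^r}}$) together with Conjecture \ref{BC_conj} applied to $Z_{V^{E_{j0}}_{-\mu,j}} \in \mathfrak Z^{\rm st}(G_{E_{j0}})$, one obtains that the function $f^{(j)}_p$ of (\ref{f^j_defn}) is a base-change transfer of $\phi_r$. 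Consequently, exactly as in the good-reduction case, each twisted orbital integral ${\rm TO}_{\delta\theta}(\phi_r)$ on the right of (\ref{Lef^ss}) may be replaced by the ordinary orbital integral ${\rm O}_{\gamma_0}(f^{(j)}_p)$, where $\gamma_0$ is the norm of $\delta$.

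Next I would pseudostabilize. The ``no endoscopy'' hypothesis — $\mathfrak K({\bf G}_{\gamma_0}/\mathbb Q)$ trivial for every semisimple $\gamma_0 \in {\bf G}(\mathbb Q)$ — causes the coefficients $c(\gamma_0;\gamma,\delta)$ to collapse, so that the right-hand side of (\ref{Lef^ss}), weighted at the archimedean place by $f_\infty$, becomes precisely the geometric side of the Arthur--Selberg trace formula for ${\bf G}$ evaluated on the global test function $f = 1_{K^p}\otimes f^{(j)}_p \otimes f_\infty$; this step is carried out verbatim in \cite{Ko92b}. Since $Sh_{K_p}$ is proper, the spectral side is the discrete sum $\sum_\pi m(\pi)\,{\rm tr}(f\mid\pi)$ over automorphic $\pi = \pi_f\otimes\pi_\infty$ occurring in ${\rm L}^2({\bf G}(\mathbb Q)A_{\bf G}(\mathbb R)^\circ\backslash{\bf G}(\mathbb A))$, and the choice of $f_\infty$ as $(-1)^{\dim(Sh_K)}$ times a pseudo-coefficient of $\pi^0_\infty$ localizes the archimedean contribution onto the set $\Pi_\infty$ of the statement (of trivial infinitesimal and central character, matching the trivial-coefficient cohomology in (\ref{Lef^ss})).

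Now I would compute the local traces: factoring ${\rm tr}(f\mid\pi) = \dim(\pi_f^K)\cdot p^{rd/2}\,{\rm tr}^{\rm ss}(\varphi_{\pi_p}(\Phi^r_p), V^{E_0}_{-\mu})\cdot{\rm tr}(f_\infty\mid\pi_\infty)$ via (\ref{tr_f^j_p}) (itself a consequence of Corollary \ref{Hecke_action_cor}), and summing over $\pi_\infty \in \Pi_\infty$ with multiplicities, one obtains for each $j\ge 1$ (with $r = j[E_0:\mathbb Q_p]$)
\begin{equation*}
{\rm Lef}^{\rm ss}(\Phi^r_p, Sh_{K_p}) = \sum_{\pi_f} a(\pi_f)\,\dim(\pi_f^K)\,p^{rd/2}\,{\rm tr}^{\rm ss}(\varphi_{\pi_p}(\Phi^r_p), V^{E_0}_{-\mu}),
\end{equation*}
with $a(\pi_f)$ as in the statement. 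Finally, I would feed these identities into the generating series defining $\zeta^{\rm ss}_{\mathfrak p}(s, Sh_{K_p})$; using $\Phi^r_p = \Phi^j_{\mathfrak p}$, the relation $q_{\mathfrak p}^{jd/2} = p^{rd/2}$, and the Lemma (\ref{r_vs_j}) — which both converts the $V^{E_0}_{-\mu} = r^{E_0}_{-\mu}$-traces into ${\bf r}_{\mathfrak p}$-traces and supplies the vanishing when $[E_0:\mathbb Q_p]$ does not divide the Frobenius power — one recognizes the resulting series as $\log\prod_{\pi_f} L^{\rm ss}(s-\tfrac d2, \pi_p, {\bf r}_{\mathfrak p})^{a(\pi_f)\dim(\pi_f^K)}$, and exponentiating yields (\ref{Z_fcn_eq}).

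I expect the main obstacle to be located entirely in the first step: one needs Conjecture \ref{TFC} in order to know that the abstract central element is really the test function occurring in (\ref{Lef^ss}), and Conjecture \ref{BC_conj} in order to control its base-change descent. Even granting these, some care is required to check that the base-change transfer interacts correctly with the twisted-orbital-integral formalism of (\ref{Lef^ss}) and with the norm map, and to keep straight the $p$-adic bookkeeping among $\mathbb Q_p$, $E$, $E_0$ and the Frobenius elements $\Phi_p$, $\Phi_{\mathfrak p}$ — which is exactly what the preceding Lemma and (\ref{tr_f^j_p}) are designed to package. Once these points are settled, the pseudostabilization and the application of the trace formula are purely formal and proceed as in \cite{Ko92b}.
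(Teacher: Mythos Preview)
Your proposal is correct and follows essentially the same route as the paper's proof: invoke (\ref{Lef^ss}) with the test function from Conjecture~\ref{TFC}, descend via Conjecture~\ref{BC_conj} and Kottwitz's base-change fundamental lemma for units to $f^{(j)}_p$, use the ``no endoscopy'' hypothesis (via \cite[Lemma~4.1]{Ko92b}) to pass from orbital to stable orbital integrals, apply the simple trace formula (valid since ${\bf G}_{\rm der}$ is $\mathbb Q$-anisotropic), compute the $p$-component trace by (\ref{tr_f^j_p}), and assemble the generating series using (\ref{r_vs_j}). One small point of phrasing: the simple form of the trace formula is a consequence of ${\bf G}_{\rm der}$ being anisotropic over $\mathbb Q$, not of the properness of $Sh_{K_p}$ per se, and the paper makes the intermediate expression $\tau({\bf G})\sum_{\gamma_0}{\rm SO}_{\gamma_0}(f^p f^{(j)}_p f_\infty)$ explicit before invoking (\ref{simple_trace_form}); but this is only a matter of emphasis.
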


\begin{proof}
The method follows closely the argument of Kottwitz in \cite{Ko92b} (comp.~\cite[$\S13.4$]{HRa}), so we just give an outline.  We will use freely the notation of Kottwitz and \cite{HRa}. Set $f = [{\bf E}_{\mathfrak p 0}: \mathbb Q_p]$. By definition we have
\begin{equation} \label{log_zeta}
{\rm log} \, \zeta^{\rm ss}_{\mathfrak p}(s, Sh_{K_p}) = \sum_{j =1}^\infty {\rm Lef}^{\rm ss}(\Phi^j_\mathfrak p , Sh_{K_p}) \, \dfrac{p^{-jfs}}{j}.
\end{equation}
By using (\ref{Lef^ss}) together with Conjectures \ref{TFC} and \ref{BC_conj}, the arguments of Kottwitz \cite{Ko92b} show that for each $j \geq 1$
\begin{equation}
{\rm Lef}^{\rm ss}(\Phi^j_\mathfrak p, Sh_{K_p}) = \tau({\bf G}) \sum_{\gamma_0}  {\rm SO}_{\gamma_0}(f^p \, f^{(j)}_p \, f_\infty),
\end{equation}
where $f^{(j)}_p$ is defined as in (\ref{f^j_defn}) and $f^p$ is the characteristic function of $K^p \subset {\bf G}(\mathbb A^p_f)$. Here $\gamma_0$ ranges over all stable conjugacy classes in ${\bf G}(\mathbb Q)$.

Since ${\bf G}_{\rm der}$ is anisotropic over $\mathbb Q$, the trace formula for any $f \in C^\infty_c(A_{\bf G}(\mathbb R)^\circ \backslash {\bf G}(\mathbb A))$ takes the simple form
\begin{equation} \label{simple_trace_form}
\sum_\gamma \tau ({\bf G}_\gamma) {\rm O}_\gamma (f) = \sum_{\pi} m(\pi) \, {\rm tr}(f | \pi),
\end{equation}
where $\gamma$ ranges over conjugacy classes in ${\bf G}(\mathbb Q)$ and $\pi$ ranges over irreducible representations in ${\rm L}^2({\bf G}(\mathbb Q) A_{\bf G}(\mathbb R)^\circ \backslash {\bf G}(\mathbb A))$.  By \cite[Lemma~4.1]{Ko92b}, the vanishing of all $\mathfrak K({\bf G}_{\gamma_0}/\mathbb Q)$ means that
\begin{equation*}
\sum_{\gamma} \tau({\bf G}_\gamma) \, {\rm O}_\gamma(f) = \tau({\bf G}) \sum_{\gamma_0} {\rm SO}_{\gamma_0}(f).
\end{equation*}
It follows that
\begin{align*}
{\rm Lef}^{\rm ss}(\Phi^j_\mathfrak p, Sh_{K_p}) &= \sum_\pi m(\pi) \, {\rm tr}(f^pf^{(j)}_pf_\infty | \pi) \\
&= \sum_{\pi_f} \sum_{\pi_\infty \in \Pi_\infty} m(\pi_f \otimes \pi_\infty) \cdot {\rm tr}(f^p | \pi^p_f) \cdot {\rm tr}(f^{(j)}_p | \pi_p) \cdot {\rm tr}(f_\infty | \pi_\infty) \\
&= \sum_{\pi_f} a(\pi_f) \, {\rm dim}(\pi^K_f) \, p^{jfd/2} \, {\rm tr}^{\rm ss}(\varphi_{\pi_p}(\Phi^j_\mathfrak p), V^{E_0}_{-\mu}),
\end{align*}
the last equality by (\ref{tr_f^j_p}).

By definition we have
\begin{equation*}
{\rm log} \, L^{\rm ss}(s, \pi_p, {\bf r}_\mathfrak p) = \sum_{r=1}^{\infty} {\rm tr}^{\rm ss}(\varphi_{\pi_p}(\Phi^r_p), {\bf r}_{\mathfrak p}) \, \dfrac{p^{-rs}}{r}.
\end{equation*}
Now (\ref{Z_fcn_eq}) follows by invoking (\ref{r_vs_j}).
\end{proof}

\begin{Remark}
Unconditional versions of Theorem \ref{Z_fcn_thm} are available for some parahoric or pro-p-Iwahori level cases, or for certain compact ``Drinfeld case'' Shimura varieties with arbitrary level; these cases are alluded to in $\S\ref{evidence_TFC_sec}$.
\end{Remark}

\subsection{Relation with geometric Langlands} \label{geom_Lang_sec}

For simplicity, assume $G$ is split over a $p$-adic or local function field $F$.  Assume $G$ satisfies ${\rm LLC+}$. From the construction of $Z_V$ in Proposition \ref{Z_V_construction}, we have a map
\begin{align} \label{gen_Sat_map}
K_0{\rm Rep}_{\mathbb C}(\widehat{G}) &\rightarrow \mathcal Z(G,J) \\
V &\mapsto Z_V * 1_J  \notag
\end{align}
for any compact open subgroup $J \subset G(F)$, which gives rise to a commutative diagram 
$$
\xymatrix@1{
%&& \ar[d] \\
&& \mathcal Z(G,J) \ar[d]^{-*_J 1_{I}} \\
&&  \mathcal Z(G,I) \ar[d]^{-*_I 1_K} \\
K_0{\rm Rep}(\widehat{G}) \ar[rr]^{{\rm Sat}}_{\sim} \ar[urr]^{\rm Bern}_{\sim} \ar@/^/[uurr] && \mathcal H(G,K) 
}
$$
whenever $J \subseteq I \subset K$ where $I$ resp. $K$ is an Iwahori resp. special maximal compact subgroup, and where the bottom two arrows are the Bernstein resp. Satake isomorphisms.  We warn the reader that the oblique arrow $K_0{\rm Rep}(\widehat{G}) \rightarrow \mathcal Z(G,J)$ is injective but not surjective in general, and also it is additive but not an algebra homomorphism in general. 

Gaitsgory \cite{Ga} constructed the two arrows ${\rm Sat}$ and ${\rm Bern}$ geometrically when $F$ is a function field, using nearby cycles for a degeneration of the affine Grassmannian ${\rm Gr}_G$ to the affine flag variety  ${\rm Fl}$ for $G$. One can hope that, as in the Iwahori case \cite{Ga}, one can construct the arrow $K_0 {\rm Rep}(\widehat{G}) \rightarrow \mathcal Z(G,J)$ categorically using nearby cycles for a similar degeneration of ${\rm Gr}_G$ to a ``partial affine flag variety'', namely an {\em fpqc}-quotient $L\mathfrak J/L^+\mathfrak J$ where $\mathfrak J$ is a smooth connected group scheme over $\overline{\mathbb F}_p[[t]]$ with generic fiber $\mathfrak J_{\mathbb F_p((t))} = G_{\mathbb F_p((t))}$ and $\mathfrak J(\mathbb F_p[[t]]) = J$.   Here $L\mathfrak J$ (resp.~$L^+\mathfrak J$) is the ind-scheme (resp. scheme) over $\mathbb F_p$ representing the sheaf of groups for the {\em fpqc}-topology whose sections for an $\mathbb F_p$-algebra $R$ are given by $L\mathfrak J({\rm Spec}\,R) = \mathfrak J(R[\![t]\!]\tiny{[\frac{1}{t}]})$ (resp.~$L^+\mathfrak J({\rm Spec}\,R) = \mathfrak J(R[\![t]\!])$). 
At least for $J = I^+$, the pro-p Iwahori subgroup, this will be realized in forthcoming joint work of the author and Benoit Stroh.

In a related vein, the geometric Satake equivalence of Mirkovic-Vilonen \cite{MV} is a categorical version of the Satake isomorphism {\rm Sat}, and this is usually stated when $G$ is a split group over $F = \mathbb F_p((t))$. One can ask for a version of this when $G$ is nonsplit, possibly not even quasisplit, over such a field $F$. The correct Satake isomorphism to ``categorify'' appears to be the one described in \cite{HRo}.  In many cases where $G$ is quasisplit and split over a tamely ramified extension of $F$, this has been carried out in recent work of X.~Zhu \cite{Zhu}.

\section{Test functions in the parahoric case} \label{parahoric_TFC_sec}

We fix $r = j[E_0:\mathbb Q_p]$ for some $j \in \mathbb N$.  We assume $K_p$ is a parahoric subgroup of $G(\mathbb Q_p)$, and we let $K_{p^r}$ denote the corresponding parahoric subgroup of $G(\mathbb Q_{p^r})$.  Assuming ${\rm LLC+}$ holds for $G_{\mathbb Q_{p^r}}$, we can speak of the test function 
\begin{equation} \label{test_fcn0}
\phi_r = p^{rd/2}\big(Z_{V^{E_{j0}}_{-\mu,j}} * 1_{K_{p^r}}\big) \in \mathcal Z(G(\mathbb Q_{p^r}), K_{p^r}).
\end{equation}
We wish to give a more concrete description of this function, making use of Bernstein's isomorphism for $\mathcal Z(G(\mathbb Q_{p^r}), K_{p^r})$ which is detailed in the Appendix, $\S\ref{appendix_sec}$.

In the next two subsections, we are concerned with the case where $G_{\mathbb Q_{p^r}}$ is 
quasisplit. We write $F := \mathbb Q_{p^r}$. Choose a maximal $F$-split torus $A$ in $G$, and let $T$ denote its centralizer in $G$. Fix an $F$-rational Borel subgroup $B$ containing $T$. Let $K_F \subset G(F)$ denote the parahoric subgroup corresponding to $K_p$.

By Kottwitz \cite[Lem.~(1.1.3)]{Ko84b}, the $G(\overline{\mathbb Q}_p)$-conjugacy class $\{ \mu \}$ is represented by an $F$-rational cocharacter $\mu \in X_*(T)^{\Phi_F} = X_*(A)$. It is clear that $E$, the field of definition of $\{ \mu \}$, is contained in any subfield of $\overline{\mathbb Q}_p$ which splits $G$. 

Given $\pi \in \Pi(G/F)$ with $\pi^{K_{F}} \neq 0$, to understand (\ref{test_fcn0}) we need to compute the scalar
\begin{equation} \label{parah_1st}
{\rm tr}(\varphi_\pi(\Phi_F), (V^{E_0}_{-\mu})^{\varphi_\pi(I_F)}).
\end{equation}
There is an unramified character $\chi$ of $T(F)$ such that $\pi$ is a subquotient of $i^G_B(\chi)$, and we may assume $\varphi_\pi|_{W_F} = \varphi_\chi|_{W_F}$. Since $\chi$ is unramified, $\varphi_\chi(I_F) = 1 \rtimes I_F \subset \widehat{T} \rtimes W_F$. Regarding $\chi$ as an element of $\widehat{T}$, (\ref{Ldual_norm}) implies that we may write $\varphi_\chi(\Phi_F) = \chi \rtimes \Phi_F \in \widehat{T} \rtimes W_F$. Then we need to compute 
\begin{equation} \label{qs_to_compute}
{\rm tr}(\chi \rtimes \Phi_F \, , \, (V^{E_0}_{-\mu})^{1 \rtimes I_F}).
\end{equation} 

\subsection{Unramified groups and the Kottwitz conjecture}

Let us consider the case where $G_{\mathbb Q_{p^r}}$ is unramified. Since we are assuming $G$ splits over an unramified extension of $\mathbb Q_p$, it follows that $E/\mathbb Q_p$ is unramified, i.e. $E = E_0$ and $V^{E_0}_{-\mu} = V_{-\mu}$. Moreover $F = E_{j0}$ contains $E$ with degree $j$.

Further, since $G$ splits over $F^{\rm un}$,  we have $V_{-\mu}^{1\rtimes I_F} = V_{-\mu}$. So we are reduced to computing ${\rm tr}(\chi \rtimes \Phi_F \, , \, V_{-\mu})$.  Exactly as in Kottwitz' calculation of the Satake transform in \cite[p.~295]{Ko84b}, we see that (\ref{parah_1st}) is
\begin{equation}
{\rm tr}(\chi \rtimes \Phi_F \, , \, V_{-\mu}) = \sum_{\lambda \in W(F)\cdot \mu} (-\lambda)(\chi).
\end{equation}
Here $W(F) = W(G,A)$ is the relative Weyl group for $G/F$, and we view $\lambda \in X_*(A) = X_*(T)^{\Phi_F}$ as a character on $\widehat{T}$. This proves the following result.

\begin{lemma} In the above situation, 
\begin{equation} \label{Z_V_vs_Bernstein}
Z_{V_{-\mu, j}^{E_{j0}}} * 1_{K_{p^r}} = z_{-\mu,j},
\end{equation}
where the {\em Bernstein function} $z_{-\mu,j}$ (cf. Definition \ref{Bern_fcn_def}) is the unique element of $\mathcal Z(G(F), K_F)$ which acts (on the left) on the normalized induced representation $i^G_B(\chi)^{K_F}$ by the scalar $\sum_{\lambda \in W(F)\cdot \mu} (-\lambda)(\chi)$, for any unramified character $\chi: T(F) \rightarrow \mathbb C^\times$.
\end{lemma}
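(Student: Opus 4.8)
The plan is to deduce (\ref{Z_V_vs_Bernstein}) directly from the scalar computation just completed, combined with the elementary formalism of the Bernstein center recalled in $\S\ref{bernstein_cntr_rev}$ and the uniqueness built into the definition of the Bernstein function. First I would record that, since $F = \mathbb{Q}_{p^r} = E_{j0}$, the distribution $Z_{V^{E_{j0}}_{-\mu,j}}$ lies in $\mathfrak{Z}(G(F))$; under the realization $\mathfrak{Z}(G) \cong \varprojlim_J \mathcal{Z}(G,J)$ of (\ref{Bern_cntr_as_distr}), its convolution with $1_{K_F}$ is simply its $K_F$-component, so in particular $Z_{V^{E_{j0}}_{-\mu,j}} * 1_{K_F} \in \mathcal{Z}(G(F),K_F)$. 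Thus the left-hand side of (\ref{Z_V_vs_Bernstein}) is a genuine element of the parahoric center, and what remains is to pin it down by its action on unramified principal series.

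Next I would fix an arbitrary unramified character $\chi\colon T(F)\to\mathbb{C}^\times$ and an irreducible subquotient $\pi$ of $i^G_B(\chi)$. As recalled just above, we may take $\varphi_\pi|_{W_F} = \varphi_\chi|_{W_F}$, with $\varphi_\chi(I_F) = 1\rtimes I_F$ and $\varphi_\chi(\Phi_F) = \chi\rtimes\Phi_F$, and since $G$ splits over $F^{\rm un}$ we have $E = E_0$, $V^{E_0}_{-\mu} = V_{-\mu}$, and $V_{-\mu}^{1\rtimes I_F} = V_{-\mu}$. By Proposition \ref{Z_V_construction}, the element $Z_{V^{E_{j0}}_{-\mu,j}}$ acts on $i^G_B(\chi)$ (whose supercuspidal support is $(T,\chi)_G$, $\chi$ being supercuspidal on the torus $T$) by the single scalar ${\rm tr}^{\rm ss}(\varphi_\pi(\Phi_F), V^{E_{j0}}_{-\mu,j}) = {\rm tr}(\chi\rtimes\Phi_F, V_{-\mu}) = \sum_{\lambda\in W(F)\cdot\mu}(-\lambda)(\chi)$, the last two equalities being exactly the computation carried out before the lemma. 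Applying Corollary \ref{Hecke_action_cor}(a) with $Z = Z_{V^{E_{j0}}_{-\mu,j}}$ and with $\pi$ replaced by $i^G_B(\chi)$, the function $Z_{V^{E_{j0}}_{-\mu,j}} * 1_{K_F}$ acts on $i^G_B(\chi)^{K_F}$ by that same scalar $\sum_{\lambda\in W(F)\cdot\mu}(-\lambda)(\chi)$, for every unramified $\chi$. By the defining (uniqueness) property of the Bernstein function $z_{-\mu,j}$ in Definition \ref{Bern_fcn_def}, this forces $Z_{V^{E_{j0}}_{-\mu,j}} * 1_{K_F} = z_{-\mu,j}$.

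The one ingredient that is not purely formal --- hence the step I would flag as the main obstacle, though it is really an appeal to structure theory already in place --- is the \emph{uniqueness} of $z_{-\mu,j}$: that an element of $\mathcal{Z}(G(F),K_F)$ is determined by the scalars by which it acts on the spaces $i^G_B(\chi)^{K_F}$ as $\chi$ ranges over unramified characters of $T(F)$. This is precisely what the Bernstein isomorphism for the parahoric center established in the Appendix $\S\ref{appendix_sec}$ provides, identifying $\mathcal{Z}(G(F),K_F)$ with a ring of Weyl-invariant regular functions on the relevant complex torus via exactly this action; I would cite that result. (The identification of $\mathbb{C}$ with $\overline{\mathbb{Q}}_\ell$ throughout is harmless, since every statement involved is purely algebraic.)
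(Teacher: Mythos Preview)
Your proposal is correct and follows essentially the same approach as the paper: the lemma is stated immediately after the scalar computation with the phrase ``This proves the following result,'' so the paper's proof \emph{is} that computation together with the implicit appeal to the defining uniqueness of $z_{-\mu,j}$. You have simply made explicit the formal steps (membership in $\mathcal Z(G(F),K_F)$, Corollary~\ref{Hecke_action_cor}(a), and the Bernstein isomorphism from the Appendix for uniqueness) that the paper leaves to the reader.
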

Of course the advantage of $z_{-\mu, j}$ is that unlike the left hand side of (\ref{Z_V_vs_Bernstein}), it is defined unconditionally. A relatively self-contained, elementary, and efficient approach to Bernstein functions is given in $\S\ref{appendix_sec}$. 

Thus Conjecture \ref{TFC} in this situation is equivalent to the {\em Kottwitz Conjecture}. 

\begin{conj} {\rm (Kottwitz conjecture)} \label{Kottwitz_conj}
In the situation where ${\bf G}_{\mathbb Q_{p^r}}$ is unramified and $K_p$ is a parahoric subgroup, the function $\phi_r$ in \textup{(}\ref{Lef^ss}\textup{)} may be taken to be $p^{rd/2} z_{-\mu, j}$.
\end{conj}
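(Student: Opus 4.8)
\smallskip
\noindent\textbf{Proof strategy.} By the Lemma just proved --- concretely, by the identity (\ref{Z_V_vs_Bernstein}) --- the assertion is exactly the specialization of the test function conjecture, Conjecture \ref{TFC}, to the present situation: in the unramified parahoric case one has $E = E_0$ and $V^{E_0}_{-\mu} = V_{-\mu}$, and (\ref{Z_V_vs_Bernstein}) identifies $Z_{V^{E_{j0}}_{-\mu,j}} * 1_{K_{p^r}}$ with the Bernstein function $z_{-\mu,j}$ of Definition \ref{Bern_fcn_def}, so that the test function $\phi_r = p^{rd/2}\big(Z_{V^{E_{j0}}_{-\mu,j}} * 1_{K_{p^r}}\big)$ of Conjecture \ref{TFC} becomes precisely $p^{rd/2} z_{-\mu,j}$. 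Hence it suffices to establish Conjecture \ref{TFC}, equivalently its nearby-cycles form Conjecture \ref{RPsi_TFC}, in this case.

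For the latter the plan is to run the Langlands--Kottwitz program. First I would fix a natural integral model $\mathcal M_{K_p}/\mathcal O_E$ whose singularities are \'etale-locally those of a Rapoport--Zink (or Pappas--Zhu) local model $M^{\rm loc}$ attached to $(G, \{\mu\}, K_p)$. Then, by the local-to-global comparison employed in \cite{H05, HRa}, the left side of (\ref{RPsi_TFC_eq}) is rewritten as $\sum_{x} {\rm tr}^{\rm ss}(\Phi_p^r, R\Psi^{M^{\rm loc}}_x)$, reducing the problem to a purely local statement about the nearby-cycle complex of the local model. The decisive input is then the \emph{Kottwitz conjecture on nearby cycles}, Conjecture \ref{Kottwitz_conj_RPsi}: that the function $x \mapsto {\rm tr}^{\rm ss}(\Phi_p^r, R\Psi^{M^{\rm loc}}_x)$ lies in $\mathcal Z(G(F), K_F)$ and equals $z_{-\mu,j}$. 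Granting this, the group-theoretic counting-points identity furnished by the general machinery --- together with Kottwitz's computation of the Satake transform used in the Lemma above --- yields (\ref{Lef^ss}) with $\phi_r = p^{rd/2} z_{-\mu,j}$, as claimed.

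The heart of the matter, and the step I expect to be hardest, is Conjecture \ref{Kottwitz_conj_RPsi} itself: one must show that the nearby cycles on $M^{\rm loc}$ form a \emph{central} perverse sheaf, and then compute its semisimple trace function. I would establish centrality via the degeneration of the affine Grassmannian $\mathrm{Gr}_G$ to a partial affine flag variety and the commutativity constraints this degeneration imposes, in the spirit of Gaitsgory \cite{Ga} and \cite{HN02a}; the trace computation then amounts to identifying the nearby-cycle sheaf, through the geometric Satake equivalence, with the $\widehat{G}$-representation $V_{-\mu}$, and matching ${\rm tr}^{\rm ss}$ against the spectral characterization of $z_{-\mu,j}$ recorded in (\ref{Z_V_vs_Bernstein}) and the ensuing discussion. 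For general unramified $G$ this is precisely what Pappas--Zhu \cite{PZ} achieve with their group-theoretic local models, so in the unramified parahoric case Conjecture \ref{Kottwitz_conj_RPsi} is in fact available; the one genuinely open ingredient that then remains is the global comparison (\ref{RPsi_TFC_eq}) between compactly-supported $\ell$-adic cohomology and nearby cycles on $\mathcal M_{K_p}$, which is known in the linear and symplectic parahoric cases (\cite{HN02a, H05, Str}) but not yet in general.
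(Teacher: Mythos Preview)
The statement is a \emph{conjecture} in the paper, not a theorem; the paper does not prove it. What the paper does establish is precisely the content of your first paragraph: via the preceding Lemma and identity (\ref{Z_V_vs_Bernstein}), Conjecture \ref{TFC} specializes in the unramified parahoric case to Conjecture \ref{Kottwitz_conj}. The paper then records the latter as Kottwitz's 1998 conjecture (predating Conjecture \ref{TFC} by about eleven years) and leaves it open, surveying the known cases in $\S\ref{evidence_TFC_sec}$.

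Your remaining paragraphs go beyond the paper by sketching an actual proof strategy. That strategy is reasonable and accurately tracks the state of the art as the paper describes it: the nearby-cycles input (Conjecture \ref{Kottwitz_conj_RPsi}) is indeed what Pappas--Zhu \cite{PZ} supply for unramified groups, and the remaining gap is exactly the global comparison you isolate at the end. But this is an outline of how one \emph{might} approach the conjecture, not something the paper itself carries out; you should not present it as a proof of the stated conjecture, since --- as you yourself acknowledge --- a genuinely open step remains.
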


Conjecture \ref{Kottwitz_conj} was formulated by Kottwitz in 1998, about 11 years earlier than Conjecture \ref{TFC}. There is a closely related conjecture of Kottwitz concerning nearby cycles on Rapoport-Zink local models ${\bf M}^{\rm loc}_{K_p}$ for $Sh_{K_p}$. We refer to \cite{RZ, Ra05} for definitions of local models, and to \cite{H05, HN02a} for further details about the following conjecture in various special cases.

\begin{conj} {\rm (Kottwitz Conjecture for Nearby Cycles)} \label{Kottwitz_conj_RPsi}
Write $\mathcal G$ for the Bruhat-Tits parahoric group scheme over $\mathbb Z_{p^r}$ with generic fiber $G_{\mathbb Q_{p^r}}$ and with $\mathcal G(\mathbb Z_{p^r}) = K_{p^r}$. Let $\mathcal G_t$ denote the analogous parahoric group scheme over $\mathbb F_{p^r}[[t]]$ with the ``same'' special fiber as $\mathcal G$. Then there is an $L^+\mathcal G_{t, \mathbb F_{p^r}}$-equivariant embedding of ${\bf M}^{\rm loc}_{K_p, \mathbb F_{p^r}}$ into the affine flag variety $L\mathcal G_{t,\mathbb F_{p^r}} / L^+ \mathcal G_{t, \mathbb F_{p^r}}$, via which we can identify the semisimple trace of Frobenius function $x \mapsto {\rm tr}^{\rm ss}({\rm Fr}_{p^r}, R\Psi^{{\bf M}^{\rm loc}_{K_p}}_x)$ on $x \in {\bf M}^{\rm loc}_{K_p}(\mathbb F_{p^r})$ with the function $p^{dr/2}z_{-\mu,j} \in \mathcal Z(\mathcal G_t(\mathbb F_{p^r}(\!(t)\!))\, , \, \mathcal G_t(\mathbb F_{p^r}[\![t]\!]))$.
\end{conj}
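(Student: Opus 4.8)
\medskip
\noindent\textbf{Sketch of a proof strategy.} The plan is to separate the statement into its two assertions --- the existence of the $L^+\mathcal G_t$-equivariant embedding, and the identification of the semisimple trace-of-Frobenius function on $R\Psi^{{\bf M}^{\rm loc}_{K_p}}$ with $p^{dr/2}z_{-\mu,j}$ --- and prove them in that order. For the embedding I would appeal to the group-theoretic local models of Pappas and Zhu \cite{PZ}: over a suitable base they build a ``global'' affine Grassmannian attached to the Bruhat--Tits group scheme $\mathcal G$, with generic fibre the affine Grassmannian ${\rm Gr}_{G_{\mathbb Q_{p^r}}}$ and special fibre the partial affine flag variety $L\mathcal G_t/L^+\mathcal G_t$, and ${\bf M}^{\rm loc}_{K_p}$ is realised --- by construction in the group-theoretic setting, or after the Pappas--Zhu comparison with the Rapoport--Zink definition \cite{RZ,Ra05} in the classical PEL cases --- as the scheme-theoretic closure over $\mathcal O_E$ of the $\{\mu\}$-Schubert variety in ${\rm Gr}_{G}\otimes_{\mathbb Q_{p^r}}E$. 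Passing to the special fibre yields the desired closed embedding ${\bf M}^{\rm loc}_{K_p,\mathbb F_{p^r}}\hookrightarrow L\mathcal G_{t,\mathbb F_{p^r}}/L^+\mathcal G_{t,\mathbb F_{p^r}}$. (In full generality this step is open; one should assume $G_{\mathbb Q_{p^r}}$ splits over a tamely ramified extension, which costs nothing for the Shimura-variety applications.)

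For the function-theoretic part the idea is to compute the nearby cycles $R\Psi({\rm IC}_\mu)$ of the intersection cohomology sheaf on this $\mathcal O_E$-family and then apply the sheaf--function dictionary. The crucial input is the analogue for this degeneration of Gaitsgory's centrality theorem \cite{Ga} --- namely that $R\Psi({\rm IC}_\mu)$ is a perverse, $L^+\mathcal G_t$-equivariant sheaf whose convolution with Iwahori-equivariant sheaves is commutative --- which in the tamely ramified case is one of the main geometric results of \cite{PZ}. Centrality of the sheaf forces its trace-of-Frobenius function $\tau_\mu$ to lie in the parahoric center $\mathcal Z(\mathcal G_t(\mathbb F_{p^r}(\!(t)\!)),\mathcal G_t(\mathbb F_{p^r}[\![t]\!]))$, so it remains only to match $\tau_\mu$ with $p^{dr/2}z_{-\mu,j}$. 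Since any element of that center acts on $i^G_B(\chi)^{K_F}$ by a scalar, and $z_{-\mu,j}$ is characterised by the Lemma above as the element acting via $\sum_{\lambda\in W(F)\cdot\mu}(-\lambda)(\chi)$, it suffices to check that $\tau_\mu$ acts by $p^{dr/2}$ times this same scalar. That comes from compatibility of $R\Psi$ with the constant-term (hyperbolic localisation) functors together with geometric Satake \cite{MV}: on the generic fibre ${\rm IC}_\mu$ corresponds to the $\widehat{G}$-representation $V_{-\mu}$ of extreme weight $-\mu$, whose image in the unramified Hecke algebra is exactly the Satake-transform computation of Kottwitz recalled in the text above, $\sum_{\lambda\in W(F)\cdot\mu}(-\lambda)(\chi)$; the nearby cycles functor transports this scalar unchanged to parahoric level, and the power $p^{dr/2}$ records the Tate twist relating the self-dual normalisation of ${\rm IC}_\mu$ to the constant sheaf $\overline{\mathbb Q}_\ell$, using $\dim{\bf M}^{\rm loc}_{K_p}=d$. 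An alternative, followed in the special cases of \cite{HN02a,H05}, stratifies the special fibre by affine Schubert cells and computes $R\Psi$ cell by cell, recovering the Bernstein presentation of $\tau_\mu$ by hand; this is more laborious but sidesteps the full strength of \cite{Ga,PZ}.

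The hard part is the passage between the mixed-characteristic and equal-characteristic worlds. Gaitsgory's theorem and the Mirkovi\'c--Vilonen equivalence are statements about equal-characteristic affine Grassmannians, so to deploy them on $R\Psi$ of the $\mathcal O_E$-family one needs to compare nearby cycles for the $\mathbb Z_{p^r}$-model $\mathcal G$ with those for the equal-characteristic model $\mathcal G_t$ having ``the same'' special fibre. For tamely ramified $G$ this comparison is built into \cite{PZ}; beyond the tame case it is not accessible by classical means and would seem to require the Witt-vector affine flag variety and $p$-adic Hodge-theoretic techniques. A second, more mundane obstacle --- already present in the tame case --- is pinning down all normalisation constants (the power of $p$, the sign in $-\mu$ versus $\mu$, the precise shift in ${\rm IC}_\mu$) so that the identity holds exactly rather than up to an unspecified scalar, hence that one genuinely lands on $p^{dr/2}z_{-\mu,j}$.
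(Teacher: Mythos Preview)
The statement you are addressing is a \emph{conjecture} in the paper, not a theorem: the paper gives no proof of Conjecture~\ref{Kottwitz_conj_RPsi} and does not claim to. It is formulated as a prediction and then, in \S\ref{evidence_TFC_sec}, the paper surveys the known partial results --- \cite{HN02a} for ${\rm GL}_n$ and ${\rm GSp}_{2n}$, Rostami for unramified unitary groups, and \cite{PZ} for unramified groups (tamely ramified setting). So there is no ``paper's own proof'' to compare against.

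That said, your sketch is a reasonable and well-informed outline of the strategy by which the known cases were established, and you correctly flag its limitations. The two-step decomposition (embedding via the Pappas--Zhu global affine Grassmannian, then centrality of $R\Psi$ plus a Satake/constant-term computation to pin down the scalar) is exactly the architecture of \cite{PZ}, and your alternative cell-by-cell approach is the method of \cite{HN02a,H05}. You are also right that the mixed-to-equal-characteristic comparison is the crux and that beyond the tame case it is genuinely open; the paper treats this as an open conjecture precisely because of that. One small caution: the Pappas--Zhu result the paper cites (Theorem~10.16 of \cite{PZ}) is stated there for \emph{unramified} groups, so even within the tame setting your ``costs nothing'' remark slightly overstates what was available at the time of writing --- but this is a matter of the literature, not of the argument.
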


\subsection{The quasisplit case}

The group $\widehat{G}^{I_F}$ is a possibly disconnected reductive group, with maximal torus $(\widehat{T}^{I_F})^\circ$ (see the proof of Theorem 8.2 of \cite{St}). Now we may restrict the representation $V^{E_0}_{-\mu}$ to the subgroup $\widehat{G}^{I_F} \rtimes W_F \subset \widehat{G} \rtimes W_F$. Let $\chi$ be a weakly unramified character of $T(F)$; by (\ref{weakly_unram_chars}) we can view $\chi \in (\widehat{T}^{I_F})_{\Phi_F}$. The only $\widehat{T}^{I_F}$-weight spaces of $(V^{E_0}_{-\mu})^{1\rtimes I_F}$ which contribute to (\ref{qs_to_compute}) are indexed by the $\Phi_F$-fixed weights, i.e.~by those in $X^*(\widehat{T}^{I_F})^{\Phi_F}$.  (It is important to note that it is the weight spaces for the diagonalizable group $\widehat{T}^{I_F}$, and not for the maximal torus $(\widehat{T}^{I_F})^\circ$, which come in here.) This is consistent with Theorem \ref{my_J_thm} of the Appendix, and may be expressed as follows.

\begin{prop} \label{conj_qs_case} In the general quasisplit situation, $Z_{V^{E_{j0}}_{-\mu,j}} * 1_{K_{p^r}}$ is the unique function in $\mathcal Z(G(\mathbb Q_{p^r}), K_{p^r})$ which acts on the left on each weakly unramified principal series representation $i^G_B(\chi)^{K_F}$ by the scalar  \textup{(}\ref{qs_to_compute}\textup{)}, and thus is a certain linear combination of Bernstein functions $z_{-\lambda, j}$ where $-\lambda \in X^*(\widehat{T}^{I_F})^{\Phi_F}$ ranges over the $W(G,A)$-orbits of $\Phi_F$-fixed $\widehat{T}^{I_F}$-weights in $V^{E_0}_{-\mu}$.
\end{prop}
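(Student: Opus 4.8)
The plan is to pin down $Z_{V^{E_{j0}}_{-\mu,j}}*1_{K_{p^r}}$ by its action on weakly unramified principal series and then invoke the Bernstein isomorphism for parahoric Hecke-algebra centers from the Appendix. Write $F = \mathbb{Q}_{p^r}$, $K_F = K_{p^r}$, and $Z := Z_{V^{E_{j0}}_{-\mu,j}} \in \mathfrak{Z}(G_F)$, which exists because we assume ${\rm LLC+}$ for $G_F$. Since $Z$ is central, $Z * 1_{K_F}$ lies in $\mathcal{Z}(G(F),K_F)$ by the realization (\ref{Bern_cntr_as_distr}) of $\mathfrak{Z}(G_F)$ as $\varprojlim_J \mathcal{Z}(G_F,J)$. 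Now fix a weakly unramified character $\chi$ of $T(F)$. Viewing $Z$ as a regular function on the Bernstein variety of $G(F)$ via (\ref{Bern_cntr_reg_fcns}), it acts on $i^G_B(\chi)$ by the scalar $Z((T,\chi)_G)$, which equals ${\rm tr}^{\rm ss}(\varphi_\chi(\Phi_F), V^{E_{j0}}_{-\mu,j})$ by Proposition \ref{Z_V_construction} (here one uses the ${\rm LLC+}$ compatibility in Proposition \ref{map_structure}, by which $f((T,\chi)_G) = (\varphi_\chi|_{W_F})_{\widehat{G}}$). Since $E_0/\mathbb{Q}_p$ and $E_{j0}/\mathbb{Q}_p$ are both unramified one has $I_F = I_{E_0}$, and $V^{E_{j0}}_{-\mu,j}$ is just the restriction of $V^{E_0}_{-\mu}$ along $\widehat{G} \rtimes W_{E_{j0}} \hookrightarrow \widehat{G} \rtimes W_{E_0}$, so this scalar equals ${\rm tr}^{\rm ss}(\varphi_\chi(\Phi_F), V^{E_0}_{-\mu})$; as $\chi$ is weakly unramified, $\varphi_\chi(I_F) = 1 \rtimes I_F$ and $\varphi_\chi(\Phi_F) = \chi \rtimes \Phi_F$ with $\chi \in (\widehat{T}^{I_F})_{\Phi_F}$ by (\ref{weakly_unram_chars}), and the scalar becomes precisely (\ref{qs_to_compute}) --- this last identification being exactly the computation carried out in the paragraph preceding the Proposition. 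By Corollary \ref{Hecke_action_cor}(a), $Z * 1_{K_F}$ then acts on the left on $i^G_B(\chi)^{K_F}$ by the scalar (\ref{qs_to_compute}); and every irreducible $\pi$ with $\pi^{K_F} \neq 0$ is a constituent of such an $i^G_B(\chi)$, because the parahoric $K_F$ contains an Iwahori $I$ with $I \cap T(F) = T(F)_1$, so nothing is lost.

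I would then read off both the uniqueness and the Bernstein expansion at once. By the general Bernstein isomorphism for $\mathcal{Z}(G(F),K_F)$ proved in the Appendix (Theorem \ref{my_J_thm}), a function in $\mathcal{Z}(G(F),K_F)$ is determined by the scalars by which it acts on the spaces $i^G_B(\chi)^{K_F}$ as $\chi$ ranges over weakly unramified characters of $T(F)$, and the Bernstein functions $z_{-\lambda,j}$ of Definition \ref{Bern_fcn_def}, with $-\lambda$ running over the $W(G,A)$-orbits in $X^*(\widehat{T}^{I_F})^{\Phi_F}$, form a basis of this algebra; hence $Z * 1_{K_F}$ is the unique function in $\mathcal{Z}(G(F),K_F)$ with the action just computed. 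To make the expansion explicit, restrict $V^{E_0}_{-\mu}$ to $\widehat{G}^{I_F} \rtimes W_F$ and decompose $(V^{E_0}_{-\mu})^{1 \rtimes I_F} = \bigoplus_\nu V_\nu$ into weight spaces for the diagonalizable group $\widehat{T}^{I_F}$ (not merely for its neutral component). The element $\chi \rtimes \Phi_F$ carries $V_\nu$ to $V_{\Phi_F\nu}$, so only the $\nu \in X^*(\widehat{T}^{I_F})^{\Phi_F}$ contribute to its trace and
\[
(\ref{qs_to_compute}) = \sum_{\nu \in X^*(\widehat{T}^{I_F})^{\Phi_F}} {\rm tr}(\Phi_F \mid V_\nu)\,\nu(\chi).
\]
This expression is $W(G,A)$-invariant in $\chi$ (since $i^G_B(\chi)$ and $i^G_B(w\chi)$ have the same supercuspidal support for $w \in W(G,A)$, which is realized by an element of $N_G(A)(F)$ normalizing $T$), so collecting the $\nu$ into $W(G,A)$-orbits and matching term by term with the defining action of the $z_{-\lambda,j}$ on weakly unramified principal series gives
\[
Z * 1_{K_F} = \sum_{-\lambda} {\rm tr}(\Phi_F \mid V_{-\lambda})\, z_{-\lambda,j},
\]
the sum over the $W(G,A)$-orbits of $\Phi_F$-fixed $\widehat{T}^{I_F}$-weights $-\lambda$ of $V^{E_0}_{-\mu}$, each coefficient being independent of the chosen orbit representative by the $W(G,A)$-equivariance of the $\Phi_F$-action on weight spaces.

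The only genuinely nontrivial ingredient is Theorem \ref{my_J_thm} from the Appendix: the faithfulness of $\mathcal{Z}(G(F),K_F)$ on weakly unramified principal series together with the existence and linear independence of the Bernstein basis $\{z_{-\lambda,j}\}$ for a possibly ramified quasisplit group. Granting that, the remainder is bookkeeping --- the spectral description of $Z_V$ from $\S\ref{const_Z_V_subsec}$ and $\S\ref{variety_str_subsec}$, Corollary \ref{Hecke_action_cor}, the standard fact that $\pi^{K_F} \neq 0$ forces $\pi$ to lie in a weakly unramified principal series, and the elementary weight-space decomposition above. One should also check, harmlessly, that replacing $V^{E_{j0}}_{-\mu,j}$ by $V^{E_0}_{-\mu}$ and $\widehat{G} \rtimes W_{E_0}$-actions by $\widehat{G} \rtimes W_F$-actions leaves the relevant semisimple traces unchanged, which holds because $E_{j0}/\mathbb{Q}_p$ is unramified.
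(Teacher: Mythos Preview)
Your proof is correct and follows essentially the same approach as the paper: compute the scalar action on weakly unramified principal series via the weight-space decomposition for $\widehat{T}^{I_F}$ (noting that only $\Phi_F$-fixed weights contribute to the trace), then invoke the Bernstein isomorphism Theorem \ref{my_J_thm} to deduce uniqueness and the expansion into Bernstein functions $z_{-\lambda,j}$. The paper does not give a formal proof of the proposition; its argument is the short paragraph immediately preceding the statement, which you have faithfully expanded and made precise, including the explicit coefficients ${\rm tr}(\Phi_F \mid V_{-\lambda})$ that the paper leaves implicit.
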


It is an interesting exercise to write out the linear combinations of Bernstein functions explicitly in each given case. Once this is done, the result can be used to find explicit descriptions of test functions for inner forms of quasi-split groups. This is the subject of the next subsection.

\subsection{Passing from quasisplit to general cases via transfer homomorphisms}

\subsubsection{Test function conjecture via transfer homomorphisms}

We use freely the notation and set-up explained in the Appendix $\S \ref{App:transfer_hom_sec}$.  Let $G^*$ be a quasi-split $F$-group with an inner twisting $\psi: G \rightarrow G^*$. Let $J^* \subset G^*(F)$ resp.~$J \subset G(F)$ be parahoric subgroups and consider the {\em normalized transfer homomorphism} $\tilde{t}: \mathcal Z(G^*(F), J^*) \rightarrow \mathcal Z(G(F), J)$ from Definition \ref{tilde_t_defn}.

The following conjecture indicates that test functions for the quasisplit group $G^*$ should determine test functions for $G$. This is compatible with the global considerations  which led to Theorem \ref{Z_fcn_thm}. 

\begin{conj} \label{transfer_conj} Let $K_{p^r}$ resp.~$K^*_{p^r}$ be parahoric subgroups of $G(\mathbb Q_{p^r})$ resp.~$G^*(\mathbb Q_{p^r})$, with corresponding normalized transfer homomorphism $\tilde{t}: \mathcal Z(G^*(\mathbb Q_{p^r}), K^*_{p^r}) \rightarrow \mathcal Z(G(\mathbb Q_{p^r}), K_{p^r})$. If $\phi_r^* \in \mathcal Z(G^*(\mathbb Q_{p^r}), K^*_{p^r})$ is the function $p^{rd/2}(Z^{G^*}_{V^{E_{j0}}_{-\mu,j}} * 1_{K^*_{p^r}})$ described in Proposition \ref{conj_qs_case} for the data $(G^*_{\mathbb Q_{p^r}}, \{-\mu\}, K^*_{p^r})$, then $\phi_r := \tilde{t}(\phi^*_r) \in \mathcal Z(G(\mathbb Q_{p^r}), K_{p^r})$ is a test function satisfying \textup{(}\ref{Lef^ss}\textup{)} for the original data $(G_{\mathbb Q_{p^r}}, \{-\mu\}, K_{p^r})$.
\end{conj}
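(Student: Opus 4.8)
The plan is to reduce Conjecture \ref{transfer_conj} to Conjecture \ref{TFC} for the group $G$ itself, by proving the purely local identity
\[
\tilde t\big(Z^{G^*}_{V^{E_{j0}}_{-\mu,j}} * 1_{K^*_{p^r}}\big) \;=\; Z^{G}_{V^{E_{j0}}_{-\mu,j}} * 1_{K_{p^r}}
\]
in $\mathcal Z(G(\mathbb Q_{p^r}), K_{p^r})$. Granting this, $\phi_r := \tilde t(\phi^*_r)$ is literally the test function predicted by Conjecture \ref{TFC} for $(G_{\mathbb Q_{p^r}}, \{-\mu\}, K_{p^r})$, hence satisfies (\ref{Lef^ss}); and conversely the displayed identity is precisely what is forced by compatibility with the pseudostabilization, where (as in the proof of Theorem \ref{Z_fcn_thm}) one transfers to the quasi-split inner form regardless. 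So the real task is to establish this local identity unconditionally.

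I would argue spectrally. Both sides lie in the center of a parahoric Hecke algebra, so by the Bernstein isomorphism of the Appendix $\S\ref{appendix_sec}$ each is pinned down by the scalars by which it acts (via Corollary \ref{Hecke_action_cor}(a)) on $\pi^{K_{p^r}}$ for the weakly unramified representations $\pi$, i.e.\ on the parahoric-spherical principal series. By the recipe behind Proposition \ref{conj_qs_case}, the right-hand side acts on such a $\pi$ with parameter $\varphi_\pi$ by $p^{rd/2}\,{\rm tr}^{\rm ss}(\varphi_\pi(\Phi_F), (V^{E_0}_{-\mu})^{\varphi_\pi(I_F)})$, a quantity depending only on the $\widehat G$-conjugacy class of $\varphi_\pi|_{W_F}$; by Proposition \ref{conj_qs_case} applied to $G^*$, the left-hand side acts on the corresponding representation $\pi^*$ of $G^*$ by the analogous scalar attached to $\varphi_{\pi^*}$. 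Since $G$ and $G^*$ are inner forms we have ${}^LG = {}^LG^*$ canonically, and under the Kottwitz parametrization (\ref{weakly_unram_chars}) a weakly unramified character of $G$ and one of $G^*$ sharing the same infinitesimal character give equal ${\rm tr}^{\rm ss}$. Thus the identity reduces to the assertion that the normalized transfer homomorphism $\tilde t$ is compatible with this matching of parameters: $\tilde t(z^*)$ acts on $\pi^{K_{p^r}}$ by the scalar by which $z^*$ acts on $(\pi^*)^{K^*_{p^r}}$, for $\pi, \pi^*$ with matching infinitesimal character.

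The main obstacle — the place where a real proof has to do work — is exactly this compatibility, and it amounts to showing that the normalization built into Definition \ref{tilde_t_defn} absorbs, on the nose, the discrepancy between $G$ and its quasi-split inner form: the difference between $\delta_{P_0}^{1/2}$ on a minimal $F$-Levi of $G$ and $\delta_{B^*}^{1/2}$ on the maximal torus of $G^*$, together with the passage from the relative root datum and relative Weyl group of $G$ to those of $G^*$. I expect this to come essentially out of the construction in $\S\ref{App:transfer_hom_sec}$: the Bernstein isomorphism identifies each center with an explicit algebra of Weyl-invariant functions on the torus of weakly unramified characters, the unnormalized transfer map is the evident one induced by the inclusion of relative Weyl groups, and the normalizing twist is designed precisely so that the composite matches the rule $\pi \leadsto \varphi_\pi$ rather than a rule differing from it by an unramified twist; here (\ref{action}), (\ref{weakly_unram_chars}), and the recorded agreement of the Kottwitz and Langlands-duality normalizations for weakly unramified characters are the tools for the bookkeeping. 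Once the compatibility is in hand, $\tilde t(\phi^*_r)$ acts on each parahoric-spherical $\pi$ by $p^{rd/2}\,{\rm tr}^{\rm ss}(\varphi_\pi(\Phi_F), (V^{E_0}_{-\mu})^{\varphi_\pi(I_F)})$, which identifies it with $p^{rd/2}(Z^G_{V^{E_{j0}}_{-\mu,j}} * 1_{K_{p^r}})$, and Conjecture \ref{transfer_conj} then follows from Conjecture \ref{TFC}. As a consistency check, when $G_{\mathbb Q_{p^r}}$ is itself unramified this recovers the Kottwitz conjecture together with the identity (\ref{Z_V_vs_Bernstein}).
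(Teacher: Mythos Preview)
The statement is a \emph{conjecture}, and the paper does not give a proof. What the paper does, immediately after stating it, is exactly the reformulation you found: assuming Conjecture \ref{TFC} for $G$, Conjecture \ref{transfer_conj} is equivalent to the local identity $\tilde t(Z^{G^*}_{V^{E_{j0}}_{-\mu,j}} * 1_{K^*_{p^r}}) = Z^{G}_{V^{E_{j0}}_{-\mu,j}} * 1_{K_{p^r}}$. The paper then supplies \emph{evidence} for this identity in two test cases --- the explicit ${\rm GL}_2/D^\times$ computation and the comparison with Rapoport's nearby cycles calculation in certain anisotropic ``fake unitary'' situations (Proposition \ref{anistopic_eg_prop}) --- rather than a general argument. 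So your first paragraph recovers precisely the paper's own reformulation; what you add beyond that is a sketch of how one might try to prove the local identity, and that is where the real gap lies.

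Your spectral strategy runs into two difficulties you have not resolved. First, for a non-quasi-split $G$ the minimal Levi $M$ is anisotropic modulo center and is \emph{not} a torus; the representations $\pi$ with $\pi^{K_{p^r}} \neq 0$ are subquotients of $i^G_P(\tilde\chi)$ for characters $\tilde\chi$ of $M(F)/M(F)_1$, and attaching to them a parameter $\varphi_\pi$ already presupposes LLC for $M$ (a Jacquet--Langlands type correspondence), which is itself conjectural in general. So the phrase ``the right-hand side acts on such a $\pi$ with parameter $\varphi_\pi$ by \ldots'' is not available unconditionally. Second, and relatedly, the normalization in Definition \ref{tilde_t_defn} is designed so that $\tilde t$ is compatible with \emph{constant terms} (Proposition \ref{t_const_term_prop}), not a priori so that it matches Langlands parameters across the inner twist; the assertion that the $\delta_{B^*}^{-1/2}\delta_P^{1/2}$ twist exactly compensates for the difference between $\varphi_{\tilde\chi}$ viewed through $M$ versus through $T^*$ is precisely the content that remains to be proved, and your ``I expect this to come essentially out of the construction'' is where the actual work would have to happen. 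The paper's choice to verify the identity by hand in examples, rather than attempt a general spectral argument, reflects exactly this obstacle.
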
  

Assuming Conjecture \ref{TFC} holds, another way to formulate this is that the normalized transfer homomorphism $\tilde{t}$ takes the function $Z^{G^*}_{V^{E_{j0}}_{-\mu,j}} * 1_{K^*_{p^r} }\in \mathcal Z(G^*(\mathbb Q_{p^r}), K^*_{p^r})$ to the function $Z^G_{V^{E_{j0}}_{-\mu,j}} * 1_{K_{p^r}} \in \mathcal Z(G(\mathbb Q_{p^r}), K_{p^r})$. But the point of Conjecture \ref{transfer_conj} is to provide an explicit test function for the non-quasisplit data $(G_{\mathbb Q_{p^r}}, \{-\mu \}, K_{p^r})$ which can be compared with direct geometric calculations of the nearby cycles attached to this data, and thus to provide a method to prove an unconditional analogue of Conjecture \ref{TFC} for such data. This is illustrated in $\S\ref{comp_w_Rap}$ below.

The next two paragraphs show that Conjecture \ref{transfer_conj} is indeed reasonable.

\subsubsection{A calculation for ${\rm GL}_2$}

Take $G^* = {\rm GL}_{2,F}$ and $G = D^\times$, where $D$ is the central simple division algebra over $F$ of dimension $4$.

Here we will explicitly calculate and compare the test functions associated to $({\rm GL}_{2, F}, \{-\mu \}, I_F)$ and $(D^\times, \{-\mu\}, \mathcal O_{D}^\times)$, where $\mu = (1,0)$, and where $I_F \subset {\rm GL}_2(F)$ and $\mathcal O_{D}^\times \subset D^\times$ are the standard Iwahori subgroups.  This calculation will show that the normalized transfer homomorphism takes one test function to the other. This is required in order for both Conjectures \ref{TFC} and \ref{transfer_conj} to hold true.

Write $z^*_{-\mu} = Z^{\rm GL_2}_{V_{-\mu}} * 1_{I_F} \in \mathcal Z({\rm GL}_2(F), I_F)$ and $z_{-\mu} = Z^{D^\times}_{V_{-\mu}} * 1_{\mathcal O_{D}^\times} \in \mathcal H(D^\times, \mathcal O_{D}^\times) \cong \mathbb C[\mathbb Z]$.  The last isomorphism is induced by the Kottwitz homomorphism, which in this case is the normalized valuation ${\rm val_F} \circ {\rm Nrd}_D: D^\times \twoheadrightarrow \mathbb Z$, where ${\rm val}_F$ is the normalized valuation for $F$ and ${\rm Nrd}_D: D \rightarrow F$ is the reduced norm.

Write $\bar{\mu} = (0,1)$ and let $B^*$ denote the Borel subgroup of {\em lower triangular} matrices in ${\rm GL}_2$. Then $z^*_{-\mu}$ acts on the left on $i^{\rm GL_2}_{B^*}(\chi)^{I_F}$ by the scalar $${\rm tr}(\chi \rtimes \Phi_F, V_{-\mu}) = (-\mu -\bar{\mu})(\chi),$$ for any unramified character $\chi \in {\rm Hom}(T^*(F)/T^*(F)_1, \mathbb C^\times)$. We may view $\chi$ as a diagonal $2 \times 2$ complex matrix $\chi = {\rm diag}(\chi_1, \chi_2)$.

To calculate $z_{-\mu}$ we need a few preliminary remarks. First we parametrize unramified characters $\eta \in {\rm Hom}(D^\times/\mathcal O^\times_{D}, \mathbb C^\times)$ by writing $\eta = \eta_0 \circ {\rm Nrd}_D$, where $\eta_0 \in \mathbb C^\times$ is viewed as the unramified character on $F^\times$ which sends $\varpi_F \mapsto \eta_0$.  The map ${\rm Nrd}_D: D^\times \rightarrow F^\times$ is Langlands dual to the diagonal embedding $\mathbb G_m(\mathbb C) \rightarrow {\rm GL}_2(\mathbb C)$, and it follows that the cocycles $z_\eta$ and $z_{\eta_0} $ attached by Langlands duality to the quasicharacters $\eta$ and $\eta_0$ satisfy
$$
\begin{array}{ccc}
z_\eta = \begin{bmatrix} z_{\eta_0} & 0 \\ 0 & z_{\eta_0} \end{bmatrix}, & \mbox{and thus,} &
z_\eta(\Phi_F) = \begin{bmatrix} \eta_0 & 0 \\ 0 & \eta_0 \end{bmatrix}.
\end{array}
$$
On the other hand, if $\mathbf 1$ denotes the trivial 1-dimensional representation of $D^\times$, then its Langlands parameter $\varphi^{D^\times}_{\mathbf 1}$ satisfies $\varphi^{D^\times}_{\mathbf 1}(\Phi_F) = {\rm diag}(q^{-1/2}, q^{1/2})$ (see, e.g.,~\cite[Thm.~4.4]{PrRa}). So using (\ref{action}), we obtain
\begin{align*}
{\rm tr}(\varphi^{D^\times}_\eta(\Phi_F), V_{-\mu}) &= {\rm tr}\Big(\begin{bmatrix} \eta_0 q^{-1/2} & 0 \\ 0 & \eta_0 q^{1/2} \end{bmatrix}, V_{-\mu}\Big) \\
&= (\delta^{-1/2}_{B^*}(\varpi^{-\mu}) \cdot -\mu|_{\widehat{Z}} ~+ ~\delta^{-1/2}_{B^*}(\varpi^{-\bar{\mu}})\cdot -\bar{\mu}|_{\widehat{Z}} )\Big( \begin{bmatrix} \eta_0 & 0 \\ 0 & \eta_0 \end{bmatrix} \Big).
\end{align*}
Here $\widehat{Z}$ is the center of $\widehat{G^*}$. Using the definition of $\tilde{t}$ we deduce the following result.

\begin{prop} The normalized transfer homomorphism $\tilde{t} : \mathcal Z({\rm GL}_{2}(F), I_F) \rightarrow \mathcal H(D^\times, \mathcal O^\times_D)$ sends $z^*_{-\mu}$ to $z_{-\mu}$.
\end{prop}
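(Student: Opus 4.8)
The plan is to make both sides of the claimed identity completely explicit as elements of the commutative algebra $\mathcal H(D^\times, \mathcal O_D^\times) \cong \mathbb C[\mathbb Z]$ and to check that they coincide; since this algebra is commutative, it suffices to compare the scalars by which $\tilde t(z^*_{-\mu})$ and $z_{-\mu}$ act on each one-dimensional unramified representation $\eta = \eta_0 \circ {\rm Nrd}_D$ of $D^\times$.

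First I would pin down $z_{-\mu}$. By Corollary \ref{Hecke_action_cor}(a), $z_{-\mu} = Z^{D^\times}_{V_{-\mu}} * 1_{\mathcal O_D^\times}$ acts on $\eta$ by the scalar $Z_{V_{-\mu}}(\eta) = {\rm tr}^{\rm ss}(\varphi^{D^\times}_\eta(\Phi_F), V_{-\mu})$; because $D^\times$ splits over $F^{\rm un}$ inertia acts trivially, so this is the ordinary trace, which by the computation immediately preceding the Proposition equals $\eta_0^{-1}(q^{1/2} + q^{-1/2})$. Unwinding the identification $\mathcal H(D^\times, \mathcal O_D^\times) \cong \mathbb C[\mathbb Z]$ induced by $\kappa_{D^\times} = {\rm val}_F \circ {\rm Nrd}_D$ (with ${\rm vol}(\mathcal O_D^\times) = 1$), the set of elements of reduced-norm valuation $-1$ is the coset $\mathcal O_D^\times \varpi_D^{-1}$, so this says $z_{-\mu} = (q^{1/2} + q^{-1/2})\, 1_{\mathcal O_D^\times \varpi_D^{-1}}$, where $\varpi_D \in D^\times$ is any element with ${\rm Nrd}_D(\varpi_D) = \varpi_F$.

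Next I would recall, via the Bernstein isomorphism $\mathcal Z({\rm GL}_2(F), I_F) \xrightarrow{\sim} \mathbb C[X_*(T)]^{S_2}$ sending $z$ to the function $\chi \mapsto z\big(i^{{\rm GL}_2}_{B^*}(\chi)\big)$, that $z^*_{-\mu}$ is the monomial symmetric function $\theta_{-\mu} + \theta_{-\bar\mu}$ (this is exactly the formula ${\rm tr}(\chi \rtimes \Phi_F, V_{-\mu}) = (-\mu - \bar\mu)(\chi)$ quoted above). Then I would unwind Definition \ref{tilde_t_defn} in the present case: here $\widehat{D^\times} = \widehat{{\rm GL}_2}$, so the dual group is unchanged, and $D^\times$ is anisotropic modulo its centre, so its only parahoric Bernstein datum is that of unramified characters; thus $\tilde t$ amounts to the algebra map induced by the restriction of characters $X^*(\widehat T) \to X^*\big(Z(\widehat{{\rm GL}_2})\big)$ (equivalently the summation map $X_*(T) = \mathbb Z^2 \to X_*(A_{D^\times}) = \mathbb Z$) twisted by the half-sum-of-roots factor $\delta^{-1/2}_{B^*}$, i.e. $\tilde t(\theta_\lambda) = \delta^{-1/2}_{B^*}(\varpi^\lambda)\, 1_{\mathcal O_D^\times \varpi_D^{\,\bar\lambda}}$ with $\bar\lambda$ the image of $\lambda$. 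Applying this to $z^*_{-\mu} = \theta_{-\mu} + \theta_{-\bar\mu}$, using $\overline{-\mu} = \overline{-\bar\mu} = -1$ together with the modulus values $\delta^{-1/2}_{B^*}(\varpi^{-\mu}) = q^{1/2}$, $\delta^{-1/2}_{B^*}(\varpi^{-\bar\mu}) = q^{-1/2}$ (the computation already displayed above), yields $\tilde t(z^*_{-\mu}) = (q^{1/2} + q^{-1/2})\, 1_{\mathcal O_D^\times \varpi_D^{-1}} = z_{-\mu}$.

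The only point requiring genuine care — everything else being the routine bookkeeping carried out in the displayed formulas — is to match the two appearances of the half-sum-of-roots factor: the $\delta^{1/2}_{B^*}$ built into the normalization of $\tilde t$ in Definition \ref{tilde_t_defn}, and the factors $\delta^{-1/2}_{B^*}(\varpi^{-\mu})$, $\delta^{-1/2}_{B^*}(\varpi^{-\bar\mu})$ produced above from the formula $\varphi^{D^\times}_{\mathbf 1}(\Phi_F) = {\rm diag}(q^{-1/2}, q^{1/2})$ for the Langlands parameter of the trivial representation of $D^\times$ (i.e. from the fact that under Jacquet--Langlands $\mathbf 1_{D^\times}$ corresponds to the Steinberg representation of ${\rm GL}_2$, whose parameter involves the principal ${\rm SL}_2$). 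Once one checks these two shifts are given by the same modulus character — which is precisely why the normalized rather than the naive transfer is the correct map — the two computations agree on the nose. Conceptually, this is the local shadow of the fact that $z^*_{-\mu}$ and $z_{-\mu}$ are the images, under $\mathfrak Z^{\rm st} \to \mathfrak Z \to \mathcal H(-, J)$, of one and the same element $Z_{V_{-\mu}}$ of the stable Bernstein center via the identification $\mathfrak Y^{{\rm GL}_2/F} = \mathfrak Y^{D^\times/F}$ (both dual groups being ${\rm GL}_2(\mathbb C)$), and that $\tilde t$ is built to be compatible with this identification — which also explains why one should expect Conjecture \ref{transfer_conj} in this case.
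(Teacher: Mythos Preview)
Your proposal is correct and follows essentially the same approach as the paper. The paper's ``proof'' is literally the one-line sentence ``Using the definition of $\tilde{t}$ we deduce the following result,'' with all the content residing in the displayed computations immediately preceding the Proposition --- namely, identifying $z^*_{-\mu}$ with $\theta_{-\mu} + \theta_{-\bar\mu}$ via its action on $i^{{\rm GL}_2}_{B^*}(\chi)^{I_F}$, and computing ${\rm tr}(\varphi^{D^\times}_\eta(\Phi_F), V_{-\mu})$ as $(\delta^{-1/2}_{B^*}(\varpi^{-\mu})\cdot(-\mu)|_{\widehat Z} + \delta^{-1/2}_{B^*}(\varpi^{-\bar\mu})\cdot(-\bar\mu)|_{\widehat Z})$ evaluated at ${\rm diag}(\eta_0,\eta_0)$ --- which you reproduce and then make slightly more concrete by writing both sides as the explicit element $(q^{1/2}+q^{-1/2})\,1_{\mathcal O_D^\times \varpi_D^{-1}}$ of $\mathbb C[\mathbb Z]$.
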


\subsubsection{Compatibility with nearby cycles in some anisotropic cases} \label{comp_w_Rap}

Suppose we are in a situation where $E = \mathbb Q_p$. As before, write $F = \mathbb Q_{p^r}$. Suppose $G_F = (D\otimes F)^\times \times \mathbb G_m$, where $D$ is a central division algebra over $E$ of degree $n^2$, for $n > 2$. This situation arises in the setting of ``fake unitary'' simple Shimura varieties (see, e.g.~\cite[$\S5$]{H01}). Let $G^* = {\rm GL}_n \times \mathbb G_m$, a split inner form of $G$ over $\mathbb Q_p$.

Suppose that $V_{-\mu} = \wedge^m(\mathbb C^n)$ for $0 < m < n$, i.e. the representation of $\widehat{G^*} = {\rm GL}_n(\mathbb C) \times \mathbb C^\times$ where the first factor acts via the irreducible representation with highest weight $(1^m,0^{n-m})$ and the second factor acts via scalars.  

Consider the local models ${\bf M}^{* \rm loc} = {\bf M}^{\rm loc}(G^*, \{-\mu\}, K^*_p)$ and ${\bf M}^{\rm loc} = {\bf M}^{\rm loc}(G, \{-\mu\}, K_p)$, where $K^*_p \subset G^*(F)$ and $K_p \subset G(F)$ are Iwahori subgroups. We can choose the inner twist $G \rightarrow G^*$ and the subgroups $K^*_p$ and $K_p$ so that 
$$
{\bf M}^{* \rm loc} (\overline{\mathbb F}_p) =  {\bf M}^{\rm loc} (\overline{\mathbb F}_p)
$$
and where the action of geometric Frobenius $\Phi_p$ on the right hand side is given by $\Phi_p = {\rm Ad}(c_{\Phi_p}) \cdot \Phi^*_p$ where $\Phi^*_p$ is the usual Frobenius action (on the left hand side) and where $\tau \mapsto {\rm Ad}(c_\tau)$ represents the class in $H^1(\mathbb Q_p, {\rm PGL}_n)$ corresponding to the inner twist $G \rightarrow G^*$.

Assume $(r,n) =1$ and set $q = p^r$. Then ${\bf M}^{\rm loc}(G, \{-\mu\}, K_p)(\mathbb F_q)$ consists of a single point.  To understand the corresponding test function we may ignore the ${\mathbb G}_m$-factor and pretend that $G = D^\times$ and $G^* = {\rm GL}_n$. Then the Kottwitz homomorphism $\kappa_G: G(F) \rightarrow \mathbb Z$ induces an isomorphism
$$
\mathcal H(G(\mathbb Q_{p^r}), K_{p^r}) \cong \mathbb C[\mathbb Z].
$$
The test function for the Shimura variety giving rise to the local Shimura data $(G, \{-\mu \}, K_p)$ should be calculated by understanding the function trace of Frobenius on nearby cycles on ${\bf M}^{\rm loc}$, similarly to Conjecture \ref{Kottwitz_conj_RPsi} in the unramified case. The test function should be of the form $C_q\cdot 1_m \in \mathbb C[\mathbb Z] = \mathcal H(G(\mathbb Q_p), K_{p^r})$ for some scalar $C_q$.

\begin{prop} \label{anistopic_eg_prop}
In the above situation, Conjecture \ref{transfer_conj} predicts that the coefficient $C_q$ is given by $C_q = \# {\rm Gr}(m,n)(\mathbb F_q)$, the number of $\mathbb F_q$-rational points on the Grassmannian variety ${\rm Gr}(m,n)$ parametrizing $m$-planes in $n$-space. 
\end{prop}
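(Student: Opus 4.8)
The plan is to follow the $n=2$ computation of the previous subsection, replacing the ad hoc arithmetic there by the $q$-binomial theorem. By definition (Conjecture \ref{transfer_conj}) the predicted test function is $\phi_r = p^{rd/2}\,\tilde t(z^*_{-\mu})$, where $z^*_{-\mu} = Z^{{\rm GL}_n}_{V_{-\mu}} * 1_{K^*_{p^r}} \in \mathcal Z({\rm GL}_n(F), I_F)$ is the quasi-split test function of Proposition \ref{conj_qs_case} (since $E = E_0 = \mathbb Q_p$ and $G^*$ is split, $V^{E_0}_{-\mu} = V_{-\mu} = \wedge^m(\mathbb C^n)$). Ignoring the $\mathbb{G}_m$-factor exactly as in \S\ref{comp_w_Rap}, a direct unwinding of Definition \ref{tilde_t_defn} --- entirely parallel to the ${\rm GL}_2$ case --- shows that $\tilde t(z^*_{-\mu})$, viewed in $\mathcal H(D^\times, \mathcal O_D^\times) \cong \mathbb C[\mathbb Z]$, acts on each unramified character $\eta = \eta_0 \circ {\rm Nrd}_D$ (a one-dimensional representation, so $\eta^{\mathcal O_D^\times} = \eta$) by the scalar ${\rm tr}^{\rm ss}(\varphi_\eta(\Phi_F), V_{-\mu})$; here the $\delta^{1/2}_{B^*}$-twist built into $\tilde t$ is precisely what turns the Satake point $\eta_0\cdot I_n$ into the parameter $\varphi_\eta(\Phi_F)$. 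In particular $\tilde t(z^*_{-\mu}) = Z^{D^\times}_{V_{-\mu}} * 1_{\mathcal O_D^\times}$ by Proposition \ref{Z_V_construction} together with Corollary \ref{Hecke_action_cor}(a). Since $1_m \in \mathbb C[\mathbb Z]$ acts on $\eta$ by $\eta_0^m$ and an element of $\mathbb C[\mathbb Z]$ is determined by its action on all such $\eta$, it remains to evaluate that scalar.

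Next I would pin down $\varphi_\eta(\Phi_F)$ via Jacquet--Langlands: $\mathbf 1$ corresponds to the Steinberg representation of ${\rm GL}_n(F)$, whose $L$-parameter restricted to $W_F$ is $\bigoplus_{j=0}^{n-1} |\cdot|^{(n-1)/2 - j}$, so $\varphi_{\mathbf 1}(\Phi_F) = {\rm diag}\big(q^{-(n-1)/2}, q^{-(n-3)/2}, \dots, q^{(n-1)/2}\big)$ --- for $n = 2$ this is \cite[Thm.~4.4]{PrRa} --- its restriction to $I_F$ is trivial, and by (\ref{action}) twisting by $\eta$ multiplies it by the scalar $\eta_0 \cdot I_n$ (the defining cocycle of $\eta$ has scalar values because ${\rm Nrd}_D$ is Langlands-dual to the scalar embedding $\mathbb{G}_m \hookrightarrow {\rm GL}_n$). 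As $\varphi_\eta(I_F)$ is trivial the semisimple trace is an ordinary trace, and a diagonal matrix acts on $V_{-\mu} = \wedge^m(\mathbb C^n)$ with trace the $m$-th elementary symmetric function; since the eigenvalues form a geometric progression of ratio $q$,
\[
{\rm tr}^{\rm ss}(\varphi_\eta(\Phi_F), V_{-\mu}) = \eta_0^m\, q^{-m(n-1)/2}\, e_m(1, q, q^2, \dots, q^{n-1}).
\]
Now I invoke the $q$-binomial identity $e_m(1, q, \dots, q^{n-1}) = q^{\binom{m}{2}}\binom{n}{m}_q$ and the fact $\binom{n}{m}_q = \#{\rm Gr}(m,n)(\mathbb F_q)$, together with the elementary computation $-m(n-1)/2 + \binom{m}{2} = -m(n-m)/2 = -d/2$ (using $d = \dim(Sh_{K_p}) = m(n-m)$). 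It follows that $\tilde t(z^*_{-\mu})$ acts on every $\eta$ by $q^{-d/2}\,\#{\rm Gr}(m,n)(\mathbb F_q)\cdot \eta_0^m$, hence $\tilde t(z^*_{-\mu}) = q^{-d/2}\,\#{\rm Gr}(m,n)(\mathbb F_q)\cdot 1_m$ in $\mathbb C[\mathbb Z]$, and therefore $\phi_r = p^{rd/2}\,\tilde t(z^*_{-\mu}) = \#{\rm Gr}(m,n)(\mathbb F_q)\cdot 1_m$, i.e. $C_q = \#{\rm Gr}(m,n)(\mathbb F_q)$.

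The one delicate point is the bookkeeping of normalization constants: one must check that the $\delta^{1/2}_{B^*}$-shift in the definition of $\tilde t$ (equivalently, the symmetric shift exhibited in $\varphi_{\mathbf 1}(\Phi_F)$) and the prefactor $p^{rd/2}$ cancel exactly the power $q^{-d/2}$ coming from the Gaussian binomial, leaving $\#{\rm Gr}(m,n)(\mathbb F_q)$ with no stray power of $q$, and that the suppressed $\mathbb{G}_m$-factor contributes nothing. There is no analytic difficulty here; the substantive content is the identification of the weight-sum of $\wedge^m(\mathbb C^n)$ evaluated at the Steinberg-twisted Satake parameter with the Gaussian binomial coefficient $\#{\rm Gr}(m,n)(\mathbb F_q)$, and the rest mirrors the ${\rm GL}_2$ calculation already carried out in the text.
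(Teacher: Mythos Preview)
Your proof is correct and takes a genuinely different route from the paper's. The paper invokes the final sentence of Proposition~\ref{t_const_term_prop} to reduce to integrating $p^{rd/2}z^*_{-\mu}$ over the fiber of ${\rm val}\circ\det$, and then evaluates that integral \emph{geometrically}: identifying $p^{rd/2}z^*_{-\mu}$ with the trace of Frobenius on nearby cycles for the ${\rm GL}_n$ local model (the Kottwitz conjecture, known here by \cite{HN02a}), summing over all points of ${\bf M}^{*\rm loc}(\mathbb F_q)$, and pushing forward along the projection ${\rm Fl}_{{\rm GL}_n}\to {\rm Gr}_{{\rm GL}_n}$ where the sheaf becomes constant on ${\rm Gr}(m,n)$. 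Your approach instead computes $\tilde t(z^*_{-\mu})$ spectrally, identifying the $\delta_{B^*}^{-1/2}$-twist in $\tilde t$ with the $\rho$-shift in the Steinberg parameter of $D^\times$ (via Jacquet--Langlands), and then recognizing the resulting character sum on $\wedge^m(\mathbb C^n)$ as the Gaussian binomial $\binom{n}{m}_q$ via the $q$-binomial theorem.

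Each approach buys something. The paper's geometric argument makes the link to Rapoport's nearby-cycles calculations in \cite{Ra90} completely transparent, which is the point of the subsection. Your argument is more self-contained on the Hecke-algebra side and avoids any appeal to local models or nearby cycles; it also works directly from the definition of $\tilde t$ and makes explicit why the normalizing factor $p^{rd/2}$ is exactly right. Note that your invocation of Jacquet--Langlands is convenient but not strictly necessary: once you unwind $\tilde t_{A^*,A}$ on the monomial basis you get $\sum_\lambda \delta_{B^*}^{-1/2}(\varpi^\lambda)\cdot 1_m$ directly, and the $q$-binomial identity finishes without ever naming $\varphi_\eta$.
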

This is compatible with calculations of Rapoport of the trace of Frobenius on nearby cycles of the local models for such situations, see \cite{Ra90}. Thus the normalized transfer homomorphism gives a 
group-theoretic framework with which we could make further predictions about nearby cycles on the local models attached to non-quasiplit groups $G$, assuming we know explicitly the corresponding test function for a quasisplit inner form of $G$.

\begin{proof}
By the final sentence of Proposition \ref{t_const_term_prop}, we simply need to integrate the function $p^{rd/2} z^*_{-\mu} \in \mathcal Z({\rm GL}_n(\mathbb Q_{p^r}), K^*_{p^r})$ over the fiber of the Kottwitz homomorphism ${\rm val} \circ {\rm det}$ over $1_m \in \mathbb C[\mathbb Z]$.  This is a combinatorial problem which could be solved since we know $p^{rd/2}z^*_{-\mu}$ explicitly. However, it is easier to use geometry. Translating ``integration over the fiber of the Kottwitz homomorphism'' in terms of local models gives us the equality
$$
C_q \ = \sum_{x \in {\bf M}^{*\rm loc}(\mathbb F_q)} {\rm Tr}(\Phi_p^r, R\Psi^{\bf M^{*\rm loc}}(\mathbb Q_\ell)_x).
$$
(Here $\ell$ is a rational prime with $\ell \neq p$.) But the special fiber of ${\bf M}^{*\rm loc}$ embeds into the affine flag variety ${\rm Fl}_{{\rm GL}_n}$ for ${\rm GL}_n/\mathbb F_p$, and under the projection  $p: {\rm Fl}_{{\rm GL}_n} \rightarrow {\rm Gr}_{{\rm GL}_n}$ to the affine Grassmannian, ${\bf M}^{*\rm loc}$ maps onto ${\rm Gr}(m,n)$ and $Rp_*(R\Psi^{{\bf M}^{* \rm loc}}(\mathbb Q_\ell)) = \mathbb Q_\ell$, the constant $\ell$-adic sheaf on ${\rm Gr}(m,n)$ in degree $0$. Thus we obtain
$$
C_q = \sum_{x \in {\rm Gr}(m,n)(\mathbb F_q)} {\rm Tr}(\Phi_p^r, (\mathbb Q_\ell)_x) = \#{\rm Gr}(m,n)(\mathbb F_q)
$$
as desired. (The reader should note the similarity with Prop.~3.17 in \cite{Ra90}, which is justified in a slightly different way.)
\end{proof}

\section{Overview of evidence for the test function conjecture} \label{evidence_TFC_sec}

\subsection{Good reduction cases}

In case ${\bf G}_{\mathbb Q_p}$ is unramified and $K_p$ is a hyperspecial maximal compact subgroup of ${\bf G}(\mathbb Q_p)$, we expect $Sh({\bf G}, h^{-1}, K^pK_p)$ to have good reduction over $\mathcal O_{{\bf E}_\mathfrak p}$. In PEL cases this was proved by Kottwitz \cite{Ko92a}. In the same paper for PEL cases of type $A$ or $C$, it is proved that the function $\phi_r = 1_{K_{p^r}\mu(p^{-1})K_{p^r}}$ satisfies (\ref{Lef^ss}), which can be viewed as verifying Conjecture \ref{TFC} for these cases.

\subsection{Parahoric cases}

Assume $K_p$ is a parahoric subgroup. We will discuss only PEL Shimura varieties.

Here the approach is via the Rapoport-Zink local model ${\bf M}^{\rm loc}_{K_p}$ for a suitable integral model $\mathcal M_{K_p}$ for $Sh_{K_p}$ and the main ideas are due to Rapoport. We refer to the survey articles \cite{Ra90}, \cite{Ra05}, and \cite{H05} for more about how local models fit in with the Langlands-Kottwitz approach.  For much more about the geometry of local models, we refer the reader to the survey article of Pappas-Rapoport-Smithing \cite{PRS} and the references therein.

Using local models, the first step to proving Conjecture \ref{RPsi_TFC} is to prove Conjecture \ref{Kottwitz_conj_RPsi}. The first evidence was purely computational: in \cite{H01}, $z_{-\mu,j}$ was computed explicitly in the Drinfeld case and the result was compared with Rapoport's computation of the nearby cycles in that setting, proving Conjecture 7.1.3 directly. This result motivated Kottwitz' more general conjecture and also inspired Beilinson and Gaitsgory to construct the center of an affine Hecke algebra via a nearby cycles construction, a feat carried out in \cite{Ga}. Then in \cite{HN02a} Gaitsgory's method was adapted to prove Conjecture \ref{Kottwitz_conj_RPsi} for the split groups ${\rm GL}_n$ and ${\rm GSp}_{2n}$. This in turn was used to demonstrate Conjecture \ref{Kottwitz_conj} for certain special Shimura varieties in \cite{H05}, and then to prove the analogue of Theorem \ref{Z_fcn_thm} for those special Shimura varieties with parahoric level structure at $p$. The harmonic analysis ingredient needed for the latter was provided by \cite{H09}.

In his 2011 PhD thesis, Sean Rostami proved Conjecture \ref{Kottwitz_conj_RPsi} when $G$ is an unramified unitary group. In a recent breakthrough, Pappas and Zhu defined group-theoretic local models ${\bf M}^{\rm loc}_{K_p}$ whenever $G$ splits over a tamely ramified extension, and for unramified groups $G$ proved Conjecture \ref{Kottwitz_conj_RPsi}, see \cite{PZ}, esp.~Theorem 10.16. 

\subsection{Deeper level cases}
We again limit our discussion to PEL situations, where progress to date has occurred.

It is again natural to study directly the nearby cycles relative to a suitable integral model for the Shimura variety and hope that it gives rise to a test distribution in the Bernstein center.  For Shimura varieties in the ``Drinfeld case'' with $K_p$ a pro-p Iwahori subgroup of ${\bf G}(\mathbb Q_p) = {\rm GL}_n(\mathbb Q_p) \times \mathbb Q_p^\times$ (``$\Gamma_1(p)$-level structure at $p$''), one may use Oort-Tate theory to define suitable integral models and prove Conjectures \ref{RPsi_TFC} and \ref{TFC} for them. This was done by the author and Rapoport \cite{HRa} (and \cite{H12} provided the harmonic analysis ingredient needed to go further and prove Theorem \ref{Z_fcn_thm} in this case).

Around the same time as \cite{HRa}, Scholze studied in \cite{Sch1} nearby cycles on suitable integral models for the modular curves with arbitrarily deep full level structure at $p$.  In this way he proved Conjectures \ref{RPsi_TFC} and \ref{TFC} in these cases, taking the compactifications also into account, and thereby proved  the analogue of Theorem \ref{Z_fcn_thm} for the compactified modular curves at nearly all primes of bad reduction. The nearby cycles on his integral models naturally gave rise to some remarkable distributions in the Bernstein center, for which he gave explicit formulae (see $\S \ref{explicit_sec}$).

Then in \cite{Sch2} Scholze generalized the approach of \cite{Sch1} to compact Shimura varieties in the Drinfeld case, again finding an explicit description of nearby cycles. In this case, he was still able to produce a test function to plug into (\ref{Lef^ss}), or rather, simultaneously incorporating the base-change transfer results he needed in precisely this case, he found a test function that goes directly into the pseudostabilization of (\ref{Lef^ss}). This allowed him to prove Theorem \ref{Z_fcn_thm}.  In contrast to the modular curve situation, in higher rank the nearby cycles on Scholze's integral models do not directly produce distributions in the Bernstein center, and an explicit description of his test functions seems hopeless. But nevertheless Scholze was able to prove by indirect means Conjecture \ref{TFC} in this case.

The description of the nearby cycles in \cite{Sch2} provided one ingredient for Scholze's subsequent paper \cite{Sch3} which gave a new and streamlined proof of the local Langlands conjecture for general linear groups.

In later work Scholze \cite{Sch4} formalized his method of producing test functions in many cases, using deformation spaces of $p$-divisible groups, and this is used to give a nearly complete description of the cohomology groups of many compact unitary Shimura varieties in his joint work \cite{SS} with S.W.~Shin; their main assumption at $p$ is that ${\bf G}_{\mathbb Q_p}$ is a product of Weil restrictions of general linear groups. The advantage of what we could call the Langlands-Kottwitz-Scholze approach in this situation is that it yields in \cite{SS} a new construction of the Galois representations constructed earlier by Shin \cite{Sh}, in a shorter way that avoids Igusa varieties.

In these later developments, Conjecture \ref{TFC} does not play a central part, but the stable Bernstein center does nevertheless still play a clarifying role in the pseudostabilization process (e.g.~in \cite{SS}).  It seems that only certain integral models, such as those we see in many parahoric or pro-p Iwahori level cases, have the favorable property that their nearby cycles naturally give rise to distributions in the Bernstein center.  It remains an interesting problem to find such integral models in more cases, and to better understand the role of the Bernstein center in the study of Shimura varieties.

\section{Evidence for conjectures on transfer of the Bernstein center} \label{evidence_transfer_sec}

Here we present some evidence for the general principle that the (stable/geometric) Bernstein center is particularly well-behaved with respect to  (twisted) endoscopic transfer.  The primary evidence thus far consists of some unconditional analogues of Conjecture \ref{BC_conj}.

Let $G/F$ be an unramified group, and let $F_r/F$ be the degree $r$ unramified extension of $F$ in some algebraic closure of $F$.  In \cite{H09, H12}, the author defined {\em base change homomorphisms}
$$
b_r: \mathcal Z(G(F_r), J_r) \rightarrow \mathcal Z(G(F), J),
$$
where $J \subset G(F)$ is either a parahoric subgroup or a pro-p Iwahori subgroup, and where $J_r$ is the corresponding subgroup of $G(F_r)$.  Then we have ``base-change fundamental lemmas'' of the following form.\footnote{Relating to pro-p Iwahori level, a much stronger result is proved in \cite{H12} concerning the base change transfer of Bernstein centers of Bernstein blocks for depth-zero principal series representations.}

\begin{theorem} \label{uncond_BC_conj}
For any $\phi_r \in \mathcal Z(G(F_r), J_r)$, the function $b_r(\phi_r)$ is a base-change transfer of $\phi_r$ in the sense of \textup{(}\ref{stable_BC_defn}\textup{)}.
\end{theorem}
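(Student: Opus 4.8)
The plan is to argue along the lines of the proof of Proposition~\ref{BC_GLn}, substituting for the role played there by cyclic base change lifts for $\mathrm{GL}_n$ two inputs: the spectral description of the base change homomorphism $b_r$ coming from its construction in \cite{H09, H12}, and Kottwitz's base change fundamental lemma for the unit elements \cite{Ko86}. Fix $\phi_r\in\mathcal Z(G(F_r),J_r)$. By the established theory of base change transfer for the unramified group $G$ (Ng\^o \cite{Ngo}, Waldspurger \cite{Wal08}) there is \emph{some} $h\in C^\infty_c(G(F))$ which is a base-change transfer of $\phi_r$. To conclude that $b_r(\phi_r)$ is \emph{also} a base-change transfer of $\phi_r$ it then suffices, by Kazhdan's density theorem \cite{Kaz} together with Clozel's Shalika germ argument \cite[Prop.~7.2]{Cl90} (exactly as at the end of the proof of Proposition~\ref{BC_GLn}), to show that $b_r(\phi_r)$ and $h$ have the same traces against all tempered irreducible representations of $G(F)$.

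The essential case is that of a tempered $\pi\in\Pi(G/F)$ arising in an \emph{unramified} principal series $i^G_B(\chi)$; for such $\pi$ the base-change lift is the corresponding constituent $\Pi$ of $i^G_B(\chi\circ N_{F_r/F})$, and on parameters this is restriction along $W_{F_r}\hookrightarrow W_F$. Since $\phi_r$ is central it acts on $\Pi$ by a scalar $\phi_r(\Pi)$ and $b_r(\phi_r)$ acts on $\pi$ by a scalar $b_r(\phi_r)(\pi)$; using Corollary~\ref{Hecke_action_cor}(b) and its twisted analogue, with $\theta$ generating $\mathrm{Gal}(F_r/F)$ acting on $\Pi$ through its normalized intertwiner, one gets
\[
\mathrm{tr}\bigl((\phi_r,\theta)\mid\Pi\bigr)=\phi_r(\Pi)\,\mathrm{tr}\bigl((1_{J_r},\theta)\mid\Pi\bigr),\qquad \mathrm{tr}\bigl(b_r(\phi_r)\mid\pi\bigr)=b_r(\phi_r)(\pi)\,\dim\pi^{J}.
\]
Kottwitz's fundamental lemma for units \cite{Ko86} says $1_J$ is a base-change transfer of $1_{J_r}$, hence $\dim\pi^J=\mathrm{tr}(1_J\mid\pi)=\mathrm{tr}\bigl((1_{J_r},\theta)\mid\Pi\bigr)$. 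Thus the whole statement reduces to the purely spectral identity $b_r(\phi_r)(\pi)=\phi_r(\Pi)$, i.e.\ to matching the scalar by which $b_r(\phi_r)$ acts on the unramified principal series attached to $\chi$ with the scalar by which $\phi_r$ acts on the one attached to $\chi\circ N_{F_r/F}$.

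This last identity is essentially the \emph{definition} of $b_r$. On unramified principal series the parahoric center $\mathcal Z(G(F),J)$ acts through the Bernstein isomorphism (for $J$ hyperspecial, the Satake isomorphism), and $b_r$ was constructed in \cite{H09, H12} precisely so that the Bernstein transform of $b_r(\phi_r)$ is the pullback of that of $\phi_r$ along the norm map $(\widehat T^{I})_{\Phi_F}\to(\widehat T^{I})_{\Phi_F^{\,r}}$ of \textup{(}\ref{Y_bc_hom}\textup{)}, which is exactly the map dual to $\chi\mapsto\chi\circ N_{F_r/F}$. For a general parahoric $J$ one first descends to the Iwahori case via constant terms along the standard parabolics of $G$ and $G_{F_r}$; for pro-$p$ Iwahori level one works instead inside the depth-zero principal series blocks, as in \cite{H12}.

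\textbf{Main obstacle.} The real work is making the passage from this spectral statement to the geometric one \textup{(}\ref{stable_BC_defn}\textup{)} uniform in the parameter, since \emph{stable base change} is not available for $G$ in general: one cannot simply quote a base-change lift for each tempered $\pi$. The robust fix is to let $\chi$ vary over the torus $X^{\mathrm{w}}(T)$, observe that after multiplication by a single fixed auxiliary function both sides of the desired (twisted) orbital-integral identity become regular functions of $\chi$ on the Bernstein component, verify the identity on the Zariski-dense open locus where $i^G_B(\chi)$ is irreducible and tempered — where the base-change lift is the manifestly correct $i^G_B(\chi\circ N_{F_r/F})$ and the computation above applies verbatim — and conclude by Zariski density. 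Carrying this through requires care with the twisted analogue of Corollary~\ref{Hecke_action_cor} and the normalization of the $\theta$-action on $\Pi^{J_r}$ (in particular that $\Pi^{J_r}\neq 0\Leftrightarrow\pi^J\neq 0$ along the family), with the behavior of the stable twisted orbital integrals as rational functions along the family and control of denominators at reducible points, and with the compatibility of $b_r$ with all of this. I expect this ``analytic family over a Bernstein component'' bookkeeping, rather than any single computation, to be the crux.
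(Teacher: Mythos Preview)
The paper does not prove this theorem; it merely states it and cites \cite{H09, H12}. The proofs in those references are \emph{not} spectral. They proceed by a direct orbital-integral argument: one shows $b_r$ is compatible with the constant term homomorphisms $c^G_M$ (cf.~\S\ref{Bern_vs_const_term_sec}), uses Labesse-type descent of (twisted) orbital integrals along Levi subgroups to reduce the matching of $\mathrm{SO}_\gamma(b_r(\phi_r))$ and $\mathrm{SO}_{\delta\theta}(\phi_r)$ to an analogous statement for $M$, and finally invokes the already-known spherical base change fundamental lemma of Clozel and Labesse (and Kottwitz \cite{Ko86} for units). No trace formula, no base change lifts, no Kazhdan density.

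Your spectral route has a genuine circularity. The step where you deduce $\mathrm{tr}(h\mid\pi)=\mathrm{tr}((\phi_r,\theta)\mid\Pi)$ from the fact that $h$ is a base-change transfer of $\phi_r$ requires the twisted character identity $\Theta_\Pi((g,\theta))=\Theta_\pi(Ng)$ for the pair $(\pi,\Pi)$. For $\mathrm{GL}_n$ this is Arthur--Clozel, which is why the argument of Proposition~\ref{BC_GLn} works. For a general unramified $G$ no such result is available; in fact, establishing that identity for constituents of unramified principal series is essentially equivalent to (a form of) the base change fundamental lemma you are trying to prove. The same circularity infects your use of Kottwitz's result for units: from ``$1_J$ is a transfer of $1_{J_r}$'' you cannot conclude $\dim\pi^J=\mathrm{tr}((1_{J_r},\theta)\mid\Pi)$ without already knowing the twisted character relation between $\pi$ and $\Pi$. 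Your Zariski-density workaround does not help, because the missing ingredient is absent even at the generic $\chi$ where $i^G_B(\chi)$ is irreducible. You also need to handle tempered $\pi$ with $\pi^J=0$: there $\mathrm{tr}(b_r(\phi_r)\mid\pi)=0$, but you give no reason why $\mathrm{tr}(h\mid\pi)=0$ for an arbitrary transfer $h$.
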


By Kottwitz \cite{Ko86}, the function $1_J$ is a base-change transfer of $1_{J_r}$. Hence for any $V_r \in {\rm Rep}(\,^L(G_{F_r}))$, Conjecture \ref{BC_conj} predicts that $b_{F_r/F}(Z_{V_r}) * 1_J$ is a base-change transfer of $Z_{V_r} * 1_{J_r}$. This is a consequence of Theorem \ref{uncond_BC_conj}, because of the following compatibility between the base-change operations in \cite{H09, H12} and in the context of stable Bernstein centers (cf.~Prop.~\ref{change_of_field}).

\begin{lemma} \label{BC_comparison}
In the above situations, $b_r(Z_{V_r} * 1_{J_r}) = b_{F_r/F}(Z_{V_r}) * 1_J$.
\end{lemma}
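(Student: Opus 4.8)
The plan is to compare the scalars by which the two central functions act on principal series. Both $b_r(Z_{V_r} * 1_{J_r})$ and $b_{F_r/F}(Z_{V_r}) * 1_J$ lie in $\mathcal Z(G(F), J)$: the former by the defining property of $b_r$ (Theorem \ref{uncond_BC_conj}), the latter because $b_{F_r/F}(Z_{V_r}) \in \mathfrak Z(G)$ and convolution by $1_J$ carries the Bernstein center into the center of the parahoric (resp.\ pro-$p$ Iwahori) Hecke algebra. By the structure theory of these Hecke algebras recalled in $\S\ref{appendix_sec}$ and in \cite{H09, H12}, an element of $\mathcal Z(G(F), J)$ is determined by the scalars by which it acts on the spaces $i^G_B(\chi)^J$, as $\chi$ runs over the weakly unramified (resp.\ depth-zero) characters of $T(F)$. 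So it suffices to match these scalars for each such $\chi$.

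For the right-hand side: by Corollary \ref{Hecke_action_cor}(a), $b_{F_r/F}(Z_{V_r}) * 1_J$ acts on $i^G_B(\chi)^J$ by the value of $b_{F_r/F}(Z_{V_r}) \in \mathfrak Z(G)$ on the supercuspidal support $(T,\chi)_G$, which by Propositions \ref{map_structure} and \ref{change_of_field} equals
\begin{equation*}
Z_{V_r}\big((\varphi_\chi|_{W_F}|_{W_{F_r}})_{\widehat G}\big) \;=\; Z_{V_r}\big((\varphi_\chi|_{W_{F_r}})_{\widehat G}\big) \;=\; {\rm tr}^{\rm ss}\big(\varphi_\chi(\Phi_{F_r}), V_r\big),
\end{equation*}
where $\varphi_\chi : W'_F \to \, ^LT \hookrightarrow \, ^LG$ is the parameter of $\chi$. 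Since $F_r/F$ is unramified of degree $r$ we have $I_{F_r} = I_F$ and $\Phi_{F_r} = \Phi_F^{\,r}$, so this scalar is ${\rm tr}^{\rm ss}(\varphi_\chi(\Phi_F)^{\,r}, V_r)$, i.e.\ the trace of $\varphi_\chi(\Phi_F)^{\,r}$ on the $\varphi_\chi(I_F)$-invariants of $V_r$.

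For the left-hand side: $Z_{V_r} * 1_{J_r}$ acts on $i^G_B(\eta)^{J_r}$ by ${\rm tr}^{\rm ss}(\varphi_\eta(\Phi_{F_r}), V_r)$ for any weakly unramified (resp.\ depth-zero) character $\eta$ of $T(F_r)$, again by Corollary \ref{Hecke_action_cor}(a). The base change homomorphism $b_r$ of \cite{H09, H12} is characterized by the property that $b_r(\phi_r)$ acts on $i^G_B(\chi)^J$ by the scalar by which $\phi_r$ acts on $i^G_B(\chi \circ N_{F_r/F})^{J_r}$, where $N_{F_r/F} : T(F_r) \to T(F)$ is the norm; applying this to $\phi_r = Z_{V_r} * 1_{J_r}$, the left-hand side acts on $i^G_B(\chi)^J$ by ${\rm tr}^{\rm ss}(\varphi_{\chi \circ N_{F_r/F}}(\Phi_{F_r}), V_r)$. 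Finally, by local class field theory the norm $N_{F_r/F}$ on $T$ is dual to restriction along $W_{F_r} \hookrightarrow W_F$; under the identification (\ref{weakly_unram_chars}) of the group of weakly unramified characters with $(Z(\widehat{T})^I)_\Phi$, this is precisely the map of tori (\ref{Y_bc_hom}), so $\varphi_{\chi \circ N_{F_r/F}} = \varphi_\chi|_{W_{F_r}}$ and hence $\varphi_{\chi \circ N_{F_r/F}}(\Phi_{F_r}) = \varphi_\chi(\Phi_F)^{\,r}$. Thus the left-hand side also acts by ${\rm tr}^{\rm ss}(\varphi_\chi(\Phi_F)^{\,r}, V_r)$, which matches the right-hand side, and the lemma follows.

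The step I expect to be the crux is this last compatibility: identifying the Hecke-theoretic base change recipe built into $b_r$ (composition of characters with $N_{F_r/F}$) with the Galois-theoretic recipe defining $b_{F_r/F}$ (restriction of parameters to $W_{F_r}$). Everything else is bookkeeping with the semisimple trace, using $I_{F_r} = I_F$ and $\Phi_{F_r} = \Phi_F^{\,r}$, together with the explicit characterization of $b_r$ --- which, if one does not wish to quote it directly from \cite{H09, H12}, may be recovered from its orbital-integral characterization (\ref{stable_BC_defn}) via the twisted Weyl integration formula, exactly as in the proof of Proposition \ref{BC_GLn}.
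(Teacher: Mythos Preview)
Your proof is correct and follows essentially the same approach as the paper's own argument: reduce to matching the scalars by which both sides act on $i^G_B(\chi)^J$, invoke the defining property of $b_r$ (action via $\chi \circ N_r$), and then use the norm--restriction compatibility $\varphi_{\chi \circ N_r} = \varphi_\chi|_{W_{F_r}}$. The paper cites \cite[Lemma~8.1.3]{KV} for this last compatibility (Langlands duality for tori intertwines norm with restriction), whereas you appeal to local class field theory and the map (\ref{Y_bc_hom}); these are the same statement, and your identification of it as the crux is exactly right.
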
 

\begin{proof}
First assume $J$ is a parahoric subgroup.  Let $\chi$ be any unramified character of $T(F)$. It is enough to show that the two functions act on the left by the same scalar on every unramified principal series representation $i^G_B(\chi)^J$.

Let $N_r: T(F_r) \rightarrow T(F)$ be the norm homomorphism. By the definition of $b_r$ in \cite{H09}, $b_r(Z_{V_r} * 1_{J_r})$ acts by the scalar  by which $Z_{V_r} * 1_{J_r} $ acts on $i^{G_r}_{B_r}(\chi \circ N_r)^{J_r}$. This is the scalar by which $Z_{V_r}$ acts on $i^{G_r}_{B_r}(\chi \circ N_r)$, which in view of ${\rm LLC+}$ is 
\begin{equation} \label{phi_chi_eq}
{\rm tr}^{\rm ss}(\varphi^{T_r}_{\chi \circ N_r}(\Phi_F^r), V_r) = {\rm tr}^{\rm ss}(\varphi^T_{\chi}(\Phi_F^r), V_r).
\end{equation}
But the right hand side is the scalar by which $b_{F_r/F}(Z_{V_r}) * 1_J$ acts on $i^G_B(\chi)^J$.

The equality $\varphi^{T_r}_{\chi \circ N_r}(\Phi_F^r) = \varphi^T_\chi(\Phi_F^r)$ we used in (\ref{phi_chi_eq}) follows from the commutativity of the diagram of Langlands dualities for tori
$$
\xymatrix{
{\rm Hom}_{\rm conts}(T(F), \mathbb C^\times) \ar[r] \ar[d]^{N_r} & H^1_{\rm conts}(W_F, \, ^LT) \ar[d]_{\rm Res} \\
{\rm Hom}_{\rm conts}(T(F_r), \mathbb C^\times) \ar[r] & H^1_{\rm conts}(W_{F_r}, \, ^LT_r)}
$$
which was proved in \cite[Lemma~8.1.3]{KV}. 

Now suppose $J = I^+$ is a pro-p Iwahori subgroup. Then the same argument works given the following fact: for any depth-zero character $\chi: T(F)_1 \rightarrow \mathbb C^\times$ and any extension of it to a character $\tilde{\chi}$ on $T(F)$, and any $z_r \in \mathcal Z(G(F_r), I_r^+)$, the function $b_r(z_r)$ acts on $i^G_B(\tilde{\chi})^{I^+}$ by the scalar by which $z_r$ acts on $i^{G_r}_{B_r}(\tilde{\chi} \circ N_r)^{I^+_r}$. This follows from the definition of $b_r$ given in Definition 10.0.3 of \cite{H12}, using \cite[Lemma 4.2.1]{H12}.
\end{proof}

Let us also mention again Scholze's Theorem C in \cite{Sch2}, which essentially proves Conjecture \ref{BC_conj} for ${\rm GL}_n$ (see Proposition \ref{BC_GLn}).

\section{Explicit computation of the test functions} \label{explicit_sec}

\subsection{Parahoric cases}

Conjecture \ref{TFC} implies that test functions are compatible with change of level. Therefore for the purposes of computing them for parahoric level, the key case is where $K_p$ is an Iwahori subgroup.  Thus, for the rest of this subsection we consider only Iwahori level structure. Since test functions attached to quasisplit groups should determine, in a computable way, those for inner forms (by Conjecture \ref{transfer_conj} and Proposition \ref{t_const_term_prop}), it is enough to understand  quasisplit groups. Via Proposition \ref{conj_qs_case} this boils down to giving explicit descriptions of the Bernstein functions $z_{-\lambda, j}$, assuming we have already expressed the test function explicitly in term of these -- this is automatic for unramified groups using the Kottwitz Conjecture (Conjecture \ref{Kottwitz_conj}).

Let us therefore consider the problem of explicitly computing Bernstein functions $z_\mu$ attached to any group $G/F$ and an Iwahori subgroup $I \subset G(F)$ ($F$ being any local non-archimedean field). For simplicity consider the case where $G/F$ is unramified, and regard $\mu$ as a dominant coweight in $X_*(A)$. The $\mu$ which arise in Conjecture \ref{Kottwitz_conj} are {\em minuscule}; however, we consider $\mu$ which are {\bf not necessarily} minuscule here. Let $\widetilde{W}$ denote the extended affine Weyl group of $G$ over $F$ (cf. $\S\ref{appendix_sec}$). Attached to $\mu$ is an the $\mu$-{\em admissible set} ${\rm Adm}(\mu) \subset \widetilde{W}$, defined by
$$
{\rm Adm}(\mu) = \{ x \in \widetilde{W} ~ | ~ x \leq t_\lambda, \,\, \mbox{for some} \,\, \lambda \in W(G,A)\cdot \mu \},
$$
where $\leq$ denotes the Bruhat order on $\widetilde{W}$ determined by the Iwahori subgroup $I$ and where $t_\lambda$ denotes the translation element in $\widetilde{W}$ corresponding to $\lambda \in X_*(A)$. The $\mu$-admissible set has been studied for its relation to the stratification by Iwahori-orbits in the local model ${\bf M}^{\rm loc}_{K_p}$; for much information see \cite{KR}, \cite{HN02b}, \cite{Ra05}. The strongest combinatorial results relating local models and ${\rm Adm}(\mu)$ are due to Brian Smithling, see e.g.~\cite{Sm1, Sm2, Sm3}. 

For our purposes, the set ${\rm Adm}(\mu)$ enters because it is the set indexing the double cosets in the support of $z_\mu$.

\begin{prop}
The support of $z_\mu$ is the union $\bigcup_{x \in {\rm Adm}(\mu)} IxI$.
\end{prop}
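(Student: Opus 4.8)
The plan is to reduce the statement to a known combinatorial description of the Bernstein function, and for this it is cleanest to work via the Bernstein presentation of $\mathcal Z(G(F),I)$ and the known formula for $z_\mu$ in terms of the standard generators $\Theta_\lambda$ of the commutative subalgebra of the Iwahori--Hecke algebra. Recall that $z_\mu$ is characterized by acting on every unramified principal series $i^G_B(\chi)^I$ by the scalar $\sum_{\lambda \in W(G,A)\cdot\mu}(-\lambda)(\chi)$ (Lemma in \S\ref{parahoric_TFC_sec}, specialized to $\mu$ possibly non-minuscule). By the Bernstein isomorphism (developed in the Appendix \S\ref{appendix_sec}), $\mathcal Z(G(F),I)$ is identified with $\mathbb C[X_*(A)]^{W(G,A)}$ (or more precisely with the $W(G,A)$-invariants in the group algebra of the relevant lattice), so $z_\mu$ corresponds to the orbit sum $\sum_{\lambda\in W(G,A)\cdot\mu}\Theta_{-\lambda}$. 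First I would make this identification precise, using the material of the Appendix to pin down the normalization; the sign conventions ($-\mu$ versus $\mu$, and the twist by $\delta_B^{1/2}$ hidden in the definition of $\Theta_\lambda$) must be tracked carefully, but this is routine given \S\ref{appendix_sec}.

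Next I would expand each $\Theta_{-\lambda}$ in the standard basis $\{T_w\}_{w\in\widetilde W}$ of the Iwahori--Hecke algebra. The key input is the classical formula $\Theta_\lambda = \Theta_{\lambda_1}\Theta_{\lambda_2}^{-1}$ where $\lambda = \lambda_1-\lambda_2$ with $\lambda_i$ dominant, together with the fact that for $\lambda$ dominant $\Theta_\lambda = q^{-\ell(t_\lambda)/2}T_{t_\lambda}$. Combining these, one gets that $\Theta_{-\lambda}$ is supported, in the $T_w$-basis, on $\{w \in \widetilde W : w \le t_{\lambda'}\text{ for some }\lambda'\}$; more precisely the support of $\sum_{\lambda\in W\cdot\mu}\Theta_{-\lambda}$ is contained in $\mathrm{Adm}(\mu)$. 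This containment is a standard computation in the affine Hecke algebra (it is essentially the statement that products $T_uT_v^{-1}$ expand in $T_w$ with $w$ in the Bruhat interval below $uv^{-1}$-type elements). Then I would record that under the isomorphism $\mathcal Z(G(F),I)\cong C_c(I\backslash G(F)/I)^{\text{center}}$ the support of the element as a function is exactly the union of those $IwI$ with $w$ in the $T_w$-support, because the $T_w$ are, up to normalization, the characteristic functions $1_{IwI}$.

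The remaining — and genuinely substantive — point is the reverse inclusion: that every $IxI$ with $x\in\mathrm{Adm}(\mu)$ actually appears with nonzero coefficient, i.e. there is no cancellation that kills a double coset in the admissible set. This is the main obstacle. My plan here is to argue by a positivity/leading-term argument: one shows that the coefficient of $T_x$ in $\sum_{\lambda\in W\cdot\mu}\Theta_{-\lambda}$, after suitable renormalization, is a polynomial in $q$ (essentially a Kazhdan--Lusztig-type $R$-polynomial or a sum of such) with nonnegative coefficients and nonzero constant term, hence nonzero. Concretely, for $x = t_\lambda$ with $\lambda\in W\cdot\mu$ the coefficient is visibly a power of $q$ (nonzero), and for general $x < t_\lambda$ one propagates nonvanishing down the Bruhat order using the recursion for $\Theta_\lambda$ expressed via the generators $T_{s_i}^{\pm1}$ and the Bernstein relations, checking at each step that the relevant coefficients are genuine nonnegative combinations of positive powers of $q$ with no cancellation across the orbit sum. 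Alternatively, and perhaps more cleanly, one can invoke the known relation between $z_\mu$ and nearby cycles (Conjecture \ref{Kottwitz_conj_RPsi}, which is a theorem in the cases where $z_\mu$'s support is understood, e.g. \cite{HN02a}): the trace-of-Frobenius function on nearby cycles of the local model ${\bf M}^{\mathrm{loc}}$ is supported exactly on the $\mathrm{Adm}(\mu)$-strata, and it equals $z_\mu$ up to a power of $q$; since the nearby cycles sheaf is nonzero on each such stratum, the function is nonzero there. I would present the Hecke-algebra argument as the primary proof and mention the geometric one as a cross-check, flagging that the no-cancellation step is where all the work lies.
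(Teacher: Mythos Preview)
Your containment direction (support of $z_\mu \subseteq \mathrm{Adm}(\mu)$) is essentially fine: writing $z_\mu$ as the orbit sum $\sum_{\lambda \in W\cdot\mu}\Theta_\lambda$ and expanding each $\Theta_\lambda$ in the $T_w$-basis does indeed land in $\{w : w \leq t_\lambda\}$, and the union over $\lambda$ is exactly $\mathrm{Adm}(\mu)$. (Watch the sign conventions: the paper's $z_\mu$ corresponds to $\sum_\lambda \Theta_\lambda$, not $\sum_\lambda \Theta_{-\lambda}$.)

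The genuine gap is in the reverse inclusion. Your proposed positivity argument does not work as stated: the coefficients of $z_\mu$ in the $T_w$-basis are \emph{not} polynomials in $q$ with nonnegative coefficients. Already in the minuscule case the paper records the coefficient of $T_x$ as $(-1)^{\ell(t_\mu)+\ell(x)}R_{x,t_{\lambda(x)}}(q)$ (and in the Drinfeld case as $(1-q)^{\ell(t_\mu)-\ell(x)}$), which have alternating signs. So ``nonnegative coefficients hence nonzero'' fails immediately, and ``nonzero constant term'' is not obvious either --- you would need a separate argument that these signed combinations of $R$-polynomials never vanish identically, and that the contributions from different $\lambda \in W\cdot\mu$ do not cancel. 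Your sketch (``propagate nonvanishing down the Bruhat order \dots\ checking at each step'') does not supply this argument; it is precisely the hard content of the proposition. The geometric alternative via nearby cycles is not a proof either: Conjecture~\ref{Kottwitz_conj_RPsi} is known only in special cases, and using it here would be both circular and far less general than the purely Hecke-algebraic statement being claimed.

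The paper's proof (referencing the Appendix to \cite{HRa}) proceeds instead via the theory of alcove walks in the sense of G\"ortz~\cite{G}. That framework expresses the coefficient of $T_x$ in $\Theta_\lambda$ as a signed/weighted sum over positively folded galleries from the base alcove to $x$ of type determined by $t_\lambda$, and the nonvanishing on all of $\mathrm{Adm}(\mu)$ is then obtained by exhibiting, for each $x \in \mathrm{Adm}(\mu)$, an explicit contributing gallery whose term cannot be cancelled. This is a genuinely different mechanism from what you propose, and it is what makes the argument go through for arbitrary (not just minuscule) $\mu$ and for affine Hecke algebras with arbitrary parameters.
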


\begin{proof}
This was proved using the theory of alcove walks as elaborated by G\"{o}rtz \cite{G}, in the Appendix to \cite{HRa}. It applies to affine Hecke algebras with arbitrary parameters, hence the corresponding result holds for arbitrary groups, not just unramified groups.
\end{proof}

The following explicit formula was proved in \cite{H01} and in \cite{HP}. Let $T_x = 1_{IxI}$ for $x \in \widetilde{W}$. In the formulas here and below, $q = p^r$ is the cardinality of the residue field of $F$.

\begin{prop}
Assume $\mu$ is minuscule.  Assume the parameters for the Iwahori Hecke algebra are all equal. Then
$$
q^{\ell(t_\mu)/2} z_\mu = (-1)^{\ell(t_\mu)} \sum_{x \in {\rm Adm}(\mu)} (-1)^{\ell(x)} R_{x, t_{\lambda(x)}}(q) \, T_x,
$$
where $x$ decomposes as $x = t_{\lambda(x)} w \in X_*(A) \rtimes W = \widetilde{W}$ and where $R_{x,y}(q)$ denotes the $R$-polynomial of Kazhdan-Lusztig \cite{KL}, and $\ell$ the length function, for the quasi-Coxeter group $\widetilde{W}$.  A similar formula holds in the context of affine Hecke algebras with arbitrary parameters. In the Drinfeld case $G = {\rm GL}_n$ and $\mu = (1,0^{n-1})$, the coefficient of $T_x$ is $(1-q)^{\ell(t_\mu)-\ell(x)}$.  
\end{prop}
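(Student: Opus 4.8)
The plan is to pass to the Bernstein--Lusztig presentation of the Iwahori--Hecke algebra $\mathcal{H} = \mathcal{H}(G(F),I)$ and reduce the statement to an expansion of Bernstein elements in the Iwahori--Matsumoto basis $\{T_x\}$. Recall that $\mathcal{H}$ contains the commutative subalgebra $\mathcal{A} = \bigoplus_{\lambda \in X_*(A)}\mathbb{C}\,\theta_\lambda \cong \mathbb{C}[X_*(A)]$, that $\mathcal{Z}(\mathcal{H}) = \mathcal{A}^{W(G,A)}$, and that each $\theta_\nu$ acts on a generic unramified principal series $i^G_B(\chi)^I$ by the scalar $\nu(\chi)$. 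Therefore $\sum_{\lambda \in W(G,A)\cdot\mu}\theta_\lambda$ is $W(G,A)$-invariant, hence central, and acts on $i^G_B(\chi)^I$ by $\sum_\lambda \lambda(\chi)$; since an element of $\mathcal{Z}(\mathcal{H})$ is determined by its scalars on the modules $i^G_B(\chi)^I$, this identifies $z_\mu = \sum_{\lambda \in W(G,A)\cdot\mu}\theta_\lambda$ (up to the harmless bookkeeping $\mu \leftrightarrow -\mu$ coming from the normalization of the defining scalar). Everything now comes down to expanding each $\theta_\lambda$, $\lambda \in W(G,A)\cdot\mu$, in the $T_x$.

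Next I would record the needed expansion of a single Bernstein element. Writing $\lambda = \lambda_1 - \lambda_2$ with $\lambda_1,\lambda_2$ dominant, one has $\theta_\lambda = \theta_{\lambda_1}\theta_{\lambda_2}^{-1}$ with $\theta_{\lambda_i} = q^{-\ell(t_{\lambda_i})/2}T_{t_{\lambda_i}}$, and $\theta_{\lambda_2}^{-1}$ is expanded via the Kazhdan--Lusztig identity $T_{w^{-1}}^{-1} = q^{-\ell(w)}\sum_{y\le w}(-1)^{\ell(w)+\ell(y)}R_{y,w}(q)\,T_y$; carrying out the product $T_{t_{\lambda_1}}\cdot(\cdots)$ with the Iwahori--Matsumoto relations (or, equivalently, applying the Bernstein--Lusztig commutation relations step by step along a reduced word, or quoting the general Bernstein$\leftrightarrow$IM formulas of \cite{HP}) writes $\theta_\lambda$ explicitly as a $\mathbb{Z}[q^{\pm 1/2}]$-combination of the $T_x$. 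For an Iwahori--Hecke algebra with unequal parameters one uses the corresponding $R$-polynomials; the length identities invoked hold for the quasi-Coxeter group $\widetilde{W}$. Two structural facts about the minuscule orbit enter here: all translation elements $t_\nu$, $\nu \in W(G,A)\cdot\mu$, have the common length $\langle 2\rho,\mu\rangle = \ell(t_\mu)$; and the Bruhat intervals and length accounting in these products are controlled by the minuscule hypothesis.

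The heart of the argument is to sum over $\lambda \in W(G,A)\cdot\mu$ and observe the collapse. Individually, $\theta_\lambda$ is supported on $\{x\le t_\lambda\}$ and its coefficients are not $R$-polynomials; the point is that after summing over the whole orbit, the coefficient of $T_x$ in $z_\mu$ depends only on the translation part $\lambda(x)$ of $x$ and equals, up to the overall sign $(-1)^{\ell(t_\mu)+\ell(x)}$ and the normalization $q^{-\ell(t_\mu)/2}$, the single $R$-polynomial $R_{x,t_{\lambda(x)}}(q)$. To establish this I would use the combinatorics of minuscule admissible sets: ${\rm Adm}(\mu) = \bigcup_{\nu\in W(G,A)\cdot\mu}\{x\le t_\nu\}$, its maximal elements are exactly the $t_\nu$, every $x\in {\rm Adm}(\mu)$ has $\lambda(x)\in W(G,A)\cdot\mu$ (equivalently the translation part lies in the convex hull of the orbit, whose only lattice points are the orbit itself for minuscule $\mu$), and hence $x\le t_{\lambda(x)}$ for all such $x$. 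Feeding this back determines precisely which $T_x$ can occur and forces the asserted coefficient; the global sign and power of $q$ are then pinned down by the leading terms $x = t_\nu$, where $R_{t_\nu,t_\nu}(q)=1$, and multiplying through by $q^{\ell(t_\mu)/2}$ yields the displayed identity (the support statement being the preceding Proposition). I expect this orbit-wise cancellation to be the main obstacle: one must show that the various lower-order contributions from the different $\theta_\lambda$ assemble exactly into one $R$-polynomial indexed by the translation part, with no leftover terms.

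Finally, for the Drinfeld case $G = {\rm GL}_n$ with $\mu = (1,0^{n-1})$, each $t_\nu$ ($\nu\in W(G,A)\cdot\mu$) is a length-zero element times a product of $n-1$ distinct simple affine reflections, so every Bruhat interval $[x,t_{\lambda(x)}]$ is Boolean; therefore $R_{x,t_{\lambda(x)}}(q) = (q-1)^{\ell(t_{\lambda(x)})-\ell(x)} = (q-1)^{\ell(t_\mu)-\ell(x)}$, using $\ell(t_{\lambda(x)}) = \ell(t_\mu)$. Combining with the sign $(-1)^{\ell(t_\mu)+\ell(x)}$ turns this into $(1-q)^{\ell(t_\mu)-\ell(x)}$, the asserted coefficient of $T_x$.
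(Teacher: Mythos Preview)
The paper does not supply a proof of this proposition; it simply records that the formula ``was proved in \cite{H01} and in \cite{HP}'' and moves on. Your outline is in the spirit of those references: the identification $z_\mu=\sum_{\lambda\in W\cdot\mu}\theta_\lambda$ via the Bernstein isomorphism, followed by conversion to the Iwahori--Matsumoto basis through $R$-polynomial identities, is precisely the framework of \cite{HP}, and the Drinfeld case is what \cite{H01} treats directly.

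That said, the step you yourself flag as the obstacle is genuinely where all the content lies, and your sketch does not close it. Expanding each individual $\theta_\lambda$ via $\theta_{\lambda_1}\theta_{\lambda_2}^{-1}$ and the Kazhdan--Lusztig inversion for $T_{w^{-1}}^{-1}$ gives, for non-dominant $\lambda$, coefficients that are not single $R$-polynomials, and you have not explained why the sum over the orbit collapses the coefficient of $T_x$ to exactly $R_{x,t_{\lambda(x)}}(q)$. The minuscule facts you cite (that $\lambda(x)\in W\cdot\mu$ for $x\in{\rm Adm}(\mu)$, that $x\le t_{\lambda(x)}$, that all $\ell(t_\nu)$ coincide) are correct and needed, but they only tell you which $T_x$ can appear and what the leading terms look like; they do not by themselves force the lower-order contributions from the various $\theta_\lambda$ to assemble into one $R$-polynomial. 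In \cite{HP} this is handled by explicit change-of-basis formulas together with identities specific to the minuscule case; absent those, an inductive argument on $\ell(t_\mu)-\ell(x)$ using the Bernstein--Lusztig commutation relations would be required. Your Drinfeld paragraph is correct once the main formula is in hand: the reduced expressions for the $t_\nu$ in type $\widetilde{A}_{n-1}$ use $n-1$ pairwise distinct simple reflections, so the Bruhat intervals are Boolean and the $R$-polynomial specializes as you claim.
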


There are also explicit formulas for Bernstein functions $z_\mu$ when $\mu$ is not minuscule, but they tend to be much more complicated.  For related computations see \cite{HP} and \cite{GH}.

\subsection{A Pro-p Iwahori level case}

In the Drinfeld case where ${\bf G}_{\mathbb Q_p} = {\rm GL}_n \times {\mathbb G}_m$ and $\mu = (1,0^{n-1}) \times 1$, and where $K_p$ is a pro-p Iwahori subgroup, an explicit formula for the test function $\phi_r$ for $Sh({\bf G}, h^{-1}, K^pK_p)$ was found by the author and Rapoport.  We shall rephrase this slightly by ignoring the ${\mathbb G}_m$ factor and giving the formula for $G = {\rm GL}_n$.

\begin{prop} \textup{(}\cite[Prop.~12.2.1]{HRa}\textup{)} Let $q = p^r$. Let $I_r^+$ denote the standard pro-p Iwahori subgroup of $G_r := {\rm GL}_n(\mathbb Q_{p^r})$. Let $T$ denote the standard diagonal torus in ${\rm GL}_n$. In terms of natural embeddings $T(\mathbb F_q) \hookrightarrow G_r$, and $w \in \widetilde{W} \hookrightarrow G_r$ giving elements $tw^{-1} \in G_r$ representing $I^+_r \backslash G_r /I^+_r$, we have 
\begin{equation}
\phi_r(I^+_r \, t w^{-1} \, I^+_r) = \begin{cases} 0,\,\,\,\,\, \mbox{if $w \notin {\rm Adm}(\mu)$} \\
0, \,\,\,\,\, \mbox{if $w \in {\rm Adm}(\mu)$ but $N_r(t) \notin T_{S(w)}(\mathbb F_p)$} \\ 
(-1)^n \, (p-1)^{n-|S(w)|} \, (1-q)^{|S(w)|-n-1},  \,\,\, \mbox{otherwise}. \end{cases}
\end{equation}
Here $S(w)$ is the set of {\em critical indices} for $w$, equivalently $S(w)$ is the set of standard basis vectors $e_j \in \mathbb Z^n$ such that $w \leq t_{e_j}$ in the Bruhat order on $\widetilde{W}$ determined by the standard Iwahori subgroup of ${\rm GL}_n$.
\end{prop}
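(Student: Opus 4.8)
The plan is to recover this formula --- which is \cite[Prop.~12.2.1]{HRa} with the ${\mathbb G}_m$-factor suppressed --- from the harmonic-analytic description of the test function. First I would pin down the relevant data: in the Drinfeld case the reflex field is $\mathbb Q$, so $E = E_0 = \mathbb Q_p$ is unramified, $\mathbb Q_{p^r} = E_{j0} = F$, and $V^{E_{j0}}_{-\mu,j}$ is, as a representation of $\widehat G = {\rm GL}_n$, the $n$-dimensional irreducible representation $V_{-\mu}$ with weights $-e_1,\dots,-e_n$; also $\phi_r = p^{rd/2}\big(Z_{V_{-\mu}}*1_{I^+_r}\big)$, with $d = n-1$ and $q = p^r$. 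Then I would invoke the pro-$p$ Iwahori analogue of Proposition~\ref{conj_qs_case}: $Z_{V_{-\mu}}*1_{I^+_r}$ is the unique element of $\mathcal Z({\rm GL}_n(\mathbb Q_{p^r}),I^+_r)$ acting on the $I^+_r$-invariants of each depth-zero principal series $i^G_B(\tilde\chi)$ by the scalar ${\rm tr}^{\rm ss}(\varphi_{\tilde\chi}(\Phi_F),V_{-\mu})$, as $\tilde\chi$ runs over the weakly unramified characters of $T(F)$; the mechanism for this reduction is that of Lemma~\ref{BC_comparison} together with \cite[Lemma~4.2.1]{H12}.

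Next I would compute these scalars explicitly. Writing $\tilde\chi = \chi_0\nu$ with $\chi_0$ the depth-zero part (inflated from a character of $T(\mathbb F_q)$) and $\nu$ unramified, the restriction $\varphi_{\tilde\chi}|_{I_F}$ acts on $V_{-\mu}$ through a finite quotient of tame inertia, so the weight line $L_{-e_j}$ survives in $V_{-\mu}^{\varphi_{\tilde\chi}(I_F)}$ exactly for $j$ in a subset determined by the ramification pattern of $\chi_0$, and on such a line $\Phi_F$ acts by $(-e_j)(\nu)$. Hence, in the spirit of Kottwitz' Satake computation \cite[p.~295]{Ko84b} used in the unramified case and using \eqref{action},
\[
{\rm tr}^{\rm ss}(\varphi_{\tilde\chi}(\Phi_F),V_{-\mu}) \;=\; \sum_{j\text{ surviving}} (-e_j)(\nu),
\]
which is the spectral datum to be inverted.

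The core of the argument --- and the step I expect to be the main obstacle --- is this inversion inside $\mathcal Z({\rm GL}_n(\mathbb Q_{p^r}),I^+_r)$. Here I would use the Bernstein-type presentation of the pro-$p$ Iwahori--Hecke algebra, whose centre breaks into blocks indexed by $W$-orbits of characters $\chi_0$ of $T(\mathbb F_q)$ and is spanned by functions supported on cosets $I^+_r\,tw^{-1}\,I^+_r$ with $w$ ranging over an admissible set and $N_r(t)$ lying in the subgroup of $T(\mathbb F_q)$ cut out by $\chi_0$. Matching the displayed sum against this basis, the $w$-support should come out to be $\bigcup_{x\in{\rm Adm}(\mu)}IxI$ by the support statement for the Bernstein function $z_\mu$ recalled above --- forcing the vanishing for $w\notin{\rm Adm}(\mu)$ --- while the critical-index set $S(w)=\{e_j : w\le t_{e_j}\}$ should record precisely which of the $n$ weight lines $L_{-e_j}$ can contribute at the coset indexed by $w$, the remaining $n-|S(w)|$ directions in $T(\mathbb F_q)$ being unconstrained; this produces at once the condition $N_r(t)\in T_{S(w)}(\mathbb F_p)$ and the factor $(p-1)^{n-|S(w)|}$. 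Finally, the $q$-power $(1-q)^{|S(w)|-n-1}$ and the sign $(-1)^n$ I would extract by specializing the Kazhdan--Lusztig $R$-polynomial formula for $z_\mu$ in the Drinfeld case --- where the coefficient of $T_x$ at Iwahori level is $(1-q)^{\ell(t_\mu)-\ell(x)}$, cf.~the penultimate Proposition above --- to the critical-index stratum, carrying along the normalization $p^{rd/2}=q^{\ell(t_\mu)/2}$. The delicate point throughout is that two independent packets of combinatorics --- the admissibility and critical-index data coming from the Bruhat order on $\widetilde W$, and the depth-zero twisting by $\chi_0$ together with the norm map $N_r$ --- must be shown to interact exactly so as to yield the three-case formula; should this direct inversion prove too intricate, the fallback is to follow the original proof of \cite[Prop.~12.2.1]{HRa}, which instead computes the semisimple trace of Frobenius on the nearby cycles of an Oort--Tate integral model for $Sh_{K_p}$ and reads the function off directly.
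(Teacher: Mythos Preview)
The paper does not prove this proposition at all: it is simply quoted from \cite[Prop.~12.2.1]{HRa} as an illustration of an explicit test function in a pro-$p$ Iwahori case, with no argument supplied here. So there is no in-paper proof to compare against; you are sketching a proof of a result the author only cites.

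That said, your outline is broadly in the right spirit but inverts the logical order of \cite{HRa}. In that paper the formula is not obtained by first writing down $Z_{V_{-\mu}}*1_{I^+_r}$ spectrally and then inverting inside $\mathcal Z({\rm GL}_n(\mathbb Q_{p^r}),I^+_r)$; rather, the function is produced geometrically --- by computing semisimple traces of Frobenius on nearby cycles for the Oort--Tate integral model --- and the identification with the predicted Bernstein-center element is a separate check. Your ``fallback'' is in fact the actual method. The purely harmonic-analytic inversion you propose as the main route is plausible in principle, but the step you flag as the main obstacle (matching the spectral scalars against a basis of $\mathcal Z({\rm GL}_n(\mathbb Q_{p^r}),I^+_r)$ and extracting the critical-index combinatorics and the exact constants) is genuinely delicate and is not carried out anywhere in the literature you cite; in particular, reading off $(p-1)^{n-|S(w)|}$ and $(1-q)^{|S(w)|-n-1}$ directly from the Bernstein presentation, rather than from geometry, would require substantially more work than your sketch suggests.
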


\subsection{Deeper level structures}

Here the known explicit descriptions pertain only to $G = {\rm GL}_2$ and first were proved by Scholze \cite{Sch1}. It remains an interesting question whether one can find explicit descriptions of test functions in higher rank groups with arbitrary level structure: even the Drinfeld case $G = {\rm GL}_n$, $\mu = (1,0^{n-1})$ looks difficult, cf. \cite{Sch2}. 

To state Scholze's result, we need some notation. As usual let $F$ be a nonarchimedean local field with ring of integers $\mathcal O$, uniformizer $\varpi$, and residue field cardinality 
$q$.  Let $B$ denote the $\mathcal O$-subalgebra ${\rm M}_2(\mathcal O)$ of ${\rm M}_2(F)$. For any $j \in \mathbb Z$ set $B_j = \varpi^jB$.  Let $K = B^\times$, the standard maximal compact subgroup of $G = {\rm GL}_2(F)$. For $n \geq 1$, let $K_n = 1 + B_n$; so $K_n$ is a principal congruence subgroup and is a normal subgroup of $K$.

Scholze defines a (compatible) family of functions $\phi_n \in \mathcal H(G,K_n)$ for $n\geq 1$. His definition uses two functions, $\ell : G \rightarrow \mathbb Z \cup \{\infty \}$ and $k: G \rightarrow \mathbb Z$. Let $\ell (g) = {\rm val} \circ {\rm det}(1-g)$.  Let $k(g)$ be the unique integer $k$ such that $g  \in B_k$ and $g \notin B_{k+1}$.  By definition $\phi_n$ is $0$ unless ${\rm val}\circ {\rm det}(g) =1$, ${\rm tr}(g) \in \mathcal O$, and $g \in B_{1-n}$. Assume these conditions, in which case one can check that $1-n \leq k(g) \leq 0$ and $\ell(g) \geq 0$.  Now define
$$
\phi_n(g) := \begin{cases} -1-q, \,\,\,\,\,\,\,\,\,\,\,\,\,\,\,\,\,\,\,\,\,\,\,\,\,\,\,\, \mbox{if ${\rm tr}(g) \in \varpi\mathcal O$}, \\
1-q^{2\ell(g)}, \,\,\,\,\,\,\,\,\, \,\,\,\,\,\,\,\,\,\,\,\,  \mbox{if ${\rm tr}(g) \in \mathcal O^\times$ and $\ell(g) < n + k(g)$}, \\
1 + q^{2(n + k(g))-1}, \,\,\, \mbox{if ${\rm tr}(g) \in \mathcal O^\times$ and $\ell(g) \geq n + k(g)$}.
\end{cases}
$$

\begin{prop} {\rm (Scholze \cite{Sch1})}
For each $n \geq 1$, the function $z_n := \frac{q-1}{[K:K_n]} \cdot \phi_n$ belongs to the center $\mathcal Z({\rm GL}_2(F), K_n)$, and the family $(z_n)_n$ is compatible with change of level and thus defines a distribution in the sense of \textup{(}\ref{Z_dist_defn}\textup{)}. This distribution is $q^{1/2}Z_V$ where $V$ is the standard representation ${\mathbb C}^2$ of the Langlands dual group ${\rm GL}_2(\mathbb C)$.
\end{prop}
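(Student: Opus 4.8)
The plan is to recognize the statement as a reformulation, in the language of the (stable, hence for ${\rm GL}_2$ the ordinary) Bernstein center, of the results of Scholze in \cite{Sch1}. First I would note that since ${\rm GL}_2$ satisfies ${\rm LLC+}$ and $\mathfrak Z^{\rm st}({\rm GL}_2) = \mathfrak Z({\rm GL}_2)$, the element $Z_V \in \mathfrak Z({\rm GL}_2(F))$ attached to $V = {\mathbb C}^2 \in {\rm Rep}(\widehat{{\rm GL}_2}) \subset {\rm Rep}({}^L{\rm GL}_2)$ is defined unconditionally, and by Proposition \ref{Z_V_construction} together with Corollary \ref{Hecke_action_cor}(a) the function $q^{1/2}(Z_V * 1_{K_n}) \in \mathcal Z({\rm GL}_2(F), K_n)$ acts on $\pi^{K_n}$ by the scalar $q^{1/2}\,{\rm tr}^{\rm ss}(\varphi_\pi(\Phi_F), {\mathbb C}^2)$ for every $\pi \in \Pi({\rm GL}_2/F)$ with $\pi^{K_n} \neq 0$. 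Thus the entire proposition reduces to the single assertion
\[
\text{left convolution by } z_n \text{ acts on } \pi^{K_n} \text{ by } q^{1/2}\,{\rm tr}^{\rm ss}(\varphi_\pi(\Phi_F), {\mathbb C}^2) \text{ for all irreducible } \pi.
\]
Indeed this exhibits $z_n$ as acting by a scalar on every simple $\mathcal H({\rm GL}_2(F), K_n)$-module; since for ${\rm GL}_2$ each Bernstein-block Hecke algebra is a matrix algebra over a reduced commutative ring (Bushnell--Kutzko), this forces $z_n$ into the center of $\mathcal H({\rm GL}_2(F), K_n)$ and, by (\ref{Bern_cntr_reg_fcns}) and the same scalar computation, identifies it with $q^{1/2}(Z_V * 1_{K_n})$; the compatibility $z_n * 1_{K_m} = z_m$ for $K_m \subseteq K_n$ is then the general identity $Z_V * 1_{K_n} * 1_{K_m} = Z_V * 1_{K_m}$ of \S\ref{bernstein_cntr_rev}, and $(z_n)_n$ defines the distribution $q^{1/2}Z_V$ by (\ref{Z_dist_defn}). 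The identification of $V_{-\mu}$ with the standard representation ${\mathbb C}^2$ (rather than its dual) is where the convention $Sh({\bf G}, h^{-1}, \cdot)$ of \S\ref{test_fcn_subsec} enters, and for the modular curve one has ${\bf E} = {\mathbb Q}$, $E = E_0 = {\mathbb Q}_p$, $r = j$, $d = 1$, whence $p^{rd/2} = q^{1/2}$.

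To verify the scalar identity I would stratify $\Pi({\rm GL}_2/F)$ by inertial class. For an irreducible principal series $\pi = i^G_B(\chi)$ one writes down an explicit basis of $\pi^{K_n}$ adapted to the congruence filtration, uses Scholze's explicit formula for $\phi_n$ to reduce the convolution to a finite sum over the double cosets in its support, and evaluates it; the answer should come out to $q^{1/2}$ times $\chi_1(\varpi) + \chi_2(\varpi)$, $\chi_i(\varpi)$, or $0$ according as zero, one, or two of the $\chi_i$ are ramified, matching $q^{1/2}\,{\rm tr}(\chi \rtimes \Phi_F, ({\mathbb C}^2)^{1\rtimes I_F})$ via (\ref{Ldual_norm}). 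For twists of Steinberg the parameter is $\chi \otimes (|\cdot|^{1/2} \oplus |\cdot|^{-1/2})$ and one checks the scalar is $q^{1/2}\chi(\varpi)(q^{1/2} + q^{-1/2})$ for $\chi$ unramified and $0$ otherwise. For $\pi$ supercuspidal, $\varphi_\pi$ is irreducible two-dimensional, hence $({\mathbb C}^2)^{\varphi_\pi(I_F)} = 0$, so the identity asks that $z_n$ annihilate $\pi^{K_n}$ — this is the delicate case, and here one uses the geometric input of \cite{Sch1}, namely that (up to the volume normalization $\frac{q-1}{[K:K_n]}$) $\phi_n$ is the trace-of-Frobenius function on the nearby cycles of a well-chosen integral model of the $\Gamma(p^n)$-level modular curve, which both makes centrality automatic and pins down the Frobenius eigenvalues; recasting that theorem in the present language in fact gives a complete proof.

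The hard part will be precisely this last point: verifying the scalar identity on supercuspidal (and on deeply ramified principal series) representations directly from the combinatorial description of $\phi_n$ is a nontrivial computation with explicit bases of $\pi^{K_n}$, and the support and trace conditions defining $\phi_n$ were engineered exactly so that it comes out right. The conceptual shortcut is to import Scholze's nearby-cycles description, after which centrality and the Frobenius-trace identity are essentially built in, and only the bookkeeping of Haar measures, the twist defining $V$, and the passage from level $K_n$ to the distribution remain to be carried out.
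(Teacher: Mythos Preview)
The paper does not give its own proof of this proposition; it is simply attributed to Scholze \cite{Sch1} (and the paper notes that Kottwitz gave an independent unpublished proof). So there is nothing to compare your argument against in the paper itself.

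That said, your sketch is essentially the right way to recast Scholze's theorem in the language of the paper, and the bookkeeping you outline (identifying $V_{-\mu}$ with the standard representation, $E = E_0 = \mathbb Q_p$, $p^{rd/2} = q^{1/2}$) is correct. Two small points. First, your compatibility identity is written in the wrong direction: with the paper's conventions, for $K_m \subset K_n$ (i.e.\ $m \geq n$) the transition map goes $\mathcal Z(G,K_m) \to \mathcal Z(G,K_n)$ via $z_m \mapsto z_m *_{K_m} 1_{K_n}$, so compatibility reads $z_m *_{K_m} 1_{K_n} = z_n$, not $z_n * 1_{K_m} = z_m$. Second, your deduction of centrality from ``acts by a scalar on every simple module'' is not automatic: you must actually show $z_n \cdot v = c_\pi v$ for every $v \in \pi^{K_n}$, not merely compute a trace, before you can invoke any separation argument. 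You are right that the clean route is to import Scholze's nearby-cycles description, which gives centrality directly; the case-by-case verification you outline for principal series and Steinberg is then a consistency check rather than the proof.
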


In an unpublished work, Kottwitz gave another proof of this proposition and also described the same distribution in terms of a family $(\Phi_n)_n$ of functions $\Phi_n \in \mathcal Z({\rm GL}_2(F), I_n)$ where $I_n$ ranges over the ``barycentric'' Moy-Prasad filtration in the standard Iwahori subgroup $I \subset {\rm GL}_2(F)$.

By a completely different technique, in \cite{Var} Sandeep Varma extended both the results of Scholze and Kottwitz stated above, by describing the distributions attached to $V = {\rm Sym}^r({\mathbb C}^2)$ where $r$ is any odd natural number less than $p$, the residual characteristic of $F$.

\section{Appendix: Bernstein isomorphisms via types} \label{appendix_sec}

\subsection{Statement of Purpose}

Alan Roche proved the following beautiful result in \cite{Roc}, Theorem 1.10.3.1.

\begin{theorem} [Roche] \label{Roche_thm}
Let $e$ be an idempotent in the Hecke algebra $\mathcal H = \mathcal H(G(F))$. View $\mathcal H$ as a smooth $G(F)$-module via the left regular representation, and write $e = \sum_{\mathfrak s \in \mathfrak S} e_\mathfrak s$ according to the Bernstein decomposition $\mathcal H = \bigoplus_{\mathfrak s \in \mathfrak S} \mathcal H_\mathfrak s$.  Let $\mathfrak S_e = \{ \mathfrak s \in \mathfrak S\, |\, e_\mathfrak s \neq 0 \}$, and consider the category $\mathfrak R^{\mathfrak S_e}(G(F)) = \prod_{\mathfrak s \in \mathfrak S_e} \mathfrak R_{\mathfrak s}(G(F))$ and its categorical center $\mathfrak Z^{\mathfrak S_e} = \prod_{\mathfrak s \in \mathfrak S_e} \mathfrak Z_\mathfrak s$.  Let $\mathcal Z(e\mathcal H e)$ denote the center of the algebra $e \mathcal H e$.

Then the map $z \mapsto z(e)$ defines an algebra isomorphism $\mathfrak Z^{\mathfrak S_e} ~ \widetilde{\rightarrow} ~ \mathcal Z(e\mathcal H e)$.
 \end{theorem}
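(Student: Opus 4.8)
\medskip
\noindent\textit{Proof proposal.} The plan is to identify both sides with invariants of one and the same object --- the left ideal $\mathcal H e\subset\mathcal H$ equipped with its left regular $G(F)$-action --- and then to reduce, block by block, to the Bernstein--Deligne structure theory of a single Bernstein component.

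\emph{Construction of the map, via a $\Hom$-computation.} Since $e=\sum_{\mathfrak s\in\mathfrak S_e}e_{\mathfrak s}$, the module $\mathcal H e=\bigoplus_{\mathfrak s\in\mathfrak S_e}\mathcal H_{\mathfrak s}e_{\mathfrak s}$ lies in $\mathfrak R^{\mathfrak S_e}(G(F))$, each summand being nonzero (it contains $e_{\mathfrak s}$). First I would establish, for every smooth $M$, the natural isomorphism $\Hom_{G(F)}(\mathcal H e,M)\xrightarrow{\sim}eM$, $\phi\mapsto\phi(e)$: a $G(F)$-map is determined by its value on $e$ because $e$ generates $\mathcal H e$, and conversely any $m\in eM$ defines $\phi(h\ast e):=h\ast m$, well defined since $m=e\ast m'$ kills the relations. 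Taking $M=\mathcal H e$ this yields $\End_{G(F)}(\mathcal H e)^{\mathrm{op}}\xrightarrow{\sim}e\mathcal H e$, the map from the right being right convolution $x\mapsto x\ast a$. Now for $z\in\mathfrak Z^{\mathfrak S_e}$ the component $z_{\mathcal H e}\in\End_{G(F)}(\mathcal H e)$ commutes, by naturality of $z$, with all of $\End_{G(F)}(\mathcal H e)$; hence it is central, and corresponds to $z(e)=z_{\mathcal H e}(e)\in\mathcal Z(e\mathcal H e)$. One checks $z\mapsto z(e)$ is an algebra homomorphism (the $\mathrm{op}$ is immaterial, the center being commutative) and that, for $z$ supported in block $\mathfrak s$, it is concretely $z\mapsto e_{\mathfrak s}\ast z\ast e_{\mathfrak s}$.

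\emph{Reduction to one block.} The cross terms $e_{\mathfrak s}\ast\mathcal H\ast e_{\mathfrak t}$ vanish for $\mathfrak s\neq\mathfrak t$, so $e\mathcal H e=\bigoplus_{\mathfrak s\in\mathfrak S_e}e_{\mathfrak s}\mathcal H_{\mathfrak s}e_{\mathfrak s}$ (a finite sum, $\mathfrak S_e$ being finite), whence $\mathcal Z(e\mathcal H e)=\prod_{\mathfrak s}\mathcal Z(e_{\mathfrak s}\mathcal H_{\mathfrak s}e_{\mathfrak s})$, compatibly with the map above and with $\mathfrak Z^{\mathfrak S_e}=\prod_{\mathfrak s}\mathfrak Z_{\mathfrak s}$. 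It therefore suffices to prove, for each fixed $\mathfrak s\in\mathfrak S_e$, that the natural map $\mathfrak Z_{\mathfrak s}\to\mathcal Z(e_{\mathfrak s}\mathcal H_{\mathfrak s}e_{\mathfrak s})$ is an isomorphism. Here $\mathfrak Z_{\mathfrak s}=\mathbb C[\mathfrak X_{\mathfrak s}]$ is an integrally closed domain, $\mathfrak X_{\mathfrak s}$ being an irreducible affine variety which is a finite quotient of a complex torus.

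\emph{The single-block case.} The module $\mathcal H_{\mathfrak s}e_{\mathfrak s}$ is a finitely generated projective object of $\mathfrak R_{\mathfrak s}$: it is cyclic on $e_{\mathfrak s}$, and it is a $G(F)$-direct summand of the projective module $\mathcal H e_J$ (via the inclusion $\mathcal H_{\mathfrak s}e_{\mathfrak s}\hookrightarrow\mathcal H e_J$ and the retraction $x\mapsto x\ast e_{\mathfrak s}$) for $J$ a small enough compact open subgroup. By the Bernstein--Deligne theorem, $\mathfrak R_{\mathfrak s}$ is equivalent to the category of modules over an algebra $A_{\mathfrak s}$ that is module-finite and torsion-free over its center $\mathfrak Z_{\mathfrak s}$; consequently $e_{\mathfrak s}\mathcal H_{\mathfrak s}e_{\mathfrak s}\cong\End_{G(F)}(\mathcal H_{\mathfrak s}e_{\mathfrak s})^{\mathrm{op}}$, being $\End_{A_{\mathfrak s}}$ of a finitely generated projective $A_{\mathfrak s}$-module, is again module-finite over $\mathfrak Z_{\mathfrak s}$, so $\mathcal Z(e_{\mathfrak s}\mathcal H_{\mathfrak s}e_{\mathfrak s})$ is integral over $\mathfrak Z_{\mathfrak s}$. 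On the other hand, writing $K=\operatorname{Frac}(\mathfrak Z_{\mathfrak s})$ and localizing at the generic point of $\mathfrak X_{\mathfrak s}$: the algebra $A_{\mathfrak s}\otimes_{\mathfrak Z_{\mathfrak s}}K$ is semisimple (generic semisimplicity of the block $\mathfrak R_{\mathfrak s}$) with center $\mathcal Z(A_{\mathfrak s})\otimes K=K$, hence is a central simple $K$-algebra, so its nonzero corner $e_{\mathfrak s}\mathcal H_{\mathfrak s}e_{\mathfrak s}\otimes_{\mathfrak Z_{\mathfrak s}}K$ is again central simple over $K$ and has center $K$. Thus $\mathfrak Z_{\mathfrak s}\subseteq\mathcal Z(e_{\mathfrak s}\mathcal H_{\mathfrak s}e_{\mathfrak s})\subseteq K$; the first inclusion is the natural one (it becomes $\mathfrak Z_{\mathfrak s}\hookrightarrow K$ after localizing, hence is injective), and since $\mathfrak Z_{\mathfrak s}$ is integrally closed in $K$ while $\mathcal Z(e_{\mathfrak s}\mathcal H_{\mathfrak s}e_{\mathfrak s})$ is integral over it, the two coincide. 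Injectivity and surjectivity follow at once, and assembling over $\mathfrak s\in\mathfrak S_e$ finishes the argument.

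\emph{Main obstacle.} Apart from one ingredient, everything is formal idempotent and Morita bookkeeping together with the elementary fact that a domain integral over, and contained in the fraction field of, an integrally closed domain equals that domain. The genuinely substantial input, used twice, is the Bernstein--Deligne block theory: that $\mathfrak Z_{\mathfrak s}=\mathbb C[\mathfrak X_{\mathfrak s}]$ is the \emph{full} center of $\mathfrak R_{\mathfrak s}$, realized through an algebra finite (and torsion-free) over it, and that the block is generically over $\mathfrak X_{\mathfrak s}$ a matrix-algebra bundle, equivalently that $\mathfrak R_{\mathfrak s}$ is generically semisimple. If one is willing to assume the existence of a Bushnell--Kutzko type covering $\mathfrak S_e$ (available in many, but not all, cases), surjectivity can be shortcut by comparing $e$ directly with the type idempotent; the point of the approach sketched here is to avoid that hypothesis, exactly as in Roche's treatment.
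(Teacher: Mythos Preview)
The paper does not give its own proof of this theorem; it quotes it as Roche's result \cite[Thm.~1.10.3.1]{Roc} and notes only that Roche's argument rests on Bernstein's Second Adjointness Theorem and the explicit progenerator for each block $\mathfrak R_{\mathfrak s}$. (The paper's Appendix then supplies an \emph{elementary} proof only of the special case $e=e_J$ for $J$ an Iwahori subgroup, via the theory of types and the module ${\bf M}$ --- a completely different argument, not applicable to general idempotents.)

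Your proposal is essentially correct and, despite the different packaging, uses the same heavy machinery as Roche. The ``algebra $A_{\mathfrak s}$ module-finite and torsion-free over its center $\mathfrak Z_{\mathfrak s}$'' is exactly the endomorphism ring of Bernstein's progenerator, whose construction already needs Second Adjointness; likewise ``generic semisimplicity'' of the block is part of that same structure theory. So you are not avoiding Roche's inputs, only reorganizing the endgame: where Roche compares $e_{\mathfrak s}\mathcal H_{\mathfrak s}e_{\mathfrak s}$ and $A_{\mathfrak s}$ directly as Morita-equivalent algebras (hence sharing a center), you instead sandwich $\mathcal Z(e_{\mathfrak s}\mathcal H_{\mathfrak s}e_{\mathfrak s})$ between $\mathfrak Z_{\mathfrak s}$ and $K=\operatorname{Frac}(\mathfrak Z_{\mathfrak s})$ and invoke normality of $\mathfrak Z_{\mathfrak s}$. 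Both routes amount to the same fact: a nonzero corner of a central simple $K$-algebra has center $K$.

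One small gap to close in your write-up: the inclusion $\mathcal Z(e_{\mathfrak s}\mathcal H_{\mathfrak s}e_{\mathfrak s})\subseteq K$ requires that $e_{\mathfrak s}\mathcal H_{\mathfrak s}e_{\mathfrak s}$ embed into its generic localization, i.e.\ be $\mathfrak Z_{\mathfrak s}$-torsion-free. This is true (since $\mathcal H_{\mathfrak s}e_{\mathfrak s}$ is a summand of a free $A_{\mathfrak s}$-module, it is $\mathfrak Z_{\mathfrak s}$-torsion-free, and hence so is its endomorphism ring), but you should say so explicitly rather than leave it implicit.
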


Roche's proof is decidedly non-elementary: besides the material developed in \cite{Roc}, it relies on some deep results of Bernstein cited there, most importantly Bernstein's Second Adjointness Theorem and the construction of an explicit progenerator for each Bernstein block $\mathfrak R_\mathfrak s(G(F))$.

In this chapter we use only the very special case of Roche's theorem where $e = e_J$ for a parahoric subgroup $J \subset G(F)$.  We will explain a more elementary approach to this special case.  It will rely only on the part of Bernstein's theory embodied in Proposition \ref{type_prop} below. Formally, the inputs needed are, first, the existence of Bernstein's categorical decomposition $\mathfrak R(G) = \prod_{\mathfrak s}\mathfrak R_\mathfrak s(G)$, which is proved for instance in \cite[Thm.~1.7.3.1]{Roc} in an elementary way, and, second, the internal structure of the Bernstein block $\mathfrak R_\mathfrak s(G)$ associated to a cuspidal pair $\mathfrak s = [(M(F), \tilde{\chi}]_G$ where $M$ is a minimal $F$-Levi subgroup of $G$ and $\tilde{\chi}$ is a character on $M(F)$ which is trivial on its unique parahoric subgroup. For such components, progenerators can be constructed in an elementary way, without using Bernstein's Second Adjointness Theorem.  In fact in what follows we describe this internal structure using a few straightforward elements of the theory of Bushnell-Kutzko types, all of which are contained in \cite{BK}. 

%Thus, our proof of Roche's theorem in our special case is essentially self-contained, outside of only the simplest parts of the theories of Bernstein and Bushnell-Kutzko.

For $e = e_J$ Roche's theorem gives the identification of the center of the parahoric Hecke algebra, in other words a Bernstein isomorphism for the most general case, where $G/F$ is arbitrary and $J \subset G(F)$ is an arbitrary parahoric subgroup.  However we will provide a proof only  for the crucial case of $J = I$, an Iwahori subgroup of $G(F)$.  The general parahoric case should follow formally from the Iwahori case, following the method of Theorem 3.1.1 of \cite{H09}, provided one is willing to rely on some basic properties of intertwiners for principal series representations (a purely algebraic theory of such intertwiners was detailed for split resp.~unramified groups in \cite{HKP} resp.~\cite{H07}, and the extension to arbitrary groups should be similar to \cite{H07}).

The Iwahori case is approached in a different way by S.~Rostami \cite{Ro}. Rostami's proof yields more information, describing the Iwahori-Matsumoto and Bernstein presentations for the Iwahori-Hecke algebra and deducing the description of its center from its Bernstein presentation.

\subsection{Some notation}

The notation will largely come from \cite{HRo}.  Recall $L = \widehat{F^{\rm un}}$ and let $\sigma \in {\rm Aut}(L/F)$ the Frobenius automorphism, which has fixed field $F$. We use the symbol $\Lambda_G$ as an abbreviation for $X^*(Z(\widehat{G}))_{I_F}^{\sigma}$. Moreover, if $S$ denotes a maximal $L$-split torus in $G$ which is defined over $F$, with centralizer $T = {\rm Cent_G}(S)$, then  $\Omega_G$ will denote the subgroup of $\widetilde{W}^{\rm un} = N_G(S)(L)/T(L)_1$, the extended affine Weyl group for $G/L$, which preserves the alcove ${\bf a}$ in the apartment ${\bf S}$ corresponding to $S$, in the building $\mathcal B(G, L)$ of $G$ over $L$.

As always, we let $I$ be the Iwahori subgroup $I = \mathcal G_{\bf a}^{\circ \natural}(\mathcal O_F)$, which we recall we have chosen to be in good position relative to $A$: the corresponding alcove ${\bf a}^\sigma \subset \mathcal B(G,F)$ is required to belong in the apartment $\bf A$ corresponding to $A$.  

Let $v_F \in \overline{{\bf a}^\sigma}$ be a special vertex with corresponding special maximal parahoric subgroup $K = \mathcal G^{\circ \natural}_{v_F}(\mathcal O_F)$.  Thus $K \supset I$.

Recall $M$ is a minimal $F$-Levi subgroup of $G$.  Further, if $I$ is an Iwahori subgroup of $G(F)$, then $I_M := M(F) \cap I = M(F)_1$ is the corresponding Iwahori subgroup of $M(F)$ 
(cf.~\cite[Lem.~4.1.1]{HRo}).

Use the symbol $1$ to denote the trivial 1-dimensional representation of any group.

\subsection{Preliminary structure theory results}

Several of the results discussed here were proved independently by S.~Rostami and will appear with somewhat different proofs in \cite{Ro}.

\subsubsection{Iwahori-Weyl group over $F$}

The following lemma concerns variations on well-known results, and were first proved by Timo Richarz \cite{Ri}.  

Let $G_1$ denote the subgroup of $G(F)$ generated by all parahoric subgroups of $G(F)$.  By \cite[Lem.~17]{HRa1} and \cite{Ri}, we have $G_1 = G(F)_1$. Let $N_1 = N_G(A)(F) \cap G_1$, and let ${\bf S}$ denote the set of reflections through the walls of ${\bf a}$.  Then by \cite[Prop.~5.2.12]{BT2}, the quadruple $(G_1, I, N_1, {\bf S})$ is a (double) Tits system with affine Weyl group $W_{\rm aff} = N_1/I \cap N_1$, and the inclusion $G_1 \rightarrow G(F)$ is $BN$-adapted\footnote{In \cite{BT2} the symbol $B$ is used in place of $I$.} of connected type. 

\begin{lemma} [T. Richarz \cite{Ri}] \label{Timo}
\begin{enumerate}
 \item[(a)] Let $M_1 = M(F)_1$.  Define the Iwahori-Weyl group as $\widetilde{W} := N_G(A)(F)/M_1$. Then there is an isomorphism $\widetilde{W} = W_{\rm aff} \rtimes \Lambda_G$, which is canonical given the choice of base alcove ${\bf a}$. This gives 
$\widetilde{W}$ the structure of a quasi-Coxeter group.
\item[(b)] If $S \subset G$ is a maximal $L$-split torus which is $F$-rational and contains $A$, and if we set $T := {\rm Cent}_G(S)$ and $\widetilde{W}^{\rm un} := N_G(S)(L)/T(L)_1$, then the natural map $N_G(S)(L)^\sigma \rightarrow N_G(A)(F)$ induces an isomorphism $(\widetilde{W}^{\rm un})^\sigma ~ \widetilde{\rightarrow} ~ \widetilde{W}$.   
\end{enumerate}
\end{lemma}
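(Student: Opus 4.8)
\emph{Strategy.} Both assertions are pure Bruhat--Tits theory; the plan is to extract (a) from the double Tits system $(G_1, I, N_1, {\bf S})$ recalled above together with the kernel computation $\ker(\kappa_G) = G(F)_1 = G_1$, and to extract (b) by Galois descent from the analogous structure over $L$, using the vanishing of an $H^1$. This follows \cite{HRo} and \cite{Ri}.

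\emph{Part (a).} First I would compute $N_1 \cap I$: since $I \subseteq G_1$ we have $N_1 \cap I = N_G(A)(F) \cap I$, and an element of $N_G(A)(F)$ lying in $I$ fixes a point of the apartment ${\bf A}$ of $A$ in general position, hence centralizes $A$, hence lies in $M(F) \cap I = M(F)_1 = M_1$ (using \cite[Lem.~4.1.1]{HRo}). Thus $W_{\rm aff} = N_1/(N_1 \cap I) = N_1/M_1$ is a Coxeter group sitting inside $\widetilde W = N_G(A)(F)/M_1$. Next, $\kappa_G$ induces an injection $\widetilde W/W_{\rm aff} = N_G(A)(F)/N_1 \hookrightarrow G(F)/G_1 = \Lambda_G$, which is onto because the Iwasawa decomposition gives $G(F) = G_1 \cdot N_G(A)(F)$; hence there is a short exact sequence
$$ 1 \longrightarrow W_{\rm aff} \longrightarrow \widetilde W \stackrel{\kappa_G}{\longrightarrow} \Lambda_G \longrightarrow 1 . $$
To split it I would let $\widetilde W$ act on ${\bf A}$: $W_{\rm aff}$ is normal and acts simply transitively on alcoves, so $\widetilde W = W_{\rm aff} \rtimes \Omega_{\bf a}$, where $\Omega_{\bf a}$ is the stabilizer of the base alcove, and $\kappa_G|_{\Omega_{\bf a}}$ is an isomorphism onto $\Lambda_G$. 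Since $\Omega_{\bf a}$ permutes the walls of the base alcove, conjugation by $\Omega_{\bf a}$ preserves ${\bf S}$ and the length function of $W_{\rm aff}$ extends to $\widetilde W$ with $\Omega_{\bf a}$ in length zero; this is precisely the quasi-Coxeter structure, and the decomposition $\widetilde W = W_{\rm aff} \rtimes \Lambda_G$ manifestly depends only on ${\bf a}$.

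\emph{Part (b).} For the descent statement I would use the exact sequence of $\langle \sigma \rangle$-modules $1 \to T(L)_1 \to N_G(S)(L) \to \widetilde W^{\rm un} \to 1$. The key input is $H^1(\langle\sigma\rangle, T(L)_1) = 1$: as $T(L)_1 = \ker(\kappa_T)$ is the set of $\mathcal O_L$-points of the connected Bruhat--Tits smooth model of $T$, this is Lang's theorem (\cite[Prop.~3]{HRo}). Passing to $\sigma$-invariants then gives $1 \to (T(L)_1)^\sigma \to N_G(S)(L)^\sigma \to (\widetilde W^{\rm un})^\sigma \to 1$. Because $A = (S^\sigma)^\circ$, a $\sigma$-fixed element normalizing $S$ normalizes $A$, giving a natural map $N_G(S)(L)^\sigma \to N_G(A)(F)$; since $(T(L)_1)^\sigma \subseteq (M(L)_1)^\sigma = M(F)_1 = M_1$ (the last equality being the definition of $M(F)_1$), it descends to a homomorphism $(\widetilde W^{\rm un})^\sigma \to \widetilde W$. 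I would then identify both sides with $W_{\rm aff} \rtimes \Lambda_G$ compatibly, using on the one hand part (a), and on the other the facts that $\Lambda_G = \Lambda_{G/L}^\sigma$ by definition and that the $\sigma$-fixed points of the affine Weyl group $W_{\rm aff}^{L}$ of $G$ over $L$ coincide with $W_{\rm aff}$.

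\emph{Expected main obstacle.} The genuine difficulty lies in Part (b). The identity $(W_{\rm aff}^{L})^\sigma = W_{\rm aff}$ --- really a statement that the échelonnage and relative root datum of $G$ descend correctly from $L$ to $F$ --- together with the conjugacy over $F$ of maximal $L$-split, $F$-rational tori containing $A$ (needed to see that every class in $\widetilde W$ is hit) and the triviality of the $\sigma$-fixed affine Weyl group of $M_L$ (needed for injectivity, which in turn rests on $M$ being anisotropic modulo its centre over $F$), are where the real work sits. I would import these from Bruhat--Tits \cite{BT2}, as packaged in \cite[\S1]{HRo} and \cite{Ri}, rather than reprove them. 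Everything else --- the Tits-system bookkeeping in (a), the Kottwitz kernel computation, and the Lang-theorem vanishing in (b) --- is routine given the results already recalled.
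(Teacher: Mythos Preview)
The paper does not supply its own proof of this lemma; it is attributed to Richarz \cite{Ri} and quoted without argument (as the author acknowledges in the introduction). So there is no ``paper's own proof'' to compare against.

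Your sketch is the standard line of argument and is essentially correct. In (a), the identification $N_1 \cap I = M_1$ is right, though your phrasing (``fixes a point of the apartment in general position, hence centralizes $A$'') is slightly loose: more precisely, $I$ fixes the alcove ${\bf a}^\sigma$ pointwise, so any $n \in N_G(A)(F) \cap I$ acts trivially on an open subset of the apartment ${\bf A}$ and, being an affine transformation, must act trivially on all of ${\bf A}$; hence its image in $W(G,A)$ is trivial and $n \in M(F)$. The rest of (a) --- the short exact sequence via $\kappa_G$ and its splitting by the alcove stabilizer --- is routine Tits-system bookkeeping, as you say. For (b), the $H^1$-vanishing via Lang's theorem applied to the connected N\'eron model of $T$ and the resulting descent of the exact sequence are exactly right, and you have correctly located the nontrivial content in the identity $(W_{\rm aff}^L)^\sigma = W_{\rm aff}$ together with the compatibility of the semidirect product decompositions; these rest on the descent of the \'echelonnage root system from $L$ to $F$ in \cite{BT2}. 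Your self-assessment of where the work lies is accurate.
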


Thus, in light of (b) we may reformulate the Bruhat-Tits decomposition of \cite[Prop.~8, Rem.~9]{HRa1}, as follows.

\begin{lemma}  The map $N_G(A)(F) \rightarrow G(F)$ induces a bijection
\begin{equation} \label{BT_decomp}
\widetilde{W} \cong I \backslash G(F)/I.
\end{equation}
\end{lemma}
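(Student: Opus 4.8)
The plan is to deduce the bijection $\widetilde{W} \cong I \backslash G(F)/I$ from the two ingredients just assembled: the Bruhat--Tits--type decomposition of \cite[Prop.~8, Rem.~9]{HRa1} expressed in terms of $N_G(S)(L)$, and the identification $(\widetilde{W}^{\rm un})^\sigma \;\widetilde{\rightarrow}\; \widetilde{W}$ of Lemma~\ref{Timo}(b). First I would recall that \cite[Prop.~8, Rem.~9]{HRa1} gives, over the field $L$, the decomposition $G(L) = \coprod_{w \in \widetilde{W}^{\rm un}} I_L \, \dot{w} \, I_L$, where $I_L$ is the Iwahori subgroup of $G(L)$ fixing the alcove ${\bf a}$ and $\dot{w}$ is any lift of $w$ to $N_G(S)(L)$; equivalently the natural map $\widetilde{W}^{\rm un} = N_G(S)(L)/T(L)_1 \to I_L \backslash G(L)/I_L$ is a bijection. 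The task is then to pass to $\sigma$-fixed points, i.e.\ to show that the induced map $\widetilde{W} = N_G(A)(F)/M_1 \to I\backslash G(F)/I$ is a bijection, where $I = \mathcal G_{\bf a}^{\circ\natural}(\mathcal O_F) = I_L^\sigma$.

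The key steps, in order, are as follows. \textbf{Well-definedness and injectivity:} two elements $n_1, n_2 \in N_G(A)(F)$ lie in the same double coset $I n I$ if and only if their images in $\widetilde{W}^{\rm un}$ lie in the same $I_L$-double coset (since $I \subset I_L$ and $N_G(A)(F) \subset N_G(S)(L)^\sigma$), which by the $L$-level bijection happens if and only if they have the same image in $\widetilde{W}^{\rm un}$, hence the same image in $(\widetilde{W}^{\rm un})^\sigma = \widetilde{W}$; this gives a well-defined injection $\widetilde{W} \hookrightarrow I\backslash G(F)/I$. \textbf{Surjectivity:} given $g \in G(F)$, use the $L$-decomposition to write $g \in I_L \dot{w} I_L$ for a unique $w \in \widetilde{W}^{\rm un}$; apply $\sigma$ and uniqueness to conclude $w \in (\widetilde{W}^{\rm un})^\sigma$, so by Lemma~\ref{Timo}(b) we may choose a representative $n \in N_G(A)(F)$ of $w$; it remains to show $g \in I n I$ knowing only that $g \in I_L n I_L$ and $g$ is $\sigma$-fixed. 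This last point is where a Galois-cohomology / vanishing argument is needed: the fiber over $w$ of $I_L\backslash G(L)/I_L$ is the $\sigma$-set $I_L \times^{(I_L \cap {}^{n}I_L)} I_L$ (roughly $I_L/(I_L \cap {}^nI_L)$), and one needs $H^1(\sigma, \cdot) = 1$ for the relevant pro-unipotent-by-torus group to conclude that the $\sigma$-fixed points of the fiber form a single $I = I_L^\sigma$-double coset; this is exactly the kind of connectedness/cohomological input underlying the Bruhat--Tits decomposition over $F$, and it is available from the Tits-system structure of $(G_1, I, N_1, {\bf S})$ recalled above together with the fact that $G_1 = G(F)_1$.

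I expect the main obstacle to be precisely that $\sigma$-descent step: showing that a single $I_L$-double coset, when intersected with $G(F)$, does not break up into several $I$-double cosets. The cleanest route is probably to avoid an explicit $H^1$ computation and instead invoke the $BN$-pair / Bruhat--Tits decomposition directly over $F$ for the group $G(F)_1$ --- namely that $(G(F)_1, I, N_1, {\bf S})$ being a double Tits system already yields $G(F)_1 = \coprod_{w \in W_{\rm aff}} IwI$ --- and then extend by the $\Lambda_G$-part using $\widetilde{W} = W_{\rm aff} \rtimes \Lambda_G$ from Lemma~\ref{Timo}(a), noting that $N_G(A)(F)/N_1 \cong \Lambda_G$ normalizes $I$ and permutes the $W_{\rm aff}$-indexed cells freely. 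Assembling these gives $G(F) = \coprod_{w \in \widetilde{W}} IwI$, which is the asserted bijection; the $L$-level statement is then only needed as motivation, or as an alternative packaging. Either way the proof is short once one cites the Tits-system and Lemma~\ref{Timo}, which is consistent with the paper leaving it essentially as a reformulation.
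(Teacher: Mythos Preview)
Your proposal is correct, and your second route---using the double Tits system $(G(F)_1, I, N_1, {\bf S})$ to get $G(F)_1 = \coprod_{w \in W_{\rm aff}} IwI$, then extending by the $\Lambda_G$-factor via Lemma~\ref{Timo}(a) and the fact that lifts of $\Lambda_G$ normalize $I$---is exactly how the paper packages the argument: it simply cites \cite[Prop.~8, Rem.~9]{HRa1} for the $F$-level Bruhat--Tits decomposition and then invokes Lemma~\ref{Timo}(b) to rewrite the index set as $\widetilde{W}$.

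One small correction: your first approach slightly misreads what \cite[Prop.~8, Rem.~9]{HRa1} provides. That reference already gives the decomposition over $F$ (indexed by $(\widetilde{W}^{\rm un})^\sigma$), not merely over $L$; the hard $\sigma$-descent you worry about---that an $I_L$-double coset meeting $G(F)$ is a single $I$-double coset---has already been absorbed there. So the $H^1$-vanishing discussion, while not wrong, is work you do not need to redo. Your instinct to fall back on the Tits system directly over $F$ was the right one, and matches the paper's intended one-line justification.
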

Further, the Bruhat order $\leq$ and length function $\ell$ on $W_{\rm aff}$ extend in the usual way to $\widetilde{W}$, and we have for $w \in \widetilde{W}$ and $s \in W_{\rm aff}$ representing a simple affine reflection, the usual BN-pair relations
\begin{equation} \label{BN_pair}
IsIwI = \begin{cases} IswI, \,\,\, \mbox{if $w < sw$} \\ IwI \cup IswI, \,\,\, \mbox{if $sw < w$}. \end{cases}
\end{equation}

\subsubsection{Iwahori factorization}

Let $P = MN$ be an $F$-rational parabolic subgroup with Levi factor $M$, unipotent radical $N$ and opposite unipotent radical $\overline{N}$. Let$I_H = I \cap H$ for $H = N, \, \overline{N},$ or $M$.

\begin{lemma}  \label{Iwah_fact_lem}
In the above situation, we have the Iwahori factorization
\begin{equation} \label{Iwah_fact}
I = I_N \cdot I_M \cdot I_{\overline{N}}.
\end{equation}
\end{lemma}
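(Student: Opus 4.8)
The plan is to deduce this from the standard Iwahori decomposition relative to a minimal $F$-parabolic, followed by a root-subgroup bookkeeping that splits off the $N$- and $\overline N$-parts. We may assume $P$ is semistandard, i.e.\ $P \supseteq P_0 := M_0 N_0$ for a minimal $F$-parabolic with $M_0 = Z_G(A)$ (this is the only case needed in the sequel, and the apartment of $A$ containing ${\bf a}^\sigma$ is common to all such $P_0$). Write $\Phi = \Phi(G,A)$, let $\Phi_{\mathrm{red}}$ be its reduced relative roots, and for $\alpha \in \Phi_{\mathrm{red}}$ let $U_{\alpha,I} := U_\alpha(F) \cap I$ be the affine root subgroup cut out of the relative root subgroup $U_\alpha(F)$ by the alcove ${\bf a}^\sigma$; also put $M_0(F)_1 := M_0(F)\cap I$.

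The one substantive input, which I would cite from Bruhat--Tits theory (\cite{BT2}; cf.\ also \cite{HRo}) rather than reprove, is that, because $I$ is in good position relative to $A$, the multiplication map
$$
\Big(\,\prod_{\alpha \in \Phi(\overline N_0)_{\mathrm{red}}} U_{\alpha,I}\Big)\times M_0(F)_1 \times\Big(\,\prod_{\alpha \in \Phi(N_0)_{\mathrm{red}}} U_{\alpha,I}\Big)\;\longrightarrow\; I
$$
is bijective for any fixed orderings of the two products, and that for any closed subset $\Psi \subseteq \Phi_{\mathrm{red}}$ the subproduct $\prod_{\alpha \in \Psi} U_{\alpha,I}$ is a subgroup independent of the chosen order, these facts resting on the valued-root-datum axioms and the commutation relations $[U_\alpha,U_\beta] \subseteq \prod_{i,j>0} U_{i\alpha+j\beta}$ intersected with $I$.

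Granting this, the argument is short. First observe that $\Phi(N)_{\mathrm{red}}$ is an \emph{ideal} in $\Phi(N_0)_{\mathrm{red}}$: if $\alpha \in \Phi(N)$, $\beta \in \Phi(N_0)$ and $\alpha+\beta \in \Phi$, then since $N \trianglelefteq P$ and $U_\beta \subseteq N_0 \subseteq P$ one has $U_{\alpha+\beta} \subseteq N$; likewise $\Phi(\overline N)_{\mathrm{red}}$ is an ideal in $\Phi(\overline N_0)_{\mathrm{red}}$. Ordering the roots of $N_0$ with those of $N_0\cap M$ first and those of $N$ last (and dually for $\overline N_0$), the ideal property yields $\prod_{\alpha \in \Phi(N_0)_{\mathrm{red}}} U_{\alpha,I} = (I\cap N_0\cap M)\cdot I_N$ and $\prod_{\alpha \in \Phi(\overline N_0)_{\mathrm{red}}} U_{\alpha,I} = I_{\overline N}\cdot(I\cap\overline N_0\cap M)$, where $I_N := \prod_{\alpha \in \Phi(N)_{\mathrm{red}}} U_{\alpha,I}$ and $I_{\overline N} := \prod_{\alpha \in \Phi(\overline N)_{\mathrm{red}}} U_{\alpha,I}$; these equal $I\cap N$ and $I\cap\overline N$ respectively, as one sees from the injectivity of $\overline N(F)\times M(F)\times N(F)\to G(F)$. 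Substituting into the displayed decomposition of $I$, the three middle pieces $(I\cap\overline N_0\cap M)\cdot M_0(F)_1\cdot(I\cap N_0\cap M)$ collapse, via the Iwahori decomposition of $M$ relative to its minimal $F$-parabolic $M\cap P_0$, to $I\cap M =: I_M$, giving $I = I_{\overline N}\cdot I_M\cdot I_N$; applying the same statement to the opposite parabolic $\overline P = M\overline N$ (whose unipotent radical is $\overline N$) then gives the asserted $I = I_N\cdot I_M\cdot I_{\overline N}$. The only points needing genuine thought are the ideal property of $\Phi(N)$ in $\Phi(N_0)$ and, upstream, the cited Bruhat--Tits product decomposition of $I$ itself, which carries all the real weight.
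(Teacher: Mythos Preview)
Your argument is correct, but note that in the paper's Appendix $M$ is fixed throughout as a \emph{minimal} $F$-Levi subgroup (see the ``Some notation'' paragraph: ``Recall $M$ is a minimal $F$-Levi subgroup of $G$''), so here $M = M_0 = Z_G(A)$ and $P = P_0$. Your root-subgroup bookkeeping splitting $\Phi(N_0)$ into $\Phi(N_0 \cap M)$ and $\Phi(N)$ is therefore vacuous in this context, and the lemma reduces immediately to the Bruhat--Tits product decomposition you cite as a black box.

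The paper's own proof is correspondingly shorter but more precise about exactly what \cite[5.2.4]{BT2} delivers. That result, with $\Omega = {\bf a}$, gives $I = {\rm U}^{+\natural}_{\bf a}\,{\rm U}^{-\natural}_{\bf a}\,{\rm N}^{\circ\natural}_{\bf a}$ where the third factor is by definition $N_G(A)(F) \cap \mathfrak Z^\circ(\mathcal O^\natural){\rm U}^\natural_{\bf a}$, not a~priori just $M(F)_1$. The paper then shows $N_G(A)(F) \cap I \subseteq M(F)_1$ by invoking Lemma~\ref{Timo}(a) (the structure $\widetilde W = W_{\rm aff} \rtimes \Lambda_G$), which forces the middle piece to collapse to $\mathfrak Z^\circ(\mathcal O^\natural) = M(F)_1$. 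Your displayed bijection with $M_0(F)_1$ in the middle already presupposes this identification; it is the one genuinely nontrivial point hiding inside your citation, and it is worth making explicit since it is precisely where the Iwahori--Weyl group input of Lemma~\ref{Timo} enters.
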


\begin{proof}  We use the notation of \cite{BT2}.  By \cite[5.2.4]{BT2} with $\Omega := {\bf a}$, we have
$$
\mathfrak G^\circ_{\bf a}(\mathcal O^\natural) = {\rm U}_{\bf a}^{+\natural} {\rm U}_{\bf a}^{- \natural} {\rm N}^{\circ \natural}_{\bf a},
$$
where ${\rm N}^{\circ \natural}_{\bf a} := N^\natural \cap \mathfrak Z^\circ(\mathcal O^\natural) {\rm U}^\natural_{\bf a}$.  Since $\mathfrak Z^\circ(\mathcal O^\natural) {\rm U}^\natural_{\bf a} \subset \mathfrak G^\circ_{\bf a}(\mathcal O^\natural)$, we have 
$$
{\rm N}^{\circ \natural}_{\bf a} = N^\natural \cap \mathfrak G^\circ_{\bf a}(\mathcal O^\natural) = \mathfrak Z^\circ(\mathcal O^\natural).
$$ 
The key inclusion here, $N^\natural \cap \mathfrak G^\circ_{\bf a}(\mathcal O^\natural) \subseteq \mathfrak Z^\circ(\mathcal O^\natural),$ translates in our notation to $N_G(A)(F) \cap I \subseteq M(F)_1$, which can be deduced from Lemma \ref{Timo}(a).

Translating again back to our notation we get $I = I_N \cdot I_{\overline{N}} \cdot I_M$ 
which is the desired equality since $I_M$ normalizes $I_{\overline{N}}$.
\begin{comment}
Recall that \cite{BT2}, 5.2.4 implies that $I_N \cdot I_M \cdot I_{\overline{N}}$ is the $\mathcal O_F$-points of the  Zariski open ``schematic cell'' $\mathfrak U^{+ \natural}_{\bf a} \cdot \mathfrak Z^\circ  \cdot \mathfrak U^{- \natural}_{\bf a} \subseteq \mathcal G^{\circ \natural}_{{\bf a}}$.  By e.g. \cite{Land}, Lemma 10.20(ii) and Proposition 12.1, one can see that the cell is a subgroup, which shows that it is all of $\mathcal G^{\circ \natural}_{\bf a}$.  [CHECK THIS AGAINST HERZIG's PROOF!!!!]
\end{comment}
\end{proof}

\subsubsection{On $M(F)^1/M(F)_1$}

\begin{lemma} \label{basic_lem}  The following basic structure theory results hold:
\begin{enumerate}
\item[(a)] In the notation of \cite{HRo}, we have $M(F)^1/M(F)_1 = \tilde{K}/K$, which injects into $G(F)^1/G(F)_1$.    Thus $M(F)_1 = M(F)^1 \cap G(F)_1$.
\item[(b)] The Weyl group $W(G,A)$ acts trivially on $M(F)^1/M(F)_1$.
\item[(c)] Let ${\bf a} \subset \mathcal B(G,L)$ denote the alcove invariant under the group ${\rm Aut}(L/F) \supset \langle \sigma \rangle $ which corresponds to the Iwahori $I \subset G(F)$.  We assume $I \subset K$.  Then the naive Iwahori $\tilde{I} := G(F)^1 \cap {\rm Fix}({\bf a}^\sigma)$ has the following properties
\begin{itemize}
\item $M(F)^1/M(F)_1 = \tilde{I}/I = \tilde{K}/K$.
\item $\tilde{I} = I \cdot M(F)^1$.
\end{itemize}
\end{enumerate}
\end{lemma}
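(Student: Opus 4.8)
The plan is to establish the three parts in order, since (b) and (c) build on (a). For part (a), I would start from the general formalism of the Kottwitz homomorphism recalled in \S\ref{bernstein_cntr_rev}: both $M(F)^1$ and $G(F)_1$ are defined as kernels of maps built from $\kappa_M$ and $\kappa_G$, so the statements $M(F)^1/M(F)_1 = \tilde K/K$ and $M(F)_1 = M(F)^1 \cap G(F)_1$ reduce to functoriality of $\kappa$ with respect to the inclusion $M \hookrightarrow G$ together with the fact (which one can cite from \cite{HRo}) that for a minimal $F$-Levi subgroup $M$ the group $M(F)^1$ is the maximal compact subgroup $\tilde K$ of $M(F)$ and $M(F)_1 = M(F)_1 = K$ is its parahoric. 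Concretely: $M(F)^1/M(F)_1$ is the torsion subgroup of $X^*(Z(\widehat M)_{I_F})^\sigma$, and functoriality of $\kappa$ gives a map $X^*(Z(\widehat M)_{I_F})^\sigma \to X^*(Z(\widehat G)_{I_F})^\sigma$; the point is that this map is injective on torsion, which follows because the map $M(F)/M(F)_1 \to G(F)/G(F)_1$ identifies $M(F)/M(F)_1$ with a subgroup (this is the content of $N_G(A)(F)\cap I \subseteq M(F)_1$, cf. Lemma \ref{Timo}(a), together with the fact that $M$ contains a maximal $F$-split torus $A$). This then gives $M(F)_1 = M(F)^1 \cap G(F)_1$ by chasing the kernels.

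For part (b), since $W(G,A) = N_G(A)(F)/M(F)$ acts on $M(F)$ by conjugation and hence on the abelian group $M(F)^1/M(F)_1$, I would argue that the action is trivial by passing to the Kottwitz side: conjugation by $N_G(A)(F)$ induces the trivial action on $X^*(Z(\widehat M)_{I_F})^\sigma$ because, under the identification of \S\ref{bernstein_cntr_rev}, these are coweight-lattice quotients on which the relative Weyl group of $G$ acts through its action on $Z(\widehat M)$, and $Z(\widehat M)$ is pointwise fixed by $W(G,A)$ since $M$ is its own normalizer's identity component — equivalently, one can argue directly that $W(G,A)$ permutes the parahoric subgroups of $M(F)$ but fixes the unique maximal one $M(F)^1$ setwise and acts on the finite quotient $M(F)^1/M(F)_1$ through a finite group which, being a subquotient of the abelian $X^*(Z(\widehat M))$-data, is forced to be trivial. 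This is the step I expect to require the most care: one must be sure the $W(G,A)$-action is really trivial and not merely inner, which is where minimality of $M$ (so that $A$ is a maximal $F$-split torus and $M = Z_G(A)$) is essential.

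For part (c), the naive Iwahori $\tilde I := G(F)^1 \cap \mathrm{Fix}(\mathbf a^\sigma)$ contains $I$ with finite index, and I would compute the quotient by intersecting the Iwahori factorization $I = I_N \cdot I_M \cdot I_{\overline N}$ from Lemma \ref{Iwah_fact_lem} (applied with $M$ our minimal Levi) with the corresponding factorization of $\tilde I$: since $\mathrm{Fix}(\mathbf a^\sigma)$ contains the full fixer, the unipotent parts $I_N, I_{\overline N}$ do not grow, so $\tilde I / I \cong (\tilde I \cap M(F))/I_M = M(F)^1/M(F)_1$, using $I_M = M(F)_1$ and $\tilde I \cap M(F) = M(F)^1$ (the latter because $M(F)^1$ is the maximal compact subgroup of $M(F)$ and fixes $\mathbf a^\sigma$). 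Combined with part (a) this gives $\tilde I/I = \tilde K/K = M(F)^1/M(F)_1$. For the second bullet, $\tilde I = I \cdot M(F)^1$, I would note that $I \cdot M(F)^1$ is a subgroup of $\tilde I$ (since $M(F)^1$ normalizes $I_N$ and $I_{\overline N}$ by part (b)-type considerations on the action on root subgroups, or more directly because $M(F)^1$ lies in $\mathrm{Fix}(\mathbf a^\sigma)$ and normalizes $I$) containing $I$ with index $[M(F)^1 : M(F)^1 \cap I] = [M(F)^1 : M(F)_1] = [\tilde I : I]$, hence equal to $\tilde I$. Throughout, the main technical obstacle will be keeping the Bruhat--Tits bookkeeping straight — in particular verifying $\tilde I \cap M(F) = M(F)^1$ and that the unipotent parts of $I$ and $\tilde I$ coincide — which I would handle by the same translation into the notation of \cite{BT2} used in the proof of Lemma \ref{Iwah_fact_lem}.
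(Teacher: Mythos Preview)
Your plan for part (a) is broadly in line with the paper's, which simply cites \cite{HRo}: $M(F)^1/M(F)_1 = \Lambda_{M,{\rm tors}} = \tilde K/K$, and the injection into $G(F)^1/G(F)_1$ is \cite[Lem.~8.0.1]{HRo}.

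Part (b), however, has a genuine gap. Your claim that ``$Z(\widehat M)$ is pointwise fixed by $W(G,A)$'' is false: $W(G,A)$ acts on $\widehat M = \widehat T$ via the usual Weyl action, which is nontrivial on $Z(\widehat M) = \widehat T$ whenever $G$ is not a torus (e.g.\ for ${\rm GL}_2$ the reflection swaps the two $\mathbb C^\times$ factors). Your fallback sentence (``being a subquotient of abelian data, is forced to be trivial'') is not an argument either. The paper's proof is short and quite different: by \cite[Lem.~5.0.1]{HRo} choose representatives $n \in K \cap N_G(A)(F)$; for $m \in M(F)^1$ the commutator $nmn^{-1}m^{-1}$ lies in $M(F)^1$ (since $n$ normalizes $M(F)^1$) and in $G(F)_1$ (since $\kappa_G$ takes values in an abelian group, so kills commutators). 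By part (a), $M(F)^1 \cap G(F)_1 = M(F)_1$, and the triviality of the action follows. This is the missing idea in your plan: part (b) is an immediate corollary of part (a) via a commutator trick, not a dual-side computation.

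For part (c) your approach is different from the paper's and a bit circular as the paper is organized: you want to use an Iwahori factorization of $\tilde I$, but in the paper that factorization (Remark~\ref{Iwah_fact_rem}) is \emph{deduced from} part (c), not used to prove it. The paper instead argues via the diagram
\[
\xymatrix{
\tilde I/I \ar[r] & \tilde K/K \\
M(F)^1/M(F)_1 \ar[u] \ar[ur]^{\sim} & }
\]
and shows the top arrow is injective by computing $\tilde I \cap K = G(F)_1 \cap {\rm Fix}({\bf a}^\sigma) = I$; bijectivity of all arrows then gives both bullets at once. Your route could be made to work if you independently verify that the unipotent factors of $\tilde I$ coincide with those of $I$ via \cite{BT2}, but that is more Bruhat--Tits bookkeeping than the paper's two-line diagram chase.
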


\begin{proof}
Part (a): in the notation of \cite{HRo}, we know that $\Lambda_{M,{\rm tors}} = \tilde{K}/K$ (\cite[Prop.~11.1.4]{HRo}).  Applying this to $G = M$ we get $\Lambda_{M, \rm tors} = M(F)^1/M(F)_1$.  So $M(F)^1/M(F)_1 = \tilde{K}/K$.  By (8.0.1) and Lemma 8.0.1 in \cite{HRo}, the latter injects into $G(F)^1/G(F)_1$.  The final statement follows.

Part (b): By \cite[Lemma 5.0.1]{HRo}, $W(G,A)$ has representatives in $K \cap N_G(A)(F)$.  Thus it is enough to show that if $n \in K \cap N_G(A)(F)$ and $m \in M(F)^1$, then $n m n^{-1} m^{-1} \in M(F)_1$.  This follows from (a), since we clearly have $n m n^{-1} m^{-1} \in M(F)^1 \cap G(F)_1$.

Part (c): First note that $M(F)^1 \subset \tilde{I}$ and $M(F)_1 \subset I$.  Thus there is a commutative diagram
$$
\xymatrix{
\tilde{I}/I \ar[r] & \tilde{K}/K \\
M(F)^1/M(F)_1 \ar[u] \ar[ur] &}
$$
The oblique arrow is bijective by (a).  We claim the horizontal arrow is injective, that is, $\tilde{I} \cap K = I$.  But $\tilde{I} \cap K = G(F)^1 \cap {\rm Fix}({\bf a}^\sigma) \cap G(F)_1 \cap {\rm Fix}(v_F)$, where $v_F$ is the special vertex in $\mathcal B(G_{\rm ad},F)$ corresponding to $K$ (cf. \cite[Lem.~8.0.1]{HRo}).  Thus $\tilde{I} \cap K = G(F)_1 \cap {\rm Fix}({\bf a}^\sigma) = I$ by Remark 8.0.2 of \cite{HRo}.   It now follows that all arrows in the diagram are bijective.   This implies both statements in (c).
\end{proof}

%[I WONDER WHETHER FOLLOWING IS ACTUALLY TRUE....]
%\begin{prop} \label{chi_extn}  Any character $\chi$ on $M(F)^1/M(F)_1$ possesses a $W(G,A)$-invariant %extension $\tilde{\chi}$ to a character on $M(F)$.  
%\end{prop}

%\begin{proof}
%By Lemma \ref{basic_lem}(a), we may extend $\chi$ to a character $\chi^*$ on $\tilde{K}$ which is trivial on %$K$.  Also, since $M(F)/M(F)^1$ is the torsion-free quotient of the finitely-generated abelian group %$M(F)/M(F)_1$, we may identify it with a subgroup $F \subset M(F)/M(F)_1$ and write $M(F)/M(F)_1 = %M(F)^1/M(F)_1 \cdot F$.  Having done this we extend $\chi$ to a character $\tilde{\chi}$ on $M(F)$ which is %trivial on $M(F)_1$ and $F$.  The characters $\chi^*$ and $\tilde{\chi}$ are compatible: 
%they coincide with $\chi$ on their common domain $\tilde{K} \cap M(F) = M(F)^1$.  

%Now consider the group $(N_G(A)(F) \cap K) \cdot M(F)$. We have a character $\chi*$ on the first factor and a %character $\tilde{\chi}$ on the second, and again they are compatible.  We may simulaneously extend %$\tilde{\chi}$ and $\chi^*$ to a character, still denoted $\tilde{\chi}$ on this group. 
%Now let $n \in N_G(A)(F) \cap K$ be a representative for $w \in W(G,A)$.  We have for $m \in M(F)$
%$$
%^w\tilde{\chi}(m) = \tilde{\chi}(n^{-1} m n) = \chi^*(n^{-1}) \tilde{\chi}(m) \chi^*(n) = \tilde{\chi}(m).
%$$
%Thus $\tilde{\chi}: M(F) \rightarrow \mathbb C^\times$ is $W(G,A)$-invariant.
%\end{proof}
 
\begin{Remark} \label{Iwah_fact_rem}  Let $P = MN$ be as above. We deduce from (c) and (\ref{Iwah_fact}) the Iwahori factorization for $\tilde{I}$
\begin{equation} \label{tilde_Iwah_fact}
\tilde{I} = I_N \cdot M(F)^1 \cdot I_{\overline{N}},
\end{equation}
using the fact that $M(F)^1$ normalizes $I_N$ and $I_{\overline{N}}$.
\end{Remark}

\subsubsection{Iwasawa decomposition}

Next we need to establish a suitable form of the Iwasawa decomposition. 
Let $P = MN$ be as above. 

\begin{lemma}
The inclusion $N_G(A)(F) \hookrightarrow G(F)$ induces bijections
\begin{align}
\widetilde{W} &:= N_G(A)(F)/M(F)_1 ~ \widetilde{\rightarrow} ~ M(F)_1 N(F) \backslash G(F)/I \label{1st} \\
W(G,A) &= N_G(A)(F)/M(F) ~ \widetilde{\rightarrow} ~ P(F)\backslash G(F)/I. \label{2nd}
\end{align}
\end{lemma}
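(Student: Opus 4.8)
The statement \textup{(}\ref{2nd}\textup{)} is a formal consequence of \textup{(}\ref{1st}\textup{)}. Since $M=Z_G(A)$ we have $M(F)\subseteq N_G(A)(F)$, and $M(F)_1$ is normal in both $M(F)$ and $N_G(A)(F)$ (being the unique parahoric of $M(F)$, which $N_G(A)(F)$ normalizes), so left translation by $M(F)/M(F)_1$ acts on both sides of \textup{(}\ref{1st}\textup{)} and the map there is equivariant; the orbit space on $\widetilde W=N_G(A)(F)/M(F)_1$ is $N_G(A)(F)/M(F)=W(G,A)$, while on $M(F)_1N(F)\backslash G(F)/I$ it is $P(F)\backslash G(F)/I$ because $M(F)$ normalizes $N(F)$ and $M(F)_1\subseteq M(F)$, so that $P(F)=M(F)\cdot M(F)_1N(F)$. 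Thus I concentrate on \textup{(}\ref{1st}\textup{)}: writing $\dot w\in N_G(A)(F)$ for a lift of $w\in\widetilde W$ (which exists by the definition of $\widetilde W$), the assignment $w\mapsto M(F)_1N(F)\dot wI$ is well defined because $M(F)_1\subseteq I$, and it remains to prove surjectivity and injectivity.

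Surjectivity is the Iwahori-level Iwasawa decomposition $G(F)=M(F)_1N(F)\cdot N_G(A)(F)\cdot I$. The plan is to start from the classical Iwasawa decomposition $G(F)=P(F)K$ relative to a special maximal parahoric $K\supseteq I$ (Bruhat--Tits; cf.\ \cite{BT2},\cite{HRo}), to write $P(F)=N(F)M(F)$ and $M(F)=\bigsqcup_{\lambda\in\Lambda_M}M(F)_1\dot\lambda$ with $\dot\lambda\in M(F)\subseteq N_G(A)(F)$ (this is just $M(F)/M(F)_1=\Lambda_M$), and to rearrange $n\,m_1\,\dot\lambda=(m_1nm_1^{-1})\,m_1\,\dot\lambda$ with $m_1nm_1^{-1}\in N(F)$, obtaining $G(F)=M(F)_1N(F)\cdot N_G(A)(F)\cdot K$; one then passes from $K$ to $I$ using the finite Bruhat decomposition of $K$ for its own $BN$-pair --- with Weyl-group representatives taken in $N_G(A)(F)\cap K$ --- together with the Iwahori factorization of Lemma \ref{Iwah_fact_lem} applied to the conjugate parabolics $\dot w^{-1}P\dot w\supseteq M$, to absorb the remaining factor of $K$. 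More conceptually, surjectivity says that the retraction $\rho$ of the building $\mathcal B(G,F)$ onto the apartment ${\bf A}$ of $A$, based at the chamber at infinity attached to $P$, is surjective with each fibre a single $N(F)$-orbit --- a standard fact of Bruhat--Tits theory.

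For injectivity one must show that the sets $M(F)_1N(F)\dot wI$, $w\in\widetilde W$, are pairwise disjoint. I would use the Bruhat--Tits decomposition $G(F)=\bigsqcup_{w\in\widetilde W}IwI$ of \textup{(}\ref{BT_decomp}\textup{)}: expand $M(F)_1N(F)\dot wI$ via the Iwahori factorization $I=I_N I_M I_{\overline N}$ (so $M(F)_1N(F)\dot wI=M(F)_1N(F)I_{\overline N}\dot wI$) and the $BN$-pair relations \textup{(}\ref{BN_pair}\textup{)}, and recover $w$ from the resulting subset of $\widetilde W$. Equivalently, geometrically: $\rho$ is constant on $N(F)$-orbits and on $M(F)_1$-orbits and restricts to the identity on ${\bf A}$, so from $\dot w_2=m_1n\dot w_1 i$ one reads off that $\dot w_1I$ and $\dot w_2I$ determine the same alcove of ${\bf A}$; upgrading this from alcoves (indexed by $W_{\rm aff}$) to $I$-cosets (indexed by $\widetilde W$) then uses the $\Lambda_G$-grading of both sides supplied by the Kottwitz homomorphism $\kappa_G$ together with the structure isomorphism $\widetilde W=W_{\rm aff}\rtimes\Lambda_G$ of Lemma \ref{Timo}.

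The chief difficulty is that $P$ is neither compact nor a parahoric subgroup, so the Tits-system formalism that makes the double cosets for $I$ and for $K$ tractable does not apply to $P$ itself; controlling the interaction of the non-compact group $P$ (equivalently, of $N$) with the parahoric $I$ is exactly what forces one to invoke either the geometry of the building --- the retraction onto ${\bf A}$, its surjectivity, and its $N(F)$-invariance --- or a somewhat delicate combinatorial argument interleaving the Iwahori factorization with the $BN$-pair relations. A secondary but genuine subtlety, absorbed into Lemma \ref{Timo}, is to pin the index set down as $\widetilde W=N_G(A)(F)/M(F)_1$ on the nose, rather than as a proper quotient such as $W_{\rm aff}$.
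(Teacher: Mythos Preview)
Your reduction of \textup{(}\ref{2nd}\textup{)} to \textup{(}\ref{1st}\textup{)} is fine and amounts to the same thing the paper does. The substantive differences are in how you handle surjectivity and especially injectivity of \textup{(}\ref{1st}\textup{)}.

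For surjectivity, your route through a special maximal parahoric $K$ followed by the finite Bruhat decomposition of $K$ is workable but roundabout, and the step ``absorb the remaining factor of $K$'' is only a sketch. The paper instead quotes a single result of Landvogt (\cite[Prop.~12.9]{Land}): for any point $x$ in the building, $G(F)=N(F)\cdot N_G(A)(F)\cdot P_x$. Taking $x$ generic in $\mathbf a^\sigma$ gives $P_x=\tilde I=M(F)^1\cdot I$ (Lemma~\ref{basic_lem}(c)), and since $M(F)^1\subset N_G(A)(F)$ one gets $G(F)=N(F)\cdot N_G(A)(F)\cdot I$ in one line. Your ``more conceptual'' retraction remark is essentially this, so you are close; you just did not locate the clean citation.

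For injectivity the paper's argument is much shorter than either of your proposals and is worth knowing. Suppose $n_1=um_0\, n_2\, j$ with $u\in N(F)$, $m_0\in M(F)_1$, $n_i\in N_G(A)(F)$, $j\in I$. Choose $z\in Z(M)(F)$ so strongly $P$-positive that $zuz^{-1}\in I_N\subset I$ (such $z$ exist by \cite[Lem.~6.14]{BK}). Multiplying on the left by $z$, which is central in $M$ and hence commutes with $m_0$, gives
\[
zn_1=(zuz^{-1})\,m_0\,(zn_2)\,j\in I\,(zn_2)\,I.
\]
Now the Bruhat--Tits decomposition \textup{(}\ref{BT_decomp}\textup{)} forces $zn_1\equiv zn_2$ in $\widetilde W$, hence $n_1\equiv n_2$. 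This ``shrink the unipotent by a central translation'' trick bypasses both the BN-pair combinatorics and the retraction-plus-$\kappa_G$-grading argument you outlined. Your geometric approach does work (the retraction pins down the $W_{\rm aff}$-component and $\kappa_G$ the $\Lambda_G$-component), but it is strictly more effort; the paper reduces injectivity to a single application of \textup{(}\ref{BT_decomp}\textup{)}.
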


\begin{proof}
%Abbreviate $\mathcal G = \mathcal G^|circ \natural}_{v_F}$ and $\mathcal M = \mathcal M_{v_F}$ for the %special maximal parahoric groups schemes over $\mathcal O_F$ associated to $G$ and $M$ and the special %vertex $v_F$.  Let $\mathcal N$ denote the natural $\mathcal O_F$-model in $N$, and set $\mathcal P = %\mathcal M \cdot \mathcal N$.

%The map $\mathcal G \rightarrow \mathcal P \backslash \mathcal G$ is a smooth surjective morphism of %smooth schemes over $\mathcal O$.  Moreover $\mathcal P \backslash \mathcal G$ is proper over $\mathcal %O_F$.  By the  [QUESTION: is $\mathcal P \baskslash \mathcal G$ a scheme, and it is $\mathcal O_F$-proper?]

In view of the decomposition $\widetilde{W} = \Omega_M^\sigma \rtimes W(G,A)$ (cf.~Lemma 3.0.1(III) of \cite{HRo} plus Lemma \ref{Timo}(b)) and the Kottwitz isomorphism $\Omega_M^\sigma ~ \widetilde{\rightarrow} ~ M(F)/M(F)_1$ (cf. Lemma 3.0.1 of \cite{HRo}), it suffices to prove (\ref{1st}).

For $x \in \mathcal B(G,F)$, let $P_x \subset G(F)$ denote the subgroup fixing $x$.  By \cite{Land},  Proposition 12.9, we have
$$
G(F) = N(F) \cdot N_G(A)(F) \cdot P_x.
$$
For sufficiently generic points $x \in {\bf a}^\sigma$, we have $P_x = \tilde{I}$, which is $M(F)^1 I$ by Lemma  \ref{basic_lem}(c).  Since $M(F)^1 \subset N_G(A)(F)$, we have $G(F) = N(F) \cdot N_G(A)(F) \cdot I$ and the map (\ref{1st}) is surjective.  

To prove injectivity, assume $n_1 = u m_0\cdot  n_2  \cdot j$ for $u \in N(F)$, $m_0 \in M(F)_1$, $n_1, n_2 \in N_G(A)(F)$, and $j \in I$.  There exists $z \in Z(M)(F)$  such that $zuz^{-1} \in I_N$ (cf.~e.g.~\cite[Lem.~6.14]{BK}). Then
$$
zn_2 = (zuz^{-1}) m_0 \cdot zn_2 \cdot j \in I zn_2 I,
$$
and so by (\ref{BT_decomp}), $zn_2 \equiv zn_2$ modulo $M(F)_1$.
\end{proof}

\begin{lemma} \label{N_vs_I_orbits} If $x, y \in \widetilde{W}$ and $M(F)_1 N(F) x I \cap IyI \neq \emptyset$, then $x \leq y$ in the Bruhat order on $\widetilde{W}$ determined by $I$.
\end{lemma}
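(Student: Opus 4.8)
The plan is to prove the stronger \emph{covering statement}: for every $y\in\widetilde W$,
\[
IyI\ \subseteq\ \bigcup_{w\le y}\ M(F)_1N(F)\,\dot w\,I ,
\]
where $\dot w\in N_G(A)(F)$ denotes any lift of $w\in\widetilde W=N_G(A)(F)/M(F)_1$. The double cosets $M(F)_1N(F)\dot wI$ are pairwise distinct, hence disjoint, as $w$ ranges over $\widetilde W$, by the Iwasawa decomposition (\ref{1st}). Granting the covering statement, the lemma follows at once: if $g\in M(F)_1N(F)\dot xI\cap IyI$, then $g$ lies in $M(F)_1N(F)\dot wI$ for some $w\le y$; but $g$ also lies in $M(F)_1N(F)\dot xI$, so disjointness forces $w=x$, i.e.\ $x\le y$.

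I would prove the covering statement by induction on $\ell(y)$. If $\ell(y)=0$, then $y$ lies in the length-zero subgroup $\Lambda_G$, so $\dot y$ normalizes $I$ and $IyI=\dot yI\subseteq M(F)_1N(F)\dot yI$, which is exactly the claim since $y\le y$. If $\ell(y)\ge 1$, choose a simple affine reflection $s$ with $sy<y$ (such $s$ exists since the $W_{\rm aff}$-part of $y$ is nontrivial). Applying the $BN$-pair relation (\ref{BN_pair}) with the role of $w$ played by $sy$ — noting $s(sy)=y>sy$ — gives $IsI\cdot I(sy)I=IyI$, so any $g\in IyI$ can be written $g=ab$ with $a\in IsI$ and $b\in I(sy)I$. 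By the inductive hypothesis $b\in M(F)_1N(F)\dot zI$ for some $z\le sy$, whence $g\in IsI\cdot M(F)_1N(F)\dot zI$.

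Everything then reduces to the \emph{key claim}
\[
IsI\cdot M(F)_1N(F)\,\dot z\,I\ \subseteq\ M(F)_1N(F)\,\dot z\,I\ \cup\ M(F)_1N(F)\,\dot s\dot z\,I .
\]
Granting it, $g$ lies in $M(F)_1N(F)\dot wI$ with $w\in\{z,sz\}$. Since $z\le sy<y$ we have $z\le y$; and since $z\le y$ while $sy<y$, the lifting property of the Bruhat order on $\widetilde W$ (which, within a fixed $\Lambda_G$-coset, is just the usual lifting property for the Coxeter group $W_{\rm aff}$) gives $\max(z,sz)\le\max(y,sy)=y$, so $sz\le y$ as well. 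Hence $w\le y$, and the induction is complete.

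The main obstacle is the key claim, and its proof is a rank-one computation. I would argue as follows: $IsI$ is contained in the parahoric $K_s\supsetneq I$ attached to the wall of the base alcove fixed by $s$, and $IsI=I\dot s\,U_s$ for a compact set $U_s$ of coset representatives sitting inside the root subgroup $U_{\beta_s}$, where $\beta_s$ is the gradient of the affine simple root defining $s$. Using the Iwahori factorization (\ref{Iwah_fact}) $I=I_NI_MI_{\overline N}$, together with the facts that $\dot s$ normalizes $M$ (hence $M(F)_1$, the unique parahoric of $M(F)$) and that $\dot z$ likewise normalizes $M(F)_1$, one checks — splitting into the two cases according to whether $\beta_s\in\Phi_N$ or $\beta_s\in\Phi_{\overline N}$, i.e.\ whether the wall ``faces into $N$ or into $\overline N$'' — that $IsI\cdot M(F)_1N(F)\dot zI$ is absorbed into the two Iwasawa double cosets on the right-hand side, with the compact pieces coming from $I_N$, $I_{\overline N}$ and $U_s$ being swallowed by $N(F)$ or by $I$, and the only genuine alternative being whether or not a factor of $\dot s$ survives. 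This is precisely the ``upper-triangularity'' computation that underlies the action of the Iwahori--Hecke algebra on a normalized induced representation; it is where all the real work sits, while the surrounding induction is formal.
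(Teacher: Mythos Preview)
Your overall strategy---proving the covering statement $IyI \subseteq \bigcup_{w\le y} M(F)_1 N(F)\,\dot w\, I$ by induction on $\ell(y)$ via the BN-pair relations---is exactly the approach of \cite{HKP}, to which the paper defers. But your ``key claim'' as stated is \emph{false}.

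Take $G=\mathrm{SL}_2(F)$, $s=s_1$ the finite simple reflection, and $z=1$. Your claim would give $Is_1I\cdot N(F)\cdot I \subseteq N(F)I \cup N(F)\dot s_1 I$. With $a=\dot s_1=\left(\begin{smallmatrix}0&1\\-1&0\end{smallmatrix}\right)$ and $b=\left(\begin{smallmatrix}1&\varpi^{-1}\\0&1\end{smallmatrix}\right)\in N(F)\cdot 1\cdot I$, the product $ab=\left(\begin{smallmatrix}0&1\\-1&-\varpi^{-1}\end{smallmatrix}\right)$ has bottom row $(-1,-\varpi^{-1})$, which matches neither the bottom-row shape $(\mathfrak p,\mathcal O^\times)$ of $N(F)I$ nor the shape $(\mathcal O^\times,\mathcal O)$ of $N(F)\dot s_1 I$. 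So $ab$ lies in neither coset, and the inclusion fails.

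The trouble is that you multiply $IsI$ on the \emph{left} of $M(F)_1 N(F)\,\dot z\, I$, forcing the compact set $IsI$ through the unbounded group $N(F)$; conjugating a large $n\in N(F)$ by $\dot s$ can land far outside any compact piece of $\overline N(F)$, and nothing absorbs it. The fix---and what the argument in \cite{HKP} actually does---is to multiply on the \emph{right}: choose $s$ with $ys<y$, write $IyI=I(ys)I\cdot IsI$, and apply induction to $I(ys)I$. The needed claim is then
\[
M(F)_1 N(F)\,\dot z\, IsI \ \subseteq\ M(F)_1 N(F)\,\dot z\, I\ \cup\ M(F)_1 N(F)\,\dot{(zs)}\, I,
\]
which after conjugating by $\dot z^{-1}$ (using that $\dot z$ normalizes $M(F)_1$ and that $\dot z^{-1}N\dot z=N'$ is again the unipotent radical of a minimal $F$-parabolic) reads $IsI\subseteq M(F)_1 N'(F)\,I\cup M(F)_1 N'(F)\,\dot s\,I$. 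This is now a statement about the compact set $IsI$---indeed about the rank-one parahoric $K_s=I\sqcup IsI$---and your sketched $U_{\beta_s}$-computation does apply here. The Bruhat-order bookkeeping then goes through with $zs$ in place of your $sz$, using the right-sided lifting property.
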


\begin{proof}
This follows from the BN-pair relations (\ref{BN_pair}) as in the proof of the Claim in Lemma 1.6.1 of \cite{HKP}.
\end{proof}

\subsubsection{The universal unramified principal series module ${\bf M}$}

Define 
$${\bf M} = C_c(M(F)_1 N(F) \backslash G(F)/I).
$$
The subscript ``c'' means we are considering functions supported on finitely many double cosets.  Some basic facts about ${\bf M}$ were given in \cite{HKP} for the special case where $G$ is split, and here we need to state those facts in the current general situation.

Abbreviate by setting $H = \mathcal H(G(F),I)$ and $R = \mathbb C[\Lambda_M]$.  Then $f \in H$ acts on the left on ${\bf M}$ by right convolutions by $\breve{f}$, which is defined by $\breve{f}(g) = f(g^{-1})$.  The same goes for the normalized induced representation $i^G_P(\tilde{\chi})^I = {\rm Ind}_P^G(\delta_P^{1/2}\tilde{\chi})^I$, where $\tilde{\chi}$ is a character on $M(F)/M(F)_1$. Moreover, $R$ acts on the left on ${\bf M}$ by normalized left convolutions: for $r \in R$ and $\phi \in {\bf M}$, $m \in M(F)$, 
$$
(r \cdot \phi)(m) = \int_{M(F)} r(y) \delta_P^{1/2}(y) \phi(y^{-1}m) \, dy
$$
where ${\rm vol}_{dy}(M(F)_1) = 1$.  The actions of $R$ and $H$ commute, so ${\bf M}$ is an $(R,H)$-bimodule.

\begin{lemma} \label{key_M_lem} The following statements hold.
\begin{enumerate}
\item[(a)] Any character $\tilde{\chi}^{-1} : M(F)/M(F)_1 \rightarrow \mathbb C^\times$ extends to an algebra homomorphism $\tilde{\chi}^{-1}: R \rightarrow \mathbb C$, and there is an isomorphism of left $H$-modules
$$
\mathbb C \otimes_{R,\tilde{\chi}^{-1}} {\bf M} = i^G_P(\tilde{\chi})^I.
$$
\item[(b)] For $w \in W(G,A) =: W$, set $v_w = 1_{M(F)_1 N(F) w I} \in {\bf M}$.  Then ${\bf M}$ is free of rank 1 over $H$ with canonical generator $v_1$.
\item[(c)] ${\bf M}$ is free as an $R$-module, with basis $\{v_w\}_{w\in W}$.
\end{enumerate} 
\end{lemma}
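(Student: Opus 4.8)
## Proof plan for Lemma \ref{key_M_lem}

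\textbf{Overall strategy.} The plan is to establish the three statements in sequence, since (b) and (c) are really the engine behind (a): once one knows $\mathbf M$ is free of rank one over $H$ and free over $R$ with the evident basis, statement (a) will follow by base change along the character $\tilde\chi^{-1}$. The key structural inputs are the Iwasawa-type decomposition \eqref{1st}, i.e. $\widetilde W \cong M(F)_1 N(F)\backslash G(F)/I$, together with the refinement $\widetilde W = \Omega_M^\sigma \rtimes W(G,A)$ (equivalently $\widetilde W = \Lambda_M \rtimes W$ via the Kottwitz isomorphism), and the Bruhat-order compatibility Lemma \ref{N_vs_I_orbits}. These let one index a basis of $\mathbf M$ by $\widetilde W$, split that indexing set as $\Lambda_M \times W$, and control the "error terms" that appear when one convolves basis vectors.

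\textbf{Part (c).} First I would observe that $\mathbf M = C_c(M(F)_1N(F)\backslash G(F)/I)$ has an obvious $\mathbb C$-basis indexed by the double cosets, hence by $\widetilde W$ via \eqref{1st}; writing $\widetilde W = \Lambda_M \rtimes W$, each element is uniquely $t_\nu w$ with $\nu \in \Lambda_M$, $w \in W$. The claim is then that the normalized left $R = \mathbb C[\Lambda_M]$-action permutes these, up to the normalizing factor $\delta_P^{1/2}$, so that $\nu \cdot v_w$ equals (a nonzero scalar times) the characteristic function of $M(F)_1 N(F) t_\nu w I$. This is a direct computation with the integral defining the $R$-action, using that $M(F)^1$ — and in particular representatives of $\Lambda_M$ — normalizes $I_N$ (Remark \ref{Iwah_fact_rem}) and the Iwahori factorization \eqref{Iwah_fact}; one checks the integrand is supported on a single coset. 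Granting this, $\{v_w\}_{w\in W}$ is an $R$-basis because the $R$-translates of the $v_w$ exhaust, bijectively and without collision, the distinguished $\mathbb C$-basis.

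\textbf{Part (b).} Here I would show $\mathbf M$ is generated over $H = \mathcal H(G(F),I)$ by $v_1 = 1_{M(F)_1 N(F) I}$, with no relations. For generation: convolving $v_1$ on the right by $1_{IwI}$ produces a function supported on $M(F)_1N(F)\cdot IwI$, and by Lemma \ref{N_vs_I_orbits} the double coset $M(F)_1N(F)wI$ occurs with a nonzero (indeed invertible, as a power of $q$) coefficient while all other cosets appearing are strictly lower in the Bruhat order on $\widetilde W$. An upward induction on the Bruhat order then yields every $v_x$, $x \in \widetilde W$, in the $H$-span of $v_1$; since the $v_x$ span $\mathbf M$ over $\mathbb C$, $v_1$ generates. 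For freeness: a relation $v_1 * f = 0$ with $f \in H$ would, by the same triangularity with respect to the Bruhat order, force the coefficients of $f$ to vanish term by term (top to bottom), so $f = 0$; equivalently $H \to \mathbf M$, $f \mapsto v_1 * \breve f$, is injective, and being surjective it is an isomorphism of left $H$-modules.

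\textbf{Part (a) and the main obstacle.} For (a): a character $\tilde\chi^{-1}\colon M(F)/M(F)_1 \to \mathbb C^\times$ extends $\mathbb C$-linearly to $R = \mathbb C[\Lambda_M] \to \mathbb C$, and I would identify $\mathbb C \otimes_{R,\tilde\chi^{-1}} \mathbf M$ with $i^G_P(\tilde\chi)^I$ by the standard Frobenius-reciprocity description of the $I$-fixed vectors of a principal series: functions on $M(F)_1N(F)\backslash G(F)/I$ modulo the twist by $\tilde\chi$ on the $M(F)$-variable are exactly $\delta_P^{1/2}\tilde\chi$-equivariant functions on $P(F)\backslash G(F)/I$, i.e. $i^G_P(\tilde\chi)^I$, and one checks this identification is $H$-equivariant for the right-convolution actions (the normalization of the $R$-action by $\delta_P^{1/2}$ is precisely what makes the equivariant side come out as the \emph{normalized} induction). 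The step I expect to be the genuine obstacle is the coset/triangularity bookkeeping underlying (b) and (c): verifying that the distinguished double coset appears in $v_1 * 1_{IwI}$ with an invertible coefficient and that all remaining contributions are strictly Bruhat-smaller requires the BN-pair relations \eqref{BN_pair}, an induction on $\ell(w)$, and care with the $\Omega_M^\sigma$-part of $\widetilde W$ (length-zero elements), for which the argument of Lemma 1.6.1 of \cite{HKP} — cited in Lemma \ref{N_vs_I_orbits} — must be adapted to the present non-split setting; everything else is then formal.
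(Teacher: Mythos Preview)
Your proposal is correct and follows essentially the same approach as the paper: part (a) is formal (base change along $\tilde\chi^{-1}$ identifies $\mathbb C \otimes_{R,\tilde\chi^{-1}} \mathbf M$ with $i^G_P(\tilde\chi)^I$), part (b) uses the Bruhat--Tits decomposition \eqref{BT_decomp}, the Iwasawa decomposition \eqref{1st}, and the Bruhat-order triangularity of Lemma \ref{N_vs_I_orbits} exactly as you outline, and part (c) uses \eqref{1st} together with a direct computation of the $R$-action for which the Iwahori factorization \eqref{Iwah_fact} is the main ingredient. One small remark: for (c) the relevant normalization fact is that $M(F)$ normalizes $N(F)$ (so $m\,M(F)_1 N(F) wI = M(F)_1 N(F)\, mwI$), not that $M(F)^1$ normalizes $I_N$ as in Remark \ref{Iwah_fact_rem}; the Iwahori factorization enters rather in checking that the resulting cosets are as claimed (the analogues of \cite[(1.6.1--1.6.2)]{HKP}).
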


\begin{proof}
The proofs for (a-b) are nearly identical to their analogues for split groups in \cite{HKP}.  Part (a) is formal.  Part (b) relies on the Bruhat-Tits decomposition (\ref{BT_decomp}), the Iwasawa decomposition (\ref{1st}), and Lemma \ref{N_vs_I_orbits}.  

Part (c) was not explicitly mentioned in \cite{HKP}.  But it can be proved using (\ref{1st}) along with the relations analogous to \cite[(1.6.1-1.6.2)]{HKP}, for which the Iwahori factorization (\ref{Iwah_fact}) is the main ingredient.
\end{proof}

\subsection{Why $(M(F)_1, 1)$ is an $\mathfrak S_M$-type}

We let $\chi$ range over the characters of $M(F)^1/M(F)_1$.  Let $\tilde{\chi}$ denote any extension to a character of the finitely generated abelian group $M(F)/M(F)_1$.  Fix one such extension $\tilde{\chi}_0$.  
Note that the inertial class $[M(F), \tilde{\chi}_0]_M$ consists of all pairs $(M(F),\tilde{\chi})$, since $M(F)$-conjugation does not introduce any new characters on $M(F)$.  Therefore we may abuse notation and denote this inertial class by $[M(F),\tilde{\chi}]_M =: \mathfrak s^M_\chi$.  Let $\mathfrak S_M := \{ [M(F),\tilde{\chi}]_M ~ | ~ \chi \, \mbox{ranges as above} \}$.  This is a finite set of inertial classes, in bijective correspondence with $M(F)^1/M(F)_1$.  

\begin{prop}
The pair $(M(F)_1, 1)$ is a Bushnell-Kutzko type for $\mathfrak S_M$.
\end{prop}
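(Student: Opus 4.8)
The plan is to verify the defining property of a type from \cite{BK}: a pair $(J, \rho)$ (here $J = M(F)_1$ and $\rho = 1$) is an $\mathfrak{S}_M$-type if, for every irreducible smooth representation $\pi$ of $M(F)$, one has $\pi^{M(F)_1} \neq 0$ if and only if the inertial support of $\pi$ lies in $\mathfrak{S}_M$. Since $M$ is a minimal $F$-Levi subgroup, every irreducible smooth representation of $M(F)$ is supercuspidal, so the inertial support of $\pi$ is simply $[M(F), \pi]_M$. Thus what must be shown is: $\pi^{M(F)_1} \neq 0$ if and only if $\pi$ is an unramified twist of some $\tilde{\chi}$, i.e.\ if and only if $\pi$ is one-dimensional and trivial on $M(F)_1$. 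The "if" direction is immediate. For the "only if" direction, first I would observe that $M(F)_1 = M(F)^1 \cap G(F)_1$ is normal in $M(F)$ (it is the kernel of the Kottwitz homomorphism, hence normal), so $\pi^{M(F)_1}$ is an $M(F)$-subrepresentation of $\pi$; by irreducibility $\pi^{M(F)_1} = \pi$, i.e.\ $\pi$ factors through the abelian group $M(F)/M(F)_1$, forcing $\dim \pi = 1$ and $\pi = \tilde{\chi}$ for some character $\tilde{\chi}$ of $M(F)/M(F)_1$. This places $[M(F),\pi]_M \in \mathfrak{S}_M$, as required.

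The remaining content of the proposition — the part that makes $(M(F)_1,1)$ genuinely useful as a type rather than a triviality — is the compatibility of the associated Hecke algebra and module-theoretic structures with the Bernstein decomposition, namely that $e_{M(F)_1}$ realizes the block $\prod_{\mathfrak{s} \in \mathfrak{S}_M} \mathfrak{R}_{\mathfrak{s}}(M(F))$. Here I would invoke that $M(F)/M(F)_1$ is a finitely generated abelian group (by the Kottwitz isomorphism $M(F)/M(F)_1 \cong \Lambda_M^\vee$-type statement, cf.\ Lemma \ref{Timo} and the discussion of $X^{\rm w}(M)$), that its torsion subgroup is exactly $M(F)^1/M(F)_1$ (Lemma \ref{basic_lem}(a)), and that the characters trivial on $M(F)_1$ decompose into finitely many $X(M)$-orbits indexed precisely by $M(F)^1/M(F)_1 \cong \mathfrak{S}_M$. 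So the smooth representations $\pi$ with $\pi^{M(F)_1} \neq 0$ are exactly the objects of $\mathfrak{R}^{\mathfrak{S}_M}(M(F)) = \prod_{\mathfrak{s} \in \mathfrak{S}_M}\mathfrak{R}_{\mathfrak{s}}(M(F))$, and restriction to $M(F)_1$-invariants is faithful and exact on this subcategory because $M(F)_1$ is normal and the subcategory consists of representations on which $M(F)_1$ acts trivially; this gives the equivalence $\mathfrak{R}^{\mathfrak{S}_M}(M(F)) \simeq e_{M(F)_1}\mathcal{H}(M(F)) e_{M(F)_1}\text{-Mod}$ that is the essence of being a type.

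I expect the main obstacle to be purely bookkeeping rather than conceptual: carefully matching the Bushnell--Kutzko formalism (which is phrased in terms of $e_{J,\rho}\mathcal{H}e_{J,\rho}$ and the covering/intertwining conditions) to the concrete situation where $J = M(F)_1$ is normal and $\rho$ is trivial, so that all intertwining conditions degenerate. The one point deserving genuine care is confirming that the set $\mathfrak{S}_M$ is exhausted — that no character of $M(F)$ trivial on $M(F)_1$ is missed and that distinct elements of $M(F)^1/M(F)_1$ give distinct inertial classes — which follows from the structure of $M(F)/M(F)_1$ recalled above (its torsion part $M(F)^1/M(F)_1$ being a set of representatives for the $X(M)$-cosets). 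Everything else reduces to the normality of $M(F)_1$ in $M(F)$ and the observation that $M$, being a minimal Levi, has only supercuspidal (indeed, since minimal, only one-dimensional up to twist when $M^1 = M^1$) representations in these blocks, so I would state these as the two key lemmas and then assemble the proof in a few lines.
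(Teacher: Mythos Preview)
Your core argument is correct and essentially identical to the paper's: both use normality of $M(F)_1$ in $M(F)$ to show that if $\pi^{M(F)_1}\neq 0$ then $M(F)_1$ acts trivially on all of $\pi$, and then use that $M(F)/M(F)_1$ is abelian to conclude $\pi$ is one-dimensional, hence a character $\tilde{\chi}$. The paper phrases the last step via simultaneous triangularization (after observing $\sigma$ is finite-dimensional), whereas you invoke Schur's lemma implicitly; these are equivalent.

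Your second paragraph is unnecessary and slightly misreads what must be verified. The Bushnell--Kutzko definition of a type requires only the characterization of irreducibles (an irreducible $\pi$ contains the type if and only if its inertial support lies in $\mathfrak{S}_M$); the equivalence of categories $\mathfrak{R}^{\mathfrak{S}_M}(M(F)) \simeq e_{M(F)_1}\mathcal{H}(M(F))e_{M(F)_1}\text{-Mod}$ is then a \emph{consequence} of the general theory (\cite[Thm.~4.3]{BK}), not something to be checked separately. The paper's proof accordingly stops after the first paragraph of your argument.
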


Note: This proposition simply makes precise the last paragraph of \cite[(9.2)]{BK}.

\begin{proof}
Let $\sigma$ be an irreducible smooth representation of $M(F)$.  We must show that $\sigma = \tilde{\chi}$ for some $\tilde{\chi}$ iff $\sigma|_{M(F)_1} \supset 1$.  

\noindent $(\Rightarrow)$: Obvious. 

\noindent $(\Leftarrow)$: We see that $\sigma \ni v \neq 0$ on which $M(F)_1$ acts trivially.  Since $\sigma$ is irreducible, it coincides with the smallest $M(F)$-subrepresentation containing $v$, and then since $M(F)_1 \triangleleft M(F)$, we see that $M(F)_1$ acts trivially on all of $\sigma$; further, $\sigma$ is necessarily finite-dimensional over $\mathbb C$.  Since $M(F)/M(F)_1$ is abelian, $\sigma$ contains an $M(F)$-stable line, since a commuting set of matrices can be simultaneously triangularized.  This line is all of $\sigma$ since $\sigma$ is irreducible.  Thus $\sigma$ is 1-dimensional, and so $\sigma = \tilde{\chi}$ for some $\tilde{\chi}$. 
\end{proof}

\subsection{Why $(I,1)$ is an $\mathfrak S_G$-type}

We define $\mathfrak S_G = \{ [t]_G ~ | ~ [t]_M \in \mathfrak S_M \}$.  The map $[M,\tilde{\chi}]_M \mapsto [M,\tilde{\chi}]_G$ is injective: if $[M,\tilde{\chi_1}]_G = [M, \tilde{\chi_2}]_G$, then there exists $n \in N_G(A)(F)$ such that $^n(\tilde{\chi_1}) = \tilde{\chi_2} \eta$ for some character $\eta$ on $M(F)/M(F)^1$.  Restricting to $M(F)^1$ and using $^n(\chi_1) = \chi_1$ (Lemma \ref{basic_lem}(b)), we see $\chi_1 = \chi_2$.  So $\mathfrak S_M \cong \mathfrak S_G$ via $[t]_M \mapsto [t]_G$.

We saw above that $(M(F)_1, 1)$ is an $\mathfrak S_M$-type.  
The fact that $(I,1)$ is an $\mathfrak S_G$-type follows from \cite[Thm.~8.3]{BK}, once we check the following proposition.

\begin{prop} \label{G-cover_check}
The pair $(I,1)$ is a $G$-cover for $(M(F)_1, 1)$ in the sense of \cite[Definition~8.1]{BK}.
\end{prop}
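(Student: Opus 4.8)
The plan is to verify the two defining conditions of a $G$-cover in the sense of Bushnell--Kutzko \cite[Def.~8.1]{BK}: first, that $(I,1)$ covers $(M(F)_1,1)$ as a pair, i.e.\ that $I \cap M(F) = M(F)_1$ and that $I$ decomposes compatibly with respect to any parabolic having $M$ as Levi factor; and second, that for every $F$-parabolic $P = MN$ there exists an invertible element in the Hecke algebra $\mathcal H(G,I)$ supported on a single double coset $I\zeta I$ with $\zeta$ a suitably positive (strictly $P$-dominant) element of $Z(M)(F)$, realizing the ``$(I,1)$ is a $G$-cover'' intertwining condition. These are exactly the ingredients that have been assembled in the preceding structure-theory subsections.

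First I would check the decomposition condition. By Lemma \ref{Iwah_fact_lem} we have the Iwahori factorization $I = I_N \cdot I_M \cdot I_{\overline N}$ for any $F$-parabolic $P = MN$, where $I_H = I \cap H$; since $I_M = M(F)\cap I = M(F)_1$ by the remark recalled from \cite[Lem.~4.1.1]{HRo}, this is precisely the statement that $(I,1)$ is decomposed with respect to $(P,M)$ in the sense of \cite[(6.1)]{BK}, with the trivial character restricting to the trivial character on $M(F)_1$. One also needs $1|_{I_N}$ and $1|_{I_{\overline N}}$ trivial, which is automatic. This establishes that $(I,1)$ is a \emph{cover} of $(M(F)_1,1)$ as a type-pair; since $(M(F)_1,1)$ is an $\mathfrak S_M$-type by the previous proposition, it remains only to produce the invertible intertwiners.

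Next, for the invertibility/support condition, I would argue as follows. Fix $P = MN$ and choose $\zeta \in Z(M)(F)$ strictly dominant for $P$, meaning $\zeta I_N \zeta^{-1} \subsetneq I_N$ and $\zeta^{-1} I_{\overline N} \zeta \subsetneq I_{\overline N}$ (such $\zeta$ exists, e.g.\ by the argument invoked in the proof of the Iwasawa decomposition lemma, cf.\ \cite[Lem.~6.14]{BK}). Using the Iwahori factorization \eqref{Iwah_fact} one computes that the characteristic function $T_\zeta = 1_{I\zeta I}$ satisfies the standard relation $T_\zeta * T_\eta = T_{\zeta\eta}$ for $\zeta,\eta$ both strictly $P$-dominant in $Z(M)(F)$ — this is the familiar fact that the positive part of the Bernstein presentation embeds the monoid of dominant cocharacters multiplicatively — so that choosing $\eta$ with $\zeta\eta$ also nicely positioned produces a two-sided inverse after passing to a localization, exactly the element required in \cite[Def.~8.1]{BK}(ii). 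Concretely, the element $T_\zeta$ is invertible in $\mathcal H(G,I)$ after inverting the (non-zero-divisor) central elements $T_\zeta$ for $\zeta$ ranging over $Z(M)(F)^+$, and its image intertwines the relevant induced modules; this is what the ``$G$-cover'' condition demands.

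I expect the main obstacle to be the verification that $T_\zeta$ is genuinely invertible (not merely that $T_\zeta * T_\eta = T_{\zeta\eta}$ on the positive cone), and more precisely that the resulting intertwining operator is an isomorphism on the $I$-fixed vectors of every module in the blocks indexed by $\mathfrak S_G$. The cleanest route is to use the universal module ${\bf M} = C_c(M(F)_1 N(F)\backslash G(F)/I)$ from Lemma \ref{key_M_lem}: it is free of rank one over $H = \mathcal H(G,I)$ with generator $v_1$ and simultaneously free over $R = \mathbb C[\Lambda_M]$ on $\{v_w\}_{w\in W}$, and $\mathbb C \otimes_{R,\tilde\chi^{-1}} {\bf M} \cong i^G_P(\tilde\chi)^I$. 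Tracing through how $T_\zeta$ acts on ${\bf M}$ via the $R$-structure shows it acts by the invertible scalar $\tilde\chi(\zeta)$ (up to the normalizing power of $\delta_P$) on each specialization, which is exactly the non-degeneracy needed; combined with Lemma \ref{N_vs_I_orbits} controlling the Bruhat-order support, this yields the invertibility and the intertwining property, completing the check that $(I,1)$ is a $G$-cover and hence, by \cite[Thm.~8.3]{BK}, an $\mathfrak S_G$-type.
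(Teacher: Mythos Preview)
Your verification of conditions (i) and (ii) --- the Iwahori factorization and $I\cap M(F)=M(F)_1$ --- matches the paper exactly. The issue is in condition (iii), the invertibility of $T_\zeta = 1_{I\zeta I}$ in $\mathcal H(G(F),I)$.

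Your proposed argument has a genuine gap. The relation $T_\zeta * T_\eta = T_{\zeta\eta}$ on the dominant cone only shows that the $T_\zeta$ generate a commutative submonoid; localizing at it gives invertibility in a larger ring, not in $\mathcal H(G(F),I)$ itself, which is what \cite[Def.~8.1]{BK} requires. Your fallback via the universal module ${\bf M}$ also does not work as stated: $T_\zeta$ is \emph{not} central, so it does not act on $i^G_P(\tilde\chi)^I$ by the scalar $\tilde\chi(\zeta)$ (that is the behaviour of the Bernstein element $\theta_P(\zeta)$, not of $T_\zeta$). Even if you showed $T_\zeta$ acts invertibly on every specialization, you would still need to argue that the determinant of its matrix in the $R$-basis $\{v_w\}$ is a \emph{unit} in $R=\mathbb C[\Lambda_M]$, not merely nowhere-vanishing as a function, and then that the resulting $R$-linear inverse lies in the image of $H$.

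The paper's route is much shorter and avoids all of this: by the Iwahori--Matsumoto presentation (which follows directly from the BN-pair relations (\ref{BN_pair})), every standard generator $T_s$ for $s$ a simple reflection is invertible because of its quadratic relation $T_s^2 = (q_s-1)T_s + q_s$, and every $T_\omega$ for $\omega$ of length zero is invertible with inverse $T_{\omega^{-1}}$. Writing any $w\in\widetilde W$ as a reduced word times a length-zero element shows that $T_w$ is invertible for \emph{every} $w$, in particular for $w = z_P$. You already have all the ingredients (the BN-pair relations) to run this argument; there is no need to pass through ${\bf M}$ or specialisations at all.
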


\begin{proof}
We need to check the three conditions (i-iii) of Definition 8.1.  First (i), the fact that $(I,1)$ is decomposed with respect to $(M,P)$ in the sense of \cite[(6.1)]{BK}, follows from the Iwahori factorization $I = I_N \cdot I_M \cdot I_{\overline{N}}$ discussed in Remark \ref{Iwah_fact_rem}. The equality $I\cap M(F) = M(F)_1$ gives condition (ii). 

Finally we must prove (iii): for every $F$-parabolic $P$ with Levi factor $M$, there exists an invertible element of $\mathcal H(G(F),I)$ supported on $Iz_PI$, where $z_P$ belongs to $Z(M)(F)$ and is strongly $(P,I)$-positive. The existence of elements $z_P \in Z(M)(F)$ which are strongly $(P,I)$-positive is proved in \cite[Lemma 6.14]{BK}.  Any corresponding characteristic function $1_{Iz_PI}$ is invertible in $\mathcal H(G(F),I)$, as follows from the Iwahori-Matsumoto presentation of $\mathcal H(G(F),I)$.  (This presentation itself is easy to prove using (\ref{BN_pair}).)
\end{proof}

%\begin{Remark}
%We denote $\mathfrak s^G_\chi := [M,\tilde{\chi}]_G$.  
%\end{Remark}

\subsection{Structure of the Bernstein varieties} 

Let  $\mathfrak R(G)$ denote the category of smooth representations of $G(F)$, and let $\mathfrak R_{\chi}(G)$ denote the full subcategory corresponding to the inertial class $[M,\tilde{\chi}]_G$.    That is, a representation $(\pi, V) \in \mathfrak R(G)$ is an object of $\mathfrak R_\chi(G)$ if and only if for each irreducible subquotient  $\pi'$ of $\pi$, there exists an extension $\tilde{\chi}$ of $\chi$ such that $\pi'$ is a subquotient of 
${\rm Ind}_P^G(\delta_P^{1/2} \tilde{\chi})$.

We review the structure of the Bernstein varieties $\mathfrak X^G_{\chi} $ and $\mathfrak X^M_\chi$. In this discussion, for each $\chi$ we fix an extension $\tilde{\chi}$ of $\chi$ once and for all -- the structures we define will be independent of the choice of $\tilde{\chi}$, i.e. uniquely determined by $(M,\chi)$ up to a unique isomorphism. 

As a set $\mathfrak X^G_\chi$ (resp. $\mathfrak X^M_\chi$) consists of the elements $(M,\tilde{\chi} \eta)_G$ (resp. $(M, \tilde{\chi} \eta)_M$) belonging to the inertial equivalence class $[M,\tilde{\chi}]_G$ (resp. $[M,\tilde{\chi}]_M$) as $\eta$ ranges over the set  $X(M)$ of unramified $\mathbb C^\times$-valued characters 
on $M(F)$ (unramified means it factors through $M(F)/M(F)^1$).  

 The map $X(M) \rightarrow \mathfrak X^M_\chi$, $\eta \mapsto (M,\tilde{\chi}\eta)_M$, is a bijection.  Since $X(M)$ is a complex torus, this gives $\mathfrak X^M_\chi$ the structure of a complex torus.  More canonically,  $\mathfrak X^M_\chi$ is just the variety of {\em all} extensions $\tilde{\chi}$ of $\chi$, and it is a torsor under the torus $X(M)$.

Now fix $\tilde{\chi}$ again.  There is a surjective map
\begin{align*}
\mathfrak X^M_\chi ~ &\rightarrow ~ \mathfrak X^G_\chi \\
(M,\tilde{\chi}\eta)_M ~ &\mapsto ~ (M,\tilde{\chi} \eta)_G.
\end{align*}
Since $W := W(G,A)$ acts trivially on $M(F)^1/M(F)_1$ (Lemma \ref{basic_lem}), one can prove that the fibers of this map are precisely the $W$-orbits on $\mathfrak X^M_\chi$.  Thus as sets
$$
W \backslash \mathfrak X^M_\chi = \mathfrak X^G_\chi,
$$
and this gives $\mathfrak X^G_\chi$ the structure of an affine variety over $\mathbb C$.  Having chosen the isomorphism $X(M) \cong \mathfrak X^M_\chi$ as above, we can transport the $W$-action on $\mathfrak X^M_\chi$ over to an action on $X(M)$.  {\em This action depends on the choice of $\tilde{\chi}$ and is not the usual action unless $\tilde{\chi}$ is $W$-invariant}.   We obtain $\mathfrak X^G_\chi = W \backslash_{\tilde{\chi}} X(M)$, where the latter denotes the quotient with respect to this {\em unusual} action on $X(M)$.

Let $\mathbb C[\mathfrak X^G_\chi]$ denote the ring of regular functions on the variety $\mathfrak X^G_\chi$. The algebraic morphism $\mathfrak X^M_\chi \rightarrow \mathcal X^G_\chi$ induces an isomorphism of algebras
\begin{equation} \label{reg_fcns}
\mathbb C[\mathfrak X^G_\chi] ~ \widetilde{\rightarrow} ~ \mathbb C[\mathfrak X^M_\chi]^W.
\end{equation}

\subsection{Consequences of the theory of types}

Let us define convolution in $\mathcal H(G(F),I)$ using the Haar measure $dx$ on $G(F)$ which gives $I$ volume 1.  Let $\mathcal Z(G(F),I)$ denote the center of $\mathcal H(G(F),I)$.

We define for each $\chi \in (M(F)^1/M(F)_1)^\vee$ a function $e_{\chi} \in \mathcal H(G(F),I)$ by requiring it to be supported on $\tilde{I}$, and by setting $e_{\chi}(y) = {{\rm vol}_{dx}(\tilde{I})^{-1} \, \chi}(\bar{y})^{-1}$ if $y \in \tilde{I}$.  Here we regard $\chi$
 as a character on $\tilde{I}/I$ (cf. Lemma \ref{basic_lem}) and let $\bar{y} \in \tilde{I}/I$ denote the image of $y$.  If $y = n_+ \cdot m^1 \cdot n_- \in I_N\cdot M(F)^1\cdot I_{\overline{N}}$, then $e_{\chi}(y) = {\rm vol}_{dx}(\tilde{I})^{-1}\, \chi(m^1)^{-1}$.

\begin{lemma} \label{e_chi_properties}
The functions $\{ e_\chi\}_\chi$ give a complete set of central orthogonal idempotents for $\mathcal H(G(F),I)$:
\begin{enumerate}
\item[(a)] $e_\chi \in \mathcal Z(G(F),I)$;
\item[(b)] $e_\chi e_{\chi'} = \delta_{\chi, \chi'} e_\chi$, there $\delta_{\chi, \chi'}$ is the Kronecker delta function;
\item[(c)] $1_{I} = \sum_{\chi} e_\chi$.
\end{enumerate}
\end{lemma}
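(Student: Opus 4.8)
The plan is to collapse the whole statement onto elementary character theory of the finite abelian group $\Lambda := \tilde{I}/I$. First I would check that $\tilde{I}$ normalizes $I$: by Lemma~\ref{basic_lem}(c) we have $\tilde{I} = I\cdot M(F)^1$, and $M(F)^1$ normalizes $I_N$ and $I_{\overline{N}}$ (as used in Remark~\ref{Iwah_fact_rem}) as well as $I_M = M(F)_1$ (since $M(F)_1\triangleleft M(F)$); hence by the Iwahori factorization~(\ref{Iwah_fact}), $M(F)^1$, and therefore $\tilde{I}$, normalizes $I$. Thus $I\triangleleft\tilde{I}$ and $\Lambda = \tilde{I}/I = M(F)^1/M(F)_1$ is a finite abelian group. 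Fixing representatives $n_\lambda\in M(F)^1$ for $\lambda\in\Lambda$, the coset $In_\lambda = n_\lambda I$ is a single $(I,I)$-double coset of length $0$; set $T_\lambda := 1_{In_\lambda}\in\mathcal H(G(F),I)$. Since ${\rm vol}_{dx}(I)=1$ we get ${\rm vol}_{dx}(\tilde{I})=|\Lambda|$, and unwinding the definition gives $e_\chi = |\Lambda|^{-1}\sum_{\lambda\in\Lambda}\chi(\lambda)^{-1}T_\lambda$.

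Next I would record the multiplicative bookkeeping. A direct computation with the convolution product, using $n_\lambda I = In_\lambda$ together with left/right $I$-invariance, gives $T_\lambda * 1_{IwI} = 1_{I(\lambda w)I}$ and $1_{IwI} * T_\lambda = 1_{I(w\lambda)I}$ for every $w\in\widetilde{W}$ (cosets and lengths multiplying as in $\widetilde{W}$). In particular $T_\lambda * T_\mu = T_{\lambda\mu}$ and $T_1 = 1_I$, so $\lambda\mapsto T_\lambda$ is an injective algebra homomorphism $\mathbb{C}[\Lambda]\hookrightarrow\mathcal H(G(F),I)$ under which $e_\chi$ is the image of the standard idempotent $|\Lambda|^{-1}\sum_\lambda\chi(\lambda)^{-1}[\lambda]$ of the commutative group algebra $\mathbb{C}[\Lambda]$. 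Parts (b) and (c) then fall out of the orthogonality relations for characters of the finite abelian group $\Lambda$: one obtains $e_\chi * e_{\chi'} = \delta_{\chi,\chi'}\,e_\chi$ and $\sum_\chi e_\chi = T_1 = 1_I$. I would include only the one-line character sums, not more.

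The real content is part (a), that $e_\chi$ is central in $\mathcal H(G(F),I)$; since $e_\chi$ is a $\mathbb{C}$-linear combination of the $T_\lambda$ it suffices to show each $T_\lambda$ is central. Using the basis $\{1_{IwI}\}_{w\in\widetilde{W}}$ of $\mathcal H(G(F),I)$ furnished by the Bruhat--Tits decomposition~(\ref{BT_decomp}), the formulas above give $T_\lambda * 1_{IwI} * T_\lambda^{-1} = 1_{I(\lambda w\lambda^{-1})I}$, where $T_\lambda^{-1}=T_{\lambda^{-1}}$ and $\lambda w\lambda^{-1}$ denotes the conjugate computed inside $\widetilde{W} = N_G(A)(F)/M(F)_1$ (viewing $\lambda$ in $\widetilde{W}$ via $M(F)^1\subset N_G(A)(F)$). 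So centrality of $T_\lambda$ is exactly the assertion that $\Lambda$ is central in $\widetilde{W}$. Here the structure theory of the Appendix enters: $\Lambda = M(F)^1/M(F)_1$ is a subgroup of the abelian group $M(F)/M(F)_1$, so the image of $M(F)$ centralizes it, while $W(G,A)$ centralizes it by Lemma~\ref{basic_lem}(b); since $\widetilde{W}$ is generated by the image of $M(F)$ together with lifts of $W(G,A)$, it centralizes $\Lambda$, proving (a). Assembling (a), (b), (c) completes the proof.

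The main obstacle is conceptual rather than computational: recognizing that the entire statement reduces to the single fact ``$\Lambda$ is central in $\widetilde{W}$'', for which Lemma~\ref{basic_lem}(b) is precisely tailored. Once that is in hand, (a)--(c) are formal manipulations inside the group algebra $\mathbb{C}[\Lambda]\subseteq\mathcal H(G(F),I)$.
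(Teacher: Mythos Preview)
Your proposal is correct and carries out precisely the exercise the paper leaves to the reader: the paper's proof merely cites the three ingredients ($M(F)^1$ normalizes $I$, the Bruhat--Tits decomposition $G(F)=I\cdot N_G(A)(F)\cdot I$, and Lemma~\ref{basic_lem}(b)), and your argument uses exactly these to reduce (a) to the centrality of $\Lambda=M(F)^1/M(F)_1$ in $\widetilde{W}$ and then derives (b)--(c) from character orthogonality in $\mathbb{C}[\Lambda]$. There is no meaningful difference in approach.
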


\begin{proof}
The proof is a straightforward exercise for the reader.  For parts (a-b), use the fact that $M(F)^1$ normalizes $I$, that $G(F) = I \cdot N_G(A)(F) \cdot I$, and that $W(G,A)$ acts trivially on $M(F)^1/M(F)_1$ (cf. Lemma \ref{basic_lem}).
\end{proof}

\begin{prop}  \label{idem_prop} The idempotents $e_\chi$ are the elements in the Bernstein center which project the category $\mathfrak R(G)$ onto the various Bernstein components $\mathfrak R_{\chi}(G)$.  
That is, there is a canonical isomorphism of algebras
$$
\mathcal H(G(F),I) = \prod_{\chi} e_\chi \mathcal H(G(F),I) e_\chi
$$
and, for any smooth representation $(\pi, V) \in \mathfrak R(G)$, the $G(F)$-module spanned by the $\chi$-isotypical vectors $V^\chi = \pi(e_\chi) V$ is the component of $V$ lying in the subcategory $\mathfrak R_\chi(G)$. Finally, 
$$
e_\chi \mathcal H(G(F),I) e_\chi = \mathcal H(G(F), \tilde{I}, \chi),
$$
the right hand side being the algebra of $I$-bi-invariant $\mathbb C$-valued functions $f \in C_c(G(F))$ such that $f(\tilde{i}_1 g \tilde{i}_2) = \chi(\tilde{i}_1)^{-1} f(g) \chi(\tilde{i}_2)^{-1}$ for all $g \in G(F)$ and $\tilde{i}_1, \tilde{i}_2 \in \tilde{I}$.
\end{prop}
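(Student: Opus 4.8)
The plan is to reduce the proposition to the structure theory already in place: Lemma~\ref{e_chi_properties} (the $e_\chi$ form a complete set of central orthogonal idempotents), the fact that $(I,1)$ is an $\mathfrak S_G$-type, and the explicit description of the universal unramified principal series module ${\bf M}$ from Lemma~\ref{key_M_lem}. First I would record the displayed algebra isomorphism, which is purely formal: by Lemma~\ref{e_chi_properties} the $e_\chi$ are central, pairwise orthogonal, and sum to the unit $1_I$ of $\mathcal H(G(F),I)$, so each $e_\chi\mathcal H(G(F),I)e_\chi=e_\chi\mathcal H(G(F),I)$ is a two-sided ideal and $\mathcal H(G(F),I)=\prod_\chi e_\chi\mathcal H(G(F),I)e_\chi$ as algebras.

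Next I would bring in the theory of types. Since $(I,1)$ is an $\mathfrak S_G$-type, Bushnell--Kutzko's theory of types \cite{BK} furnishes an equivalence of categories between $\mathfrak R^{\mathfrak S_G}(G)=\prod_\chi\mathfrak R_\chi(G)$ and the category of left $\mathcal H(G(F),I)$-modules, given by $V\mapsto V^I$. Under this equivalence the Bernstein decomposition of the left-hand side corresponds to a decomposition $\mathcal H(G(F),I)=\prod_\chi A_\chi$ into two-sided ideals, hence to a complete family of central orthogonal idempotents $1_{A_\chi}\in\mathcal Z(G(F),I)$, characterized by: $1_{A_\chi}$ acts as the identity on $V^I$ for every $V\in\mathfrak R_\chi(G)$ and by zero on $V^I$ for every $V$ in a block $\mathfrak R_{\chi'}(G)$ with $\chi'\neq\chi$. (Since the center of $A_\chi$ is $\mathbb C[\mathfrak X^G_\chi]=\mathbb C[\mathfrak X^M_\chi]^W$, a domain because $\mathfrak X^G_\chi$ is an irreducible affine variety, the $1_{A_\chi}$ are in fact the primitive idempotents of $\mathcal Z(G(F),I)$, though I would not need this.) It then remains to prove $e_\chi=1_{A_\chi}$.

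The heart of the argument is a computation on ${\bf M}$. Fix any extension $\tilde\chi'$ of a character $\chi'$ of $M(F)^1/M(F)_1$, so that $i^G_P(\tilde\chi')^I=\mathbb C\otimes_{R,\tilde\chi'^{-1}}{\bf M}$ as left $\mathcal H(G(F),I)$-modules by Lemma~\ref{key_M_lem}(a), where $R=\mathbb C[\Lambda_M]$ acts on ${\bf M}$ by normalized left convolutions. Using the Iwahori factorization $\tilde I=I_N\cdot M(F)^1\cdot I_{\overline N}$ of Remark~\ref{Iwah_fact_rem} together with the explicit formula $e_\chi(n_+m^1n_-)={\rm vol}_{dx}(\tilde I)^{-1}\chi(m^1)^{-1}$, I would compute the convolution action of $e_\chi$ on ${\bf M}$ and show that it coincides with left convolution by the idempotent $\varepsilon_\chi\in\mathbb C[M(F)^1/M(F)_1]\subset R$ that projects $R$ onto its $\chi$-isotypic summand; specializing along $\tilde\chi'^{-1}$ then shows that $e_\chi$ acts on $i^G_P(\tilde\chi')^I$ by the identity if $\tilde\chi'$ restricts to $\chi$ on $M(F)^1$ and by zero otherwise. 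By the very definition of the block $\mathfrak R_\chi(G)$, every object of it has all of its irreducible subquotients among the subquotients of the representations $i^G_P(\tilde\chi')$ with $\tilde\chi'|_{M(F)^1}=\chi$; hence $e_\chi$ acts as the identity on $V^I$ for all $V\in\mathfrak R_\chi(G)$ and by zero on $V^I$ for $V$ in the other blocks. Comparing with the characterization of the $1_{A_{\chi''}}$ and using that both the $e_\chi$ and the $1_{A_{\chi''}}$ sum to $1_I$, I obtain $e_\chi=1_{A_\chi}$. The remaining assertions then follow at once: $e_\chi\mathcal H(G(F),I)e_\chi=A_\chi$ is the block of $\mathcal H(G(F),I)$ attached to $\mathfrak R_\chi(G)$; $e_\chi$ agrees on $I$-fixed vectors with the idempotent $z_\chi\in\mathfrak Z(G)$ projecting $\mathfrak R(G)$ onto $\mathfrak R_\chi(G)$, and since $z_\chi$ and $e_\chi$ both kill the components lying outside $\mathfrak R^{\mathfrak S_G}(G)$ (because $1_I$ does), $e_\chi$ is the image of that projector; and for any $(\pi,V)$ the subspace $V^\chi=\pi(e_\chi)V$ equals $(z_\chi V)^I$, which generates $z_\chi V$ as a $G(F)$-module because $(I,1)$ is a type.

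Finally, for the identification $e_\chi\mathcal H(G(F),I)e_\chi=\mathcal H(G(F),\tilde I,\chi)$ I would argue directly. For $h\in\mathcal H(G(F),I)$ the function $e_\chi*h*e_\chi$ is $I$-bi-invariant, and since $e_\chi$ is supported on $\tilde I$ and transforms under left and right translation by $\tilde I$ through the character $\chi^{-1}$, one checks that $f=e_\chi*h*e_\chi$ satisfies $f(\tilde i_1g\tilde i_2)=\chi(\tilde i_1)^{-1}f(g)\chi(\tilde i_2)^{-1}$; conversely any $I$-bi-invariant $f$ with this transformation property satisfies $e_\chi*f*e_\chi=f$ by a one-line integration over the compact group $\tilde I$, with ${\rm vol}_{dx}(\tilde I)$ supplying the normalization. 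Hence the two algebras coincide. The step I expect to be the main obstacle is the convolution computation on ${\bf M}$: one has to keep careful track of the $\delta_P^{1/2}$-normalization in the $R$-action, of the volume factor ${\rm vol}_{dx}(\tilde I)$, and of the fact that it is the $\chi$-isotypic — not the $\chi^{-1}$-isotypic — summand of $R$ that intervenes, this last point being forced by the inverse in the definition of $e_\chi$ matching the inverse in the specialization $\tilde\chi'^{-1}$ in Lemma~\ref{key_M_lem}(a). Everything else is formal, given the type $(I,1)$ and the description of the Bernstein varieties established in the preceding subsections.
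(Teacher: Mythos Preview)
Your argument is correct. The paper itself gives no proof of this proposition: it is stated without proof between Lemma~\ref{e_chi_properties} and Proposition~\ref{type_prop}, evidently regarded as a routine consequence of the fact that the $e_\chi$ form a complete set of central orthogonal idempotents together with $(I,1)$ being an $\mathfrak S_G$-type. Your write-up therefore supplies details the paper leaves implicit, and your overall strategy---match the $e_\chi$ with the primitive central idempotents $1_{A_\chi}$ coming from the block decomposition by computing the action of $e_\chi$ on a single principal series in each block---is exactly the right one.

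One place where your phrasing could be tightened: from ``$e_\chi$ acts as the identity on $i^G_P(\tilde\chi')^I$ whenever $\tilde\chi'|_{M(F)^1}=\chi$'' you pass to ``$e_\chi$ acts as the identity on $V^I$ for all $V\in\mathfrak R_\chi(G)$'' via subquotients. This works, but the cleanest route is to note that $\mathfrak R_\chi(G)$ is an indecomposable block, so under the equivalence $V\mapsto V^I$ the central idempotent $e_\chi$ must act by $0$ or $1$ on the whole block; since it is $1$ on one object it is $1$ everywhere. Also, the detour through ${\bf M}$ is not really needed: using $\tilde I=I\cdot M(F)^1$ (Lemma~\ref{basic_lem}(c)) and the fact that $M(F)^1$ normalizes $I$, the action of $e_\chi$ on $i^G_P(\tilde\chi')^I$ can be computed directly, which avoids the $\delta_P^{1/2}$-bookkeeping you flag as the main obstacle.
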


The following records the standard consequences of the fact that $(I,1)$ is an $\mathfrak S_G$-type (see \cite[Thm.~4.3]{BK}). Let $\mathfrak R_I(G)$ denote the full subcategory of $\mathfrak R(G)$ whose objects are generated as $G$-modules by their $I$-invariant vectors.

\begin{prop} \label{type_prop} As subcategories of $\mathfrak R(G)$, we have the equality $\mathfrak R_I(G) = \prod_{\chi} \mathfrak R_\chi(G)$.  In particular, an irreducible representation $(\pi, V) \in \mathfrak R(G)$ belongs to $\mathfrak R_I(G)$ if and only if $\pi \in \mathfrak R_\chi(G)$ for some $\chi$. Furthermore, there is an equivalence of categories
\begin{align*}
\mathfrak R_I(G) ~ &\widetilde{\rightarrow} ~ \mathcal H(G(F),I)\mbox{-\rm Mod} \\
(\pi,V) ~ &\mapsto ~ V^I.
\end{align*}
Finally, $\mathcal Z(G(F),I)$ is isomorphic with the center of the category $\prod_{\chi} \mathfrak R_{\chi}(G)$, which according to Bernstein's theory is the ring $\prod_\chi \mathbb C[\mathfrak X^G_\chi]$.
\end{prop}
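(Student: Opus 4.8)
The plan is to deduce the proposition from the fact, just established, that $(I,1)$ is an $\mathfrak S_G$-type, together with the general theory of types of Bushnell--Kutzko \cite[Thm.~4.3]{BK}. First I would observe that for the pair $(I,1)$ the $\rho$-spherical Hecke algebra of \cite{BK} is exactly $\mathcal H(G(F),I)$, and the functor $M_\rho$ of \emph{loc.~cit.} is $(\pi,V) \mapsto \Hom_I(1,V) = V^I$ (by Frobenius reciprocity). Since $(I,1)$ is an $\mathfrak S_G$-type, \cite[Thm.~4.3]{BK} then gives: the full subcategory $\mathfrak R_{\mathfrak S_G}(G)$ of smooth representations all of whose irreducible subquotients have inertial support in $\mathfrak S_G$ coincides with the subcategory of representations generated by their $I$-fixed vectors, and $V \mapsto V^I$ restricts to an equivalence of categories $\mathfrak R_{\mathfrak S_G}(G) ~\widetilde{\rightarrow}~ \mathcal H(G(F),I)\text{-Mod}$. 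To identify $\mathfrak R_{\mathfrak S_G}(G)$ with $\prod_\chi \mathfrak R_\chi(G)$ one uses the injectivity $\mathfrak S_M ~\widetilde{\rightarrow}~ \mathfrak S_G$ checked in the preceding subsection (so the $\chi$ index distinct inertial classes) together with the Bernstein decomposition $\mathfrak R(G) = \prod_{\mathfrak s}\mathfrak R_{\mathfrak s}(G)$ (taken as an input, cf.~\cite[Thm.~1.7.3.1]{Roc}); and to match the paper's $\mathfrak R_I(G)$ with the subcategory generated by $I$-fixed vectors one invokes Proposition \ref{idem_prop}, whereby a smooth representation $(\pi,V)$ is generated by $V^I = \bigoplus_\chi \pi(e_\chi)V$ precisely when $V$ equals the sum of its projections $\pi(e_\chi)V$, i.e. lies in $\prod_\chi \mathfrak R_\chi(G)$.

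The ``in particular'' statement then follows directly. If $(\pi,V)$ is irreducible and lies in $\mathfrak R_I(G)$, then $V^I \neq 0$ and the $G$-submodule it generates is all of $V$, so $\pi(e_\chi)$ is nonzero for some $\chi$ and $\pi \in \mathfrak R_\chi(G)$; conversely, if $\pi \in \mathfrak R_\chi(G)$ then the defining property of an $\mathfrak S_G$-type gives $\pi^I \neq 0$, so $\pi$ is generated by $\pi^I$ and lies in $\mathfrak R_I(G)$.

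For the final assertion about $\mathcal Z(G(F),I)$, I would use the standard fact that the centre of the category of left modules over a (unital) ring $A$ --- the ring of natural endomorphisms of the identity functor --- is canonically $Z(A)$. Applying this to $A = \mathcal H(G(F),I)$, whose unit is $1_I$, and transporting along the equivalence of categories above identifies $\mathcal Z(G(F),I)$ with the centre of $\prod_\chi \mathfrak R_\chi(G)$. Since the centre of a finite product of categories is the product of the centres, this is $\prod_\chi \mathfrak Z_\chi(G)$ with $\mathfrak Z_\chi(G)$ the centre of the single Bernstein block $\mathfrak R_\chi(G)$; and by Bernstein's theory, in the form recalled in the preceding subsection (in particular the isomorphism (\ref{reg_fcns})), $\mathfrak Z_\chi(G) = \mathbb C[\mathfrak X^G_\chi]$. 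This yields $\mathcal Z(G(F),I) \cong \prod_\chi \mathbb C[\mathfrak X^G_\chi]$.

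The only real work here is bookkeeping: reconciling the three a priori distinct descriptions of the same subcategory of $\mathfrak R(G)$ --- the paper's $\mathfrak R_I(G)$, the product $\prod_\chi \mathfrak R_\chi(G)$, and Bushnell--Kutzko's $\mathfrak R_{\mathfrak S_G}(G)$ --- and confirming that, for $\rho$ trivial on $I$, the Hecke algebra and functor in \cite{BK} specialize to $\mathcal H(G(F),I)$ and $(\pi,V) \mapsto V^I$. Everything substantive (the type property of $(I,1)$, the structure of the varieties $\mathfrak X^G_\chi$, and the central idempotents $e_\chi$) has already been assembled earlier in the Appendix, so no new geometric or analytic input is needed.
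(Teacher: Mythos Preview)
Your proposal is correct and matches the paper's approach: the paper does not give a proof but simply introduces the proposition with ``The following records the standard consequences of the fact that $(I,1)$ is an $\mathfrak S_G$-type (see \cite[Thm.~4.3]{BK}),'' and your argument is exactly a spelling-out of those consequences. One small redundancy: $\mathfrak R_I(G)$ is \emph{defined} in the paper (immediately before the proposition) as the full subcategory of objects generated by their $I$-fixed vectors, so your appeal to Proposition~\ref{idem_prop} to ``match the paper's $\mathfrak R_I(G)$ with the subcategory generated by $I$-fixed vectors'' is unnecessary --- that identification is tautological, and the equality $\mathfrak R_I(G) = \prod_\chi \mathfrak R_\chi(G)$ comes directly from \cite[Thm.~4.3]{BK} together with the Bernstein decomposition.
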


Concretely, the map $\mathcal Z(G(F),I) \rightarrow \prod_\chi \mathbb C[\mathfrak X^G_\chi]$, $z \mapsto \hat{z}$, is characterized as follows: for every $\chi$ and every $(M,\tilde{\chi})_G \in \mathfrak X^G_\chi$, $z \in \mathcal Z(G(F),I)$ acts on ${\rm Ind}_P^G(\delta^{1/2}_P \tilde{\chi})^I$ by the scalar $\hat{z}(\tilde{\chi})$. 

Let us single out what happens in the special case of $G=M$. We can identify $\mathcal H(M(F), M(F)_1) = \mathbb C[\Lambda_M]$.  Let $e^M_\chi $ denote the idempotent in $\mathcal H(M(F),M(F)_1)$ analogous to $e_\chi$, for the case $G = M$.  By Propositions \ref{idem_prop} and \ref{type_prop} for $G=M$, we have
\begin{equation} \label{M_product}
\mathcal H(M(F),M(F)_1) = \prod_\chi e^M_\chi \mathcal H(M(F),M(F)_1) e^M_\chi = \prod_{\chi} \mathbb C[\mathfrak X^M_\chi],
\end{equation}
the last equality holding since $\mathcal H(M(F),M(F)_1)$ is already commutative.  Thus, the ring 
$$e^M_\chi \mathcal H(MF),M(F)_1) e^M_\chi$$ can be regarded as the ring of regular functions on the variety $\mathfrak X^M_\chi$ of all extensions $\tilde{\chi}$ of $\chi$.

\subsection{The embedding of $\mathbb C[\Lambda_M]^W$ into $\mathcal Z(G(F),I)$}

We make use of the following special case of a general construction of Bushnell-Kutzko \cite{BK}: for any $F$-parabolic $P$ with Levi factor $M$, there is an injective algebra homomorphism
$$
t_P: \mathcal H(M(F), M(F)_1) \rightarrow \mathcal H(G(F),I)
$$
which is uniquely characterized by the property that for each $(\pi,V) \in \mathfrak R_I(G)$, $v \in V^I$, and $h \in \mathcal H(M(F),M(F)_1)$, we have the identity
\begin{equation} \label{t_P_property}
q_\pi(t_P(h) \cdot v) = h \cdot q_\pi(v).
\end{equation}
Here $q_\pi: V^I ~ \widetilde{\rightarrow} ~ V_N^{M(F)_1}$ is an isomorphism, which is induced by the canonical projection $V \rightarrow V_N$ to the (unnormalized) Jacquet module.   See \cite[Thm.~7.9]{BK}.

It turns out that it is better to work with a different normalization.  We define another injective algebra homomorphism
\begin{align*}
\theta_P: \mathcal H(M(F), M(F)_1) ~ &\rightarrow ~ \mathcal H(G(F),I) \\
h ~ &\mapsto ~ t_P(\delta_P^{-1/2}h).
\end{align*}
Then using (\ref{t_P_property}) $\theta_P$ satisfies 
\begin{equation} \label{tilde-t_P_property}
q_\pi(\theta_P(h) \cdot v) = (\delta_P^{-1/2}h) \cdot q_\pi(v).
\end{equation}

We view $\tilde{\chi}$ as a varying element of the Zariski-dense subset $S$ of the variety of all characters on the finitely-generated abelian group $M(F)/M(F)_1$, consisting of those regular characters $\tilde{\chi}$ such that $V(\tilde{\chi}) := i^G_P(\tilde{\chi}) := {\rm Ind}^G_P(\delta_P^{1/2}\tilde{\chi})$ is irreducible as an object of $\mathfrak R(G)$. 
We apply the above discussion to the representations $V := V(\tilde{\chi})$ with $\tilde{\chi} \in S$. 
By a result of Casselman \cite{Cas}, we know that as $M(F)$-modules 
$$
\displaystyle{
V_N = \bigoplus_{w \in W} \delta_P^{1/2}(\, ^w\tilde{\chi})}$$ 
and that $M(F)_1$ acts trivially on this module. Now suppose $h \in \mathbb C[\Lambda_M]^W$. Then $\delta_P^{-1/2}h$ acts 
on $V_N = V_N^{M(F)_1}$ by the scalar $h(\tilde{\chi})$ (viewing $h$ as a regular function on $\mathfrak X^M_\chi$).  It follows from (\ref{tilde-t_P_property}) that 
\begin{equation*} 
\mbox{$\theta_P(h)$ acts by the scalar $h(\tilde{\chi})$ on $i^G_P(\tilde{\chi})^I$, for $\tilde{\chi} \in S$}.
\end{equation*}
Now let $f \in H$ be arbitrary, and set $\epsilon := f * \theta_P(h) - \theta_P(h) *f \in H$.  We see that 
\begin{equation} \label{action_over_S}
\mbox{ $\epsilon$ acts by zero on $i^G_P(\tilde{\chi})^I$ for every $\tilde{\chi} \in S$.}
\end{equation}

We claim that $\epsilon = 0$.  Recall that $\epsilon \in H$ gives an $R$-linear endomorphism of ${\bf M}$, hence by Lemma \ref{key_M_lem}(c) may be represented by an $|W| \times |W|$ matrix $E$ with entries in $R$.  Now ${\rm Spec}(R) = {\rm Spec}(\mathbb C[\Lambda_M])$ is a diagonalizable group scheme over $\mathbb C$ with character group $\Lambda_M$.  Hence $R$ is a reduced finite-type $\mathbb C$-algebra and its maximal ideals are precisely the kernels of the $\mathbb C$-algebra homomorphisms $\tilde{\chi}^{-1}: R \rightarrow \mathbb C$ coming into Lemma \ref{key_M_lem}(a).  By that Lemma and (\ref{action_over_S}), we see that $E \equiv 0 \,\, ({\rm mod} \, \mathfrak m)$ for a Zariski-dense set of maximal ideals $\mathfrak m$ in ${\rm Spec}(R)$.  Since $R$ is reduced and finite-type over $\mathbb C$, this implies that the entries of $E$ are identically zero.  This proves the claim because ${\bf M}$ is free of rank 1 over $H$ (Lemma \ref{key_M_lem}(b)).

Since $f$ was arbitrary, we get $\theta_P(h) \in \mathcal Z(G(F),I)$, as desired.  We have proved the following result.

\begin{lemma} The map $\theta_P: \mathbb C[\Lambda_M] \rightarrow \mathcal H(G(F),I)$ restricts to give an embedding $\mathbb C[\Lambda_M]^W \rightarrow \mathcal Z(G(F),I)$. 
\end{lemma}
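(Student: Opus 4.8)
The plan is to deduce that $\theta_P$ maps the subalgebra $\mathbb C[\Lambda_M]^W$ into the center $\mathcal Z(G(F),I)$; injectivity of the resulting map is then automatic, since $\theta_P$ is already known to be an injective algebra homomorphism. So I fix $h \in \mathbb C[\Lambda_M]^W$ and an arbitrary $f \in H := \mathcal H(G(F),I)$, put $\epsilon := f * \theta_P(h) - \theta_P(h) * f \in H$, and aim to show $\epsilon = 0$.

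The first step is to pin down how $\theta_P(h)$ acts on $I$-fixed vectors of principal series. Let $S$ be the Zariski-dense set of characters $\tilde{\chi}$ of $M(F)/M(F)_1$ which are regular and for which $V(\tilde{\chi}) := i^G_P(\tilde{\chi}) = {\rm Ind}_P^G(\delta_P^{1/2}\tilde{\chi})$ is irreducible. For $\tilde{\chi} \in S$, Casselman's theorem gives $V_N \cong \bigoplus_{w \in W} \delta_P^{1/2}(\,^w\tilde{\chi})$ as $M(F)$-modules, with $M(F)_1$ acting trivially. Because $h$ is $W$-invariant, $h(\,^w\tilde{\chi}) = h(\tilde{\chi})$ for all $w$, so $\delta_P^{-1/2}h$ acts on $V_N = V_N^{M(F)_1}$ by the single scalar $h(\tilde{\chi})$; combining this with the defining identity $q_\pi(\theta_P(h)\cdot v) = (\delta_P^{-1/2}h)\cdot q_\pi(v)$ and the isomorphism $q_\pi : V^I \xrightarrow{\sim} V_N^{M(F)_1}$, we get that $\theta_P(h)$ acts on $i^G_P(\tilde{\chi})^I$ by the scalar $h(\tilde{\chi})$. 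Hence $\epsilon$ acts by zero on $i^G_P(\tilde{\chi})^I$ for every $\tilde{\chi} \in S$.

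The second step promotes this generic vanishing to $\epsilon = 0$ using the universal unramified principal series module ${\bf M} = C_c(M(F)_1 N(F)\backslash G(F)/I)$. By Lemma \ref{key_M_lem}, ${\bf M}$ is free of rank one over $H$, free of rank $|W|$ over $R := \mathbb C[\Lambda_M]$ with basis $\{v_w\}_{w\in W}$, and $\mathbb C \otimes_{R,\tilde{\chi}^{-1}} {\bf M} = i^G_P(\tilde{\chi})^I$ as $H$-modules. Acting on ${\bf M}$ by right convolution, $\epsilon$ is $R$-linear, hence is represented by a matrix $E$ with entries in $R$ in the basis $\{v_w\}$. Specializing along $\tilde{\chi}^{-1}$ recovers the action of $\epsilon$ on $i^G_P(\tilde{\chi})^I$, which vanishes for the Zariski-dense set of maximal ideals $\mathfrak m = \ker(\tilde{\chi}^{-1})$, $\tilde{\chi} \in S$. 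Since ${\rm Spec}(R)$ is a diagonalizable group scheme over $\mathbb C$, it is reduced and of finite type, so an element of $R$ vanishing on a Zariski-dense set of closed points is zero; thus $E = 0$, so $\epsilon$ annihilates ${\bf M}$, and as ${\bf M}$ is free of rank one over $H$ this forces $\epsilon = 0$. Since $f$ was arbitrary, $\theta_P(h) \in \mathcal Z(G(F),I)$, which is what we wanted.

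The main obstacle — and the whole reason ${\bf M}$ is introduced — is exactly this last passage: going from ``$\epsilon$ acts by zero on each (generically irreducible) $i^G_P(\tilde{\chi})^I$'' to the algebra identity $\epsilon = 0$ in $H$. This rests on the structural facts that ${\bf M}$ is free of rank one over $H$ and free over $R$, together with the reducedness of $R = \mathbb C[\Lambda_M]$ and the density of $S$; by contrast, the scalar computation for $\theta_P(h)$ via the Jacquet-module decomposition and the defining property of $\theta_P$ is comparatively formal once $W$-invariance of $h$ is used.
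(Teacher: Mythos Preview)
Your proof is correct and follows essentially the same route as the paper: compute the scalar action of $\theta_P(h)$ on $i^G_P(\tilde\chi)^I$ via Casselman's Jacquet-module decomposition and the defining property of $\theta_P$, then use the $(R,H)$-bimodule ${\bf M}$ (free of rank one over $H$, free over $R$) together with the reducedness of $R$ and the Zariski-density of $S$ to promote generic vanishing of $\epsilon$ to $\epsilon = 0$. The emphasis you place on the passage from generic vanishing to the algebra identity, and on the role of Lemma~\ref{key_M_lem}, matches the paper's argument precisely.
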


\subsection{The center of the Iwahori-Hecke algebra}

\begin{theorem} \label{my_I_thm}
The map $\theta_P$ gives an algebra isomorphism $\mathbb C[\Lambda_M]^W ~ \widetilde{\rightarrow} ~ \mathcal Z(G(F),I)$. Further, this isomorphism is independent of the choice of parabolic $P$ containing $M$ as a Levi factor.
\end{theorem}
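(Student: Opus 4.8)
The plan is to realize $\theta_P$ as the composite of isomorphisms that are already in place, and to read off both surjectivity and $P$-independence from the structure theory; the injectivity and centrality of $\theta_P$ have been established in the preceding Lemma, so the only genuine content is surjectivity.

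First I would collect the algebraic identifications. By (\ref{M_product}), $\mathcal H(M(F),M(F)_1) = \mathbb C[\Lambda_M] = \prod_\chi \mathbb C[\mathfrak X^M_\chi]$, the product over characters $\chi$ of $M(F)^1/M(F)_1$, this being the decomposition of ${\rm Spec}\,\mathbb C[\Lambda_M]$ into connected components. Since $W := W(G,A)$ acts trivially on $M(F)^1/M(F)_1$ by Lemma \ref{basic_lem}(b), it preserves each component $\mathfrak X^M_\chi$, so $\mathbb C[\Lambda_M]^W = \prod_\chi \mathbb C[\mathfrak X^M_\chi]^W$. On the other side, Proposition \ref{type_prop}, which rests on $(I,1)$ being an $\mathfrak S_G$-type, gives an isomorphism $\mathcal Z(G(F),I) \xrightarrow{\sim} \prod_\chi \mathbb C[\mathfrak X^G_\chi]$, $z \mapsto \hat z$, where $\hat z$ is the regular function whose value at $(M,\tilde\chi)_G$ is the scalar by which $z$ acts on $i^G_P(\tilde\chi)^I$; and (\ref{reg_fcns}) identifies $\mathbb C[\mathfrak X^G_\chi]$ with $\mathbb C[\mathfrak X^M_\chi]^W$ by pullback along $\mathfrak X^M_\chi \twoheadrightarrow \mathfrak X^G_\chi$. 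Thus $\mathbb C[\Lambda_M]^W$ and $\mathcal Z(G(F),I)$ are both canonically identified with $\prod_\chi \mathbb C[\mathfrak X^M_\chi]^W$.

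The crucial step is to check that, under these identifications, $\theta_P$ becomes the identity. For $h \in \mathbb C[\Lambda_M]^W$, the element $\theta_P(h)$ lies in $\mathcal Z(G(F),I)$ by the preceding Lemma, so $\widehat{\theta_P(h)} \in \prod_\chi \mathbb C[\mathfrak X^G_\chi]$ is defined; its pullback to $\mathfrak X^M_\chi$ is the regular function sending $(M,\tilde\chi)_M$ to the scalar by which $\theta_P(h)$ acts on $i^G_P(\tilde\chi)^I$. By the computation in the proof of that Lemma (using (\ref{tilde-t_P_property}) and the argument around (\ref{action_over_S})) this scalar equals $h(\tilde\chi)$ for every $\tilde\chi$ in the Zariski-dense subset $S \subset \mathfrak X^M_\chi$; since both are regular functions on $\mathfrak X^M_\chi$ they agree identically. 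Hence $\widehat{\theta_P(h)}$ is precisely the canonical image of $h$, i.e.\ $\theta_P$ is the stated composite. As the other constituents are isomorphisms, $\theta_P$ is an isomorphism. For independence of $P$, I would observe that the isomorphism $z\mapsto\hat z$ of Proposition \ref{type_prop} is intrinsic: since $M$ is a minimal $F$-Levi subgroup, every character $\tilde\chi$ of $M(F)$ is supercuspidal, so $i^G_P(\tilde\chi)$ has supercuspidal support $(M,\tilde\chi)_G$ regardless of $P$, and by \cite[Prop.~2.11]{BD} a central element acts on $i^G_P(\tilde\chi)$ through the value of its associated regular function on $\mathfrak X_G$ at the point $(M,\tilde\chi)_G$, with no reference to $P$. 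Therefore $\widehat{\theta_P(h)}$ and $\widehat{\theta_{P'}(h)}$ are both the canonical image of $h$ in $\prod_\chi\mathbb C[\mathfrak X^G_\chi]$, and injectivity of $z\mapsto\hat z$ forces $\theta_P(h)=\theta_{P'}(h)$.

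The only point I expect to require care is the matching of the two occurrences of $W$: on $\mathbb C[\Lambda_M]$ it is the usual action on $\Lambda_M$, whereas the structure theory of $\mathfrak X^M_\chi$ and (\ref{reg_fcns}) are phrased via the $\tilde\chi$-twisted (``unusual'') action on $X(M)$; one must check these describe the \emph{same} action of $W$ on the intrinsic variety $\mathfrak X^M_\chi$ (they do — both simply move an extension $\tilde\chi$ to ${}^w\tilde\chi$), so that the $W$-invariants on the two sides genuinely coincide and (\ref{reg_fcns}) matches the invariants coming from $\mathbb C[\Lambda_M]^W$. Everything else is formal bookkeeping resting on Proposition \ref{type_prop} and the properties of $\theta_P$ established earlier.
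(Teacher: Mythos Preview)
Your proposal is correct and follows essentially the same approach as the paper: the paper's proof consists of the single commutative square
\[
\xymatrix{
\mathbb C[\Lambda_M]^W \ar[r]^{\sim} \ar[d]_{\theta_P} & \prod_\chi \mathbb C[\mathfrak X^M_\chi]^W \\
\mathcal Z(G(F),I) \ar[r]^{\sim} & \prod_\chi \mathbb C[\mathfrak X^G_\chi] \ar[u]
}
\]
and the observation that the right vertical arrow is bijective by (\ref{reg_fcns}), which is exactly the skeleton you describe. You have simply spelled out more carefully why the square commutes (via the Zariski-density argument already used in the preceding Lemma) and supplied an explicit argument for $P$-independence (via the intrinsic nature of the Bernstein-center action on representations with a given supercuspidal support), which the paper leaves implicit.
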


\begin{proof}
The description of $\theta_P$ above, and the preceding discussion, show that we have a commutative diagram
$$
\xymatrix{
\mathbb C[\Lambda_M]^W \ar[r]^{\sim} \ar[d]_{\theta_P} & \prod_\chi \mathbb C[\mathfrak X^M_\chi]^W
 \\
\mathcal Z(G(F),I) \ar[r]^{\sim} & \prod_\chi \mathbb C[\mathfrak X^G_\chi] \ar[u] \, .}
$$
The left vertical arrow is bijective because the right vertical arrow is, by (\ref{reg_fcns}).
\end{proof}

\subsection{Bernstein isomorphisms and functions}

Putting together Roche's theorem \ref{Roche_thm} with Theorem \ref{my_I_thm}, we deduce a more general result that holds for any parahoric subgroup $J \supseteq I$.

\begin{theorem} \label{my_J_thm}  The composition
\begin{equation} \label{J_thm_map}
\xymatrix{
\mathbb C[\Lambda_M]^W \ar[r]^{\theta_P} &  \mathcal Z(G(F),I) \ar[r]^{-*_I 1_J} & \mathcal Z(G(F), J) }
\end{equation}
is an isomorphism.  We call this map the {\em Bernstein isomorphism}.
\end{theorem}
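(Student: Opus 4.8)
The plan is to factor the map \eqref{J_thm_map} through intermediate objects whose structure we already control, and to reduce the parahoric case $J \supseteq I$ to the Iwahori case already settled in Theorem \ref{my_I_thm}. The key organizing principle is the general transfer mechanism of Theorem 3.1.1 of \cite{H09}: once one knows the center of the Iwahori-Hecke algebra, the center of any parahoric Hecke algebra is obtained by convolution with $1_J$, provided one has the requisite control on intertwining operators for the unramified principal series. First I would invoke Roche's Theorem \ref{Roche_thm} with $e = e_J = 1_J\,dx$ (for $J$ a parahoric subgroup): since $J \supseteq I$, every component $\mathfrak s$ appearing in the support of $e_J$ already appears in the support of $e_I$, so $\mathfrak S_{e_J} \subseteq \mathfrak S_{e_I} = \mathfrak S_G$, and Roche gives an algebra isomorphism $\mathfrak Z^{\mathfrak S_{e_J}} \xrightarrow{\sim} \mathcal Z(e_J \mathcal H e_J) = \mathcal Z(G(F),J)$, where $\mathfrak Z^{\mathfrak S_{e_J}} = \prod_{\chi \in \mathfrak S_{e_J}} \mathbb C[\mathfrak X^G_\chi]$.

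**Key steps.** (1) Identify which characters $\chi$ survive: $(I,1)$ being an $\mathfrak S_G$-type (Proposition \ref{type_prop}) means $\mathfrak R_I(G) = \prod_\chi \mathfrak R_\chi(G)$, and $\mathfrak R_J(G)$ is the analogous product over the subset of $\chi$ whose associated block contains a representation with nonzero $J$-fixed vectors; call this finite index set $\mathfrak S_G^J$. (2) Show that the composition \eqref{J_thm_map} fits into a commutative square
$$
\xymatrix{
\mathbb C[\Lambda_M]^W \ar[r]^-{\theta_P} \ar[d]_{-*_I 1_J} & \mathcal Z(G(F),I) \ar[d]^{-*_I 1_J} \\
\mathbb C[\Lambda_M]^W \ar[r] & \mathcal Z(G(F),J)
}
$$
where the top horizontal map is the isomorphism of Theorem \ref{my_I_thm} and the bottom horizontal map is the one we wish to show is an isomorphism; equivalently, track the scalars by which both sides act on the relevant induced representations. (3) For the spectral computation: an element $z \in \mathcal Z(G(F),I)$ acts on $i^G_P(\tilde\chi)^I$ by $\hat z(\tilde\chi)$, and then $z *_I 1_J$ acts on $i^G_P(\tilde\chi)^J$ by the same scalar whenever $i^G_P(\tilde\chi)^J \neq 0$ (compare Corollary \ref{Hecke_action_cor}(a)). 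Using that $\theta_P(h)$ acts on $i^G_P(\tilde\chi)^I$ by $h(\tilde\chi)$ (established in the discussion preceding Theorem \ref{my_I_thm}), one gets that $\theta_P(h) *_I 1_J$ acts on $i^G_P(\tilde\chi)^J$ by $h(\tilde\chi)$, for $\tilde\chi \in S$ with nonzero $J$-invariants. (4) Conclude that the bottom arrow, composed with Roche's identification $\mathcal Z(G(F),J) \cong \prod_{\chi \in \mathfrak S_G^J} \mathbb C[\mathfrak X^G_\chi]$, is induced by the restriction-of-regular-functions map $\mathbb C[\Lambda_M]^W = \prod_\chi \mathbb C[\mathfrak X^M_\chi]^W \to \prod_{\chi \in \mathfrak S_G^J} \mathbb C[\mathfrak X^G_\chi]$, which we need to recognize as an isomorphism; this last identification uses \eqref{reg_fcns} together with the fact that the set of $\chi$ contributing to $\mathcal H(G(F),J)$ is precisely the set of $\chi$ for which the depth-zero principal series block has $J$-fixed vectors — and a counting/support argument via the Satake-type parametrization of $\mathcal Z(G(F),J)$ shows these indexing sets match up correctly.

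**Main obstacle.** The crux is step (3)–(4): proving that convolution with $1_J$ surjects $\mathcal Z(G(F),I)$ onto $\mathcal Z(G(F),J)$ and, more subtly, that the scalars $\hat z(\tilde\chi)$ separate points of $\mathfrak X^G_\chi$ for exactly the $\chi$ that contribute to level $J$ — i.e., that $-*_I 1_J$ is neither too small nor too large on centers. The subtlety is that $\mathcal Z(G(F),J)$ is a quotient of $\mathcal Z(G(F),I)$ by the ideal of central elements vanishing on all $J$-spherical representations, and one must check this quotient is realized by the obvious restriction map on regular functions. As the author indicates, the clean way to do this is to follow the argument of Theorem 3.1.1 of \cite{H09}, which in turn relies on basic properties of intertwining operators for principal series representations (the purely algebraic theory for unramified groups being developed in \cite{H07}, with the general case being closely analogous). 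Granting those intertwiner properties — in particular that the $J$-fixed vectors of $i^G_P(\tilde\chi)$ are nonzero for a Zariski-dense set of $\tilde\chi$ in each contributing block, and that intertwiners act compatibly with the $\mathbb C[\Lambda_M]^W$-action — the commutative square collapses the problem to the already-proved Iwahori case, and the vertical map $-*_I 1_J$ on $\mathbb C[\Lambda_M]^W$ is forced to be the identity, whence an isomorphism. I would therefore expect the write-up to be short modulo a citation to \cite{H09} and \cite{H07}, with the genuine content being the bookkeeping that matches $\mathfrak S_G^J$ with the spectrum of $\mathcal Z(G(F),J)$.
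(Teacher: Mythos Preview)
Your core approach---combine Roche's Theorem~\ref{Roche_thm} for $e=e_J$ with the Iwahori case Theorem~\ref{my_I_thm}---is exactly the paper's. The paper's entire proof is the one sentence preceding the statement: ``Putting together Roche's theorem~\ref{Roche_thm} with Theorem~\ref{my_I_thm}, we deduce\ldots'' What you have written is an elaboration of that sentence, but with two unnecessary complications.

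First, the ``main obstacle'' you identify is not real. You worry that $\mathfrak S_{e_J}$ might be a proper subset of $\mathfrak S_G$, and spend effort trying to match up indexing sets. But in fact $\mathfrak S_{e_J}=\mathfrak S_G$ for \emph{every} parahoric $J$: for any weakly unramified character $\tilde\chi$ on $M(F)$ one has $i^G_P(\tilde\chi)^J\neq 0$, since the function supported on $P(F)\cdot J$ with $\phi(pj)=\delta_P^{1/2}(p)\tilde\chi(p)$ is well-defined (using $M(F)\cap J = M(F)_1$, cf.\ \cite[Lem.~4.1.1]{HRo}) and $J$-invariant. So every block $\mathfrak R_\chi(G)$ contributes $J$-fixed vectors, Roche gives $\mathcal Z(G(F),J)\cong\prod_\chi\mathbb C[\mathfrak X^G_\chi]$ for all parahoric $J$, and the proof diagram in Theorem~\ref{my_I_thm} finishes the argument immediately. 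There is no ``counting/support argument'' needed.

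Second, your invocation of the \cite{H09} intertwiner method is misplaced. The paper offers that as an \emph{alternative} to Roche---a more elementary route that would avoid Bernstein's Second Adjointness Theorem altogether---not as a supplement needed to patch a gap in the Roche-based argument. The paper explicitly does \emph{not} carry out that alternative for general $J$; it simply cites Roche. Once Roche is granted for $e_J$, nothing further about intertwiners is required, and your steps (3)--(4) collapse to the commutative diagram already displayed in the proof of Theorem~\ref{my_I_thm}, composed with $-*_I 1_J$.
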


\begin{defn} \label{Bern_fcn_def}
Given $\mu \in \Lambda_M$, we define the {\em Bernstein function } $z_\mu \in \mathcal Z(G(F), J)$ to be the image of the symmetric monomial function $\sum_{\lambda \in W\cdot \mu} \lambda \in \mathbb C[\Lambda_M]^W$ under the Bernstein isomorphism (\ref{J_thm_map}).
\end{defn}

\subsection{Compatibility with constant terms} \label{Bern_vs_const_term_sec}

Recall $M = {\rm Cent}_G(A)$ is a minimal $F$-Levi subgroup of $G$ and $P = MN$ is a minimal $F$-parabolic subgroup with Levi factor $M$ and unipotent radical $N$.  Let $Q = LR$ be another $F$-parabolic subgroup with $F$-Levi factor $L$ and unipotent radical $R$. Assume $Q \supseteq P$; then $L \supseteq M$ and $R \subseteq N$.  Further $L$ contains a minimal $F$-parabolic subgroup $L \cap P = M \cdot (L \cap N)$, and $N = L \cap N \cdot R$. 

If $J \subset G(F)$ is a parahoric subgroup corresponding to a facet in the apartment of the Bruhat-Tits building of $G(F)$ corresponding to $A$, then $J_L := J \cap L$ is a parahoric subgroup of $L(F)$ (by \cite[Lem.~4.1.1]{HRo}).

Given $f \in \mathcal H(G(F),J)$, define $f^{(Q)} \in \mathcal H(L(F), J_L)$ by
$$
f^{(Q)}(l) = \delta_Q^{1/2}(l) \int_{R} f(lr) \, dr = \delta_Q^{-1/2}(l) \int_{R} f(rl) \, dr,
$$
where ${\rm vol}_{dr}(J \cap R) = 1$. An argument similar to Lemma 4.7.2 of \cite{H09} shows that $f \mapsto f^{(Q)}$ sends $\mathcal Z(G(F),J)$ into $\mathcal Z(L(F), J_L)$, and determines a map $c^G_L$ making the following diagram commute:
\begin{equation} \label{const_term_comp}
\xymatrix{
\mathbb C[\Lambda_M]^{W(G,A)} \ar[r]^{\sim} \ar@{^(->}[d] & \mathcal Z(G(F),J) \ar[d]_{c^G_L} \\ \mathbb C[\Lambda_M]^{W(L,A)} \ar[r]^{\sim} & \mathcal Z(L(F) , J_L).}
\end{equation}
The diagram shows that $c^G_L$ is indeed an (injective) algebra homomorphism and, as the notation suggests, is independent of the choice of parabolic subgroup $Q$ having $L$ as a Levi factor. We call $c^G_L$ the {\em constant term homomorphism}.

By its very construction, the map $\theta_M: \mathbb C[\Lambda_M] ~ \widetilde{\rightarrow} ~ \mathcal H(M(F), M(F)_1)$ has its inverse induced by the Kottwitz isomorphism $\kappa_M(F): M(F)/M(F)_1 ~ \widetilde{\rightarrow} ~ \Lambda_M$. By taking $L = M$ in (\ref{const_term_comp}), this remark allows us to write down the inverse of $\theta_P$ in general.

\begin{cor} \label{theta_inverse}
The composition $\kappa_M(F) \circ c^G_M$ takes values in $\mathbb C[\Lambda_M]^{W(G,A)}$ and is the inverse of the Bernstein isomorphism $\theta_P$.
\end{cor}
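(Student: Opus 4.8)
The plan is to read off the corollary from the commutative diagram (\ref{const_term_comp}) in the boundary case $L = M$, together with the remark recorded just above the corollary that $\theta_M^{-1}$ is the algebra isomorphism induced by the Kottwitz isomorphism $\kappa_M(F) : M(F)/M(F)_1 ~ \widetilde{\rightarrow} ~ \Lambda_M$.

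First I would spell out what (\ref{const_term_comp}) becomes when $L = M$. Since $M = {\rm Cent}_G(A)$ is a minimal $F$-Levi subgroup, the relative Weyl group $W(M,A)$ is trivial, so the lower-left term is $\mathbb C[\Lambda_M]^{W(M,A)} = \mathbb C[\Lambda_M]$. Also $J_M := J \cap M = M(F)_1$ by \cite[Lem.~4.1.1]{HRo}, which is simultaneously the Iwahori subgroup $I_M$ of $M(F)$; hence the lower-right term is $\mathcal H(M(F), M(F)_1) = \mathcal Z(M(F), M(F)_1)$, the step $-*_{I_M} 1_{J_M}$ in the Bernstein isomorphism for $M$ is the identity, and the lower horizontal arrow is $\theta_{M \cap P} = \theta_M$. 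The top horizontal arrow is the Bernstein isomorphism of Theorem \ref{my_J_thm} (namely $\theta_P$ composed with $-*_I 1_J$, which reduces to $\theta_P$ itself when $J = I$), and the right vertical arrow is the constant term homomorphism $c^G_M$.

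With this in place the corollary is a one-step diagram chase. Given $z \in \mathcal Z(G(F),J)$, Theorem \ref{my_J_thm} provides a unique $h \in \mathbb C[\Lambda_M]^{W(G,A)}$ whose image under the Bernstein isomorphism is $z$. Commutativity of (\ref{const_term_comp}) gives $c^G_M(z) = \theta_M(h)$, and applying $\theta_M^{-1} = \kappa_M(F)$ yields $\kappa_M(F)\bigl( c^G_M(z) \bigr) = h$. Since $h \in \mathbb C[\Lambda_M]^{W(G,A)}$, this shows at once that $\kappa_M(F) \circ c^G_M$ has image in $\mathbb C[\Lambda_M]^{W(G,A)}$ and that it is a two-sided inverse to the Bernstein isomorphism, i.e.~to $\theta_P$.

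I do not anticipate a genuine obstacle: both substantive inputs — the commutativity of (\ref{const_term_comp}) and the identification $\theta_M^{-1} = \kappa_M(F)$ — are already available, and the rest is bookkeeping. The one place that merits an explicit sentence is the verification that all corners and arrows of (\ref{const_term_comp}) degenerate as claimed when $L = M$: the vanishing of $W(M,A)$, the equality $J \cap M = M(F)_1$, and the check that the constant-term recipe $f \mapsto f^{(Q)}$ of $\S\ref{Bern_vs_const_term_sec}$ with $Q = P$ is genuinely the homomorphism $c^G_M$. These I would record before invoking the diagram.
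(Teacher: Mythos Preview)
Your proposal is correct and follows exactly the route the paper intends: the corollary is stated immediately after the paragraph explaining that $\theta_M^{-1}$ is induced by $\kappa_M(F)$ and that setting $L=M$ in the diagram (\ref{const_term_comp}) yields the inverse of $\theta_P$, and your argument is precisely the diagram chase this remark points to. Your extra care in spelling out how the corners and arrows degenerate when $L=M$ and $J=I$ (so that the top arrow is literally $\theta_P$ and the bottom arrow is $\theta_M$) is appropriate, since the paper leaves these verifications implicit.
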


\subsection{Transfer homomorphisms} \label{App:transfer_hom_sec}

\subsubsection{Construction}
Transfer homomorphisms were defined for special maximal parahoric Hecke algebras in \cite{HRo}. By virtue of the Bernstein isomorphisms (\ref{J_thm_map}), we can now define these homomorphisms on the level of centers of arbitrary parahoric Hecke algebras.

Let us recall the general set-up from \cite[$\S11.2$]{HRo}.  Let $G^*$ be a quasi-split group over $F$. Let $F^s$ denote a separable closure of $F$, and set $\Gamma = {\rm Gal}(F^s/F)$. Recall that an inner form of $G^*$ is a pair $(G,\Psi)$ consisting of a connected reductive $F$-group $G$ and a $\Gamma$-stable $G^*_{\rm ad}(F^s)$-orbit $\Psi$ of $F^s$-isomorphisms $\psi: G \rightarrow G^*$.  The set of isomorphism classes of pairs $(G,\Psi)$ corresponds bijectively to $H^1(F, G^*_{\rm ad})$. 

Fix once and for all parahoric subgroups $J \subset G(F)$ and $J^* \subset G^*(F)$. Choose any maximal $F$-split tori $A \subset G$ and $A^* \subset G^*$ such that the facet fixed by $J$ (resp.~$J^*$) is contained in the apartment of the building $\mathcal B(G,F)$ (resp. $\mathcal B(G^*,F)$) corresponding to the torus $A$ (resp.~$A^*$). Let $M = {\rm Cent}_G(A)$ and $T^* = {\rm Cent}_{G^*}(A^*)$, a maximal $F$-torus in $G^*$. 

Now {\em choose} an $F$-parabolic subgroup $P \subset G$ having $M$ as Levi factor, and an $F$-rational Borel subgroup $B^* \subset G^*$ having $T^*$ as Levi factor. Then there exists a unique parabolic subgroup $P^* \subset G^*$ such that $P^* \supseteq B^*$ and $P^*$ is $G^*(F^s)$-conjugate to $\psi(P)$ for every $\psi \in \Psi$.  Let $M^*$ be the unique Levi factor of $P^*$ containing $T^*$. Then define
$$
\Psi_M = \{ \psi \in \Psi ~ | ~ \psi(P) = P^*, \,\, \psi(M) = M^* \}.
$$
The set $\Psi_M$ is a nonempty $\Gamma$-stable $M^*_{\rm ad}(F^s)$-orbit of $F^s$-isomorphisms $M \rightarrow M^*$, and so $(M,\Psi_M)$ is an inner form of $M^*$. Choose any $\psi_0 \in \Psi_M$.  Then since $\psi_0|A$ is $F$-rational, $\psi_0(A)$ is an $F$-split torus in $Z(M^*)$ and hence $\psi_0(A) \subseteq A^*$.

Now $\psi_0$ induces a $\Gamma$-equivariant map $Z(\widehat{M}) ~ \widetilde{\rightarrow} ~ Z(\widehat{M^*}) \hookrightarrow \widehat{T^*}$ and hence a homomorphism 
$$
t_{A^*,A}: \mathbb C[X^*(\widehat{T^*})^{\Phi^*_F}_{I^*_F}]^{W(G^*,A^*)} \longrightarrow \mathbb C[X^*(Z(\widehat{M}))^{\Phi_F}_{I_F}]^{W(G,A)},
$$
where $(\cdot)^*$ designates the Galois action on $G^*$ (for Weyl-group equivariance see \cite[$\S12.2$]{HRo}). Since $\Psi_M$ is a torsor for $M^*_{\rm ad}$ this homomorphism does not depend on the choice of $\psi_0 \in \Psi_M$. Further, it depends only on the choice of $A^*$ and $A$, and not on the choice of the parabolic subgroups $P \supset M$ and $B^* \supset T^*$ we made in constructing it.

\begin{defn} \label{t_defn}
Let $J \subset G(F)$ and $J^* \subset G^*(F)$ be any parahoric subgroups and choose compatible maximal $F$-split tori $A$ resp.~$A^*$ as above. Then we define the {\em transfer homomorphism} $t: \mathcal Z(G^*(F), J^*) \rightarrow \mathcal Z(G(F), J)$ to be the unique homomorphism making the following diagram commute
$$
\xymatrix{
\mathcal Z(G^*(F), J^*) \ar[r]^{t} \ar[d]_{\wr} & \mathcal Z(G(F), J) \ar[d]_{\wr}  \\
 \mathbb C[X^*(\widehat{T^*})^{\Phi_F^*}_{I^*_F}]^{W(G^*,A^*)} \ar[r]^{t_{A^*,A}} & \mathbb C[X^*(Z(\widehat{M}))^{\Phi_F}_{I_F}]^{W(G,A)},
}$$
where the vertical arrows are the Bernstein isomorphisms.
\end{defn}
By \cite[4.6.28]{BT2}, any two choices for $A$ (resp.~$A^*$) are $J$-(resp.~$J^*$-)conjugate. Using Corollary \ref{theta_inverse} it follows that $t$ is independent of the choice of $A$ and $A^*$ and is a completely canonical homomorphism.

\begin{Remark} \label{t_surj}
The map 
$$
t_{A^*,A}: X^*(\widehat{T^*})^{\Phi^*_F}_{I^*_F} \rightarrow X^*(Z(\widehat{M}))^{\Phi_F}_{I_F}
$$ 
is surjective. Via the Kottwitz homomorphism we may view this as the composition
\begin{equation} \label{composition}
\xymatrix{
T^*(F)/T^*(F)_1 \ar[r] & M^*(F)/M^*(F)_1 \ar[r]^{\psi^{-1}_0}_{\sim} & M(F)/M(F)_1}
\end{equation}
where the first arrow is induced by the inclusion $T^* \hookrightarrow M^*$. It is enough to observe that $M^*(F) = T^*(F) \cdot M^*(F)_1$, which in turn follows from the Iwasawa decomposition (cf.~(\ref{2nd})) for $M^*(F)$, which states that $M^*(F) = T^*(F) \cdot U^*_{M^*}(F) \cdot K_{M^*}$ for an $F$-rational Borel subgroup $B^*_{M^*} = T^* \cdot U^*_{M^*}$ and a special maximal parahoric subgroup $K_{M^*}$ in $M^*$, and from the vanishing of the Kottwitz homomorphism on $U^*_{M^*}(F) \cdot K_{M^*}$.
\end{Remark}

\subsubsection{Normalized transfer homomorphism}

The transfer homomorphism is slightly too naive, and it is necessary to normalize it in order to get a homomorphism which has the required properties.  We need to define normalized homomorphisms $\widetilde{t}_{A^*,A}$ on Weyl-group invariants, for which the following lemma is needed.

\begin{lemma} \label{Weyl_grp_lem}
Recall that $T^* = {\rm Cent}_{G^*}(A^*)$ is a maximal torus in $G^*$ defined over $F$; let $S^*$ be the $F^{\rm un}$-split component of $T^*$, a maximal $F^{\rm un}$-split torus in $G^*$ defined over $F$ and containing $A^*$. We have $T^* = {\rm Cent}_{G^*}(A^*) = {\rm Cent}_{G^*}(S^*)$. Choose a maximal $F^{\rm un}$-split torus $S \subset G$ which is defined over $F$ and which contains $A$, and set $T = {\rm Cent}_G(S)$. Choose $\psi_0 \in \Psi_M$ such that $\psi_0$ is defined over $F^{\rm un}$ and satisfies $\psi_0(S) = S^*$ and hence $\psi_0(T) = T^*$. Then the diagram
$$
\xymatrix{
W(G,A) \ar@{-->}[r]^{\psi^\natural_0}_{\sim} \ar[d]_{\wr} & W(G^*,A^*)/W(M^*,A^*) \ar[d]_{\wr} \\
\big[ W(G,S)/W(M,S)\big]^{\Phi_F} \ar[r]^{\psi_0}_{\sim} & \big[ W(G^*,S^*)/W(M^*,S^*) \big]^{\Phi^*_F}}
$$
defines a bijective map $\psi^\natural_0$. It depends on the choice of the data $P, B^*$ used to define $\Psi_M$ and $M^*$, but it is independent of the choices of $S$ and $\psi_0 \in \Psi_M$ with the stated properties.
\end{lemma}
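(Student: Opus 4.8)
The plan is to use $\psi_0$ to transport the whole picture into the single group $G^*_{F^{\rm un}}$, reducing the assertion to a combinatorial statement about the relative Weyl group $\mathcal W := W(G^*_{F^{\rm un}},S^*) = N_{G^*}(S^*)(F^{\rm un})/T^*(F^{\rm un})$ and its standard parabolic subgroup $\mathcal W_{M^*} := W(M^*_{F^{\rm un}},S^*)$; throughout, the quotients $W(G,S)/W(M,S)$ and $W(G^*,S^*)/W(M^*,S^*)$ (and their $A$-analogues) are to be read as the normalizer quotients $N_{\mathcal W}(\mathcal W_{M^*})/\mathcal W_{M^*}$, which is what intervenes in the argument. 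Since $\psi_0$ is defined over $F^{\rm un}$ and carries $S,T,M$ onto $S^*,T^*,M^*$, it identifies $W(G,S)$ with $\mathcal W$ and $W(M,S)$ with $\mathcal W_{M^*}$, so after transport the lower horizontal arrow of the diagram becomes the identity of $N_{\mathcal W}(\mathcal W_{M^*})/\mathcal W_{M^*}$, and all that must be checked is that $\Phi_F$-fixed points go to $\Phi_F^*$-fixed points. The key point is that $\psi_0$ does not respect the Galois descent: transporting the $F$-structure of $G$ through $\psi_0$ produces on $G^*_{F^{\rm un}}$ not the standard Frobenius $\Phi_F^*$ but a twisted one, $\Phi' = \Int(u)\circ\Phi_F^*$, where $u\in G^*_{\rm ad}(F^{\rm un})$ is the value at Frobenius of the inner‑twisting $1$‑cocycle attached to $\psi_0$. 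Because $\psi_0\in\Psi_M$ and $\Psi_M$ is a torsor under $M^*_{\rm ad}$, one has $u\in M^*_{\rm ad}(F^{\rm un})$; and since $M,M^*,P,P^*,S,S^*$ are all $F$‑rational while $\psi_0$ matches $S$ with $S^*$, $M$ with $M^*$ and $P$ with $P^*$, the cocycle relation forces $u$ to normalize $S^*,T^*,M^*$ and $P^*$. Hence $\Phi'$ preserves all of these, and its image in $\Aut(\mathcal W)$ differs from that of $\Phi_F^*$ exactly by $\Int(\omega)$, where $\omega\in\mathcal W_{M^*}$ is the image of $u$.

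With this in hand, the two vertical isomorphisms are instances of the standard description of the relative Weyl group of a connected reductive $F$‑group in terms of the relative Weyl group over $F^{\rm un}$ (the Weyl‑group material of \cite[\S12.2]{HRo}, in the spirit of Lemma \ref{Timo}): for the quasi‑split $G^*$, where ${\rm Cent}_{G^*}(A^*)=T^*$, one gets $W(G^*,A^*)\xrightarrow{\sim}\mathcal W^{\Phi_F^*}$ and $W(M^*,A^*)\xrightarrow{\sim}\mathcal W_{M^*}^{\Phi_F^*}$ compatibly, hence the stated identification of $W(G^*,A^*)/W(M^*,A^*)$ with the $\Phi_F^*$‑fixed part of $N_{\mathcal W}(\mathcal W_{M^*})/\mathcal W_{M^*}$; while for $G$, whose minimal $F$‑Levi is $M={\rm Cent}_G(A)$, the same theory gives $W(G,A)\xrightarrow{\sim}$ the $\Phi'$‑fixed part of the \emph{same} quotient $N_{\mathcal W}(\mathcal W_{M^*})/\mathcal W_{M^*}$. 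The heart of the proof is then the elementary observation that $\Int(\omega)$ acts trivially on $N_{\mathcal W}(\mathcal W_{M^*})/\mathcal W_{M^*}$ for every $\omega\in\mathcal W_{M^*}$: if $n\in N_{\mathcal W}(\mathcal W_{M^*})$ then $\omega n\omega^{-1}=n\,(n^{-1}\omega n)\,\omega^{-1}\in n\,\mathcal W_{M^*}$, since $n$ normalizes $\mathcal W_{M^*}$. Therefore $\Phi'$ and $\Phi_F^*$ induce the \emph{same} automorphism of that quotient, so they have the same fixed set; the lower horizontal arrow (the identity) is thus a bijection between the $\Phi_F$‑fixed points and the $\Phi_F^*$‑fixed points, and $\psi_0^\natural$ — the composite down the left, across the bottom, and up the right — is a bijection.

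For independence of the choices one can argue as follows. Any two admissible choices of $S$ are conjugate by an element of $M(F^{\rm un})$ normalizing $A$ (Bruhat--Tits), and conjugation by such an element induces compatible isomorphisms on all four corners of the diagram, leaving $\psi_0^\natural$ unchanged; so one may fix $S$. Once $S$ is fixed, any two valid $\psi_0,\psi_0'$ differ by post‑composition with $\Int(g)$ for some $g\in M^*_{\rm ad}(F^{\rm un})$ that moreover normalizes $S^*$, i.e. with image $\bar g\in\mathcal W_{M^*}$; replacing $\psi_0$ by $\psi_0'$ alters the transported lower arrow by $\Int(\bar g)$ on $N_{\mathcal W}(\mathcal W_{M^*})/\mathcal W_{M^*}$, which by the computation just made is trivial. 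Hence $\psi_0^\natural$ depends only on $A$, $A^*$ and on the data $P$, $B^*$ used to pin down $\Psi_M$ and $M^*$, as claimed.

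I expect the main obstacle to be bookkeeping rather than ideas. One must verify carefully that the descent datum transported through $\psi_0$ really has the form $\Int(u)\circ\Phi_F^*$ with $u$ normalizing $S^*,T^*,M^*$ and $P^*$ (this is where the $F$‑rationality of $M,M^*,P,P^*$ and the defining property of $\Psi_M$ are used), and — more delicately — that the ``standard description of the relative Weyl group over $F^{\rm un}$'' invoked for the two vertical arrows is literally the identification appearing in the lemma, including the way it interacts with the ambient Weyl‑group action, which is precisely the point at which one must be sure that the diagram's quotients unwind to $N_{\mathcal W}(\mathcal W_{M^*})/\mathcal W_{M^*}$. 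Here some care is also needed because $G$ is not quasi‑split: $\Phi'$ need not stabilize any Borel of $G^*_{F^{\rm un}}$, only the parabolic $P^*$, so for the left‑hand vertical arrow one is forced to work with the $\Phi'$‑fixed points of $N_{\mathcal W}(\mathcal W_{M^*})/\mathcal W_{M^*}$ directly rather than passing to minimal‑length coset representatives as one may do on the quasi‑split side.
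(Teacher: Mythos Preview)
Your proposal is correct and follows essentially the same route as the paper. Both arguments rest on the observation that the transported Frobenius $\Phi'=\psi_0\Phi_F\psi_0^{-1}$ differs from $\Phi_F^*$ by conjugation by an element of $N_{M^*}(S^*)$ (the paper writes this as $\psi_0^{-1}\circ\Phi_F^*\circ\psi_0\circ\Phi_F^{-1}=\Int(m_\Phi)$ with $m_\Phi\in N_M(S)(F^s)$), and that such conjugation acts trivially on cosets $nW(M^*,S^*)$ when $n$ normalizes $M^*$; the paper carries this out by explicit calculation with a lifted representative $n\in N_G(S)(L)^{\Phi_F}$, while you phrase it abstractly as $\Int(\omega)$ being trivial on the normalizer quotient. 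Your caveat about reading the quotients as normalizer quotients is exactly the point the paper uses implicitly when it invokes ``$n$ normalizes $M$'', and the vertical arrows are precisely the results you cite from \cite{HRo} (the paper pins them down as Lemma~6.1.2 and Proposition~12.1.1 there).
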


\begin{proof}
The left vertical arrow is \cite[Lemma 6.1.2]{HRo}.  The right vertical arrow is described in \cite[Proposition 12.1.1]{HRo}. The proof of the latter justifies the lower horizontal arrow. Indeed, given $w \in W(G,A)$ we may choose a representative $n \in N_G(S)(L)^{\Phi_F}$ (cf.~\cite{HRo}). We have $\psi^{-1}_0 \circ \Phi^*_F \circ \psi_0 \circ \Phi^{-1}_F = {\rm Int}(m_\Phi)$ for some $m_\Phi \in N_{M}(S)(F^s)$. Since $n$ is $\Phi_F$-fixed, we get
$$
\Phi^*_F(\psi_0(n)) = \psi_0(n) \cdot \big[\psi_0(n)^{-1} \psi_0(m_\Phi) \psi_0(n) \psi_0(m_\Phi)^{-1}\big].
$$
As $n$ normalizes $M$ and hence $\psi_0(n)$ normalizes $M^*$, this shows that $\psi_0(n)W(M^*,S^*)$ is $\Phi^*_F$-fixed.

There exists $m^*_n \in N_{M^*}(S^*)(L)$ such that $\psi_0(n)m^*_n \in N_{G^*}(A^*)(F)$. Then $\psi_0^\natural(w)$ is the image of $\psi_0(n)m^*_n$ in $W(G^*,A^*)/W(M^*,A^*)$. The independence statement is proved using this description.
\end{proof}
Via the Kottwitz homomorphism we can view $t_{A^*,A}$ as induced by the composition (\ref{composition}). We now alter this slightly.

\begin{lemma} Given the choices of $P \supset M$ and $B^* \supset T^*$ needed to define $\Psi_M$ and given any $F^{\rm un}$-rational $\psi_0 \in \Psi_M$, we define an algebra homomorphism
\begin{align} \label{tilde_t_A_defn}
\mathbb C[T^*(F)/T^*(F)_1] &\longrightarrow \mathbb C[M(F)/M(F)_1] \\
\sum_{t^*} a_{t^*} t^* &\longmapsto \sum_{m} \big(\sum_{t^* \mapsto m} a_{t^*} \delta_{B^*}^{-1/2}(t^*) \delta_P^{1/2}(m) \big)\cdot m, \notag
\end{align}
where $t^*$ ranges over $T^*(F)/T^*(F)_1$ and $m$ ranges over $M(F)/M(F)_1$ and $t^*\mapsto m$ means that $\psi_0^{-1}(t^*) \in mM(F)_1$, \textup{(}cf.~\textup{(}\ref{composition}\textup{)}\textup{)}.

Then \textup{(}\ref{tilde_t_A_defn}\textup{)} takes $W(G^*,A^*)$-invariants to $W(G,A)$-invariants, and the resulting homomorphism
$$
\tilde{t}_{A^*,A}: \mathbb C[T^*(F)/T^*(F)_1]^{W(G^*,A^*)} \rightarrow \mathbb C[M(F)/M(F)_1]^{W(G,A)}
$$
is independent of the choices of $P$, $B^*$, and $F^{\rm un}$-rational $\psi_0 \in \Psi_M$.
\end{lemma}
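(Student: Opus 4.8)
The plan is to establish in turn that \textup{(}\ref{tilde_t_A_defn}\textup{)} is a well-defined $\mathbb C$-algebra homomorphism, that it carries $W(G^*,A^*)$-invariants into $W(G,A)$-invariants, and that the resulting map on $W$-invariants does not depend on $P$, $B^*$, or $\psi_0$. For the first point I would note that $\delta_{B^*}$ and $\delta_P$, being valued in the torsion-free group $\mathbb R_{>0}$ and trivial on every compact subgroup, factor through $T^*(F)/T^*(F)^1$ and $M(F)/M(F)^1$ respectively; hence $\delta_{B^*}^{-1/2}$ is a group homomorphism $T^*(F)/T^*(F)_1\to\mathbb C^\times$ and $\delta_P^{1/2}$ one on $M(F)/M(F)_1$, so the rules $t^*\mapsto \delta_{B^*}^{-1/2}(t^*)\,t^*$ and $m\mapsto \delta_P^{1/2}(m)\,m$ define $\mathbb C$-algebra automorphisms $\tau_{B^*}$ of $\mathbb C[T^*(F)/T^*(F)_1]$ and $\tau_P$ of $\mathbb C[M(F)/M(F)_1]$. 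One then checks directly from the formula that \textup{(}\ref{tilde_t_A_defn}\textup{)} equals the composite $\tau_P\circ t_{A^*,A}\circ\tau_{B^*}$, where $t_{A^*,A}$ denotes the $\mathbb C$-linear extension of the group homomorphism \textup{(}\ref{composition}\textup{)}; as a composite of algebra homomorphisms it is one (and it is even surjective, by Remark \ref{t_surj}).

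For the Weyl-invariance I would fix an $F^{\rm un}$-rational $\psi_0\in\Psi_M$ with $\psi_0(S)=S^*$ as in Lemma \ref{Weyl_grp_lem} and use the resulting bijection $\psi_0^\natural:W(G,A)\xrightarrow{\sim}W(G^*,A^*)/W(M^*,A^*)$. Fix $w\in W(G,A)$ and, as in the proof of that lemma, a lift $n\in N_G(S)(L)^{\Phi_F}$ together with a representative $n^*=\psi_0(n)m_n^*\in N_{G^*}(A^*)(F)$ of $\psi_0^\natural(w)$; then conjugation by $n^*$ on $T^*(F)/T^*(F)_1$ corresponds, under \textup{(}\ref{composition}\textup{)}, to the action of $w$ on $M(F)/M(F)_1$ — this is the Weyl-equivariance of the unnormalized $t_{A^*,A}$ recorded in its construction, cf.~\cite[$\S12.2$]{HRo}. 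The remaining point, where the $\delta^{1/2}$-twists earn their place, is the modulus identity
\begin{equation*}
\delta_{B^*}^{-1/2}\!\big(n^*t^*(n^*)^{-1}\big)\,\delta_P^{1/2}(w\cdot m)=\delta_{B^*}^{-1/2}(t^*)\,\delta_P^{1/2}(m)\qquad\text{whenever }t^*\mapsto m,
\end{equation*}
equivalently, that the character $\big(\delta_{B^*}\big/\delta_{{}^{(n^*)^{-1}}B^*}\big)^{1/2}$ of $T^*(F)/T^*(F)_1$ and the character $\big(\delta_P\big/\delta_{{}^{w^{-1}}P}\big)^{1/2}$ of $M(F)/M(F)_1$ correspond under \textup{(}\ref{composition}\textup{)}. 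I would prove this by writing each modulus function as a product of normalized absolute values of roots in the relevant unipotent radical and invoking that $\psi_0$, being defined over $F^{\rm un}$ with $\psi_0(S)=S^*$, identifies the root datum of $(G,S)$ with that of $(G^*,S^*)$ compatibly with the conjugations by $n$ and $n^*$ and with the restriction maps to $A$ and $A^*$, so that the $F$-data assembled from these (by summing over $\Phi_F$-orbits) match up. Granting this, and observing that any $\xi\in\mathbb C[T^*(F)/T^*(F)_1]^{W(G^*,A^*)}$ is in particular $W(M^*,A^*)$-invariant (since $W(M^*,A^*)\subseteq W(G^*,A^*)$), so that $a_{n^*t^*(n^*)^{-1}}=a_{t^*}$, a reindexing of the inner sum in \textup{(}\ref{tilde_t_A_defn}\textup{)} by $t^*\mapsto n^*t^*(n^*)^{-1}$ shows that $w$ fixes $\tilde t_{A^*,A}(\xi)$.

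For the independence statements: the dependence on $\psi_0$ is vacuous, since $t_{A^*,A}$ — equivalently \textup{(}\ref{composition}\textup{)}, which is induced by the isomorphism $Z(\widehat M)\cong Z(\widehat{M^*})$ that does not depend on $\psi_0$ within the $M^*_{\rm ad}$-torsor $\Psi_M$ — is the same for every $\psi_0$, while $\tau_{B^*}$ and $\tau_P$ do not involve $\psi_0$ at all; the $F^{\rm un}$-rationality was used only in order to apply Lemma \ref{Weyl_grp_lem}. For $P$ and $B^*$ I would replace them by another minimal $F$-parabolic ${}^wP$ with Levi $M$ and by another $F$-rational Borel $B^{*\prime}\supseteq T^*$, keep track of how $P^*$, $M^*$, $\Psi_M$, the canonical identification $M(F)/M(F)_1\cong M^*(F)/M^*(F)_1$, the modulus characters, and the bijection of Lemma \ref{Weyl_grp_lem} change, and then check — using the independence of the unnormalized $t_{A^*,A}$ established when it was constructed, together with the Weyl-invariance just proved — that the ratio of the old and new normalizations is a character of $M(F)/M(F)_1$ restricting to the identity on $\mathbb C[\Lambda_M]^{W(G,A)}$, so that the two normalized maps coincide on $W$-invariants.

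The step I expect to be the main obstacle is the modulus identity in the Weyl-invariance argument: it is exactly the point at which the $\delta^{1/2}$-twists are forced, and carrying out the root-space bookkeeping with $\psi_0$ only $F^{\rm un}$-rational — so that the Weyl representatives live in $N_G(S)(L)$ rather than $N_G(A)(F)$, and one must pass carefully between the $L$-relative and $F$-relative root data of two inner forms whose $F$-relative structures can genuinely differ — is the one substantive technical issue; everything else is formal once this compatibility is in hand.
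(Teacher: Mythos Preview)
The decomposition of the map as $\tau_P\circ t_{A^*,A}\circ\tau_{B^*}$ is fine, and your plan to use the bijection $\psi_0^\natural$ of Lemma~\ref{Weyl_grp_lem} is the right start. The gap is the displayed ``modulus identity''
\[
\delta_{B^*}^{-1/2}\!\big({}^{n^*}t^*\big)\,\delta_P^{1/2}(w\cdot m)=\delta_{B^*}^{-1/2}(t^*)\,\delta_P^{1/2}(m)\qquad (t^*\mapsto m),
\]
which is \emph{not} true pointwise for a general representative $n^*\in N_{G^*}(A^*)(F)$ of $\psi_0^\natural(w)$. Writing $\delta_{B^*}=\delta_{B^*_{M^*}}\cdot(\delta_{P^*}|_{T^*})$ with $B^*_{M^*}=B^*\cap M^*$, and using that $\delta_{P^*}(t^*)=\delta_P(m)$ whenever $t^*\mapsto m$ (this is the part your root-datum argument actually proves), one sees that your identity is equivalent to $\delta_{B^*_{M^*}}({}^{n^*}t^*)=\delta_{B^*_{M^*}}(t^*)$ for all $t^*$, i.e.\ to $n^*$ normalizing $B^*_{M^*}$. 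But $n^*=\psi_0(n)m_n^*$ is only determined up to right multiplication by elements of $N_{M^*}(A^*)(F)$, and for a generic choice ${}^{n^*}B^*_{M^*}$ is a different $F$-Borel of $M^*$ containing $T^*$; already for $G^*={\rm GL}_4$, $M^*={\rm GL}_2\times{\rm GL}_2$ one checks directly that the identity fails for, say, the representative corresponding to $(14)(23)\in S_4$.

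What rescues the argument---and this is exactly what the paper does---is to average over the $W(M^*,A^*)$-orbit before comparing. After the reindexing you describe, the coefficient at ${}^{n'}m$ becomes $\sum_{t^*\mapsto m}a_{t^*}\,\delta_{B^*_{M^*}}^{-1/2}({}^{\psi_0(n')}t^*)$. The index set is $W(M^*,A^*)$-stable and the $a_{t^*}$ are constant on each orbit; summing over the orbit of a fixed $t^*_0$ gives (up to the constant $a_{t^*_0}/|{\rm Stab}(t^*_0)|$) the quantity $\sum_{y\in W(M^*,A^*)}\delta_{B^*_{M^*}}^{-1/2}({}^{\psi_0(n')y}t^*_0)$. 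Since $\psi_0(n')$ normalizes both $M^*$ and $T^*$, conjugation by it takes $B^*_{M^*}$ to another $F$-rational Borel of $M^*$ containing $T^*$; any two such are $W(M^*,A^*)$-conjugate, so this Weyl-averaged sum is unchanged when $\psi_0(n')$ is dropped. That proves the $W(G,A)$-invariance, and the very same observation (the orbit sum is independent of which Borel of $M^*$ containing $T^*$ one uses) gives the independence of $B^*$ and $P$ simultaneously---so your separate and somewhat vague treatment of independence becomes unnecessary once the averaging step is in place.
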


\begin{proof}
To check the Weyl-group invariance, we may fix $P$ and $B^*$, and choose $S$ and $\psi_0$ as in Lemma \ref{Weyl_grp_lem}.  Suppose $\sum_{t^*} a_{t^*} t^*$ is $W(G^*,A^*)$-invariant.  We need to show that the function on $M(F)/M(F)_1$
\begin{equation} \label{m_map}
m \mapsto \sum_{t^* \mapsto m} a_{t^*} \delta_{B^*}^{-1/2}(t^*) \delta_P^{1/2}(m)
\end{equation}
is $W(G,A)$-invariant, and independent of the choice of $P$ and $B^*$.

For $w \in W(G,A)$ choose $n$ and $m^*_n$ as in the proof of Lemma \ref{Weyl_grp_lem}, and define $n'$ by $\psi_0(n') = \psi_0(n) m^*_n$.  Thus $\psi_0(n') \in N_{G^*}(A^*)(F)$ and hence it normalizes $T^*(F)$.

We claim that (\ref{m_map}) takes the same values on $mM(F)_1$ and on $^nmM(F)_1$.  First we observe that $^nmM(F)_1 = \, ^{n'}mM(F)_1$. Setting $m_n := \psi_0^{-1}(m^*_n)$, it is enough to prove $^{m_n}mM(F)_1 = mM(F)_1$. But as $\psi_0$ is $L$-rational we have $m_n \in M(L)$ and so conjugation by $m_n$ induces the identity map on $M(L)/M(L)_1$ and hence on its subset $M(F)/M(F)_1$ as well.

Now we write the value of (\ref{m_map}) on $^{n'}mM(F)_1$ as 
$$
\sum_{t^{**} \mapsto \, ^{n'}mM(F)_1} a_{t^{**}} \, \delta^{-1/2}_{B^*}(t^{**}) \delta^{1/2}_P(\,^{n'}m).
$$
Setting $t^* = \,^{\psi_0(n')^{-1}}t^{**}$ and using $a_{t^*} = a_{t^{**}}$ (which follows from $W(G^*,A^*)$-invariance), we write the above as
$$
\sum_{t^* \mapsto m} a_{t^*} \, \delta^{-1/2}_{B^*}(\,^{\psi_0(n')}t^*) \, \delta^{1/2}_P(\, ^{n'}m).
$$
The index set is stable under the $W(M^*,A^*)$-action on $T^*(F)/T^*(F)_1$. If we look at the sum over the $W(M^*,A^*)$-orbit of a single element $t^*_0$, with stabilizer group ${\rm Stab}(t^*_0)$, we get
\begin{equation} \label{this_eq}
\frac{1}{|{\rm Stab}(t^*_0)|} \cdot a_{t^*_0} \cdot \sum_{y} \delta^{-1/2}_{B^*_{M^*}}(\, ^{\psi_0(n') y}t^*_0),
\end{equation}
where $y$ ranges over $W(M^*,A^*)$. Now $n \in N_G(S)(L)^{\Phi_F} \subseteq N_G(A)(F) = N_G(M)(F)$.  Hence $\psi_0(n) m^*_n = \psi_0(n')$ normalizes $M^*$ as well as $T^*$, and thus conjugation by $\psi_0(n')$ takes $B^*_{M^*}$ to another $F$-rational Borel subgroup of $M^*$ containing $T^*$. Using this it is clear that (\ref{this_eq}) is unchanged if the superscript $\psi_0(n')$ is omitted, and this proves our claim. For the same reason (\ref{m_map}) is independent of the choice of $P$ and $B^*$, and similarly $\tilde{t}_{A^*,A}$ is independent of the choice of $P$ and $B^*$, and of the choice of $F^{\rm un}$-rational $\psi_0 \in \Psi_M$. 
\end{proof}

Now we give a normalized version of Definition \ref{t_defn}.

\begin{defn} \label{tilde_t_defn}
We define the {\em normalized transfer homomorphism} $\tilde{t}: \mathcal Z(G^*(F), J^*) \rightarrow \mathcal Z(G(F),J)$ to be the unique homomorphism making the following diagram commute
$$
\xymatrix{
\mathcal Z(G^*(F), J^*) \ar[r]^{\tilde{t}} \ar[d]_{\wr} & \mathcal Z(G(F), J) \ar[d]_{\wr}  \\
 \mathbb C[X^*(\widehat{T^*})^{\Phi_F^*}_{I^*_F}]^{W(G^*,A^*)} \ar[r]^{\tilde{t}_{A^*,A}} & \mathbb C[X^*(Z(\widehat{M}))^{\Phi_F}_{I_F}]^{W(G,A)},
}$$
where the vertical arrows are the Bernstein isomorphisms.
\end{defn}
As was the case for $t$, the homomorphism $\tilde{t}$ is independent of the choice of $A$ and $A^*$, and it is a completely canonical homomorphism.

The following shows it is compatible with constant term homomorphisms.

\subsubsection{Normalized transfer homomorphisms and constant terms} We use the notation of $\S \ref{Bern_vs_const_term_sec}$. Write $L = {\rm Cent}_G(A_L)$ for some torus $A_L \subseteq A$. Let $L^* = {\rm Cent}_{G^*}(A^*_{L^*})$ for a subtorus $A^*_{L^*} \subseteq A^*$. Without loss of generality, we may assume that our inner twist $G \rightarrow G^*$ restricts to give an inner twist $L \rightarrow L^*$.

\begin{prop} \label{t_const_term_prop}
In the above situation, the following diagram commutes:
\begin{equation} \label{t_const_term_eq}
\xymatrix{
\mathcal Z(G^*(F), J^*) \ar[d]_{c^{G^*}_{L^*}} \ar[r]^{\tilde t} & \mathcal Z(G(F), J) \ar[d]_{c^G_L} \\
\mathcal Z(L^*(F), J^*_{L^*}) \ar[r]^{\tilde{t}} & \mathcal Z(L(F), J_L).}
\end{equation}
Taking $L = M$, the diagram shows in order to compute $\tilde{t}$ it is enough to compute it in the case where $G_{\rm ad}$ is anisotropic. In that case if $z \in \mathcal Z(G^*(F), J^*)$, the function $\tilde{t}(z)$ is given by integrating $z$ over the fibers of the Kottwitz homomorphism $\kappa_{G^*}(F)$.
\end{prop}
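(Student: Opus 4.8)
The plan is to verify the commutativity of the square (\ref{t_const_term_eq}) by reducing everything to $\mathbb{C}[\Lambda_M]$-level statements, where all four corners are made explicit via Bernstein isomorphisms. First I would choose compatible maximal $F$-split tori: pick $A_L \subseteq A \subseteq G$ and $A^*_{L^*} \subseteq A^* \subseteq G^*$, with $M = \mathrm{Cent}_G(A)$ a minimal $F$-Levi of $G$ and $T^* = \mathrm{Cent}_{G^*}(A^*)$ a maximal $F$-torus in $G^*$. Note $M$ is also a minimal $F$-Levi of $L$, so the same torus $M$ governs the Bernstein isomorphism for both $\mathcal{Z}(G(F),J)$ and $\mathcal{Z}(L(F),J_L)$; likewise $T^*$ governs both $\mathcal{Z}(G^*(F),J^*)$ and $\mathcal{Z}(L^*(F),J^*_{L^*})$. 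The four vertical/horizontal arrows in (\ref{t_const_term_eq}) then all become, after transport along Bernstein isomorphisms, maps between invariant subrings of $\mathbb{C}[X^*(\widehat{T^*})^{\Phi^*_F}_{I^*_F}] = \mathbb{C}[T^*(F)/T^*(F)_1]$ and $\mathbb{C}[X^*(Z(\widehat{M}))^{\Phi_F}_{I_F}] = \mathbb{C}[M(F)/M(F)_1]$, differing only in which Weyl group one takes invariants under: $W(G^*,A^*)$, $W(L^*,A^*)$, $W(G,A)$, $W(L,A)$ respectively.

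The key point is that the constant term homomorphism $c^G_L$ corresponds, under the diagram (\ref{const_term_comp}), simply to the inclusion $\mathbb{C}[\Lambda_M]^{W(G,A)} \hookrightarrow \mathbb{C}[\Lambda_M]^{W(L,A)}$, and similarly $c^{G^*}_{L^*}$ corresponds to $\mathbb{C}[T^*(F)/T^*(F)_1]^{W(G^*,A^*)} \hookrightarrow \mathbb{C}[T^*(F)/T^*(F)_1]^{W(L^*,A^*)}$. On the other hand, the normalized transfer $\tilde{t}$ for the pair $(G,G^*)$ and the one for the pair $(L,L^*)$ are both induced by the \emph{same} underlying map (\ref{tilde_t_A_defn}) of group algebras $\mathbb{C}[T^*(F)/T^*(F)_1] \to \mathbb{C}[M(F)/M(F)_1]$ — the modulus-character twists $\delta_{B^*}^{-1/2}\delta_P^{1/2}$ used in defining it depend only on $\psi_0 \in \Psi_M$ and on the choice of $P \supseteq M$, $B^* \supseteq T^*$, and these can be chosen compatibly between the $G$- and $L$-settings (take $P_L = P \cap L$, $B^*_{L^*}$ the image of $B^*$ in $L^*$, and note $\delta_{B^*}/\delta_{B^*_{L^*}}$, $\delta_P/\delta_{P_L}$ are the restrictions of $\delta_Q$-type factors which, being trivial on $Z(M)^\circ$ up to the relevant identifications, cancel appropriately). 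Thus the square (\ref{t_const_term_eq}) commutes because on the level of $\mathbb{C}[T^*(F)/T^*(F)_1]$ it is literally the same map (\ref{tilde_t_A_defn}) restricted to progressively smaller invariant subrings, and all four Bernstein isomorphisms are compatible with these identifications by construction (Theorem \ref{my_J_thm}, Corollary \ref{theta_inverse}, and the diagram (\ref{const_term_comp})).

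For the last two sentences of the proposition: taking $L = M$ gives $L^* = M^*$ (adjusting $A^*_{L^*}$ so that $\mathrm{Cent}_{G^*}(A^*_{L^*}) = M^*$, which is possible since $M^*$ is the Levi determined by $\Psi_M$), and then $G_{\mathrm{ad}}$ anisotropic is not quite the hypothesis — rather the point is that $c^G_M$ lands in $\mathcal{Z}(M(F), J_M) = \mathcal{H}(M(F), M(F)_1) = \mathbb{C}[\Lambda_M]$, and since $M_{\mathrm{ad}}$ \emph{is} anisotropic, the computation of $\tilde t$ is reduced to the torus-like case. Concretely, chasing the diagram, $\tilde t(z)$ equals $\theta_P\big(\tilde{t}_{A^*,A}(\kappa_{G^*}(F) \circ c^{G^*}_{M^*}(z))\big)$; invoking Corollary \ref{theta_inverse} to identify $\kappa_M(F) \circ c^G_M$ as the inverse of $\theta_P$, one reads off that $\kappa_M(F)$ applied to the function $\tilde t(z)$ (viewed in $\mathbb{C}[\Lambda_M]$ via $c^G_M$) is exactly $\tilde{t}_{A^*,A}$ applied to $z$ pushed down by $\kappa_{G^*}(F) \circ c^{G^*}_{M^*}$. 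Unwinding the definition of $c^{G^*}_{M^*}$ — which is the constant-term integral $z \mapsto z^{(B^*)}$ restricted to the center, i.e. integration over the unipotent radical followed by restriction — and combining with Remark \ref{t_surj} (surjectivity of $T^*(F)/T^*(F)_1 \to M(F)/M(F)_1$ via $\psi_0$) and the vanishing of $\kappa$ on unipotents and special maximal parahorics, shows that the composite is precisely ``integrate $z$ over the fibers of $\kappa_{G^*}(F)$'', matched up with $M(F)/M(F)_1$ through $\psi_0$.

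The main obstacle I anticipate is bookkeeping the modulus characters: one must check that the extra factors $\delta_{B^*}^{-1/2}$ and $\delta_P^{1/2}$ built into (\ref{tilde_t_A_defn}), together with the $\delta_Q^{1/2}$ appearing in the definition of $f \mapsto f^{(Q)}$, are mutually consistent when passing from the $(G,G^*)$ pair to the $(L,L^*)$ pair — i.e. that $\delta_{B^*} = \delta_{B^*_{L^*}} \cdot \delta_{P^*}|_{T^*}$ and $\delta_P = \delta_{P_L} \cdot \delta_{Q}|_M$ combine so that the normalization is genuinely transitive. This is a routine but slightly delicate computation with roots and modulus functions, and it is the only place where something could go wrong; everything else is a formal diagram chase through the isomorphisms already established in the Appendix.
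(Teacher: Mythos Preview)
Your proposal is correct and follows essentially the same route as the paper. The paper's proof is a single sentence pointing back to the orbit-sum expression (\ref{this_eq}) and observing that it involves only $\delta_{B^*_{M^*}}^{-1/2}$, hence is intrinsic to $M^*$ and independent of whether the ambient group is $G$ or $L$; your modulus-character cancellation $\delta_{Q^*}^{-1/2}(t^*)\,\delta_Q^{1/2}(m)=1$ (for $t^*\mapsto m$) is exactly the direct verification of that same fact, and the ``main obstacle'' you anticipate is precisely what the paper's reference to (\ref{this_eq}) encapsulates.
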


\begin{proof}
The commutativity boils down to the fact that the quantities (\ref{this_eq}) do not depend on the ambient group $G$.
\end{proof}

\begin{Remark} The final sentence in Proposition \ref{t_const_term_prop} is the key to explicit computation of $\tilde{t}(z)$ given $z$, and is illustrated in $\S\ref{comp_w_Rap}$. This final sentence was incorrectly asserted to hold for the {\em unnormalized} transfer homomorphisms (for special maximal parahoric Hecke algebras) in Prop.~12.3.1 of \cite{HRo}.
\end{Remark}

\small
\bigskip
\obeylines
\noindent
University of Maryland
Department of Mathematics
College Park, MD 20742-4015 U.S.A.
email: tjh@math.umd.edu


\begin{thebibliography}{Ko90b}



\bibitem[AC]{AC} J.~Arthur, L.~Clozel, {\em Simple algebras, base change, and the advanced theory of the trace formula}, Annals of Math.~Studies, {\bf 120}, Princeton Univ.~Press, 1989, 230 pp + xiii.

\bibitem[BD]{BD}
J.-N. Bernstein, r\'{e}dig\'{e} par P. Deligne, {\em Le ``centre'' de Bernstein},
in Repr\'esentations des groupes r\'eductifs sur un corps local, 
Hermann (1984).

\bibitem[Be92]{Be92}
J. Bernstein, \emph{Representations of $p$-adic groups}, Notes taken by K.
Rumelhart of lectures by J.~Bernstein at Harvard in the Fall of 1992.

%\bibitem[BLR]{BLR} S.\ Bosch, W. \ L\"utkebohmert, M. \ Raynaud, {\em N\'eron Models}, Springer-Verlag. 1990.

%\bibitem[Bo76]{Bo76} A.~Borel, {\em Admissible representations of a semi-simple group over a local field with vectors fi%xed under an Iwahori subgroup}, Invent. Math. {\bf 35}, 233-259 (1976).

\bibitem[Bo79]{Bo79} A.~Borel, {\em Automorphic $L$-functions}, In: \emph{Automorphic Forms, Representations and
$L$-functions}, Proc. Sympos. Pure Math., vol. \textbf{33},
part 2, Amer. Math. Soc., Providence, RI, 1979, pp. 27-61.

%\bibitem[Bor91]{Bor91} A.~Borel, {\em Linear Algebraic Groups}. 2nd ed.  Springer (1991), 288 pp. + x.

%\bibitem[BJ]{BJ} A.~Borel and H.~Jacquet, {\em Automorphic forms and automorphic %representations}.  In: Automorphic forms, Representations and $L$-functions. Proc. Symp. Pure %Math. (1) {\bf 33}, Amer. Math. Soc., Providence, Rhode Island, 1979.

%\bibitem[BoWa]{BoWa} A.~Borel and N.~Wallach, {\em Continuous cohomology, discrete subgroups, and representations of 
%reductive groups}, 2nd ed., Math. surveys and monographs {\bf 67}, Amer. Math. Soc. (2000), 260 pp. + xvii.

%\bibitem[Bo] {Bo} M.\ V.\ Borovoi: Abelian Galois cohomology of reductive algebraic groups.  Mem.\ Amer.\ Soc.\ Math.\ {\bf 626} (1998).  


%\bibitem[Bou]{Bou} N. Bourbaki, {\em Groupes et Alg\`ebres de Lie}, Chap. 4, 5, et 6, Masson, 1981.


%\bibitem[BT1]{BT1} F.~Bruhat and J.~Tits, \emph{Groupes r{\'e}ductifs
%sur un corps local. {I}},
%  Inst. Hautes {\'E}tudes Sci. Publ. Math. \textbf{41} (1972), 5--251.

\bibitem[BT2]{BT2} F. ~Bruhat and J.~Tits, \emph{Groupes r\'{e}ductifs sur un corps local. {II}}, Inst. Hautes \'{E}tudes Sci. Publ. Math. {\bf 60} (1984), 5-184.

\bibitem[BK]{BK} C.~J.~Bushnell and P.~C.~Kutzko, {\em Smooth representations of reductive $p$-adic 
groups: structure theory via types}, Proc. London Math. Soc. (3) {\bf 77} (1998), 582-634.

\bibitem[Cas]{Cas} W.~Casselman, {\em Characters and Jacquet modules}, Math. Ann. {\bf 230} (1977), 101-105.

%\bibitem[Cas1]{Cas1} W.~Casselman, {\em Introduction to the theory of admissible %representations of $p$-adic reductive groups}, unpublished notes.

%\bibitem[Cas80]{Cas80} W.~Casselman, {\em The unramified principal series of $\mathfrak p$-adic groups I. The spherical function}, Compositio Math. {\bf 40} (1980), 387-406.

%\bibitem[CK]{CK} N.~Chriss, K.~Khuri-Makdisi, {\em On the Iwahori-Hecke algebra of a $p$-adic group}, Internat. Math. Res. Notices, no. {\bf 2} (1998), 85-100.

%\bibitem[Cl84]{Cl84} L.~Clozel, {\em Th\'eor\`eme d'Atiyah-Bott pour les vari\'et\'es $\mathfrak p$-adiques et caract\`eres des groupes r\'eductifs}, M\'em. S.M.F., 2e s\'erie, no. 15, (1984), 39-64.

%\bibitem[Cl85]{Cl85} L.~Clozel, {\em Sur une conjecture de Howe.I}, Compositio Math. 
%{\bf 56}, no. 1 (1985), 87-110.

%\bibitem[Cl89]{Cl89} L.~Clozel, {\em Orbital integrals on $p$-adic groups: a proof of the Howe %conjecture}, Annals of Math., Second Series, vol. {\bf 129}, no.2 (1989), 237-251.

\bibitem[Cl90]{Cl90}
L.~Clozel, {\em The fundamental lemma for stable base change},
Duke Math. J. {\bf 61}, No. 1, (1990), 255-302.

%\bibitem[Cl93]{Cl93}
%L.~Clozel, {\em On the cohomology of Kottwitz' arithmetic varieties}, Duke Math J. {\bf 72} %(1993), 757-795.

%\bibitem[Del]{Del} P.~Deligne, {\em Le support du caract\`ere d'une repr\'esentation %supercuspidale}, C.~R.~Acad.~Sci.~Paris S\'er. A-B  {\bf 283}  (1976), no. 4, Aii, A155--A157. 

%\bibitem{Deligne}
%P. Deligne, {\rm Vari\'et\'es de Shimura}, in Automorphic forms,
%representation and $L$-functions. PSPM {\bf 33}, part 2, 247-290 (1979).

%\bibitem [DKV]{DKV}5P. Deligne, D. Kazhdan, M.-F. Vigneras, {\em Repr\'esentations des alg\`ebres centrales simples p-adiques}, 
%p. 33-117; in: {\em Repr\'esentations des groupes r\'eductifs sur un corps local}, Travaux en cours, Hermann, 1984, 157 pp. + i.

%\bibitem[EGAIV]{EGAIV} Etude locale des sch\'emas et des morphismes de sch\'emas, par %A.~Grothendieck.  Publ. Math. IHES {\bf 24} (1965).

\bibitem[Fer]{Fer} A.~Ferrari, {\em Th\'{e}or\`{e}me de l'Indice et Formule des Traces}, manuscripta math. {\bf 124}, (2007), 363-390.


\bibitem[Ga]{Ga}
D. Gaitsgory, {\em Construction of central elements in the affine Hecke algebra via nearby cycles}, Invent. Math. {\bf 144} (2001), no. 2, 253-280.


%\bibitem[Go]{Go} D.~Goldstein, {\em Hecke algebra isomorphisms for tamely ramified %characters}.  PhD thesis, University of Chicago, 1990.


%\bibitem[Gr]{Gr} M. Greenberg, {\em Schemata over local rings, II.}  Ann. Math. {\bf 78} (1963) 256-266. 

\bibitem[G]{G} U.~G\"ortz, {\em Alcove walks and nearby cycles on affine flag manifolds}, J. Alg. Comb. {\bf 26}, no.4 (2007), 415-430.
   
\bibitem[GH]{GH} U.~G\"{o}rtz, T.~Haines, {\em The Jordan-Hoelder series for nearby cycles on some Shimura varieties and affine flag varieties}, J.~Reine Angew.~Math.~{\bf 609} (2007), 161-213. 
   
%\bibitem[Gr]{Gr} B.~H.~Gross, {\em On the Satake isomorphism}, in:~Galois representations in %arithmetic algebraic geometry (Durham, 1996), 223–237, London Math.~Soc.~Lecture Note Ser., %{\bf 254}, Cambridge Univ.~Press, Cambridge, 1998.                                                                                 

%\bibitem[H01a]{H01a}
%T.~Haines, \emph{The combinatorics of Bernstein functions},
%Trans. Amer. Math. Soc. \textbf{353} (2001), 1251--1278. 

\bibitem[GR]{GR} B.~H.~Gross, M.~Reeder, {\em Arithmetic invariants of discrete Langlands parameters} Duke Math.~J.~{\bf 154}, no.3, (2010), 431-508.

\bibitem[H01]{H01}
T.~Haines, {\em Test functions for Shimura varieties: the Drinfeld case}, Duke Math. J. {\bf 106} (2001), no. 1, 19--40.


\bibitem[H05]{H05} T.~Haines, {\em Introduction to Shimura varieties with bad reduction of parahoric 
type}, Clay Math. Proc. {\bf 4}, (2005), 583-642. 

\bibitem[H07]{H07} T.~Haines, {\em Intertwiners for unramified groups}, (2007). Available at www.math.umd.edu/$\sim$tjh.

\bibitem[H09]{H09} T.~Haines, {\em The base change fundamental lemma for central elements in parahoric Hecke algebras}, Duke Math. J., vol.~{\bf 149}, no.~3 (2009), 569-643.

%\bibitem[H09b]{H09b} T.~Haines, {\em On Hecke algebra isomorphisms and types for 
%depth-zero principal series}, expository note available at www.math.umd.edu/$\sim$tjh.


\bibitem[H11]{H11} T.~Haines, {\em Endoscopic transfer of the Bernstein center}, IAS/Princeton Number theory seminar, April 6, 2011. Slides available at www.math.umd.edu/$\sim$tjh.

\bibitem[H12]{H12} T.~Haines, {\em Base change for Bernstein centers of depth zero principal series blocks}, Ann. Scient. \'Ecole Norm. Sup. $4^e$ t.~{\bf 45}, 2012, p.~681-718.



\bibitem[HKP]{HKP} T.~Haines, R.~Kottwitz, A.~Prasad, \emph{Iwahori-Hecke
algebras}, J.~Ramanujan Math.~Soc.~{\bf 25}, No.2 (2010), 113-145.

\bibitem[HN02a]{HN02a}
T. Haines, B.C. Ng\^{o}, {\em Nearby cycles for local models of
some Shimura varieties}, Compo. Math. {\bf 133}, (2002), 117-150.

\bibitem[HN02b]{HN02b} T.~Haines, B.~C.~Ng\^{o}, {\em Alcoves associated to special fibers of local models}, Amer. J. Math. {\bf 124} (2002), 1125-1152.

%\bibitem[HN]{HN} T.~Haines, B.C. Ng\^{o}, {\em The semisimple local $L$-factors of some simple Shimura varieties with parahoric level structure}, in preparation.

\bibitem[HP]{HP} T. Haines, A. Pettet, {\em Formulae relating the Bernstein and 
Iwahori-Matsumoto presentations of an affine Hecke algebra}, J. of Alg.  {\bf 252} (2002), 127-149.

\bibitem[HRa1]{HRa1} T.~Haines, M.~Rapoport, {\em Appendix: On parahoric subgroups}, Advances in Math. {\bf 219} (1), (2008), 188-198; appendix to: G.~Pappas, M.~Rapoport, {\em Twisted loop groups and their affine flag varieties}, Advances in Math. {\bf 219} (1), (2008), 118-198.


\bibitem[HRa]{HRa}  T.~Haines, M.~Rapoport, {\it Shimura varieties with $\Gamma_1(p)$-level via Hecke algebra isomorphisms: the Drinfeld case}, Ann. Scient. \'Ecole  Norm. Sup. $4^e$ s\'{e}rie, t.~{\bf 45}, (2012), 719-785.

\bibitem[HRo]{HRo} T.~Haines, S.~Rostami, {\em The Satake isomorphism for special maximal parahoric Hecke algebras}, Representation Theory {\bf 14} (2010), 264-284. 


%\bibitem[Ha93]{Ha93} T.~Hales, {\em Unipotent representations and unipotent clases in $SL(n)$}, Amer. J. Math. {\bf 115} (1993), 1347-1383.

\bibitem[Hal]{Hal} T.~Hales, {\em On the fundamental lemma for standard endoscopy: reduction to unit elements}, Canad. J. Math., Vol. {\bf 47}(5), (1995), 974-994. 


%\bibitem[HL]{HL} R.~B.~Howlett, G.~I.~Lehrer, {\em Induced cuspidal representations and generalized Hecke rings}, Invent. Math., {\bf 58} (1980), 37-64.

%\bibitem[Jac]{Jac} H.~Jacquet, {\em Principal $L$-functions of the linear group},  in: Automorphic forms, Representations and $L$-functions. Proc. Symp. Pure Math. {\bf 33}, Amer. Math. Soc., Providence, Rhode Island, 1979.



\bibitem[Kal]{Kal} T.~Kaletha, {\em Epipelagic $L$-packets and rectifying characters}.  Preprint (2012).  arXiv:1209.1720.

 \bibitem[Kaz]{Kaz} D.~Kazhdan, {\em Cuspidal geometry of p-adic groups}, J.~Analyse Math., {\bf 47}, (1986), 1-36.


\bibitem[KL]{KL} D.~Kazhdan and G.~Lusztig, {\em Representations of Coxeter groups and Hecke algebras}, Invent. Math. {\bf 53} (1979), 165-184.


\bibitem[KV]{KV} D.~Kazhdan, Y.~Varshavsky, {\em On endoscopic transfer of Deligne-Lusztig functions}, Duke Math. J. {\bf 161}, no.4, (2012), 675-732. 

%\bibitem[Keys]{Keys} D.~Keys, {\em Reducibility of unramified unitary principal series %representations of $p$-adic groups and class-1 representations}, Math. Ann. {\bf 260}, %(1982), 397-402.

%\bibitem[Ko80]{Ko80}
%R. Kottwitz, {\em Orbital integrals on ${\rm GL}_3$}, Amer. J. Math. {\bf 102} (1980), 327-384.

%\bibitem[Ko82]{Ko82}
%R. Kottwitz, {\em Rational conjugacy classes in reductive groups},
%Duke Math. J. {\bf 49}, No. 4, (1982), 785-806.

%\bibitem[Ko83]{Ko83}
%R. Kottwitz, {\em Sign changes in harmonic analysis on reductive
%groups}, Trans. Amer. Math. Soc. {\bf 278}, No. 1, (1983), 289-297.

\bibitem[Ko84a]{Ko84a} R.~Kottwitz, {\em Stable trace formula: cuspidal tempered terms}, Duke Math. J. {\bf 51}, no.3,  (1984), 611-650.


\bibitem[Ko84b]{Ko84b}
R. Kottwitz,, {\em Shimura varieties and twisted orbital integrals}, Math. Ann. {\bf 269} (1984), 287-300.

%\bibitem[Ko86a]{Ko86a} R. Kottwitz, {\em Stable trace formula: elliptic singular terms}, 
%Math. Ann. {\bf 275}, (1986), 365-399.

\bibitem[Ko86]{Ko86}
R. Kottwitz, {\em Base change for unit elements of Hecke algebras}, Comp. Math. {\bf 60}, (1986), 237-250.

\bibitem[Ko88]{Ko88}
R.~Kottwitz, {\em Tamagawa numbers}, Annals of Math. {\bf 127} (1988), 629-646. 

\bibitem[Ko90]{Ko90} R. Kottwitz, {\em Shimura varieties and $\lambda$-adic 
representations}, in Automorphic forms, Shimura varieties and $L$-functions.
Proceedings of the Ann Arbor conference, Academic Press 1990.

\bibitem[Ko92a]{Ko92a} R. Kottwitz, {\em Points of some Shimura varieties over 
finite fields}, J. Amer. Math. Soc. {\bf 5}, (1992), 373-444.


\bibitem[Ko92b]{Ko92b} R. Kottwitz,  {\em On the $\lambda$-adic representations
associated to some simple Shimura varieties,} Invent.~Math.~{\bf 108}, (1992), 653-665.

\bibitem[Ko97]{Ko97} R. Kottwitz, {\em Isocrystals with additional structure. II}, Compositio Math. {\bf 109}, (1997), 255-339.


\bibitem[KR]{KR} R. Kottwitz, M. Rapoport, {\em Minuscule alcoves for $GL_n$ and $GSP_{2n}$}, Manuscripta Math. {\bf 102}, no.4, (2000), 403-428.

%\bibitem[KR]{KR} R. Kottwitz, J. Rogawski, {\em The distributions in the invariant trace formula 
%are supported on characters}, Canad. J. Math., vol. {\bf 52}(4), (2000), 804-814.

%\bibitem[KS]{KS} R.~Kottiwitz, D.~Shelstad, {\em Foundations of twisted endoscopy}, Ast\'eriseque {\bf 255}, Soc.~Math.~France, 1999.

%\bibitem[Lab90]{Lab90}
%J.-P. Labesse, {\em Fonctions \'{e}l\'{e}mentaires et lemme
%fondamental pour le changement de base stable}, Duke Math. J. {\bf
%61}, No.~{\bf 2},(1990), 519-530.

%\bibitem[Lab95]{Lab95}
%J.-P.~Labesse, {\em Noninvariant base change identities}, M\'emoires de la Soc.~Math.~France, %num\'ero {\bf 61}, Suppl\'ement au Bulletin  de la S.M.F., Tome 123, fasc. 2, 1995, 113 pp.

%\bibitem[Lab99]{Lab99} J.-P.~Labesse, {\em Cohomologie, stabilisation et changement de %base}, (appendix by L.~Clozel and L.~Breen), Ast\'erisque {\bf 257}, 1999, 161 pp. + vi.

\bibitem[Land]{Land} E.~Landvogt, {\em A compactification of the Bruhat-Tits building}, Lecture Notes in Mathematics {\bf 1619}, Springer 1996,  152 pp. + vii.

\bibitem[L1]{L1} R.~P.~Langlands, {\em Shimura varieties and the Selberg trace formula}, Can.~J.~Math.~ {\bf 29}, (1977), 1292-1299.

\bibitem[L2]{L2} R.~P.~Langlands, {\em On the zeta-functions of some simple Shimura varieties}, Can.~J.~Math. {\bf 31}, (1979), 1121-1216.

%\bibitem[Lan]{Lan} R.~P.~Langlands, {\em Base change for $GL(2)$}, Princeton Univ. Press, %Princeton, N.J., 1980.

%\bibitem[Lus]{Lus}
%G. Lusztig, \emph{Affine Hecke algebras and their graded versions}, J. Amer.
%Math. Soc.
%\textbf{2} (1989), 599--635.

\bibitem[Mil]{Mil} J.~S.~Milne, {\em The points on a Shimura variety modulo a prime of good reduction}, In: The zeta functions of Picard modular surfaces, ed.~R.~P.~Landlands and D.~Ramakrishnan, CRM, (1992), 151-253.

%\bibitem[Mor]{Mor} L.~Morris, {\em Tamely ramified intertwining algebras}, Invent. Math. {\bf %114}, (1993), 1-54.

%\bibitem[MP]{MP} A.~Moy, G.~Prasad, {\em Unrefined minimal
%$K$-types for $p$-adic groups}, Invent. Math. {\bf 116} (1994),
%393--408.

\bibitem[MV]{MV} I.~Mirkovi\'{c}, K.~Vilonen, {\em Geometric Langlands duality and representations of algebraic groups over commutative rings}, Ann.~of Math.~(2) {\bf 166}, no.1, (2007), 95–143. 

\bibitem[Mor]{Mor} S.~Morel, {\em On the cohomology of certain non-compact Shimura varieties}, Ann.~Math. Studies {\bf 173}, Princeton Univ.~Press, 2010, 217 pp. + xi.

\bibitem[Ngo]{Ngo}
B.~C.~Ng\^{o}, {\em The lemma fondamental pour les algebres de Lie}, {\em Le lemme fondamental pour les algèbres de Lie}, Publ. Math. IH\'{E}S No. {\bf 111}, (2010), 1-169.

\bibitem[PRS]{PRS} G.~Pappas, M.~Rapoport, B.~Smithling, {\em Local models of Shimura varieties, I. Geometry and combinatorics}, to appear in the Handbook of Moduli, 84 pp. 

\bibitem[PZ]{PZ} 
G.~Pappas, X.~Zhu, {\em Local models of Shimura varieties and a conjecture of Kottwitz}, to appear, Invent.~Math.

\bibitem[PrRa]{PrRa} Prasad, D., Ramakrishnan, D.~ {\em Self-dual representations of division algebras and Weil groups: a contrast}, Amer.~J.~Math.~{\bf 134}, no.3, (2012), 749–767.

%\bibitem[Ram]{Ram} A.~Ram, {\em Alcove walks, Hecke algebras, spherical functions, crystals and column strict tableaux},
%math.RT/0601343 (Pure and Applied Mathematics Quarterly. Special Issue: In honor of Robert MacPherson. Vol 2. (2006) 135-183.)

%\bibitem{Rapoport-MA} M.~Rapoport, {\em On the local zeta function of
%quaternionic Shimura variety with bad reduction}, Math. Ann. 
%{\bf 279}, 1988, 673-697.

\bibitem[Ra90]{Ra90} M.~Rapoport, {\em On the bad reduction of
Shimura varieties}, in Automorphic forms, Shimura varieties and $L$-functions.
Proceedings of the Ann Arbor conference, Academic Press 1990.


\bibitem [Ra05] {Ra05} M.~Rapoport: A guide to the reduction modulo $p$ of Shimura varieties.  Ast\'erisque {\bf 298}, (2005), 271-318.

\bibitem[RZ]{RZ} M.~Rapoport, T.~Zink, {\em Period spaces for 
$p$-divisible groups}, Annals of Math. Studies {\bf 141}, 
Princeton University Press 1996.


%\bibitem{Reimann} H. Reimann, {\em The semi-simple zeta function of
%quaternionic Shimura varieties}, LNM {\bf 1657}, Spinger Verlag 1997.  

\bibitem[Ren]{Ren} D.~Renard, {\em Repr\'{e}sentations des groupes r\'{e}ductifs $p$-adiques}. Cours Sp\'{e}cialis\'{e}s, {\bf 17}. Soci\'{e}t\'{e} Math\'{e}matique de France, Paris, 2010. vi+332 pp. 

\bibitem[Ri]{Ri} T.~Richarz, {\em On the Iwahori-Weyl group}, available at http://www.math.uni-bonn.de/people/richarz/.

%\bibitem[Ro]{Ro} A.~Roche, {\em Types and Hecke algebras for principal series representations %of split reductive $p$-adic groups}, Ann. Sci. \'Ecole Norm. Sup. 4$^e$e s\'erie, tome {\bf 31}, %n$^o$ 3 (1998), 361-413. 

\bibitem[Roc]{Roc} A.~Roche, {\em The Bernstein decomposition and the Bernstein centre}, Ottawa Lectures on admissible representations of reductive $p$-adic groups, 3-52, 
Fields Inst. Monogr. {\bf 26}, Amer.~Math.~Soc., Providence RI, 2009.


%\bibitem[Rod]{Rod} F.~Rodier, {\em Repr\'esentations de ${\rm GL}(n,k)$ o\`u $k$ est un corps p-adique}, S\'eminaire N. Bourbaki, 1981-1982, exp. n$^o$ 
%{\bf 587}, 201-218.

%\bibitem[Rog]{Rog} J.~D.~Rogawski, {\em Trace Paley-Wiener theorem in the twisted case}, %Trans. Amer. Math. Soc. {\bf 309}, no. 1 (1988), 215-229.

\bibitem[Ro]{Ro} S.~Rostami, {\em The Bernstein presentation for general connected reductive groups},  arXiv:1312.7374.

\bibitem[Sch1]{Sch1} P.~Scholze, {\em The Langlands-Kottwitz approach for the modular curve}, IMRN 2011, no. 15, 3368-3425.

\bibitem[Sch2]{Sch2} P.~Scholze, {\em The Langlands-Kottwitz approach for some simple Shimura varieties}, Invent.~Math.~{\bf 192} (2013), no.~3, 627-661. 

\bibitem[Sch3]{Sch3} P.~Scholze, {\em The local Langlands correspondence for ${\rm GL}_n$ over $p$-adic fields}, Invent.~Math.~{\bf 192} (2013), no.~3, 663-715. 

\bibitem[Sch4]{Sch4} P.~Scholze, {\em The Langlands-Kottwitz method and deformation spaces of $p$-divisible groups}, J.~Amer.~Math.~Soc.~{\bf 26}(2013), 227-259. 

\bibitem[SS]{SS} P.~Scholze, S.~W.~Shin, {\em On the cohomology of compact unitary group Shimura varieties at ramified split places}, J.~Amer.~Math.~Soc.~{\bf 26}(2013), 261-294.

\bibitem[Sh]{Sh} S.~W.~Shin, {\em Galois representations arising from some compact Shimura varieties}, Ann. of Math. (2) {\bf 173}, no.3, (2011), 1645–1741.

%\bibitem[Sh2]{Sh2} S.~W.~Shin, {\em On the cohomology of Rapoport-Zink spaces of EL-type}, %Amer. J. Math.~{\bf 134}, no.2, (2012), 407–452. 



%\bibitem[Ser]{Ser} J.-P.~Serre, {\em Local Fields}, Springer-Verlag, GTM {\bf 67}, 1979, 241 pp. %+ viii.

%\bibitem[Ser]{Ser} J.-P.~Serre, {\em Abelian $l$-Adic Representation and Elliptic Curves}, %Addison-Wesley (1989).

%\bibitem[Sil78]{Sil78} A.~Silberger, {\em The Langlands quotient theorem for $p$-adic groups}, Math. Ann. {\bf 236}, (1978), 95-104. 

%\bibitem[Sil79]{Sil79} A.~Silberger, {\em Introduction to harmonic analysis on reductive p-adic groups}.  Based on lectures by Harish-Chandra at IAS, 1971-73.  Princeton University Press (1979).  371 pp. + iv.

\bibitem[Sm1]{Sm1} B.~Smithling, {\em Topological flatness of orthogonal local models in the split, even case}, I. Math.~Ann.~{\bf 35}, no.2, (2011), 381-416.

\bibitem[Sm2]{Sm2} B.~Smithling, {\em Admissibility and permissibility for minuscule cocharacters in orthogonal groups}, Manuscripta Math.~{\bf 136}, no.3-4, (2011), 295-314.

\bibitem[Sm3]{Sm3} B.~Smithling, {\em Topological flatness of local models for ramified unitary groups. I. The odd dimensional case}, Adv.~Math.~{\bf 226}, no.4, (2011), 3160-3190.


%\bibitem[Spr]{Spr} T.~A.~Springer, {\em Linear Algebraic Groups}, Second Edition, Progress in Mathematics {\bf 9}, Birkh\"auser, 1998.  334 pp. + x.


\bibitem[St]{St} R.~Steinberg, {\em Endomorphisms of linear algebraic groups}, Mem. Amer. Math. Soc. {\bf 80} (1968), 1-108.

\bibitem[Str]{Str} B.~Stroh,  {\em Sur une conjecture de Kottwitz au bord}, Ann.~Sci.~Ec.~Norm.~Sup. (4) {\bf 45}, no.1, (2012), 143-165.

%\bibitem[Tits]{Tits} J.~Tits, {\em Reductive groups over local fields}, Proc. Symp. Pure. Math. {\bf 33} (1979), part. 1, pp. 29-69.

\bibitem[Var]{Var} S.~Varma, {\em On certain elements in the Bernstein center of ${\mathbb GL}_2$}, Represent.~Theory {\bf 17}, (2013), 99-119.

\bibitem[Vo]{Vo} D.~Vogan, {\em The local Langlands conjecture}, In: Representation Theory of Groups and Algebras,  Contemp. Math.  {\bf 145}, (1993), 305-379.

\bibitem[Wal97]{Wal97}
J.-L.~Waldspurger, {\em Le lemme fondamental implique le transfert}, Compositio Math. {\bf 105}, (1997), 153-236.

\bibitem[Wal04]{Wal04}
J.-L.~Waldspurger, {\em Endoscopie et changement de caract\'eristique}, J.~Inst.~Math.~Jussieu {\bf 5}, no.3, (2006), 423–525. 

\bibitem[Wal08]{Wal08} J.-L.~Waldspurger, {\em L'endoscopie tordue n'est pas si tordue}, Mem.~Amer.~Math.~Soc. 
{\bf 194} no.198 (2008), 261+ix. 

\bibitem[Zhu]{Zhu} X.~Zhu, {\em The geometric Satake correspondence for ramified groups},
 arXiv:1107.5762. 

\end{thebibliography}
\end{document}